\newlist{erel}{enumerate}{1}
\setlist[erel,1]{label={\bfseries (rel. \arabic*)}}
\newcommand{\N}{\ensuremath{\mathbb{N}}}
\newcommand{\Z}{\ensuremath{\mathbb{Z}}}
\newcommand{\K}{\ensuremath{\mathbb{K}}}
\newcommand{\R}{\ensuremath{\mathbb{R}}}
\newcommand{\C}{\ensuremath{\mathbb{C}}}
\newcommand{\PP}{\ensuremath{\mathbb{P}}}
\newcommand{\A}{\ensuremath{\mathbb{A}}}
\newcommand{\FF}{\ensuremath{\mathbb{F}}}
\newcommand{\PO}{\ensuremath{\mathbb{P}\mathrm{O}}}
\newcommand{\HH}{\ensuremath{\mathbb{H}}}
\newcommand{\J}{\ensuremath{\mathcal{J}}}
\renewcommand\k{\mathbb{K}}
\DeclareMathOperator{\p}{p}
\DeclareMathOperator{\Aut}{Aut}
\DeclareMathOperator{\Bir}{Bir}
\DeclareMathOperator{\Bp}{Bp}
\DeclareMathOperator{\Id}{Id}
\DeclareMathOperator{\PGL}{PGL}
\def\dashmapsto{\mapstochar\dashrightarrow}
\newcommand\SB[1][\scalebox{0.6}]{#1}
\newcommand\SBa[1][\scalebox{0.8}]{#1}
\newcommand\SBb[1][\scalebox{0.9}]{#1}
\newtheorem{Thm}{Theorem}[section]
\newtheorem*{Thm*}{Theorem}
\newtheorem{Cor}[Thm]{Corollary}
\newtheorem{Lem}[Thm]{Lemma}
\newtheorem{Prop}[Thm]{Proposition}
\theoremstyle{definition}
\newtheorem{Def}[Thm]{Definition}
\newtheorem{Rmk}[Thm]{Remark}
\newtheorem{Not}[Thm]{Notation}
\title{The abelianisation of the real Cremona group}
\author{Susanna Zimmermann}
\subjclass[2010]{14E07; 14P99}
\thanks{The author gratefully acknowledges support by the Swiss National Science Foundation Grant ``Birational geometry'' PP00P2\_153026 /1}
\address{Susanna Zimmermann\\
 Mathematisches Institut\\
Universit\"at Basel\\
Spiegelgasse 1\\
4051 Basel, Switzerland}
\email{Susanna.Zimmermann@unibas.ch}
\begin{document}
\maketitle
\thispagestyle{empty}
%%%%%%%%%%%%%%%%%%%%%%%%%%%%%%%%%%%%%%%%%%%%%%%%%%%%%%%%%%%%%%%%%%%%%%%%%%%
%%%%%%%%%%%%%%%%%%%%%%%%%%%%%%%%%%%%%%%%%%%%%%%%%%%%%%%%%%%%%%%%%%%%%%%%%%%

\begin{abstract}
We present the abelianisation of the group of birational transformations of $\PP^2_\R$.  
\end{abstract}

\tableofcontents
%%%%%%%%%%%%%%%%%%%%%%%%%%%%%%%%%%%%%%%%%%%%%%%%%%%%%%%%%%%%%%%%%%%%%%%%%%%
%%%%%%%%%%%%%%%%%%%%%%%%%%%%%%%%%%%%%%%%%%%%%%%%%%%%%%%%%%%%%%%%%%%%%%%%%%%

\section{Introduction}
Let $\Bir_{\R}(\PP^2)\subset \Bir_\C(\PP^2)$ be the groups of birational transformations of the projective plane defined over the respective fields of real and complex numbers, and $\Aut_\R(\PP^2)\simeq\mathrm{PGL}_3(\R)$, $\Aut_\C(\PP^2)\simeq\mathrm{PGL}_3(\C)$ the respective subgroups of linear transformations.

According to the Noether-Castelnuovo Theorem \cite{Cas}, the group $\Bir_\C(\PP^2)$ is generated by $\Aut_\C(\PP^2)$ and the standard quadratic transformation $\sigma_0\colon [x:y:z]\dashmapsto [yz:xz:xy]$. As an abstract group, it is not simple \cite{CL13,L15}, i.e. there exist non-trivial, proper normal subgroups $N\subset \Bir(\PP^2)$. The construction in \cite{L15} implies that $\Bir_\C(\PP^2)/N$ is SQ-universal (just like $\Bir_\C(\PP^2)$ itself), i.e. any countable group embeds into a quotient of $\Bir_\C(\PP^2)/N$ \cite[\S8]{DGO17}. Moreover, the normal subgroup generated by any non-trivial element which preserves a pencil of lines or which has degree $d\leq 4$ is the whole group \cite[Lemma 2]{Giz}, and $N$ has uncountable index (see Remark~\ref{rmk conjugate}). As $\PGL_3(\C)$ is perfect, it follows that $\Bir_\C(\PP^2)$ is perfect.

For $\Bir_\R(\PP^2)$ the situation is quite different. First of all, the group generated by $\Aut_{\R}(\PP^2)=\mathrm{PGL}_3(\R)$ and $\sigma_0$ is certainly not the whole group, as all its elements have only real base-points, and $\Bir_\R(\PP^2)$ contains for instance the circle inversion $\sigma_1\colon [x:y:z]\dasharrow [xz:yz:x^2+y^2]$ of Appolonius, which has non-real base-points. However, the group $\Bir_{\R}(\PP^2)$ is generated by $\mathrm{PGL}_3(\R)$, $\sigma_0$, $\sigma_1$, and all standard quintic birational maps (see Definition~\ref{def 5.1}) \cite{BM12}. Using these generators, we find an explicit presentation of the group $\Bir_\R(\PP^2)$ (see Theorem~\ref{prop technical thm}) and a natural quotient, which is our main result.

\begin{Thm}\label{thm 2}
\begin{enumerate}
\item\label{thm 2:1} The group $\Bir_\R(\PP^2)$ is not perfect: its abelianisation  is isomorphic to 
\[\Bir_\R(\PP^2)/[\Bir_\R(\PP^2),\Bir_\R(\PP^2)]\simeq \bigoplus_{(0,1]} \mathbb{Z}/2\mathbb{Z}.\]
\item\label{thm 2:2} In particular, $\Bir_{\R}(\PP^2)$ is not generated by $\Aut_{\R}(\PP^2)$  and a countable set of elements. 
\item\label{thm 2:3} The commutator subgroup $[\Bir_\R(\PP^2),\Bir_\R(\PP^2)]$ is the smallest normal subgroup containing $\Aut_\R(\PP^2)\simeq\mathrm{PGL}_3(\R)$. It is perfect and contains all elements of $\Bir_\R(\PP^2)$ of degree~$\leq 4$.
\end{enumerate}
\end{Thm}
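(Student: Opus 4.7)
The plan is to compute the abelianisation directly from the explicit presentation of $\Bir_\R(\PP^2)$ provided by Theorem~\ref{prop technical thm}. That presentation uses the generating set of \cite{BM12}, namely $\PGL_3(\R)$, the involutions $\sigma_0$ and $\sigma_1$, and the standard quintic maps of Definition~\ref{def 5.1}. Abelianising the presentation, one first observes that $\PGL_3(\R)$ is a connected simple Lie group, hence perfect, and its entire contribution to the quotient vanishes. The remaining relations modulo commutators should then identify any two standard quintics lying in a common $\PGL_3(\R)$-orbit, and force $\sigma_0,\sigma_1$ to become trivial. A standard quintic is determined up to $\PGL_3(\R)$-conjugacy by the projective equivalence class of its three pairs of conjugate complex base points, encoded by a single real parameter in $(0,1]$; since each such quintic is an involution, this yields a canonical surjection
\[
\bigoplus_{(0,1]}\Z/2\Z \twoheadrightarrow \Bir_\R(\PP^2)/[\Bir_\R(\PP^2),\Bir_\R(\PP^2)].
\]

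The main obstacle is to show this surjection is also injective, i.e.\ that distinct moduli yield distinct classes. For this I would construct, for each $t\in(0,1]$, a homomorphism $\varphi_t\colon \Bir_\R(\PP^2)\to \Z/2\Z$ that counts modulo~$2$ the occurrences of quintics of modulus $t$ in any factorisation of an element into the chosen generators. The delicate technical step is to verify, one relation at a time from Theorem~\ref{prop technical thm}, that this count is invariant — in particular, that every relation among quintic generators preserves the multi-set of moduli modulo~$2$. Once well-defined, the product map $(\varphi_t)_{t\in(0,1]}$ sends $\Bir_\R(\PP^2)$ onto $\bigoplus_{(0,1]}\Z/2\Z$, vanishes on the commutator subgroup, and thus provides the inverse to the surjection above.

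Parts (\ref{thm 2:2}) and (\ref{thm 2:3}) then follow cleanly. For (\ref{thm 2:2}), if $\Bir_\R(\PP^2)$ were generated by $\Aut_\R(\PP^2)=\PGL_3(\R)$ together with only countably many further elements, the abelianisation would be countably generated, contradicting (\ref{thm 2:1}). For (\ref{thm 2:3}), write $N$ for the smallest normal subgroup of $\Bir_\R(\PP^2)$ containing $\PGL_3(\R)$. The inclusion $N\subseteq [\Bir_\R(\PP^2),\Bir_\R(\PP^2)]$ is immediate because $\PGL_3(\R)$ is perfect. For the converse, I would reduce the presentation modulo $N$: both $\sigma_0$ and $\sigma_1$ become trivial through relations of Theorem~\ref{prop technical thm} already available in the linear generators, and the resulting quotient is generated by the images of the quintics with commuting relations, so $\Bir_\R(\PP^2)/N$ is abelian. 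That the commutator subgroup contains every element of degree $\leq 4$ follows from \cite{BM12}: such elements lie in the subgroup generated by $\PGL_3(\R),\sigma_0,\sigma_1$, all of which belong to the commutator subgroup by~(\ref{thm 2:1}). Perfectness of the commutator subgroup finally requires writing $\sigma_0,\sigma_1$ and the quintic involutions it contains as commutators of elements of the commutator subgroup itself, which I expect to achieve via explicit identities within $\PGL_3(\R)$ acting on these involutions.
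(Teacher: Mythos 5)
Your overall strategy coincides with the paper's (a homomorphism onto $\bigoplus_{(0,1]}\Z/2\Z$ indexed by a modulus attached to the non-real base-points of quintic maps, checked against the presentation of Theorem~\ref{prop technical thm}, with kernel the normal closure of $\Aut_\R(\PP^2)$), but two steps as you describe them would fail. First, you misread what Theorem~\ref{prop technical thm} presents: its generators are not $\sigma_0,\sigma_1$ and the standard quintics but the entire subgroups $\Aut_\R(\PP^2)$, $\mathcal{J}_*$, $\mathcal{J}_{\circ}$, and among its relations are all identities $fgh=1$ holding inside $\mathcal{J}_{\circ}$. A map $\varphi_t$ defined by ``counting quintics of modulus $t$ in a factorisation'' is therefore not defined on the generators: an element of $\mathcal{J}_{\circ}$ of arbitrary degree is a single letter of the presentation, and one must first construct a genuine homomorphism $\varphi_{\circ}\colon\mathcal{J}_{\circ}\to\bigoplus_{(0,1]}\Z/2\Z$. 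The paper does this in Section~\ref{section quotient Jcirc} via the spinor norm on $\mathrm{SO}(x^2+y^2-tz^2,\R(t))$ (equivalently, counting base-points on each non-real conic of the pencil modulo $2$), together with the re-assignment Lemma~\ref{lem linear}, which is needed to make the modulus independent of the choice of normalising linear map. None of this appears in your sketch, and without it the verification ``one relation at a time'' cannot even begin.

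Second, and more seriously, your claim that the relations identify precisely the standard quintics lying in a common $\PGL_3(\R)$-orbit — equivalently that equal modulus forces linear equivalence — is false. The invariant $\nu(C_q)\in(0,1]$ remembers only the conic through $p_1,\bar{p}_1,p_2,\bar{p}_2$ carrying the extra base-point pair (up to the scaling actions), not the position of that pair on the conic, so two standard quintics with the same modulus are in general not related by composition with linear maps. The paper instead shows (Lemmas~\ref{lem deg 3}, \ref{rmk tilde 3.2}, \ref{lem tilde 3}, \ref{lem tilde 3.3} and \ref{lem claim 1}--\ref{lem claim 3}) that they differ by compositions with quadratic and cubic elements of $\mathcal{J}_{\circ}$, which lie in $\langle\langle\Aut_\R(\PP^2)\rangle\rangle$; this same geometric work is what makes $\Bir_\R(\PP^2)/N$ abelian in your part (3), a fact you assert but cannot read off the presentation, since no commuting relation among quintics occurs in Theorem~\ref{prop technical thm}. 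Finally, for perfectness of $H=[\Bir_\R(\PP^2),\Bir_\R(\PP^2)]$ you need no explicit commutator identities for $\sigma_0,\sigma_1$ (and $H$ contains no quintic ``involutions'' — they have nonzero image in the abelianisation): since $[H,H]$ is normal in $\Bir_\R(\PP^2)$ and contains the perfect group $\PGL_3(\R)$, it contains $N=H$.
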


The second statement of the theorem is similar to a result for higher dimensional Cremona groups: For $n\geq3$, the group $\Bir(\PP^n)$ is not generated by $\Aut(\PP^n)$ and a countable number of elements \cite{Pan}.

Let $X$ be a real variety. We denote by $X(\R)$ its set of real points, and by $\Aut(X(\R))\subset\Bir_\R(X)$ the subgroup of birational transformations defined at each point of $X(\R)$. It is also called the group of {\em birational diffeomorphisms} of $X(\R)$, and is, in general, strictly larger than the group of automorphisms $\Aut_\R(X)$ of $X$ defined over $\R$. The group $\Aut(\PP^2(\R))$ is generated by $\Aut_\R(\PP^2)$ and the standard quintic transformations (see Definition~\ref{def 5.1}) \cite{RV05,BM12}. 
In the following, let $\PP^3\supset\mathcal{Q}_{3,1}$ be the smooth quadric surface given by $x^2+y^2+z^2=w^2$, and $\FF_0\simeq\PP^1\times\PP^1$ the real surface whose antiholomorphic involution is the complex conjugation on each factor. 
Using real birational transformations $\PP^2\dashrightarrow X$ and the explicit construction of the quotient in Theorem~\ref{thm 2}~(\ref{thm 2:1}), we find the following corollary.

\begin{Cor}\label{cor 2}
For any real birational map $\psi\colon\FF_0\dashrightarrow\PP^2$, the group $\psi\Aut(\FF_0(\R))\psi^{-1}$ is a subgroup of $\ker\left(\Bir_\R(\PP^2)\rightarrow\bigoplus_{(0,1]}\Z/2\Z\right)$. 
There exist surjective group homomorphisms 
{\small
\[\Aut(\PP^2(\R))\rightarrow\bigoplus_{(0,1]} \Z/2\Z,\quad\Aut(\A^2(\R))\rightarrow\bigoplus_{(0,1]}\Z/2\Z,\quad\Aut(\mathcal{Q}_{3,1}(\R))\rightarrow\bigoplus_{(0,1]}\Z/2\Z.\]
} In particular, $\Aut(\PP^2(\R))$ is not generated by $\Aut_\R(\PP^2)$ and a coundable number of elements. The same holds for $\Aut(\A^2(\R))$ and $\Aut(\mathcal{Q}_{3,1}(\R))$ if we replace $\Aut_\R(\PP^2)$ by the affine automorphism group of $\A^2$ and by $\Aut_\R(\mathcal{Q}_{3,1})$, respectively.
\end{Cor}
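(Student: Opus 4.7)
The plan is to use the abelianisation map $\pi\colon\Bir_\R(\PP^2)\twoheadrightarrow\bigoplus_{(0,1]}\Z/2\Z$ of Theorem~\ref{thm 2} as the common source for all claimed quotients, together with Theorem~\ref{thm 2}(\ref{thm 2:3}): the commutator subgroup equals $\ker\pi$, contains $\Aut_\R(\PP^2)$, and contains every birational map of degree $\leq 4$. From the explicit construction of $\pi$ one reads off that the generator of $\Z/2\Z$ indexed by $t\in(0,1]$ is the $\pi$-image of a standard quintic transformation whose pair of non-real base-points has modulus~$t$, while linear maps, $\sigma_0$ and $\sigma_1$ all lie in $\ker\pi$.

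For the first assertion I would fix a generating set of $\Aut(\FF_0(\R))$ and check that each generator, when transported by $\psi$ into $\Bir_\R(\PP^2)$, lies in $\ker\pi$. A convenient generating set, in the spirit of Blanc--Mangolte, consists of $\Aut_\R(\FF_0)$ together with a family of birational involutions whose base loci on $\FF_0\otimes\C$ are pairs of disjoint conjugate curves. The combinatorial type of those base-points differs from the one producing the quintic generators of $\pi$; tracking them through the explicit presentation of Theorem~\ref{prop technical thm} then writes each generator of $\psi\Aut(\FF_0(\R))\psi^{-1}$ as a product of maps of degree $\leq 4$, hence in $\ker\pi$ by Theorem~\ref{thm 2}(\ref{thm 2:3}).

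For the three surjections in (2), I argue case by case. For $\Aut(\PP^2(\R))$, the standard quintic transformations $\sigma_t$ already lie in $\Aut(\PP^2(\R))$ and their $\pi$-images generate the target, so the restriction of $\pi$ is surjective. For $\Aut(\A^2(\R))$, I choose an affine chart $\A^2\subset\PP^2$ whose line at infinity avoids the base-points of a chosen family of $\sigma_t$ parametrising all summands; each such $\sigma_t$ is then a birational diffeomorphism of $\A^2(\R)$, and the restriction of $\pi$ is again surjective. For $\Aut(\mathcal{Q}_{3,1}(\R))$, I fix a real stereographic projection $\phi\colon\mathcal{Q}_{3,1}\dashrightarrow\PP^2$ and transport the argument by $f\mapsto\phi f\phi^{-1}$; for each $t\in(0,1]$ one exhibits an element of $\Aut(\mathcal{Q}_{3,1}(\R))$ whose $\phi$-conjugate has nontrivial component in the $t$-th summand, for instance by pulling back a suitable $\sigma_t$.

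The non-countable generation statements then follow formally. For each $G$ among $\Aut(\PP^2(\R)),\Aut(\A^2(\R)),\Aut(\mathcal{Q}_{3,1}(\R))$ and its distinguished subgroup $H$, the image of $H$ in $\Bir_\R(\PP^2)$ (after the appropriate conjugation in the latter two cases) lies in $\ker\pi$: the first two cases are immediate since $H$ embeds into $\Aut_\R(\PP^2)$, and in the last case $\Aut_\R(\mathcal{Q}_{3,1})$ is generated by elements conjugating via $\phi$ to maps of degree $\leq 2$, hence to $\ker\pi$ by Theorem~\ref{thm 2}(\ref{thm 2:3}). If $G=\langle H,S\rangle$ with $S$ countable, then the image $\pi(G)$ is countably generated, contradicting that it equals the uncountable group $\bigoplus_{(0,1]}\Z/2\Z$. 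The main obstacle is the first assertion: it requires a careful manipulation within the presentation of Theorem~\ref{prop technical thm} to verify that the generators of $\Aut(\FF_0(\R))$, which may have arbitrarily many pairs of complex-conjugate base-points, never contribute an odd number of quintic generators in any single summand of $\bigoplus_{(0,1]}\Z/2\Z$.
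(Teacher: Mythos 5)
Your overall strategy (restrict or conjugate the homomorphism $\varphi$, observe that standard quintic transformations already generate the image, and deduce the non-countable-generation statements formally) is the paper's strategy, and your final countability argument is correct. But the first assertion, which you yourself flag as ``the main obstacle,'' is left genuinely unproved, and the route you propose for it is both unnecessary and unlikely to close. The key input you are missing is \cite[Theorem~1.4]{BM12}: $\Aut(\FF_0(\R))$ is generated by $\Aut_\R(\FF_0)\simeq\mathrm{PGL}_2(\R)^2\rtimes\Z/2\Z$ together with a \emph{single} explicit involution $\tau\colon([u_0:u_1],[v_0:v_1])\dashmapsto([u_0:u_1],[u_0v_0+u_1v_1:u_1v_0-u_0v_1])$ --- not a family of involutions with arbitrarily many conjugate base-points. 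Conjugating these generators by the explicit map $\psi^{-1}\colon([u_0:u_1],[v_0:v_1])\dashmapsto[u_0v_1:u_1v_0:u_1v_1]$ gives transformations of $\PP^2$ of degree at most $3$, which lie in $\ker(\varphi)$ by Proposition~\ref{thm quotient}; normality of $\ker(\varphi)$ then handles an arbitrary $\psi$. There is no parity bookkeeping inside the presentation of Theorem~\ref{prop technical thm} to be done, and with your (incorrect) generating set the verification you defer would be substantially harder than the corollary itself.

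Two of your three surjectivity arguments also have gaps. For $\Aut(\A^2(\R))$, requiring only that the line at infinity avoid the base-points of the chosen quintics is not enough: a standard quintic whose base-points miss $L$ generically sends $L$ to a quintic curve, so $\theta^{-1}(L(\R))$ meets $\A^2(\R)$ and $\theta$ is not a birational diffeomorphism of $\A^2(\R)$. What is needed is that the quintic \emph{preserve} the line at infinity; the paper gets this by taking $\A^2=\PP^2\setminus L_{p_1,\bar p_1}$ and using that every standard quintic in $\J_\circ$ preserves $C_3=L_{p_1,\bar p_1}\cup L_{p_2,\bar p_2}$, so that after composing with a permutation of $p_1,\bar p_1,p_2,\bar p_2$ it preserves each component. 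For $\Aut(\mathcal{Q}_{3,1}(\R))$, ``pulling back a suitable $\sigma_t$'' requires checking that $\p^{-1}\sigma_t\p$ is defined at \emph{every} real point of the quadric; this holds only for quintics adapted to the stereographic projection (those contracting onto $p_2$ the conic through their remaining base-points, so that $L_{p_2,\bar p_2}$ is preserved), and the paper verifies it by an intersection computation identifying $\p\theta\p^{-1}$ with a standard cubic transformation of $\mathcal{Q}_{3,1}$ in the sense of \cite{KM09,BM12}. Your argument for $\Aut(\PP^2(\R))$ and the concluding countability argument are fine as stated.
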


\begin{Cor}\label{cor 1}
For any $n\in\N$ there is a normal subgroup of $\Bir_{\R}(\PP^2)$ $($resp. $\Aut(\PP^2(\R))$, resp. $\Aut(\A^2(\R))$, resp. $\Aut(\mathcal{Q}_{3,1})$$)$ of index $2^n$ containing all its elements of degree $\leq 4$. 
\end{Cor}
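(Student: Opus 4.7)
The plan is to derive the corollary directly from the abelianisation results above, exploiting the fact that the codomain $\bigoplus_{(0,1]} \Z/2\Z$ has finite $2$-elementary abelian quotients of every order $2^n$. Theorem~\ref{thm 2}~(\ref{thm 2:1}) and Corollary~\ref{cor 2} provide, for each of the four groups $G$ appearing in the statement, a surjective homomorphism $\phi_G \colon G \twoheadrightarrow \bigoplus_{(0,1]} \Z/2\Z$. I first record that $\ker(\phi_G)$ already contains every element of $G$ of degree at most $4$: for $G = \Bir_\R(\PP^2)$ this is exactly Theorem~\ref{thm 2}~(\ref{thm 2:3}), and for the three other groups it follows from the explicit construction of $\phi_G$ in Corollary~\ref{cor 2}, where the relevant cocycle is designed to vanish on all standard quadratic, cubic and quartic transformations.

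Given $n \in \N$, I would then fix any $n$ distinct values $t_1,\dots,t_n \in (0,1]$ and consider the projection $\pi \colon \bigoplus_{(0,1]} \Z/2\Z \twoheadrightarrow (\Z/2\Z)^n$ onto the coordinates indexed by $t_1,\dots,t_n$. The composition $\pi \circ \phi_G \colon G \to (\Z/2\Z)^n$ remains surjective, so its kernel $H_n$ is a normal subgroup of $G$ of index exactly $2^n$ by the first isomorphism theorem. Since $\ker(\phi_G) \subseteq H_n$, every element of degree $\le 4$ lies in $H_n$, which is what the corollary asserts.

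There is essentially no serious obstacle here: all the real content has been packed into Theorem~\ref{thm 2} and Corollary~\ref{cor 2}, and the present statement is just the observation that an infinite direct sum of copies of $\Z/2\Z$ admits a surjection onto $(\Z/2\Z)^n$ for every~$n$. The only mild point to verify is the claim that $\ker(\phi_G)$ contains all elements of degree $\le 4$ in the three automorphism-group cases; this is transparent from the explicit description of the surjections built in Corollary~\ref{cor 2} and requires no new geometric input.
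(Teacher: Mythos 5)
Your proposal is correct and takes essentially the same route as the paper: the paper's proof likewise composes $\varphi$ (respectively its restrictions and the $\p$-conjugate from Corollaries~\ref{cor:aut1} and~\ref{cor:aut2}) with the projection of $\bigoplus_{(0,1]}\Z/2\Z$ onto $n$ chosen coordinates, and notes that the kernel of the composite has index $2^n$ and contains $\ker(\varphi)$, hence all elements of degree $\leq 4$.
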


\begin{Cor}\label{cor 4} 
Any normal subgroup of $\Bir_\R(\PP^2)$ generated by a countable set of elements of $\Bir_\R(\PP^2)$ is a proper subgroup of $\Bir_\R(\PP^2)$. 
The same statement holds for $\Aut(\PP^2(\R))$, $\Aut(\A^2(\R))$ and $\Aut(\mathcal{Q}_{3,1}(\R))$.
\end{Cor}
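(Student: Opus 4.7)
The plan is to deduce everything from the surjective homomorphism $\Phi\colon \Bir_\R(\PP^2) \twoheadrightarrow \bigoplus_{(0,1]} \Z/2\Z$ of Theorem~\ref{thm 2}(\ref{thm 2:1}) together with its three analogues for $\Aut(\PP^2(\R))$, $\Aut(\A^2(\R))$ and $\Aut(\mathcal{Q}_{3,1}(\R))$ furnished by Corollary~\ref{cor 2}. Let $G$ denote any of these four groups, let $\Phi\colon G \twoheadrightarrow A$ be the corresponding surjection onto $A = \bigoplus_{(0,1]} \Z/2\Z$, and let $N = \langle\!\langle S \rangle\!\rangle \trianglelefteq G$ be the normal subgroup generated by a countable subset $S \subset G$.

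The key observation is that since $A$ is abelian, $\Phi(gsg^{-1}) = \Phi(s)$ for every $g \in G$ and $s \in S$, so every product of conjugates of elements of $S \cup S^{-1}$ is mapped by $\Phi$ into the subgroup of $A$ generated by $\Phi(S)$. Therefore $\Phi(N) = \langle \Phi(S) \rangle$ is an $\F_2$-vector subspace of $A$ of at most countable dimension, hence of cardinality at most $\aleph_0$. Since $A$ has the cardinality of the continuum, $\Phi(N) \subsetneq A$, and consequently $N \subsetneq G$.

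There is no serious obstacle here: all the substantive work is encapsulated in the construction of the surjections $\Phi$, which we take for granted from Theorem~\ref{thm 2} and Corollary~\ref{cor 2}. The only elementary point is that, in an abelian target, the image of a normally generated subgroup coincides with the ordinary subgroup generated by the image of its generators, whence countability of $S$ transfers directly to countability of $\Phi(N)$ and rules out the equality $N = G$.
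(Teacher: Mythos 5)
Your proposal is correct and follows essentially the same route as the paper: push the countable generating set through the surjection onto the abelian group $\bigoplus_{(0,1]}\Z/2\Z$, observe that the image of the normal closure is the (countable) subgroup generated by the image, and conclude properness from uncountability of the target. If anything, your phrasing via $\langle\Phi(S)\rangle$ is slightly more careful than the paper's, which works with the preimage of the set $\varphi(S)$ rather than of the subgroup it generates.
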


The plan of the article is as follows: After giving the basic definitions and notations in Section~\ref{section groups}, we define in Section~\ref{section quotient Jcirc} a surjective group homomorphism from the subgroup $\mathcal{J}_{\circ}\subset\Bir_\R(\PP^2)$ of elements preserving a pencil of conics to the group $\bigoplus_{(0,1]}\Z/2\Z$. Section~\ref{section technical thm} is entirely devoted to the proof of an explicit presentation of $\Bir_\R(\PP^2)$ by generators and relations, given in Theorem~\ref{prop technical thm}, on which the subsequent section is based. The proof is rather long and technical, and one might skip this section for more comfortable reading and return to it at the end of the paper.
In Section~\ref{section quotient}, we extend the homomorphism to a surjective group homomorphism $\Bir_{\R}(\PP^2)\rightarrow\bigoplus_{(0,1]}\Z/2\Z$ and prove the second statement of Theorem~\ref{thm 2}, Corollary~\ref{cor 1} and Corollary~\ref{cor 2}. In Section~\ref{section kernel}, we prove that its kernel is the smallest normal subgroup containing $\Aut_{\R}(\PP^2)$, which will turn out to be the commutator subgroup of $\Bir_{\R}(\PP^2)$. This will conclude Theorem~\ref{thm 2}. \par
In \cite{Pol15} one can find a description of the elementary links between real rational surfaces and relations between them. However, this description was not used in the proof of Theorem~\ref{prop technical thm}.

\vskip\baselineskip

{\sc Acknowledgements:} I would like to thank Jérémy Blanc for the priceless discussions about quotients and relations, Jean-Philippe Furter, Fr\'ed\'eric Mangolte and Christian Urech for their useful remarks, and J\'anos Koll\'ar for pointing out the connection to the spinor norm. I would like to thank the referees for their careful reading and their very helpful suggestions to re-organise and shorten the article.

%%%%%%%%%%%%%%%%%%%%%%%%%%%%%%%%%%%%%%%%%%%%%%%%%%%%%%%%%%%%%%%%%%%%%%%%%%%
%%%%%%%%%%%%%%%%%%%%%%%%%%%%%%%%%%%%%%%%%%%%%%%%%%%%%%%%%%%%%%%%%%%%%%%%%%%

%%%%%%%%%%%%%%%%%%%%%%%%%
%%%%%%%%%%%%%%%%%%%%%%%%%
\section{Basic notions}\label{section groups}
%%%%%%%%%%%%%%%%%%%%%%%%%
%%%%%%%%%%%%%%%%%%%%%%%%%

%%%%%%%%%%%%%%%%%%%%%%%%%
\subsection{Linear systems and base-points}
%%%%%%%%%%%%%%%%%%%%%%%%%
We now give some basic notations and definitions, whose standard vocabulary can for instance be found in \cite{AC02} and \cite[\S7]{D12}. 

Throughout the article every surface and rational map is defined over $\R$, unless stated otherwise.

We recall that a real birational transformation $f$ of $\PP^2$ is given by
\[f\colon[x_0:x_1:x_2]\dashmapsto[f_0(x_0,x_1,x_2):\dots:f_2(x_0,x_1,x_2)]\]
where $f_0,f_1,f_2\in\R[x_0,x_1,x_2]$ are homogeneous of equal degree and without common factor, and $f$ has an inverse of the same form. We say that the pre-image by $f$ of the linear system of lines in $\PP^2$ is {\em the linear system of $f$} (also called {\em homaloidal net of $f$}). It is in the linear system of curves in $\PP^2$ generated by the curves $\{f_0=0\},\{f_1=0\},\{f_2=0\}$ and has no fixed components. The linear system of $f$ is a tool to study $f$ from a geometric point of view, as many properties of $f$ can be found by looking at its linear system. The base-locus of the linear system of $f$ is a finite set of points, and by abuse of terminology, we refer to it as set of {\em base-points of $f$}. Some base-points of $f$ might not be points in $\PP^2$ but points on a blow-up of $\PP^2$. 

Let $X_n\stackrel{\pi_n}\rightarrow X_{n-1}\stackrel{\pi_{n-1}}\rightarrow X_{n-2}\cdots X_1\stackrel{\pi_1}\rightarrow\PP^n$ be a sequence of blow-ups of points $q_0\in\PP^2,q_1\in X_1,\dots,q_{n-1}\in X_{n-1}$. Let $E_i:=\pi_{i+1}^{-1}(q_i)\subset X_{i+1}$ be the exceptional divisor of $q_i$. A point in $X_{i+1}$  in the {\em first neighbourhood} of $q_i$ if it is contained in $E_i$. A point in $X_n$ is  
\begin{itemize}
\item {\em infinitely near} $q_i$ if it is contained in the total transform of $E_i$ in $X_n$,
\item {\em proximate} to $q_i$ if it is contained in the strict transform of $E_i$ in $X_n$,
\item  a {\em proper point of} $\PP^2$ if it is not contained in the total transform of any of the exceptional divisors. We may identify it with its image in $\PP^2$.
\end{itemize}
By abuse of notation, we call the multiplicity of the linear system of $f$ in a point $p$ the {\em multiplicity of $f$ in $p$}. A simple base-point of $f$ is a base-point of multiplicity $1$.

\begin{Def}
For $f\in\Bir_\R(\PP^2)$, the characteristic of $f$ is the sequence $(\deg(f);\ m_1^{e_1},\dots,m_k^{e_k})$ where $m_1,\dots,m_k$ are the multiplicities of the base-points of $f$ and $e_i$ is the number of base-points of $f$ which have multiplicity $m_i$.
\end{Def}

\begin{Def}
Let $C\subset\PP^2$ be an irreducible (closed) curve, $f\in\Bir_{\R}(\PP^2)$ and $\Bp(f)$ the set of base-points of $f$. We denote by
\[f(C):=\overline{f(C\setminus\Bp(f))}\]
the (Zariski-) closure of the image by $f$ of $C$ minus the base-points of $f$, and call it {\em the image of $C$ by $f$}. 
\end{Def}

%%%%%%%%%%%%%%%%%%%%%%%%%
\subsection{Transformation preserving a pencil of lines or conics}
%%%%%%%%%%%%%%%%%%%%%%%%%

Throughout the article, we fix the notation
\[p_1:=[1:i:0],\quad p_2:=[0:1:i]\]
for these two specific points of $\PP^2$, because we will use them extremely often. 

\begin{Def}\label{def basic map}
We define two rational fibrations
\begin{align*}& \pi_*\colon\PP^2\dashrightarrow\PP^1,\quad[x:y:z]\mapsto[y:z]\\[6pt]
&\pi_{\circ}\colon\PP^2\dashrightarrow\PP^1,\quad[x:y:z]\mapsto[y^2+(x+z)^2:y^2+(x-z)^2]
\end{align*}
whose fibres are respectively the lines through $[1:0:0]$ and the conics through $p_1,p_2$, and their conjugates \mbox{$\bar{p}_1=[1:-i:0],\bar{p}_2=[0:1:-i]$.} 

We define by $\mathcal{J}_*$, $\mathcal{J}_{\circ}$ the subgroups of $\Bir_\R(\PP^2)$ preserving the fibrations $\pi_*,\pi_{\circ}$:
\begin{align*}
&\mathcal{J}_*= \{f\in\Bir_\R(\PP^2)\mid \exists\hat{f}\in\Aut_\R(\PP^1)\colon \hat{f}\pi_*=\pi_*f\}\\
&\mathcal{J}_{\circ}=\{f\in\Bir_\R(\PP^2)\mid\exists\hat{f}\in\Aut_\R(\PP^1)\colon \hat{f}\pi_{\circ}=\pi_{\circ}f\}
\end{align*}
\end{Def}
$\mathcal{J}_*$ is the group of transformations preserving the pencil of lines through $[1:0:0]$. 
In affine coordinates $(x,y)=[x:y:1]$ its elements are of the form
{\small \[(x,y)\dashmapsto\left(\frac{\alpha(y)x+\beta(y)}{\gamma(y)x+\delta(y)},\frac{ay+b}{cy+d}\right)\] }
where $a,b,c,d\in\R$ and $\alpha,\beta,\gamma,\delta\in\R(y)$, and so $\mathcal{J}_*\simeq\mathrm{PGL}_2(\R(y))\rtimes\mathrm{PGL}_2(\R)$.

The group $\mathcal{J}_{\circ}$ is the group of transformations preserving the pencil of conics through $p_1,\bar{p}_1,p_2,\bar{p}_2$. A description as semi-direct product is given in Lemma~\ref{lem scale on P1}.

Extending the scalars to $\C$, the analogues of these groups are conjugate in $\Bir_\C(\PP^2)$ and are called {\em de Jonquières groups}. In $\Bir_\R(\PP^2)$, the groups $\mathcal{J}_{\circ},\mathcal{J}_*$ are not conjugate. This can, for instance, be seen as consequence of Proposition~\ref{thm quotient} (see Remark~\ref{rmk conjugate}).

%%%%%%%%%%%%%%%%%%%%%%%%%%%%%%%%%%%%%%%%%%%%%%%%%%%%%%%%%%%%%%%%%%%%%%%%%%%
%%%%%%%%%%%%%%%%%%%%%%%%%%%%%%%%%%%%%%%%%%%%%%%%%%%%%%%%%%%%%%%%%%%%%%%%%%%

%%%%%%%%%%%%%%%%%%%%%%%%%
\section{A quotient of $\mathcal{J}_{\circ}$}\label{section quotient Jcirc}
%%%%%%%%%%%%%%%%%%%%%%%%%

We first construct a surjective group homomorphism $\varphi_{\circ}\colon\mathcal{J}_{\circ}\rightarrow\bigoplus_{(0,1]}\Z/2\Z$ and then (in Section~\ref{section quotient}) use the representation of $\Bir_\R(\PP^2)$ by generators and relations (Theorem~\ref{prop technical thm}) to extend $\varphi_{\circ}$ to a homomorphism $\varphi\colon\Bir_\R(\PP^2)\rightarrow\bigoplus_{(0,1]}\Z/2\Z$. Both quotients are generated by classes of standard quintic transformations contained in $\mathcal{J}_{\circ}$, as we will see from the construction in Subsection~\ref{subsection quotient Jcirc}.
In order to construct the surjective homomorphism $\varphi_{\circ}$, we need some additional information about the elements of $\mathcal{J}_{\circ}$, such as their characteristic (Lemma~\ref{lem char}) and their action on the pencil of conics passing through $p_1,\bar{p}_1,p_2,\bar{p}_2$ (Lemma~\ref{lem scale on P1}).

%%%%%%%%%%%%%%%%%%%%%%%%%
%%%%%%%%%%%%%%%%%%%%%%%%%
\subsection{The group $\mathcal{J}_{\circ}$}\label{subsection Jcirc}
%%%%%%%%%%%%%%%%%%%%%%%%%
%%%%%%%%%%%%%%%%%%%%%%%%%

We will use the properties stated in the following lemmata to obtain the action of $\mathcal{J}_{\circ}$ on the pencil of conics through $p_1,\dots,\bar{p}_2$, which will be used to construct the quotients. In Section~\ref{section technical thm} (proof of Theorem~\ref{prop technical thm}), we will use the properties to study linear systems and their base-points when playing with the relations given in Definition~\ref{def G}.

\begin{Def}\label{def basic} 
Let $\eta\colon X\rightarrow\PP^2$ be the blow-up of $p_1,\bar{p}_1,p_2,\bar{p}_2$. The morphism $\tilde{\pi}_{\circ}:=\pi_{\circ}\eta\colon X\rightarrow\PP^1$ is a real conic bundle with fibres being the strict transforms of the conics passing through $p_1,\dots,\bar{p}_2$. 
\[\xymatrix{X\ar[r]^{\eta}\ar@/_.75pc/[rr]_{\tilde{\pi}_{\circ}}&\PP^2\ar@{-->}[r]^{\pi_{\circ}}&\PP^1}\]
Let $\eta'\colon Y\rightarrow X$ be a birational morphism and let $q\in Y$ be contained in the strict transform of a fibre $f$ of $\tilde{\pi}_{\circ}$. The curve 
\[C_q:=\eta\eta'(f)\subset\PP^2\]
is a conic passing through $p_1,\bar{p}_1,p_2,\bar{p}_2$. 
We say that $C_q$ is the conic passing through $p_1,\bar{p}_1,p_2,\bar{p}_2,q$. 
The curve $C_q$ is irreducible or the union of two lines. The latter corresponds to $\pi_{\circ}(C_q)\in\{[1:0],[0:1],[1:1]\}$. We define
\[C_1:=\pi_{\circ}^{-1}([0:1]),\quad C_2:=\pi_{\circ}^{-1}([1:0]),\quad C_3:=\pi_{\circ}^{-1}([1:1]).\]
If we denote by $L_{r,s}\subset\PP^2$ the line passing through $r,s\in\PP^2$, then
\[C_1=L_{p_1,p_2}\cup L_{\bar{p}_1,\bar{p}_2},\quad C_2=L_{p_1,\bar{p}_2}\cup L_{\bar{p}_1,p_2},\quad C_3=L_{p_1,\bar{p}_1}\cup L_{p_2,\bar{p}_2}.\]
\end{Def}

\begin{Lem}\label{lem char}
Any element of $\mathcal{J}_{\circ}$ of degree $d>1$ has characteristic
\begin{align*}
&{\tiny\left(d;\ \left(\frac{d-1}{2}\right)^4,\ 2^{\frac{d-1}{2}}\right)},\quad\text{if $\deg(f)$ is odd}\\
&{\tiny\left(d;\ \left(\frac{d}{2}\right)^2,\ \left(\frac{d-2}{2}\right)^2,\ 2^{\frac{d-2}{2}},\ 1\right)},\quad\text{if $\deg(f)$ is even}
\end{align*}
and $p_1,\dots,\bar{p}_2$ are base-points of multiplicity $\frac{d-1}{2}$ or $\frac{d}{2}$ and $\frac{d-2}{2}$. \par
Furthermore, 
\begin{enumerate}
\item no two double points are contained in the same conic through $p_1,\bar{p}_1,p_2,\bar{p}_2$, 
\item any element of $\mathcal{J}_{\circ}$ exchanges or preserves the real reducible conics $C_1$ and $C_2$, and does not contract their components.
\item any element of $\mathcal{J}_{\circ}$ of even degree contracts $L_{p_1,\bar{p}_1}$ or $L_{p_2,\bar{p}_2}$ onto a point on a real conic different from $C_1,C_2$.
\end{enumerate}
\end{Lem}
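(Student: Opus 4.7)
The plan is to extract the characteristic from Noether's equations combined with the constraint that $f$ preserves the pencil $\pi_\circ$, and then to read off properties~(1)--(3) from the resulting geometric picture.

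I first set $a := \mathrm{mult}_{p_1}(f)$ and $b := \mathrm{mult}_{p_2}(f)$; reality of $f$ gives the same multiplicities at the conjugate points $\bar{p}_1, \bar{p}_2$. Since $f$ sends a general smooth conic in the pencil to another such conic, the standard degree formula yields $2 = 2d - 2a - 2b$, hence $a + b = d - 1$. Combined with Noether's equalities $\sum m_i = 3(d-1)$ and $\sum m_i^2 = d^2-1$, the "extra" base-points (those outside $\{p_1, \bar{p}_1, p_2, \bar{p}_2\}$) have total multiplicity $d-1$ and total squared multiplicity $d^2 - 1 - 2a^2 - 2b^2$.

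The geometric step localises these extra points. If $q$ were an extra base-point whose pencil-fibre $C_q$ were irreducible, applying the same intersection formula to $C_q$ would give $\mathrm{mult}_q(f) = 0$, a contradiction; so $C_q \in \{C_1, C_2, C_3\}$ and $q$ lies (possibly infinitely near) above one of the singular points $s_i$. A similar computation on the line components of $C_i$ bounds $\mathrm{mult}_q(f) \leq 2$. Writing $l, k$ for the numbers of simple and double extra points, one solves the two aggregate equations to get $l = (a-b)^2$ and $k = 2ab - (d-1)(d-2)/2$. Combined with the at-most-three location and property~(1) (no two double points on the same $C_i$, which will be established in parallel), this bounds $l \leq 3$; the parity of $a-b$ (even when $d$ is odd, odd when $d$ is even) then pins down $a = b = (d-1)/2$ in the odd case and $\{a, b\} = \{d/2, (d-2)/2\}$ in the even case, yielding the claimed characteristic.

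Property~(2) follows from reality: $\hat{f} \in \mathrm{Aut}_\R(\PP^1)$ permutes $\{C_1, C_2, C_3\}$, but $C_3$ alone has real line components ($z=0$ and $x=0$), so $f(C_3) = C_3$ and $\{f(C_1), f(C_2)\} = \{C_1, C_2\}$; contracting one component of $C_1$ or $C_2$ would by conjugation contract its conjugate, collapsing $f(C_i)$ to a finite set of points and contradicting that $f(C_i)$ is a fibre of $\pi_\circ$. Property~(1) comes from the same fibre argument applied to the unique conic of the pencil through two putative double points. For property~(3), I evaluate the degree formula on $L_{p_1, \bar{p}_1}$: this line meets the linear system only at $p_1, \bar{p}_1$, so $\deg f(L_{p_1, \bar{p}_1}) = d - 2a$, which vanishes exactly when $a = d/2$; the image is then a single real point, necessarily the unique simple base-point of $f^{-1}$, and by the localisation step applied to $f^{-1}$ together with $C_i(\R) = \{s_i\}$ for $i=1,2$ (each $s_i$ being a double base-point rather than simple), this image point must lie on $C_3$, a real conic different from $C_1$ and $C_2$.

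The main obstacle is the geometric step bounding $|a-b| \leq 1$: Noether's equations leave $(a,b)$ underdetermined, and one must carefully track that extra base-points (including infinitely near towers) appear only above the three singular points $s_1, s_2, s_3$ with multiplicity at most $2$. Once this is in place, the rest is bookkeeping and a short real-structure argument.
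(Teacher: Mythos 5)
Your proposal contains a genuine gap at its central step, the ``localisation'' of the extra base-points. You claim that if $q$ is a base-point outside $\{p_1,\bar{p}_1,p_2,\bar{p}_2\}$ and $C_q$ is irreducible, then the intersection formula forces $m_q(f)=0$, so that all extra base-points sit over the singular points of $C_1,C_2,C_3$. This is false: the formula gives $\deg(f(C_q))=2-\sum m(q')$ (sum over extra base-points on $C_q$), and the only a priori constraint is $\deg(f(C_q))\ge 0$, i.e.\ $m(q)\le 2$ --- an element of $\mathcal{J}_{\circ}$ may perfectly well \emph{contract} an irreducible member of the pencil. Indeed every standard quintic transformation in $\mathcal{J}_{\circ}$ does exactly this: its extra double points $q_3,\bar{q}_3$ are general non-real points lying on irreducible conics $C_{q_3},C_{\bar q_3}$ of the pencil, and these two conics are contracted ($\deg=10-8-2=0$). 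With the localisation gone, your bound $l=(a-b)^2\le 3$ has no justification, and the determination of $a=m(p_1)$, $b=m(p_2)$ collapses. The paper avoids this entirely: B\'ezout applied to the line $L_{p_i,\bar{p}_i}$ gives $2m(p_i)\le d$ directly, and since $(d-2m(p_1))+(d-2m(p_2))=2$ with both summands non-negative, the multiplicities at $p_1,p_2$ are pinned down at once; the genus formula and Noether's equality then yield the count of double and simple points with no need to locate them.

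A second, related error affects your parts (2) and (3). The induced automorphism $\hat{f}$ does \emph{not} permute $\{C_1,C_2,C_3\}$, and $f(C_3)=C_3$ fails in general: quadratic elements of $\mathcal{J}_{\circ}$ induce arbitrary positive scalings on $\PP^1$ (Lemma~\ref{lem scale on P1}) and so move $[1:1]=\pi_{\circ}(C_3)$; for even-degree $f$ the component $L_{p_i,\bar{p}_i}$ of $C_3$ is contracted and the other component maps onto a \emph{smooth} real fibre. Only $\{C_1,C_2\}$ is preserved, and the paper proves this by computing that each component of $C_1,C_2$ has image of degree $d-m(p_1)-m(p_2)=1$, hence is a non-contracted non-real line, forcing the image fibre to be $C_1$ or $C_2$ again. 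Your conclusion in (3) that the image of the contracted line lies on $C_3$ itself is accordingly also false; the correct (and weaker) statement, proved in the paper via the fact that $\hat f$ preserves $\{[1:0],[0:1]\}$ and hence moves $[1:1]$ off this set, is that it lies on the real fibre $\pi_{\circ}^{-1}(\hat f([1:1]))\neq C_1,C_2$.
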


\begin{proof}
Let $f\in\mathcal{J}_{\circ}$ be of degree $d>1$ and let $C$ be a general conic passing through $p_1,\bar{p}_1,p_2,\bar{p}_2$. By definition of $\mathcal{J}_{\circ}$, the curve $f(C)$ is a conic through $p_1,\bar{p}_1,p_2,\bar{p}_2$. Let $m(q)$ be the multiplicity of $f$ at the point $q$. Computing the intersection of $C$ on the blow-up of the base-points of $f$ with the linear system of $f$ gives the degree of $f(C)$:
\[2=\deg(f(C))=2d-m(p_1)-m(\bar{p}_1)-m(p_2)-m(\bar{p}_2)=(d-2m(p_1))+(d-2m(p_2)).\]
For $i=1,2$, computing the intersection of the linear system of $f$ with the line $L_{p_i,\bar{p}_i}$, we obtain that $d\geq2m(p_i)$.

If $d-2m(p_1)=d-2m(p_2)=1$, then 
\[m(p_1)=m(p_2)=\frac{d-1}{2}.\] 

Else, $d-2m(p_i)=0$, $d-2m(p_{3-i})=2$ for some $i\in\{1,2\}$, and so 
\[m(p_i)=\frac{d}{2},\ m(p_{3-i})=\frac{d-2}{2},\quad i\in\{1,2\}.\] 
Let $q$ be a base-point of $f$ not equal to $p_1,\bar{p}_1,p_2,\bar{p}_2$. 
Suppose that there exits a conic $C_q$ passing through $p_1,\bar{p}_1,p_2,\bar{p}_2,q$ (see Definition~\ref{def basic}). Then $0\leq\deg(f(C_q))\leq2$ and
\[0\leq \deg(f(C_q))\leq2d-2m(p_1)-2m(p_2)-m(q)=2-m(q)\leq2\]
In particular, $m(q)\in\{1,2\}$. If $C_q$ does not exist, then $q$ is infinitely near to a point $q'$ for which there exists a conic $C_{q'}$ passing through $p_1,\dots,\bar{p}_2,q'$, and $m(q')\leq m(q)\leq2$. Let $D$ be a general member of the linear system of $f$. The genus formula
{\small
\[0=g(D)=\frac{(d-1)(d-2)}{2}-\sum_{q\ \text{base-point of}\ f}\frac{m(q)(m(q)-1)}{2}\]}
 and $m(q)\in\{1,2\}$ for all base-points $q$ of $f$ different from $p_1,\bar{p}_1,p_2,\bar{p}_2$ imply that
 {\small
 \[\frac{(d-1)(d-2)}{2}=2\sum_{i=1}^2\frac{m(p_i)(m(p_i)-1)}{2}+|\{\text{base-points of multiplicity 2}\}|\]}
 and in particular that
 \[|\{\text{base-points of multiplicity 2}\}|=\begin{cases}\frac{d-1}{2},&d\ \text{odd}\\ \frac{d-2}{2},&d\ \text{even}\end{cases}\]
Intersecting two general elements of the linear system of $f$, we get the classical equality 
 {\small\[d^2-1=\sum_{q\ \text{base-point of}\ f}m(q)^2.\] }
It yields that $f$ has exactly one simple base-point if $d$ is even and none otherwise. This yields the characteristics. \par
Calculating the intersection of a conic through $p_1,\bar{p}_1,p_2,\bar{p}_2$ with a general curve of the linear system of $f$ implies that no two double points are contained in the same conic through $p_1,\bar{p}_1,p_2,\bar{p}_2$. The conics $C_1$, $C_2$, $C_3$ are the only reducible conics through $p_1,\dots,\bar{p}_2$, and $C_1,C_2$ each consist of two non-real lines while $C_3$ consists of two real lines. 
If $f$ has even degree, it has base-points $p_i,\bar{p}_i$ of multiplicity $m(p_i)=\frac{d}{2}$ and therefore contracts the line $L_{p_i,\bar{p}_i}$ onto the base-point of $f^{-1}$ of multiplicity $1$, and no other line is contracted (because $f^{-1}$ has only one base-point of multiplicity $1$). Because of this and the multiplicities of the base-points of $f$, $f$ sends $L_{p_i,p_j},L_{p_i,\bar{p}_j},i\neq j$, onto non-real lines. This is also true if $f$ has odd degree, simply because of the multiplicities of its base-points. Thus $f$ preserves or exchanges $C_1,C_2$. 

In particular, the induced automorphism $\hat{f}$ on $\PP^1$ does not send $\pi_{\circ}(C_3)$ onto either of $\pi_{\circ}(C_1),\pi_{\circ}(C_2)$. It follows that if $f$ has even degree, the point $f(L_{p_i,\bar{p}_i})$ is contained in the conic $\pi_{\circ}^{-1}(\hat{f}(\pi_{\circ}(C_3)))\neq C_1,C_2$. In particular, the simple base-point $f(L_{p_i,\bar{p}_i})$ of $f^{-1}$ is not contained in $C_1,C_2$. By symmetry, the same holds for $f$.
\end{proof}

\begin{Rmk}\label{rmk Jcirc elt}
Remark that $\sigma_1\colon[x:y:z]\dashrightarrow[y^2+z^2:xy:xz]$ is contained in $\mathcal{J}_{\circ}$. 
The linear map $[x:y:z]\mapsto[z:-y:x]$ exchanges $p_1$ and $p_2$ (and $\bar{p}_1$ and $\bar{p}_2$), and the linear map $[x:y:z]\mapsto[-x:y:z]$ exchanges $p_1$ and $\bar{p}_1$ and fixes $p_2$. Both are contained in $\Aut_\R(\PP^2)\cap\mathcal{J}_{\circ}$.   
\end{Rmk}

We will need the following two lemmata concerning the existence of quadratic and cubic transformations in $\mathcal{J}_{\circ}$. 
 
\begin{Lem}\label{lem sigma}
For any $q\in\PP^2(\R)$ not collinear with any two of $\{p_1,\bar{p}_1,p_2,\bar{p}_2\}$ except the pair $(p_2,\bar{p}_2)$, there exists $f\in\mathcal{J}_{\circ}$ of degree $2$ with base-points $p_1,\bar{p}_1,q$.

Let $f\in\mathcal{J}_{\circ}$ of even degree $d$, the points $p_i,\bar{p}_i$ its base-points of multiplicity $\frac{d}{2}$ and $r$ its simple base-point or the proper point of $\PP^2$ to which the simple base-point is infinitely near.
Then there exists $\tau\in\mathcal{J}_{\circ}$ of degree $2$ with base-points $p_i,\bar{p}_i,r$.
\end{Lem}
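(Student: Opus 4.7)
I would deduce both parts from Sarkisov links of type II on the conic bundle $\tilde\pi_\circ\colon X \to \PP^1$, where $\eta\colon X \to \PP^2$ is the blow-up of $p_1,\bar p_1,p_2,\bar p_2$ from Definition~\ref{def basic}. Such a link elementary-transforms the conic bundle by blowing up one smooth point of a fibre and contracting a $(-1)$-curve in that fibre; it descends via $\eta$ to a degree-$2$ element of $\mathcal{J}_\circ$ on $\PP^2$.

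For Part~1, the point $q$ lifts to a proper point $\tilde q \in X$ since $q$ is real while $p_i,\bar p_i$ are non-real. I need $\tilde q$ to be a smooth point of its fibre $F_q := \tilde\pi_\circ^{-1}(\pi_\circ(q))$. The only singular fibres are $\tilde C_1,\tilde C_2,\tilde C_3$, and their nodes project in $\PP^2$ onto the three real points $[1{:}0{:}1]$, $[1{:}0{:}{-}1]$, $[0{:}1{:}0]$ (the pairwise intersections of the real components of the $C_k$). Each is forbidden by the hypothesis on $q$: $[1{:}0{:}\pm 1]$ lies on $L_{p_1,p_2}$ or $L_{p_1,\bar p_2}$ (and their conjugates), and $[0{:}1{:}0]\in L_{p_1,\bar p_1}$. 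The link at $\tilde q$ therefore exists and produces $f_0 \in \mathcal{J}_\circ$ with $\Bp(f_0) = \{q, p_j, \bar p_j\}$ for some $j\in\{1,2\}$ (depending on which $(-1)$-curve is contracted after the blow-up). Post-composing with the linear involution $\alpha\colon [x{:}y{:}z]\mapsto[z{:}{-}y{:}x] \in \mathcal{J}_\circ \cap \Aut_\R(\PP^2)$ from Remark~\ref{rmk Jcirc elt} (which swaps the conjugate pairs) lets us always arrange $j=1$.

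For Part~2, by the $p_1\leftrightarrow p_2$ symmetry realised by $\alpha$, it suffices to verify that $r$ satisfies the hypotheses of Part~1 with $(p_{3-i},\bar p_{3-i})$ as the allowed collinear pair. There are two conditions to check: \emph{(a)} $r\notin L_{p_i,\bar p_i}$, which follows from intersecting a generic curve $D$ of the linear system of $f$ with $L_{p_i,\bar p_i}$: subtracting the multiplicities at $p_i, \bar p_i$ gives $D\cdot L_{p_i,\bar p_i} = d - 2(d/2)=0$, so if a further base-point of $f$ lay on $L_{p_i,\bar p_i}$ the line would become a fixed component of the linear system, contradicting the birationality of $f$; and \emph{(b)} $r\notin C_1\cup C_2$, which is Lemma~\ref{lem char}(3) applied to $f^{-1}$ (the simple base-point of $f$ equals $f^{-1}(L_{p_i,\bar p_i})$, placed by that lemma on a real conic distinct from $C_1,C_2$). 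The case where the simple base-point is infinitely near a proper point $r$ is handled by the same intersection-theoretic arguments on the relevant blow-up. Part~1 then supplies $\tau$.

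The main obstacle is the bookkeeping in Part~1: one must track the linear system through the blow-up/contraction to confirm the resulting map on $\PP^2$ has degree exactly~$2$ with precisely the prescribed base-points. The reducible-fibre case (where $q\in L_{p_2,\bar p_2}\subset C_3$ and thus $F_q=\tilde C_3$) is the most delicate: the link must contract the component of $\tilde C_3$ \emph{not} containing $\tilde q$ (which remains a $(-1)$-curve), since the strict transform of the component through $\tilde q$ becomes a $(-2)$-curve after the blow-up and is no longer contractible.
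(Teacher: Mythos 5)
Your Part~1 contains a genuine gap: the elementary transformation (type~II link) of $\tilde\pi_{\circ}\colon X\to\PP^1$ at $\tilde q$ does \emph{not} descend to a quadratic map when the fibre through $\tilde q$ is irreducible, which is the generic case. Blowing up $\tilde q$ and contracting the strict transform of $\tilde C_q$ produces a surface on which the only real blow-down to $\PP^2$ contracts the four lines $L_{q,p_1},L_{q,\bar p_1},L_{q,p_2},L_{q,\bar p_2}$; the composite $\PP^2\dashrightarrow\PP^2$ therefore contracts exactly $C_q$ and these four lines (total degree $6=3\cdot 3-3$) and has all five of $p_1,\bar p_1,p_2,\bar p_2,q$ as base-points, so it is the \emph{cubic} transformation with double point $q$ of Lemma~\ref{lem sigma deg 3}, not a quadratic one. (Equivalently, the pullback of a general line of the target has class $3L-E_1-\bar E_1-E_2-\bar E_2-2E_q$.) Your recipe yields degree $2$ precisely in the case you single out as delicate, $q\in L_{p_2,\bar p_2}$: there the contracted curve is $\tilde L_{p_1,\bar p_1}$, the exceptional divisors $E_2,\bar E_2$ get re-contracted in the blow-down, and the descent does have base-points $p_1,\bar p_1,q$. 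The underlying reason the link fails to produce the quadratic map in general is that a quadratic element of $\mathcal{J}_{\circ}$ with base-points $p_1,\bar p_1,q$ lifts to $X$ as ``blow up $\tilde q$, contract the component $\tilde L_{p_1,\bar p_1}$ of the \emph{other} singular fibre $\tilde C_3$''; since its induced automorphism of $\PP^1$ is a nontrivial scaling (Lemma~\ref{lem scale on P1}), this is not a link over the identity of the base, and neither its existence nor its membership in $\mathcal{J}_{\circ}$ follows from the fibration alone. Part~2 inherits the gap because it invokes Part~1, although your collinearity checks for $r$ (B\'ezout on $L_{p_i,\bar p_i}$ and Lemma~\ref{lem char}) are correct and agree with the paper's.

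The paper instead argues explicitly: it conjugates the quadratic involution $\sigma_1$ by a linear map $\alpha$ so that the base-points become $p_1,\bar p_1,q$, notes that $\sigma_1\alpha$ is defined at $p_2,\bar p_2$, and post-composes with a linear $\beta$ fixing $p_1,\bar p_1$ and sending $(\sigma_1\alpha)(p_2),(\sigma_1\alpha)(\bar p_2)$ back to $p_2,\bar p_2$, so that $\beta\sigma_1\alpha$ preserves the pencil of conics. If you want to salvage the conic-bundle viewpoint, the right model is the degree-$6$ surface $Z$ of Remark~\ref{rmk:del Pezzo} (contract $\tilde L_{p_1,\bar p_1}$ as well), on which the desired quadratic map becomes an automorphism; but producing that automorphism for an arbitrary admissible $q$ amounts to the paper's computation anyway.
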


\begin{proof}
Since $q$ is not collinear with $p_1,\bar{p}_1$, there exists $\alpha\in\Aut_\R(\PP^2)$ that sends $p_1,\bar{p}_1,q$ onto $p_1,\bar{p}_1,[0:0:1]$. 
Then the quadratic transformation $\sigma_1\alpha$ has base-points $p_1,\bar{p}_1,q$.  
By assumption, the points $p_2,\bar{p}_2$ are not on any line contracted by $\sigma_1\alpha$ and so $\sigma_1\alpha$ is an isomorphism around them. Define $t:=(\sigma_1\alpha)(p_2)$. 
There exists $\beta\in\Aut_\R(\PP^2)$ that fixes $p_1,\bar{p}_1$ and sends $t,\bar{t}$ onto $p_2,\bar{p}_2$. By construction $\sigma_1\alpha$ sends the pencil of conics through $p_1,\bar{p}_1,p_2,\bar{p}_2$ onto the pencil of conics through $p_1,\bar{p}_1,t,\bar{t}$, which is sent by $\beta$ onto the pencil of conics through $p_1,\bar{p}_1,p_2,\bar{p}_2$.
%%%%%%%%%%%%%%%%%%%%%
\begin{figure}
\def\svgwidth{0.9\textwidth}
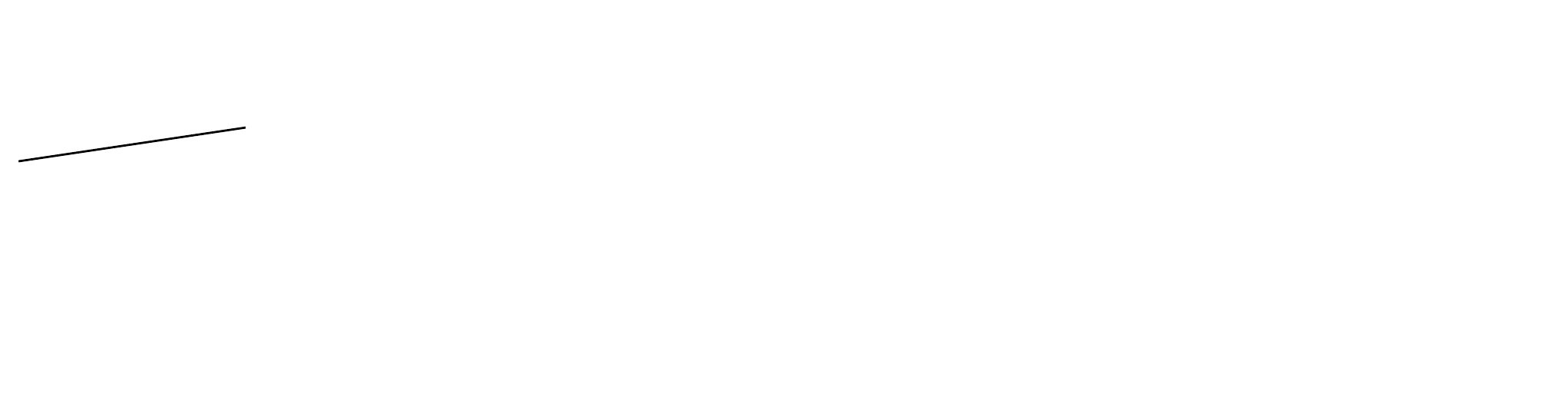
\caption{The construction of the quadratic map $f$ with base-points $p_1,\bar{p}_1,q$.}\label{figure 1}
\end{figure}
%%%%%%%%%%%%%%%%%%%%%

Let $f\in\mathcal{J}_{\circ}$ of even degree $d$, $p_i,\bar{p}_i$ its base-points of multiplicity $\frac{d}{2}$ and $r$ its simple base-point or the proper point of $\PP^2$ to which the simple base-point is infintely near. By Bézout, $r,p_i,\bar{p}_i$ are not collinear and by Lemma~\ref{lem char} the points $r,p_i,p_{3-i}$ and $r,\bar{p}_i,p_{3-i}$ are not collinear. Hence there exists $\tau\in\mathcal{J}_{\circ}$ of degree 2 with base-points $r,p_i,\bar{p}_i$. 
\end{proof}

A similar statement holds for transformations of degree $3$, which we will use in Section~\ref{section technical thm} and Section~\ref{section kernel}.
\begin{Lem}\label{lem sigma deg 3}
For every $r\in\PP^2(\R)$ not collinear with any two of $p_1,\bar{p}_1,p_2,\bar{p}_2$ there exists $f\in\mathcal{J}_{\circ}$ of degree $3$ with base-points $r,p_1,\bar{p}_1,p_2,\bar{p}_2$ $($with double point $r)$.
\end{Lem}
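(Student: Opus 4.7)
The plan is to construct $f$ via an explicit net of cubics. Consider the linear system $\mathcal{L}$ of plane cubics with a double point at $r$ passing simply through $p_1,\bar p_1,p_2,\bar p_2$. Counting linear conditions (three for the double point, one for each simple point) gives $\dim\mathcal{L}=9-3-4=2$, so $\mathcal{L}$ is a net. Using that $r$ is not collinear with any two of $p_1,\bar p_1,p_2,\bar p_2$, a case analysis shows $\mathcal{L}$ has no fixed component: no real line through $r$ contains a complex $p_j$, while the real lines $L_{p_j,\bar p_j}$ do not pass through $r$; the only candidate conic fixed part $C_r$ (the unique member of the pencil $\mathcal{P}$ of conics through $p_1,\bar p_1,p_2,\bar p_2$ that also passes through $r$) would force the complementary factor to be a one-dimensional family of lines through $r$, which is too small.

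Computing the self-intersection $3^2-4\cdot 1^2-2^2=1$ and the arithmetic genus $0$ confirms that $\mathcal{L}$ is homaloidal and defines a birational map $f\colon\PP^2\dashrightarrow\PP^2$ of degree $3$ with the prescribed base points and multiplicities. For each $i\in\{1,\bar 1,2,\bar 2\}$, a general element of $\mathcal{L}$ meets $L_{r,p_i}$ only at $r$ (multiplicity $2$) and $p_i$ (multiplicity $1$), accounting for all $1\cdot 3=3$ intersections; hence $L_{r,p_i}$ is contracted by $f$ to a point $q_i$ (with $q_{\bar j}=\overline{q_j}$), and similarly $C_r$ is contracted to a real point $r'$.

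To put $f$ into $\mathcal{J}_{\circ}$ we post-compose it with a real linear automorphism $\gamma\in\Aut_\R(\PP^2)$ sending $\{q_1,\bar q_1,q_2,\bar q_2\}$ to $\{p_1,\bar p_1,p_2,\bar p_2\}$. Such a $\gamma$ exists because the $q_i$ are in general position: otherwise the real line through three of them would lift to a $(-2)$-curve on the del Pezzo surface of degree $5$ obtained from $\PP^2$ by blowing up $p_1,\bar p_1,p_2,\bar p_2,r$ and contracting the strict transform of $C_r$, contradicting the ampleness of its anticanonical class. With this normalisation, for any general $C\in\mathcal{P}$, $C$ meets each $L_{r,p_i}$ in $p_i$ and in one further point $x_i$; since $\gamma f$ collapses $L_{r,p_i}$ to $p_i$, we obtain $p_i=\gamma f(x_i)\in\gamma f(C)$. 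Hence the conic $\gamma f(C)$ (of degree $2\cdot 3-4=2$) passes through $p_1,\bar p_1,p_2,\bar p_2$ and lies in $\mathcal{P}$, so $\gamma f\in\mathcal{J}_{\circ}$ realises the desired degree and base points.

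The main obstacle is the general position of the $q_i$, handled by the del Pezzo argument above; equivalently, $f$ can be realised as an elementary transformation of the conic bundle $\tilde\pi_{\circ}\colon X\to\PP^1$ from Definition~\ref{def basic} (blowing up the lift of $r$ to $X$ and contracting the strict transform of $C_r$) followed by a contraction $X'\to\PP^2$, where the absence of $(-2)$-curves on $X'$ precisely encodes the general position.
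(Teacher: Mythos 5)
Your proof is correct, but it takes a genuinely different route from the paper's. The paper builds the cubic as a composition $\tau_2\tau_1$ of two quadratic maps supplied by Lemma~\ref{lem sigma}: $\tau_1\in\J_{\circ}$ has base-points $r,p_1,\bar{p}_1$, and since $\tau_1$ preserves $\{p_2,\bar{p}_2\}$ and sends the lines through $r$ onto the lines through the real base-point $s$ of $\tau_1^{-1}$, the non-collinearity hypothesis transfers from $r$ to $s$; a second quadratic $\tau_2\in\J_{\circ}$ with base-points $s,p_2,\bar{p}_2$ then exists, and $\tau_2\tau_1$ is the desired cubic. That argument is shorter and lands in $\J_{\circ}$ for free, being a product of elements of $\J_{\circ}$ (it is precisely the inverse of the decomposition recorded in Remark~\ref{rmk deg 3}). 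You instead construct the homaloidal net directly, which costs you three extra verifications — absence of fixed components, general position of the base-points of $f^{-1}$ (your del Pezzo argument here is sound: the surface dominating both sides is del Pezzo because no three of $r,p_1,\dots,\bar{p}_2$ are collinear, so no $(-2)$-curve can appear after contracting $\tilde{C}_r$), and membership in $\J_{\circ}$ via the image of a general conic of the pencil — but in exchange it makes the contracted curves and the characteristic $(3;2,1^4)$ completely explicit. One small point worth tightening: in the fixed-component analysis you only dispose of real lines, but a non-real line such as $L_{r,p_1}$ could a priori occur as a fixed component together with its conjugate $L_{r,\bar{p}_1}$; the same dimension count you use for $C_r$ (the residual system would consist of lines through $p_2,\bar{p}_2$, hence have dimension $0<2$) rules this out as well.
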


\begin{proof}
By assumption $r$ is not collinear with any two of $p_1,\bar{p}_1,p_2,\bar{p}_2$, so there exists $\tau_1\in\mathcal{J}_{\circ}$ quadratic with base-points $r,p_1,\bar{p}_1$ (Lemma~\ref{lem sigma}). The base-points of its inverse are $s,p_i,\bar{p}_i$ for some $s\in\PP^2(\R)$ and $i\in\{1,2\}$. We can assume that $i=1$ by exchanging $p_1,p_2$ if necessary (Remark~\ref{rmk Jcirc elt}). In particular, $\tau_1$ preserves the set $\{p_2,\bar{p}_2\}$ and is an isomorphism around these two points. 
By assumption, $r,p_2,\bar{p}_2$ are not collinear and $\tau_1$ sends the lines through $r$ onto the lines through $s$, so $s,p_2,\bar{p}_2$ are not collinear as well.
Hence there exists $\tau_2\in\mathcal{J}_{\circ}$ of degree $2$ with base-point $s,p_2,\bar{p}_2$ (Lemma~\ref{lem sigma}). The map $\tau_2\tau_1\in\mathcal{J}_{\circ}$ is of degree $3$ with base-points $r,p_1,\bar{p}_1,p_2,\bar{p}_2$.
\end{proof}

\begin{Rmk}\label{rmk deg 3}
Let $f\in\mathcal{J}_{\circ}$ of degree $3$ and $r\in\PP^2(\R)$ its double point. The points $p_1,\bar{p}_1,p_2,\bar{p}_2$ are simple base-points of $f$. Note that for $i\in\{1,2\}$, the map $f$ contracts the line passing through $r,p_i$ onto one of $p_1,\bar{p}_1,p_2,\bar{p}_2$ and that $r$ is not collinear with any two of $p_1,\bar{p}_1,p_2,\bar{p}_2$.
By Lemma~\ref{lem sigma} there is a quadratic transformation $g\in\mathcal{J}_{\circ}$ with base-points $r,p_1,\bar{p}_1$. The map $fg^{-1}\in\J_{\circ}$ is of degree $2$ and thus every element of $\mathcal{J}_{\circ}$ of degree $3$ is the composition of two quadratic elements of $\mathcal{J}_{\circ}$.
\end{Rmk}

We define a type of real birational transformation called {\em standard quintic tranformation} and {\em special quintic transformations}, which have been described often, for instance in \cite[\S3]{BM12}, \cite[Lemma 6.3.10]{CS16}, \cite[\S VI.23]{Hud27}, \cite[\S1]{RV05}.

\begin{Def}[Standard quintic transformations]\label{def 5.1} 
Let $q_1,\bar{q}_1,q_2,\bar{q}_2,q_3,\bar{q}_3\in\PP^2$ be three pairs of non-real conjugate points of $\PP^2$, not lying on the same conic. Denote by $\pi\colon X\rightarrow\PP^2$ the blow-up of these points. The strict transforms of the six conics passing through exactly five of the six points are three pairs of non-real conjugate $(-1)$-curves. Their contraction yields a birational morphism $\eta\colon X\rightarrow Y\simeq\PP^2$ which contracts the curves onto three pairs of non-real points $r_1,\bar{r}_1,r_2,\bar{r}_2,r_3,\bar{r}_3\in\PP^2$. We choose the order so that $r_i$ is the image of the conic not passing through $q_i$. The birational map $\varphi:=\eta\pi^{-1}$ is contained in $\Bir_\R(\PP^2)$, is of degree $5$ and is called {\em standard quintic transformation}. 

Note that a different choice of identification $Y$ with $\PP^2$ yields another standard quintic transformation, which we obtain from $\varphi$ by left composition with an automorphism of $\PP^2$. 
\end{Def}

\begin{Def}[Special quintic transformations]\label{def 5.2}
Let $q_1,\bar{q}_1,q_2,\bar{q}_2\in\PP^2$ be two pairs of non-real points of $\PP^2$, not on the same line. Denote by $\pi_1:X_1\rightarrow\PP^2$ the blow-up of the four points, and by $E_1,\bar{E}_1\subset X_1$ the curves contracted onto $q_1,\bar{q}_1$ respectively. Let $q_3\in E_1$ be a point, and $\bar{q}_3\in\bar{E}_1$ its conjugate. We assume that there is no conic of $\PP^2$ passing through $q_1,\bar{q}_1,q_2,\bar{q}_2,q_3,\bar{q}_3$ and let $\pi_2\colon X_2\rightarrow X_1$ be the blow-up of $q_3,\bar{q}_3$. 

On $X_2$ the strict transforms of the two conics $C,\bar{C}$ of $\PP^2$ passing through $q_1,\bar{q}_1,q_2,\bar{q}_2,q_3$ and $q_1,\bar{q}_1,q_2,\bar{q}_2,\bar{q}_3$ respectively, are non-real conjugate disjoint $(-1)$ curves. The contraction of these two curves gives a birational morphism $\eta_2\colon X_2\rightarrow Y_1$, contracting $C,\bar{C}$ onto two points $r_3,\bar{r}_3$. On $Y_1$ we find two pairs of non-real $(-1)$ curves, all four curves being disjoint: these are the strict transforms of the exceptional curves associated to $q_1,\bar{q}_1$, and of the conics passing through $q_1,\bar{q}_1,q_2,q_3,\bar{q}_3$ and $q_1,\bar{q}_1,\bar{q}_2,q_3,\bar{q}_3$ respectively. The contraction of these curves gives a birational morphism $\eta_1:Y_1\rightarrow Y_0\simeq\PP^2$ and the images of the four curves are points $r_1,\bar{r}_1,r_2,\bar{r}_2$ respectively. The real birational map $\psi=\eta_1\eta_2(\pi_1\pi_2)^{-1}\colon\PP^2\dashrightarrow\PP^2$ is of degree $5$ and called {\em special quintic transformation}.

Note that a different choice of identification $Y_0$ with $\PP^2$ yields another special quintic transformation, which we obtain from $\psi$ by left composition with an automorphism of $\PP^2$. 
\end{Def}

Standard and special quintic transformations have the following properties, which can be checked straight forwardly (see also \cite[Example 3.1]{BM12})

\begin{Lem}\label{lem 5.1}
Let $\theta\in\Bir_\R(\PP^2)$ be a standard or special quintic transformation. Then:
\begin{enumerate}
\item The points $q_1,\bar{q}_1,q_2,\bar{q}_2,q_3,\bar{q}_3$ are the base-points of $\theta$ and $r_1,\bar{r}_1,r_2,\bar{r}_2,r_3,\bar{r}_3$ are the base-points of $\theta^{-1}$, and they are all of multiplicity $2$.
\item For $i,j=1,2,3$, $i\neq j$, the map $\theta$ sends the pencil of conics through $q_i,\bar{q}_i,q_j,\bar{q}_j$ onto the pencil of conics through $r_i,\bar{r}_i,r_j,\bar{r}_j$.
\item We have $\theta\in\Aut(\PP^2(\R)):=\{f\in\Bir_\R(\PP^2)\mid f,f^{-1}\ \text{defined on}\ \PP^2(\R)\}$.
\end{enumerate}
\end{Lem}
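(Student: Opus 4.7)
My plan is to reduce all three items to a single computation in the Picard group of the resolution of $\theta$. I would first do the standard quintic in detail; the special case then runs identically modulo bookkeeping of infinitely near points.

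First I let $\pi\colon X\to\PP^2$ be the blow-up of the six base-points, with $H$ the pull-back of a line and $E_j,\bar E_j$ the exceptional divisors. For each $i\in\{1,2,3\}$, the strict transform of the conic through the five points omitting $q_i$ has class $C_i = 2H-\bar E_i - \sum_{j\neq i}(E_j+\bar E_j)$ and self-intersection $-1$; together with their conjugates $\bar C_i$ these are exactly the six curves contracted by $\eta$. Let $H'$ denote the pull-back of a line on $Y\simeq\PP^2$; by invariance under complex conjugation I may write $H' = aH - \sum_j b_j (E_j+\bar E_j)$, and the conditions $(H')^2 = 1$ together with $H' \cdot C_i = 0$ for $i=1,2,3$ form a linear system with the unique solution $a=5$, $b_1=b_2=b_3=2$, yielding
\[ H' \;=\; 5H - 2\sum_j (E_j + \bar E_j). \]

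From this formula all three items will follow. For (i), the linear system of $\theta$ on the source $\PP^2$ is $\pi_{\ast}|H'|$, of degree $5$, with a double point at each of $q_j,\bar q_j$; since $\eta$ resolves $\theta$ there are no further base-points, and the symmetric construction handles $\theta^{-1}$. For (ii), a conic through $q_i,\bar q_i,q_j,\bar q_j$ has strict transform of class $D_{ij} = 2H - E_i - \bar E_i - E_j - \bar E_j$, and a direct computation in the intersection pairing gives $H'\cdot D_{ij} = 2$ together with $D_{ij} \cdot C_k = 1$ if $k\in\{i,j\}$ and $0$ otherwise; hence $\eta_{\ast} D_{ij}$ is a conic through exactly $r_i,\bar r_i,r_j,\bar r_j$, and varying $D_{ij}$ sweeps the target pencil. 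For (iii), all base-points of $\theta$ and of $\theta^{-1}$ are non-real, so both maps are defined at every real point, and being defined over $\R$ they commute with complex conjugation and therefore send $\PP^2(\R)$ to itself.

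For the special quintic I would rerun the argument on $X_2$ from Definition~\ref{def 5.2}, with the convention that the strict transform of $E_1$ on $X_2$ is $E_1-E_3$ (and similarly for $\bar E_1-\bar E_3$). The six contracted curves---$C,\bar C$, the exceptional pair $E_1-E_3,\bar E_1-\bar E_3$, and the two conics through $q_1,\bar q_1$, $q_3,\bar q_3$ and one of $q_2,\bar q_2$---should again fit the pattern $2H - \bar E_i - \sum_{j\neq i}(E_j + \bar E_j)$ in the natural indexing (treating $q_3$ and $\bar q_3$ as proper points after the iterated blow-up). The same linear system will then force $H' = 5H - 2\sum_j(E_j+\bar E_j)$ and (i)--(iii) follow verbatim. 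The main obstacle is precisely this verification of the six contracted classes in the special case, since the infinitely near configuration requires keeping strict and total transforms straight; once it is in place, everything else is a routine intersection computation.
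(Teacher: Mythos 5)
The paper does not actually prove this lemma: it states that the properties ``can be checked straight forwardly'' and points to \cite[Example 3.1]{BM12}, so your Picard-lattice computation is exactly the verification being left to the reader, and it is correct. For the standard quintic everything checks out: the classes $C_i=2H-\bar E_i-\sum_{j\neq i}(E_j+\bar E_j)$ are the six contracted $(-1)$-curves, orthogonality to them together with $(H')^2=1$ (and positivity of the degree, which you should mention to kill the solution $a=-5$, $b_j=-2$) forces $H'=5H-2\sum_j(E_j+\bar E_j)$, and your intersection numbers $H'\cdot D_{ij}=2$, $D_{ij}\cdot C_k=\delta_{k\in\{i,j\}}$ give (ii); (i) and (iii) follow as you say. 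One genuine inaccuracy in the special case: the six contracted classes do \emph{not} all fit the pattern $2H-\bar E_i-\sum_{j\neq i}(E_j+\bar E_j)$ --- two of them are the strict transforms $E_1-E_3$ and $\bar E_1-\bar E_3$ of the exceptional curves over $q_1,\bar q_1$, which are not conic classes at all (they are $(-2)$-classes on $X_2$ that only become $(-1)$-curves after contracting $\bar C$, resp. $C$). This does not break the argument: orthogonality of $H'=aH-\sum b_j(E_j+\bar E_j)$ to $E_1-E_3$ gives $b_1=b_3$, and the remaining genuine conic classes $2H-e_1-\bar e_1-e_2-\bar e_2-e_3$ and $2H-e_1-\bar e_1-e_2-e_3-\bar e_3$ (and conjugates) then force $b_1=b_2=b_3=2$, $a=5$ exactly as before. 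So the conclusion stands, but the sentence asserting the common pattern should be replaced by the list of the actual six classes and the observation that orthogonality to them still pins down $H'$.
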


\begin{Rmk}\label{rmk quintlin 2}
If for $i\neq j$ we have $q_i=r_i=p_1$ and $q_j=r_j=p_2$, then the standard or special quintic transformation preserves the pencil of conics through $p_1,\bar{p}_1,p_2,\bar{p}_2$ and is hence contained in $\J_{\circ}$.
Their characteristic is $(5;2^4,2^2)$ if written as in Lemma~\ref{lem char}. 
\end{Rmk}

\begin{Lem}\label{lem quintlin}
For any standard or special quintic transformation $\theta$ there exists $\alpha,\beta\in\Aut_\R(\PP^2)$ such that $\beta\theta\alpha\in\mathcal{J}_{\circ}$. 
\end{Lem}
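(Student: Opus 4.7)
The plan is to use property (2) of Lemma~\ref{lem 5.1}: the quintic $\theta$ sends the pencil of conics through $q_i,\bar q_i,q_j,\bar q_j$ onto the pencil through $r_i,\bar r_i,r_j,\bar r_j$ for any $i\neq j$. So it suffices to find $\alpha,\beta\in\Aut_\R(\PP^2)$ conjugating the source and target pencils to the pencil of $\pi_\circ$, namely the pencil through $p_1,\bar p_1,p_2,\bar p_2$. I would take $(i,j)=(1,2)$, so that in both the standard and special cases the four points $q_1,\bar q_1,q_2,\bar q_2$ (and likewise $r_1,\bar r_1,r_2,\bar r_2$) are proper points of $\PP^2$.

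The first substep is to check that $q_1,\bar q_1,q_2,\bar q_2$ are in general position, i.e.~no three are collinear. If three of them were collinear, then complex conjugation would force all four onto a single real line $L$; then $L$ together with the real line through $q_3,\bar q_3$ would be a reducible conic containing all six base-points of $\theta$, contradicting the non-degeneracy hypothesis in Definition~\ref{def 5.1} (respectively Definition~\ref{def 5.2}). The same argument applies to $r_1,\bar r_1,r_2,\bar r_2$.

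The main step is to produce a real automorphism $\alpha$ sending $(p_1,\bar p_1,p_2,\bar p_2)$ to $(q_1,\bar q_1,q_2,\bar q_2)$. I would pick an auxiliary real point $t\in\PP^2(\R)$ not on any line joining two of $q_1,\bar q_1,q_2,\bar q_2$. By the fundamental theorem of projective geometry, there is a unique $\alpha_\C\in\PGL_3(\C)$ sending the 5-tuple $(p_1,\bar p_1,p_2,\bar p_2,[1{:}1{:}1])$ to $(q_1,\bar q_1,q_2,\bar q_2,t)$. Its complex conjugate $\overline{\alpha_\C}$ satisfies $\overline{\alpha_\C}(p_1)=\overline{\alpha_\C(\bar p_1)}=\overline{\bar q_1}=q_1$ and likewise on the other four points, so it agrees with $\alpha_\C$ on this 5-tuple; by the uniqueness clause $\alpha_\C=\overline{\alpha_\C}$, whence $\alpha:=\alpha_\C\in\PGL_3(\R)=\Aut_\R(\PP^2)$. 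Analogously I would construct $\beta\in\Aut_\R(\PP^2)$ with $\beta(r_1)=p_1$ and $\beta(r_2)=p_2$, using that $r_1,\bar r_1,r_2,\bar r_2$ are in general position as well. The reality argument above is the only delicate point; everything else is bookkeeping.

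To conclude, $\alpha$ sends the pencil of conics through $p_1,\bar p_1,p_2,\bar p_2$ to the pencil through $q_1,\bar q_1,q_2,\bar q_2$, then $\theta$ sends this to the pencil through $r_1,\bar r_1,r_2,\bar r_2$ by Lemma~\ref{lem 5.1}(2), and finally $\beta$ brings it back to the pencil through $p_1,\bar p_1,p_2,\bar p_2$. Thus $\beta\theta\alpha$ preserves $\pi_\circ$ and lies in $\mathcal{J}_\circ$, as required.
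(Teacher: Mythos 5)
Your proof is correct and follows essentially the same route as the paper: both reduce the statement to Lemma~\ref{lem 5.1}(2) together with the existence of real automorphisms carrying the two pairs of conjugate base-points $q_1,\bar q_1,q_2,\bar q_2$ (resp.\ $r_1,\bar r_1,r_2,\bar r_2$) onto $p_1,\bar p_1,p_2,\bar p_2$. The paper simply asserts the existence of such automorphisms for non-collinear conjugate pairs, whereas you supply the (correct) Galois-descent justification via the uniqueness clause of the fundamental theorem of projective geometry.
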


\begin{proof}
For any two non-collinear pairs of non-real conjugate points there exists $\alpha\in\Aut_\R(\PP^2)$ that sends the two pairs onto $p_1:=[1:i:0],p_2:=[0:1:i]$ and their conjugates \mbox{$\bar{p}_1=[1:-i:0],\bar{p}_2=[0:1:-i]$.} Let $\theta$ be a standard or special quintic transformation. Then there exists $\alpha,\beta\in\Aut_\R(\PP^2)$ that send $q_1,q_2$ (resp. $r_1,r_2$) onto $p_1,p_2$. The transformation $\beta\theta\alpha^{-1}$ preserves the pencil of conics through $p_1,\bar{p}_1,p_2,\bar{p}_2$ (Lemma~\ref{lem 5.1}) and is thus contained in $\mathcal{J}_{\circ}$.
\end{proof}

\begin{Rmk}
By composing quadratic and standard quintic transformations in $\mathcal{J}_{\circ}$, we obtain transformations in $\mathcal{J}_{\circ}$ of every degree.
\end{Rmk}

\begin{Lem}\label{lem Jcirc gen}
The group $\mathcal{J}_{\circ}$ is generated by its linear, quadratic and standard quintic elements.
\end{Lem}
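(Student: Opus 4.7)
I proceed by induction on $d = \deg(f)$ for $f \in \mathcal{J}_{\circ}$, showing $f$ lies in the subgroup of $\mathcal{J}_\circ$ generated by its linear, quadratic, and standard quintic elements. The cases $d \in \{1,2\}$ are immediate since $f$ is itself a generator, and $d=3$ is precisely Remark~\ref{rmk deg 3}: every cubic element of $\mathcal{J}_\circ$ factors as a product of two quadratic elements of $\mathcal{J}_\circ$.

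For the inductive step with $d\ge 4$, the strategy is to produce $g\in\mathcal{J}_\circ$ of degree $2$ or $5$ such that $\deg(fg^{-1}) < d$, and then apply the inductive hypothesis to $fg^{-1}$. Suppose first that $d$ is even. By Lemma~\ref{lem char}, $f$ has base-points $p_i,\bar{p}_i$ of multiplicity $d/2$ for some $i\in\{1,2\}$ and a real point $r$ --- the simple base-point of $f$, or the proper point of $\PP^2(\R)$ to which the simple base-point is infinitely near. Lemma~\ref{lem sigma} then yields a quadratic $\tau\in\mathcal{J}_\circ$ with base-points $p_i,\bar{p}_i,r$, and the standard Noether-type degree formula gives
\[
\deg(f\tau^{-1}) \;=\; 2d - m_f(p_i) - m_f(\bar{p}_i) - m_f(r) \;\le\; 2d - \tfrac{d}{2} - \tfrac{d}{2} - 1 \;=\; d - 1.
\]
Suppose instead that $d\ge 5$ is odd. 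By Lemma~\ref{lem char} there are $(d-1)/2\ge 2$ double base-points of $f$ outside $\{p_1,\bar{p}_1,p_2,\bar{p}_2\}$, occurring in non-real conjugate pairs; pick such a pair $q_3,\bar{q}_3$. By Lemma~\ref{lem char}(1) no conic of the pencil contains both $q_3$ and $\bar{q}_3$, so the non-conic condition of Definition~\ref{def 5.1} is met. Together with Remark~\ref{rmk quintlin 2} this produces a standard quintic $\theta\in\mathcal{J}_\circ$ with base-points precisely $p_1,\bar{p}_1,p_2,\bar{p}_2,q_3,\bar{q}_3$ (each of multiplicity $2$), and the degree formula yields
\[
\deg(f\theta^{-1}) \;=\; 5d - 2\!\left(4\cdot\tfrac{d-1}{2} + 2\cdot 2\right) \;=\; d - 4 \;<\; d.
\]
In either parity, induction concludes the argument.

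The main technical point to verify carefully is the behaviour when some base-points are infinitely near rather than proper points of $\PP^2$: in the even case, the simple base-point may not be proper, and in the odd case the selected pair $q_3,\bar{q}_3$ may be infinitely near to a point of $\{p_1,\bar{p}_1,p_2,\bar{p}_2\}$. In both situations the cleanest fix is either to read the degree formula on the resolution of the base locus of $f$, or to first compose with a preliminary quadratic from Lemma~\ref{lem sigma} (or a cubic from Lemma~\ref{lem sigma deg 3}) that moves the infinitely near pair to proper position before applying the reduction above. Lemma~\ref{lem char}(1), which forbids two double base-points from lying on a common conic of the pencil through $p_1,\bar{p}_1,p_2,\bar{p}_2$, is the constraint that keeps this infinitely near configuration manageable and is therefore the key ingredient of the argument.
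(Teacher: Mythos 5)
Your even-degree reduction coincides with the paper's, but the odd-degree step contains a genuine gap. You assert that the $(d-1)/2$ double base-points of $f$ outside $\{p_1,\bar{p}_1,p_2,\bar{p}_2\}$ ``occur in non-real conjugate pairs''. Nothing in Lemma~\ref{lem char} forces this, and it is in fact false in general: since the base locus of a real map is conjugation-invariant and $p_1,\dots,\bar{p}_2$ have multiplicity $(d-1)/2\neq 2$ for $d>5$, the set of these extra double points is itself conjugation-invariant, so whenever $(d-1)/2$ is odd (e.g.\ $d=7$) at least one of them must be real -- and they may all be real. In that situation there is no pair $q_3,\bar{q}_3$ to feed into Definition~\ref{def 5.1} and your quintic reduction cannot start. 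The paper's proof inserts a separate case precisely here: if $f$ has odd degree and a real base-point $q$ (necessarily a proper double point, not collinear with any two of $p_1,\dots,\bar p_2$ by B\'ezout), it composes with a quadratic $\tau\in\mathcal{J}_{\circ}$ with base-points $q,p_1,\bar{p}_1$ from Lemma~\ref{lem sigma}, dropping the degree to $d-1$ (even); only when $f$ has \emph{no} real base-points does it reach for quintics.

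The second issue is the infinitely-near configuration, which you flag but do not resolve. A standard quintic transformation requires its six base-points to be proper points of $\PP^2$ (Definition~\ref{def 5.1}), so if the chosen double pair is infinitely near one of $p_1,\dots,\bar{p}_2$ you must use a \emph{special} quintic transformation (Definition~\ref{def 5.2}) and then justify that it is a product of standard quintics and linear maps of $\mathcal{J}_{\circ}$; the paper does this by citing \cite[Lemma 3.7]{BM12} together with Remark~\ref{rmk Jcirc elt}, and this citation is a genuine ingredient rather than bookkeeping. Your proposed alternative of ``first composing with a preliminary quadratic (or cubic) to move the pair to proper position'' does not work as stated: when $f$ has no real base-points, the real base-point $r$ of any quadratic $\tau\in\mathcal{J}_{\circ}$ satisfies $m_f(r)=0$, so $\deg(f\tau^{-1})=2d-2\cdot\frac{d-1}{2}-0=d+1$, and similarly a cubic from Lemma~\ref{lem sigma deg 3} gives degree $d+2$; both preliminary compositions increase the degree, so the induction would need a new, non-obvious termination argument.
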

\begin{proof}
Let $f\in\mathcal{J}_{\circ}$. We use induction on the degree $d$ of $f$. We can assume that $d>2$. 

$\bullet$ If $d$ is even, it has a (real) simple base-point. Denote by $r$ the simple base-point of $f$ or, if the simple base-points is not a proper point of $\PP^2$, the proper point of $\PP^2$ to which the simple base-point is infinitely near. Let $p_i,\bar{p}_i$, $i\in\{1,2\}$ be the points of multiplicity $\frac{d}{2}$ (Lemma~\ref{lem char}). By Lemma~\ref{lem sigma} there exists a quadratic transformation $\tau\in\mathcal{J}_{\circ}$ with base-points $p_i,\bar{p}_i,r$. The map $f\tau^{-1}\in\mathcal{J}_{\circ}$ is of degree $\leq d-1$. 

$\bullet$ Suppose that $d$ is odd and has a real base-point $q$. By Lemma~\ref{lem char}, the points $q,p_1,p_2$ are of multiplicity $2,\frac{d-1}{2},\frac{d-1}{2}$ respectively. We can assume that $q$ is a proper point of $\PP^2$ (since no real point is infinitely near $p_1,\dots,\bar{p}_2$). By Bézout, $q$ is not collinear with $p_i,p_j$, $i,j\in\{1,2\}$, and so there exists $\tau\in\mathcal{J}_{\circ}$ of degree 2 with base-points $q,p_1,\bar{p}_1$ (Lemma~\ref{lem sigma}). The map $f\tau^{-1}\in\mathcal{J}_{\circ}$ is of degree $d-1$.

$\bullet$ Suppose that $d$ is odd and has no real base-points. Then $d\geq5$ by Lemma~\ref{lem char}. If it has a double point $q$ different from $p_1,\dots,\bar{p}_2$ which is a proper point of $\PP^2$, then $p_1,\bar{p}_1,p_2,\bar{p}_2,q,\bar{q}$ are not on the same conic (Lemma~\ref{lem char}). In particular, there exists a standard quintic transformation $\theta\in\mathcal{J}_{\circ}$ with those points its base-points (Definition~\ref{def 5.1}, Lemma~\ref{lem quintlin}). The map $f\theta^{-1}\in\mathcal{J}_{\circ}$ is of degree $d-4$.

If it has no double points different from $p_1,\dots,\bar{p}_2$ that are proper points of $\PP^2$, there exists a double point $q$ infinitely near one of the $p_i$. By Lemma~\ref{lem char}, $p_1,\bar{p}_1,p_2,\bar{p}_2,q,\bar{q}$ are not contained on one conic, hence there exists a special quintic transformation $\theta\in\mathcal{J}_{\circ}$ with base-points $p_1,\bar{p}_1,p_2,\bar{p}_2,q,\bar{q}$ (Definition~\ref{def 5.2}). The map $f\theta^{-1}\in\mathcal{J}_{\circ}$ is of degree $d-4$. By \cite[Lemma 3.7]{BM12} and Remark~\ref{rmk Jcirc elt}, $\theta$ is the composition of standard quintic and linear transformations contained in $\mathcal{J}_{\circ}$. 

If all base-points of $f$ are double points, it is of degree $5$ by Lemma~\ref{lem char} and thus a composition of linear and standard quintic transformations in $\mathcal{J}_{\circ}$ by \cite[Lemma 3.7]{BM12} and Remark~\ref{rmk Jcirc elt}.
\end{proof}

\begin{Rmk}\label{rmk:del Pezzo}
Blowing up the four base-points $p_1,\bar{p}_1,p_2,\bar{p}_2$ of the rational map $\pi_{\circ}\colon\PP^2\dashrightarrow\PP^1$ and contracting the strict transform of $L_{p_1,\bar{p}_1}$ (or $L_{p_2,\bar{p}_2}$) yields a del Pezzo surface $Z$ of degree $6$. The fibration $\pi_{\circ}$ becomes a morphism $\pi_{\circ}'\colon Z\rightarrow\PP^1$, which is a conic bundle with two singular fibres, both having only one real point. The group $\mathcal{J}_{\circ}$ is the group of birational maps of $Z$ preserving this conic bundle structure. The contraction of the two $(-1)$-sections on $Z$ is a morphism 
\[Z\rightarrow S=\{wz=x^2+y^2\}\subset\PP^3,\] 
onto the quadric in $\PP^3$ whose real part is diffeomorphic to the sphere. We can choose the images of the sections to be the points $[0:1:i:0],[0:1:-i:0]\in\PP^3$ and obtain that $Z=\{([w:x:y:z],[u:v])\in\PP^3\times\PP^1\mid uz=vw,wz=x^2+y^2\}$ and
\[\xymatrix{Z\ar[r]\ar[d]^{\pi_{\circ}'}& S\ar@{-->}[d]\\ \PP^1\ar[r]&\PP^1}\quad\footnotesize\xymatrix{([w:x:y:z],[u:v])\ar[r]\ar[d]&[w:x:y:z]\ar@{-->}[d]\\ [u:v]\ar@{=}[r]&[w:z]}\]
The generic fibre of $\pi_{\circ}'$ is the conic $C$ in $\PP^2_{\R(t)}$ given by $x^2+y^2-tz^2=0$ and $\pi_{\circ}'(Z(\R))=\pi_{\circ}(\PP^2(\R))=[0,\infty]$. Let $\Aut_\R(\PP^1,[0,\infty])\subset\Aut_\R(\PP^1)$ be the group of automorphisms preserving the interval $[0,\infty]$. It is isomorphic to $\R_{>0}\rtimes\Z/2\Z$, where $\Z/2\Z$ is generated by $[x:y]\mapsto[y:x]$. The projection $\pi_{\circ}'$ induces an exact sequence
\[1\rightarrow\Aut_{\R(t)}(C)\longrightarrow\mathcal{J}_{\circ}\longrightarrow\Aut_\R(\PP^1,[0,\infty])\simeq\R_{>0}\rtimes\Z/2\Z\rightarrow1.\] 
\end{Rmk}

\begin{Lem}\label{lem scale on P1}\item
\begin{enumerate}
\item The action of $\mathcal{J}_{\circ}$ on $\PP^1$ gives rise to a split exact sequence
\[1\rightarrow\mathrm{SO}(x^2+y^2-tz^2,\R(t))\longrightarrow\mathcal{J}_{\circ}\longrightarrow\Aut_\R(\PP^1,[0,\infty])\simeq\R_{>0}\rtimes\Z/2\Z\rightarrow1\]
\item Any element of $\R_{>0}$ is the image of a quadratic element of $\mathcal{J}_{\circ}$ and $\Z/2\Z$ is the image of a linear element. 
\item The cubic transformations are sent onto $(1,0)$ if they contract $L_{p_i,q}$ onto $p_i$ or $\bar{p}_i$, $i=1,2$, where $q$ is the double point, and onto $(1,1)$ otherwise. 
\item The standard quintic transformations are sent onto $(1,0)$ or $(1,1)$. 
\end{enumerate}
\end{Lem}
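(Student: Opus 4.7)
The proof builds on Remark~\ref{rmk:del Pezzo}, which already gives the exact sequence with kernel $\Aut_{\R(t)}(C)$ for $C\subset\PP^2_{\R(t)}$ the generic fibre $\{x^2+y^2-tz^2=0\}$. For~(1), this kernel is $\PO(q,\R(t))$ with $q=x^2+y^2-tz^2$, and since $-\mathrm{Id}$ has determinant $(-1)^3=-1$ in three variables, one has $\mathrm{O}(q)=\mathrm{SO}(q)\times\{\pm\mathrm{Id}\}$ and hence $\PO(q)\simeq\mathrm{SO}(q)$, as required.

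For the splitting in~(1) and for~(2), the $\Z/2\Z$ factor is realized by the linear involution $\alpha\colon[x{:}y{:}z]\mapsto[-x{:}y{:}z]\in\mathcal{J}_\circ$ (Remark~\ref{rmk Jcirc elt}), which induces $[u{:}v]\mapsto[v{:}u]$ by direct substitution in $\pi_\circ$. For the $\R_{>0}$ factor I use the sphere model of Remark~\ref{rmk:del Pezzo}: on $S=\{wz=x^2+y^2\}\subset\PP^3$ the maps
\[\rho_\lambda\colon[w{:}x{:}y{:}z]\mapsto\bigl[w{:}\sqrt\lambda\,x{:}\sqrt\lambda\,y{:}\lambda z\bigr],\qquad\lambda\in\R_{>0},\]
are automorphisms of $S$ (since $(\sqrt\lambda\,x)^2+(\sqrt\lambda\,y)^2=\lambda wz=w\cdot\lambda z$), form an $\R_{>0}$-subgroup of $\Aut(S)$, preserve the projection to $\PP^1$, and act on it by scaling. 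Pulling them back through $\PP^2\dashrightarrow Z\to S$ gives quadratic elements $\tau_\lambda\in\mathcal{J}_\circ$ with the same $\PP^1$-action; together with $\alpha$ they supply the desired section.

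For~(3) and~(4), let $f\in\mathcal{J}_\circ$ be of odd degree. Lemma~\ref{lem char} forces $\hat f$ to preserve $\{C_1,C_2\}$, so $\hat f$ has the form $[u{:}v]\mapsto[\mu u{:}v]$ or $[u{:}v]\mapsto[\mu v{:}u]$ for some $\mu\in\R_{>0}$. I show $\hat f$ fixes $[1{:}1]$ by proving $f(C_3)=C_3$: a line is contracted iff the multiplicities of $f$ at its base-points sum to $\deg f$, and a direct check shows no real line satisfies this (for the cubic characteristic $(3;1^4,2^1)$ only the non-real $L_{q,p_i},L_{q,\bar p_i}$ do; for the quintic characteristic $(5;2^6)$ no line does, since $2k\neq5$). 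Hence both real components $L_{p_1,\bar p_1},L_{p_2,\bar p_2}$ of $C_3$ map to real lines forming a reducible pencil-conic with real components, forcing $f(C_3)=C_3$. So $\mu=1$, which proves~(4). For the dichotomy in~(3), the four contracted lines of a cubic map bijectively to $\{p_1,\bar p_1,p_2,\bar p_2\}$; complex conjugation constrains the bijection either to preserve each pair $\{p_i,\bar p_i\}$ (Case~A of the statement) or to swap the two pairs (Case~B). A direct check with the induced map on tangent directions at $p_1,p_2$ then shows $f(L_{p_1,p_2})$ is a component of $C_1$ in Case~A, giving $f(C_1)=C_1$ and $\hat f=(1,0)$, whereas in Case~B it is a component of $C_2$, giving $f(C_1)=C_2$ and $\hat f=(1,1)$.

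The main obstacle is the splitting in~(1), since one needs an honest $\R_{>0}$-subgroup of $\mathcal{J}_\circ$ rather than a mere set-theoretic section. The sphere model supplies such a subgroup directly as linear automorphisms of $S$, sidestepping the need to verify any compositional identity among quadratic maps on $\PP^2$.
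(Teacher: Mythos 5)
Your handling of parts (1), (2) and (4) is sound and close to the paper's own route: the paper likewise identifies the kernel with $\mathrm{SO}(x^2+y^2-tz^2,\R(t))$ via $\Aut_{\R(t)}(C)$ and splits the sequence by explicit automorphisms of the sphere $S=\{wz=x^2+y^2\}$ (namely $[w:x:y:z]\mapsto[\lambda w:\sqrt{\lambda}x:\sqrt{\lambda}y:z]$ and $[w:x:y:z]\mapsto[z:x:y:w]$), so your $\rho_\lambda$ is the same section up to inversion. One assertion you leave unproved is that the $\rho_\lambda$ pull back to \emph{quadratic} elements of $\mathcal{J}_{\circ}$; this is true, but it needs the computation that $\PP^2\dashrightarrow S$ is given by the net of conics through $p_i,\bar{p}_i$ and that conjugation preserves degree $2$ — the paper instead proves (2) by producing, via Lemma~\ref{lem sigma}, a quadratic with prescribed real base-point $q$ of the inverse and computing $\pi_{\circ}(\tau(C_3))=\pi_{\circ}(q)$. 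Your argument for (4) (no line is contracted by a map of characteristic $(5;2^6)$, so both real components of $C_3$ go to real lines and $f(C_3)=C_3$) is correct and more explicit than the paper's appeal to a figure.

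Part (3), however, has a genuine gap: the asserted implication ``Case A $\Rightarrow f(C_1)=C_1$'' is false. Since $L_{p_1,p_2}$ meets the contracted lines $L_{q,\bar{p}_1}$ and $L_{q,\bar{p}_2}$ in non-base-points, $f(L_{p_1,p_2})$ is the line joining $f(L_{q,\bar{p}_1})$ and $f(L_{q,\bar{p}_2})$; whether this is a component of $C_1$ or of $C_2$ is governed not by whether each pair $\{p_i,\bar{p}_i\}$ is preserved, but by whether $f(L_{q,p_1})$ and $f(L_{q,p_2})$ lie both in $\{p_1,p_2\}$ or both in $\{\bar{p}_1,\bar{p}_2\}$, versus one in each. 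Concretely, take any cubic $f$ contracting $L_{q,p_i}$ onto $p_i$ for $i=1,2$ and set $\gamma\colon[x:y:z]\mapsto[x:y:-z]\in\Aut_\R(\PP^2)\cap\mathcal{J}_{\circ}$, which fixes $p_1,\bar{p}_1$, swaps $p_2,\bar{p}_2$, and induces $[u:v]\mapsto[v:u]$. Then $\gamma f$ still contracts $L_{q,p_1}$ onto $p_1$ and $L_{q,p_2}$ onto $\bar{p}_2$ — so it belongs to your Case A — yet $\widehat{\gamma f}=(1,1)\hat{f}$, so exactly one of $f,\gamma f$ is a Case-A cubic sent to $(1,1)$. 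The same composition shows your ``direct check with tangent directions'' cannot yield the stated dichotomy; the invariant that actually decides $(1,0)$ versus $(1,1)$ is whether $f$ preserves or exchanges $C_1$ and $C_2$, which is how the paper in fact uses (3) later (in Lemma~\ref{lem deg 3} the cubic is explicitly arranged to preserve $C_1$ before concluding $\hat{f}=\mathrm{id}$). You should either reformulate the criterion in these terms or prove that the problematic contraction patterns cannot occur — they can, as the example shows.
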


\begin{proof} 
The sequence comes from Remark~\ref{rmk:del Pezzo}. The group $\Aut_{\R(t)}(C)$ is isomorphic to the subgroup of $\mathrm{PGL}_3(\R(t))$ preserving the quadratic form $x^2+y^2-tz^2$, and is therefore isomorphic to \mbox{$\mathrm{SO}(x^2+y^2-tz^2,\R(t))$}. The sequence is exact and split by  
\begin{align*}(\lambda,0)&\mapsto\left([w:x:y:z]\mapsto[\lambda w:\sqrt{\lambda}x:\sqrt{\lambda}y:z]\right),\quad
(0,1)\mapsto([w:x:y:z]\mapsto[z:x:y:w]).
\end{align*}
Let $f\in\mathcal{J}_{\circ}$. It lifts to a birational transformation of $Z$ (see Remark~\ref{rmk:del Pezzo}) preserving its conic bundle structure. By Lemma~\ref{lem char}, it preserves or exchanges $C_1,C_2$. Its induced automorphism on $\PP^1$ is therefore determined by the image of any fibre different from $C_1,C_2$. Hence $f$ induces $[u:v]\mapsto[au:bv]$ or $[u:v]\mapsto[av:bu]$ where $[a:b]=\pi_{\circ}(f(C_3))$ on $\PP^1$. 

The linear map $[x:y:z]\mapsto[-x:y:z]$ induces $[u:v]\mapsto[v:u]$, i.e. is sent onto the generator of $\Z/2\Z$. Let $\tau\in\mathcal{J}_{\circ}$ be a quadratic map and  $q=[a:b:1]$ the real base-points of $\tau^{-1}$. Then $\pi_{\circ}(\tau(C_3))=\pi_{\circ}(q)=[b^2+(a-1)^2:b^2+(a+1)^2]$. We claim that each element of $\R_{>0}$ is induced by a quadratic map. Note that $\pi_{\circ}(\PP^2(\R))=[0,\infty]$, and $\pi_{\circ}^{-1}(0)\cap\PP^2(\R)=\{[1:0:-1]\}$ and $\pi_{\circ}^{-1}(\infty)\cap\PP^2(\R)=\{[1:0:1]\}$. By Lemma~\ref{lem sigma}, for any real point $q\in\pi_{\circ}^{-1}(]0,\infty[)\cap\PP^2(\R)$ there exists a quadratic transformation in $\mathcal{J}_{\circ}$ with $q$ as real base-point. Hence any element of $\R_{>0}$ is the image of a quadratic map.

Any standard quintic transformation and any cubic transformation satisfying the assumptions of the lemma preserves $C_3$ (see Figure~\ref{fig:deg 3}).
%%%%%%%%%%%%%%%%%%%%%
\begin{figure}
\def\svgwidth{0.75\textwidth}
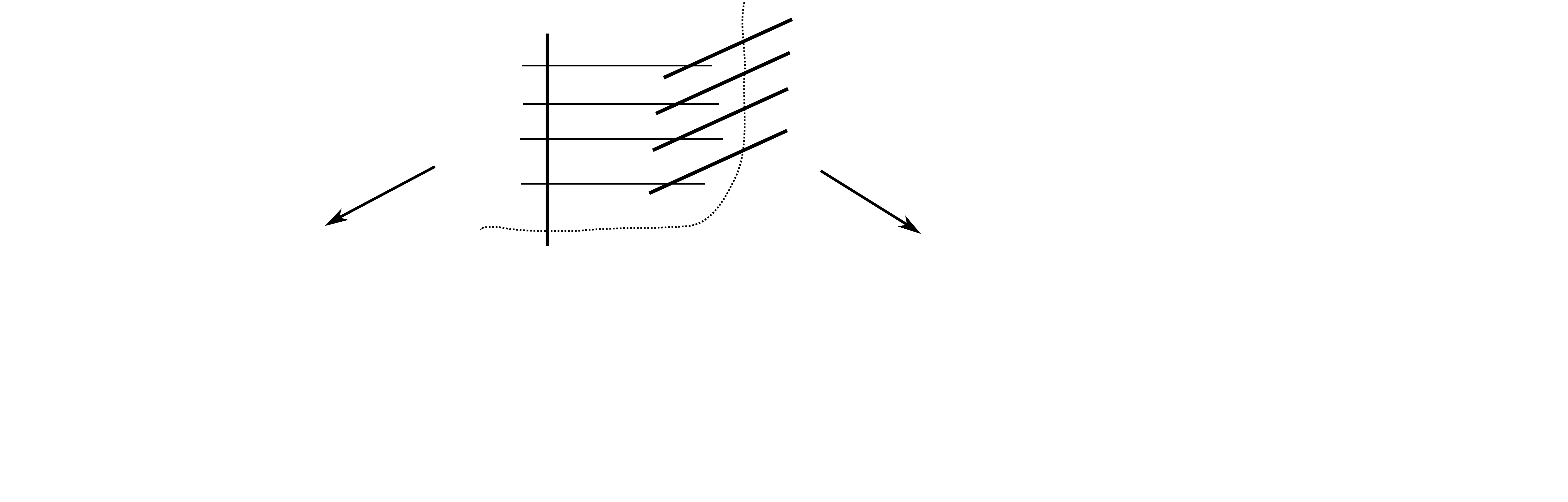
\caption{A cubic transformation that contracts $L_{p_i,q}$ onto $\bar{p}_i$.}\label{fig:deg 3}
\end{figure}
%%%%%%%%%%%%%%%%%%%%%
\end{proof}

Last but not least we look at what happens to the image of conics under $\pi_{\circ}$ after the re-assignment proposed in the following lemma.

\begin{Lem}\label{lem linear}
Let $q\in\PP^2$ be a non-real point not collinear with any two of $p_1,\bar{p}_1,p_2,\bar{p}_2$. Suppose that $\alpha_q\in\Aut_\R(\PP^2)$ fixes $p_1$ and sends $q$ onto $p_2$. Then $\alpha_q(C_q)=C_{\alpha_q(p_2)}$ and
\[\pi_{\circ}(C_{\alpha_q(p_2)})\in\R_{<0}\cdot\pi_{\circ}(C_q).\]
\end{Lem}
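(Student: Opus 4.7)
The plan is to place the fixed pair $\{p_1,\bar{p}_1\}$ at standard coordinate vertices and then reduce both assertions to a transparent algebraic computation. Start by observing that, because $\alpha_q\in\Aut_\R(\PP^2)$ commutes with complex conjugation, it automatically fixes $\bar{p}_1$ and sends $\bar{q}$ to $\bar{p}_2$; hence $\alpha_q$ carries the four-point set $\{p_1,\bar{p}_1,q,\bar{q}\}$ bijectively onto $\{p_1,\bar{p}_1,p_2,\bar{p}_2\}$.

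For the identity $\alpha_q(C_q)=C_{\alpha_q(p_2)}$, I chase base-points through $\alpha_q$: the image $\alpha_q(C_q)$ contains $p_1$ and $\bar{p}_1$ (images of $p_1,\bar{p}_1$), $p_2$ and $\bar{p}_2$ (images of $q,\bar{q}$), together with $\alpha_q(p_2)$ and $\alpha_q(\bar{p}_2)$ (images of $p_2,\bar{p}_2\in C_q$). In particular $\alpha_q(C_q)$ passes through the four base-points of the pencil $\pi_\circ$, so it is a member of that pencil, and since it also contains $\alpha_q(p_2)$ it must coincide with the fibre $C_{\alpha_q(p_2)}$.

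For the quantitative claim I change to the ``spinor'' coordinates $\zeta=x+iy$, $\eta=x-iy$, in which $p_1=[0{:}1{:}0]$, $\bar{p}_1=[1{:}0{:}0]$, complex conjugation acts as $(\zeta,\eta,z)\mapsto(\bar\eta,\bar\zeta,\bar z)$, and the pencil admits the factored form $\pi_\circ([\zeta{:}\eta{:}z])=[(\zeta+z)(\eta+z):(\zeta-z)(\eta-z)]$. In these coordinates every real automorphism fixing $p_1,\bar{p}_1$ is forced to be upper triangular with diagonal entries $(a,\bar{a},k)$ for some $a\in\C^{*}$, $k\in\R^{*}$, and a single free off-diagonal column subject to a reality constraint. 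Writing $q=[\zeta_q{:}\eta_q{:}c_0]$ and imposing $\alpha_q(q)\propto p_2=[1{:}-1{:}1]$ yields three scalar equations that pin down $k$ and the off-diagonal entries in terms of $a$ and $q$, while the requirement that the resulting matrix be real fixes the phase of $a$ via the identity $\bar{a}(\bar{\zeta}_q/\bar c_0-\eta_q/c_0)\in\R$.

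The final step is to substitute $\alpha_q(p_2)=M\cdot[1{:}-1{:}1]$ into the factored formula for $\pi_\circ$ and cancel against $\pi_\circ(q)$; the reality identity lets one rewrite the combination $a(1-\zeta_q/c_0)+2k$ as $a(1-\overline{\eta_q/c_0})$, and after this simplification the quotient collapses to a purely real scalar depending only on the coordinates of $q$. The main obstacle is then purely algebraic: keeping track of the signs produced by the two pairs of linear factors $(\zeta\pm z)$ and $(\eta\pm z)$ in $\pi_\circ$, together with the sign twist introduced by the reality constraint, carefully enough to exhibit the resulting scalar as a member of $\R_{<0}$.
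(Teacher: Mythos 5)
Your argument for the first assertion is correct and is exactly what the paper leaves implicit: $\alpha_q$ sends the six points $p_1,\bar{p}_1,q,\bar{q},p_2,\bar{p}_2$ of $C_q$ to $p_1,\bar{p}_1,p_2,\bar{p}_2,\alpha_q(p_2),\alpha_q(\bar{p}_2)$, so $\alpha_q(C_q)$ is a member of the pencil passing through $\alpha_q(p_2)$ and hence equals $C_{\alpha_q(p_2)}$.

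The second assertion is where the proposal has a genuine gap: after a correct reduction, you defer the one step that carries the entire content of the lemma --- exhibiting the scalar as negative --- to unspecified ``sign bookkeeping'', and that step cannot be completed as described. Finishing your own computation: with $q=[\zeta_q:\eta_q:1]$ and the reality constraint in the form $2k=a(\zeta_q-\bar{\eta}_q)$, one finds
\[
\pi_{\circ}(\alpha_q(p_2))=\bigl[(1-\bar{\eta}_q)(1+\eta_q):(1-\zeta_q)(1+\bar{\zeta}_q)\bigr],\qquad
\pi_{\circ}(q)=\bigl[(1+\zeta_q)(1+\eta_q):(1-\zeta_q)(1-\eta_q)\bigr],
\]
so the ratio of the affine coordinates is $\frac{(1-\eta_q)(1-\bar{\eta}_q)}{(1+\zeta_q)(1+\bar{\zeta}_q)}=\frac{\lvert 1-\eta_q\rvert^2}{\lvert 1+\zeta_q\rvert^2}$, a \emph{positive} real number. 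Concretely, for $q=[1:1:i]$ the (unique) admissible $\alpha_q$ is $[x:y:z]\mapsto[x-y:x+y:2z]$, which gives $\alpha_q(p_2)=[-1:1:2i]$ and $\pi_{\circ}(\alpha_q(p_2))=[-2-4i:-2+4i]=[1+2i:1-2i]=\pi_{\circ}(q)$, so the scalar is $+1\notin\R_{<0}$. Thus your approach, carried to the end, shows that the scalar is real but with the opposite sign to the one you promise to extract. Reality of the scalar is all that is actually used downstream (only $\nu(C_{\alpha_q(p_2)})=\nu(C_q)$ is needed, and $\nu$ is insensitive to the sign of a real rescaling), so the computation is worth completing; but as written the proposal does not establish the displayed inclusion, and you should either locate a sign error in your reduction (I could not find one) or conclude that the constant is positive. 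For comparison, the paper's own proof reduces the claim to positivity of a quadratic form $Q_{\rho,\nu}$, and for $\pi_{\circ}(C_q)=[\tfrac{-3+4i}{5}:1]$ that form vanishes at $(x,y)=(0,-2)$, so the two computations genuinely disagree on the sign and the discrepancy is not an artefact of your coordinates.
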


\begin{proof}
Consider the map
\[\psi\colon\PP^2(\C)\setminus\{z=0\}\longrightarrow\PP^2(\C)\setminus\{z=0\},\quad q\mapsto\alpha_q(p_2).\]
Then
\[\pi_{\circ}(C_{\alpha_q(p_2)})=\pi_{\circ}(\alpha_q(p_2))=\pi_{\circ}(\psi(q))\in\pi_{\circ}\left(\psi(C_q\setminus\{z=0\})\right),\]
and we prove the claim by computing $\pi_{\circ}(\psi(C_q\setminus\{z=0\}))\subset\R_{<0}\cdot\pi_{\circ}(C_q)$. 

Via the parametrisation 
\[\iota\colon\R^2\rightarrow\PP^2(\C),\quad(u,v,x,y)\mapsto[u+iv:x+iy:1],\] the map $\psi$ is conjugate to the real birational involution
{\small
\[\hat{\psi}\colon\R^4\dashrightarrow\R^4,\quad(u,v,x,y)\dashmapsto\small\left(\frac{uy-vx}{v^2+y^2},\frac{-v}{v^2+y^2},\frac{uv+xy}{v^2+y^2},\frac{y}{v^2+y^2}\right)\] }
whose domain is $\R^4\setminus\{v=y=0\}=\iota^{-1}\left( \PP^2(\C)\setminus(\{z=0\}\cup\PP^2(\R) )\right)$.
To understand $\psi(C_q\setminus\{z=0\})$, we use the parametrisation 
{\small
\begin{align*}\text{par}\colon&\C\longrightarrow C_q\setminus\{z=0\},\\ \tiny 
&t\mapsto\small\left[\frac{1}{2}\frac{(t-1)(t+1)(\lambda+\mu)}{\lambda t+\mu t +\lambda-\mu}:-\frac{1}{2}\frac{i(\lambda t^2+\mu t^2+2\lambda t-2\mu t+\lambda+\mu)}{\lambda t+\mu t+\lambda -\mu}:1\right],
\end{align*}
}
which is the inverse of the projection of $C_q$ centred at $p_1$. Consider the commutative diagram
{\small
\[\xymatrix{ &\iota^{-1}(C_q\setminus\{z=0\})\ar[d]^{\iota}\ar[r]^(.45){\hat{\psi}} & \hat{\psi}(\iota^{-1}(C_q\setminus\{z=0\}))\ar[d]^{\iota}&\\
\C\ar[r]^(.35){\text{par}} \ar@/_1pc/[rrr]&C_q\setminus\{z=0\}\ar[r]^(.45){\psi}&\psi(C_q\setminus\{z=0\})\ar[r]^(.7){\pi_{\circ}}&\PP^1}\]
}
We calculate that $(\pi_{\circ}\circ\psi\circ\text{par})$ is given by
{\small
\[(\pi_{\circ}\circ\psi\circ\text{par})\colon x+iy\longmapsto\small \left[\frac{-Q_{\rho,\nu}(x,y)}{4(\nu^2+\rho^2)}(\rho+i\nu):1\right] \]
}
where $\pi_{\circ}(C_q)=[\rho+i\nu:1]$ with $\rho,\nu\in\R$, $\nu\neq0$, and
 {\small \[Q_{\rho,\nu}(x,y)=(\nu^2+\rho^2+2\rho+1)(x^2+y^2)+2x(\nu^2+\rho^2-1)+4\nu y+(\nu^2+\rho^2-2\rho+1)\in\R[\rho,\nu,x,y].\]}
We consider $Q_{\rho,\nu}(x,y)\in\R[\rho,\nu,y][x]$ as polynomial in $x$ and calculate that its discriminant is negative for all $\nu,\rho,y$. Further, for all $\nu$ the coefficient $(\nu^2+\rho^2+2\rho+1)$ has negative discriminant with respect to $\rho$, hence it has only positive values. So, $Q_{\rho,\nu}(x,y)$ has only positive values, and 
{\small
\[\left\{\left[\frac{-Q_{\rho,\nu}(x,y)}{4(\nu^2+\rho^2)}(\rho+i\nu):1\right]\mid x,y\in\R \right\}\subset\R_{<0}\cdot[\rho+i\nu:1]=\R_{<0}\cdot\pi_{\circ}(C_q).\]}
\end{proof}

%%%%%%%%%%%%%%%%%%%%%%%%%
%%%%%%%%%%%%%%%%%%%%%%%%%
\subsection{The quotient}\label{subsection quotient Jcirc}
%%%%%%%%%%%%%%%%%%%%%%%%%
%%%%%%%%%%%%%%%%%%%%%%%%%
Using Lemma~\ref{lem scale on P1}, we now construct a surjective group homomorphism $\varphi_{\circ}\colon\mathcal{J}_{\circ}\rightarrow\bigoplus_{(0,1]}\Z/2\Z$. 
There are two constructions of the quotient - one geometrical and the other using the spinor norm on $\mathrm{SO}(x^2+y^2-tz^2,\R(t))$.
We first give the construction via the spinor norm and then the geometrical one.

%%%%%%%%%%%%%%%%%%%%%%%%%%
%%%%%%%%%%%%%%%%%%%%%%%%%%
\subsubsection{Construction of quotient using the spinor norm}\label{spinor norm}
%%%%%%%%%%%%%%%%%%%%%%%%%%
%%%%%%%%%%%%%%%%%%%%%%%%%%

The surjective morphism $\varphi_{\circ}\colon\mathcal{J}_{\circ}\rightarrow\bigoplus_{(0,1]}\Z/2\Z$ is given by the spinor norm as explained in the following.

\noindent{\bf Idea:} the spinor norm induces a surjective morphism $\bar{\theta}_\R\colon\mathcal{J}_{\circ}\rightarrow\bigoplus_{\HH}\Z/2\Z$, where $\HH\subset\C$ is the upper half plane. The action of $\Aut_\R(\PP^1,[0,\infty])$ on $\mathcal{J}_{\circ}$ and the re-assignment in Lemma~\ref{lem linear} then induce $\varphi_{\circ}$. 

\smallskip
%\vskip\baselineskip

%%%%%%%%%%%%%%%%%%%%%%%%%%
\noindent{\bf The spinor norm and an induced morphism $\bar{\theta}_\R$.}
%%%%%%%%%%%%%%%%%%%%%%%%%%
By Remark~\ref{rmk:del Pezzo} and Lemma~\ref{lem scale on P1}, the action of $\mathcal{J}_{\circ}$ on $\PP^1$ induces the split exact sequence
\[1\rightarrow\mathrm{SO}(x^2+y^2-tz^2,\R(t))\longrightarrow\mathcal{J}_{\circ}\longrightarrow\Aut_\R(\PP^1,[0,\infty])\simeq\R_{>0}\rtimes\Z/2\Z\rightarrow1.\]
For $\k\in\{\C,\R\}$, the spinor norm $\theta_\k$ is given by the exact sequence
\[0\rightarrow\Z/2\Z\rightarrow\mathrm{Spin}(x^2+y^2-tz^2,\k(t))\rightarrow\mathrm{SO}(x^2+y^2-tz^2,\k(t))\stackrel{\theta_\k}\longrightarrow\k(t)^*/(\k(t)^*)^2\]
where $\theta_\R=(\theta_\C)|_{\mathrm{SO}(x^2+y^2-tz^2,\R(t))}$ is the restriction of $\theta_\C$.  
More precisely, for a reflection $f$ at a vector $v=(a(t),b(t),c(t))$, the spinor norm is the the length of $v$ squared, i.e. 
\[\theta_\k(f)=a(t)^2+b(t)^2-tc(t)^2.\]
More information about the spinor norm may be found in \cite[\S55]{O'Me73}.

As squares are modded out, we may assume that $a(t),b(t),c(t)\in\K[t]$. An element $g\in\K[t]$ is a square if and only if every root of $g$ appears with even multiplicity. 

If $\k=\R$, we can identify $\R(t)^*/(\R(t)^*)^2$ with polynomials in $\R[t]$ having only simple roots, i.e. with $\Z/2\Z\oplus\bigoplus_{\overline{\HH}}\Z/2\Z$, where $\overline{\HH}\subset\C$ is the closed upper half plane and the first factor is the sign of the polynomial. A non-real root $a\pm ib$ is the root of $(t-a)^2+b^2$. In particular, the spinor norm induces a surjective homomorphism 
\[\bar{\theta}_\R\colon\mathrm{SO}(x^2+y^2-tz^2,\R(t))\rightarrow\bigoplus_{\HH}\Z/2\Z.\]
Let's look at it geometrically. Extending the scalars to $\C(t)$, the isomorphism $\Aut_{\R(t)}(C)\simeq\mathrm{SO}(x^2+y^2-tz^2,\R(t))$ (see Remark~\ref{rmk:del Pezzo}) extends to
\[\Aut_{\R(t)}(C)\subset\Aut_{\C(t)}(C)\simeq\mathrm{PGL}_2(\C(t))\simeq\mathrm{SO}(x^2+y^2-tz^2,\C(t))\simeq\mathrm{SO}(tx^2-yz,\C(t)),\]
where $\alpha\colon\mathrm{PGL}_2(\C(t))\stackrel{\simeq}\rightarrow\mathrm{SO}(tx^2-yz,\C(t))$ is given as follows: every automorphism of $\PP^1_{\C(t)}$ extends to an automorphism of $\PP^2_{\C(t)}$ preserving the image of $\beta\colon\PP^1_{\C(t)}\hookrightarrow\PP^2_{\C(t)}$, $[u:v]\stackrel{\beta}\mapsto[uv:tu^2:v^2]$. Then $\alpha$ is given by
\[\alpha\colon\tiny\left(\begin{matrix}a&b\\c&d\end{matrix}\right)\mapsto\left(\begin{matrix}\frac{ad+bc}{ad-bc}&\frac{ac}{t(ad-bc)}&bd\\ \frac{2abt}{ad-bc}&\frac{a^2}{ad-bc}&\frac{tb^2}{ad-bc}\\ \frac{2cd}{ad-bc}&\frac{c^2}{t(ad-bc)}&\frac{d^2}{ad-bc}\end{matrix}\right)\]
The group $\mathrm{PGL}_2(\C(t))$ is generated by its involutions: any element of $\mathrm{PGL}_2(\C(t))$ is conjugate to a matrix of the form 
\[\tiny\left(\begin{matrix}0&a\\1&b\end{matrix}\right),\]
which is a composition of involutions:
\[\tiny\left(\begin{matrix}0&a\\1&b\end{matrix}\right)=\left(\begin{matrix}0&a\\1&0\end{matrix}\right)\left(\begin{matrix}1&0\\0&-1\end{matrix}\right)\left(\begin{matrix}1&b\\0&-1\end{matrix}\right).\]
All involutions in $\mathrm{PGL}_2(\C(t))$ are conjugate to matrices of the form 
\[P:=\small\left(\begin{matrix}0&p\\1&0\end{matrix}\right),\quad p\in\C[t].\]
The image of $P$ via $\alpha$ is
\[\alpha(P)=\tiny\left(\begin{matrix}-1&0&0\\0&0&-tp\\0&-1/tp&0\end{matrix}\right)\in \mathrm{SO}(tx^2-yz,\C(t)),\]
which is a reflection at its eigenvector $v=(0,-tp,1)$ of eigenvalue $1$. In particular, $\theta(P)$ is equal to the length of $v$ squared, i.e. $\theta_\C(P)=tp\in\C(t)^*/(\C(t)^*)^2$. 

The isomorphism $\alpha\colon\mathrm{PGL}_2(\C(t))\simeq\mathrm{SO}(tx^2-yz,\C(t))$ is induced by a non-real birational map $\eta_{\alpha}\colon Z\rightarrow\PP^1_{\C}\times\PP^1_{\C}$ that contracts one component in each singular fibre. The non-zero roots of $\theta_\C(P)$ correspond to the fibres contracted by $f_P\colon\PP^1_{\C}\times\PP^1_{\C}\dashrightarrow\PP^1_{\C}\times\PP^1_{\C}$, $(x,y)\mapsto(p/x,y)$ that have an odd number of base-points on them (counted without multiplicity). So, for $f\in\Aut_{\C(t)}(C)$, the spinor norm $\theta_\C(f)$ corresponds to the fibres of $Z$ having an odd number of base-points on then (see Figure~\ref{figure theta}). Let $f\in\Aut_{\R(t)}(C)$. Since $\eta_{\alpha}$ is just the contraction of a component in each singular fibre, $\theta_\R(f)$ corresponds to the conics on which $f$ has an odd number of base-points. 
By construction, $\bar{\theta}_\R$ throws away the real roots, i.e. the real conics. 
Therefore, $\bar{\theta}_\R(f)$ corresponds to non-real conics on which $f$ has an odd number of base-points.
More precisely, 
\begin{align*}&\bar{\theta}_\R\colon f\longmapsto\bar{\theta}_\R(f)=(\bar{\theta}_\R(f)_h)_{h\in\HH}\in\bigoplus_{\HH}\Z/2\Z,\\
&(\bar{\theta}_\R(f))_h=\begin{cases}1,&\text{$f$ has an odd number of base-points on $\pi_{\circ}^{-1}(h)$}\\
0,&\text{else}\end{cases}
\end{align*}
In other words, $\bar{\theta}_\R$ counts the number of base-points of $f$ on each non-real conic modulo $2$ (see Figure~\ref{figure theta}).
%%%%%%%%%%%%%%%%%%%%%
\begin{figure}\label{fig:theta}
\def\svgwidth{0.8\textwidth}
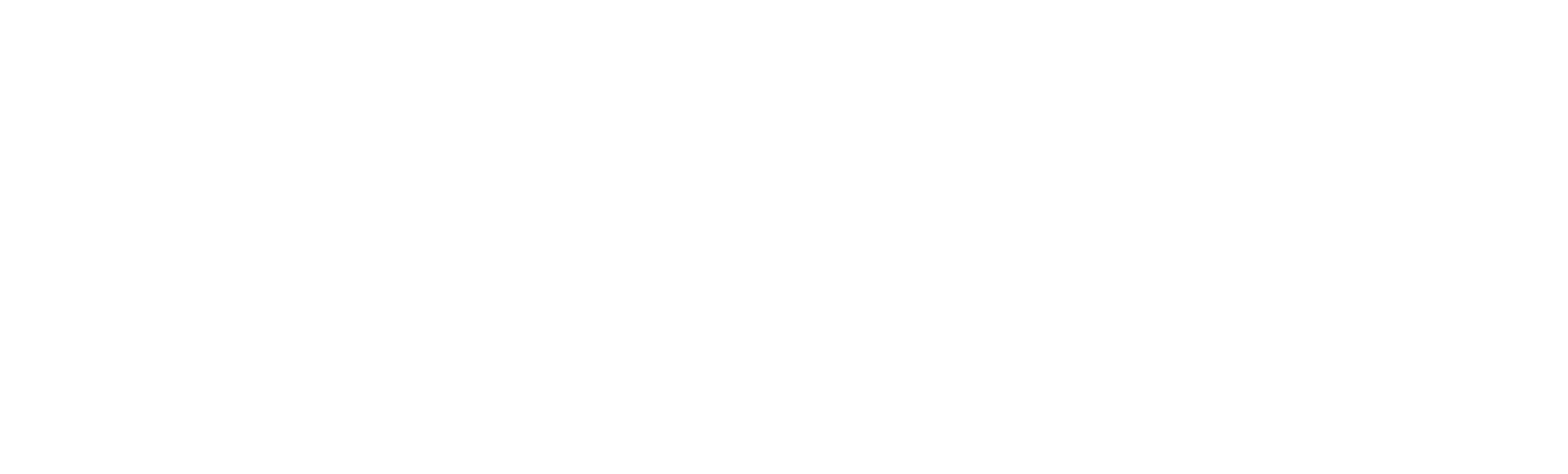
\caption{The element $\bar{\theta}_\R(f)=(\bar{\theta}_\R(f)_h)_{h\in\HH}\in\bigoplus_{\HH}\Z/2\Z$ corresponds to the fibres on which $f$ has an odd number of base-points.}\label{figure theta}
\end{figure}
%%%%%%%%%%%%%%%%%%%%%
\smallskip

%%%%%%%%%%%%%%%%%%%%%%%%%%
\noindent{\bf The action of $G$, the re-assignment and the morphism $\varphi_{\circ}$.}
%%%%%%%%%%%%%%%%%%%%%%%%%%
The group $G:=\Aut_\R(\PP^1,[0,\infty])$ acts on $\mathcal{J}_{\circ}\simeq\mathrm{SO}(x^2+y^2-tz^2,\R(t))\rtimes G$ by conjugation. It induces the quotient 
\[\mathcal{J}_{\circ}\rightarrow\mathcal{J}_{\circ}/G\simeq\mathrm{SO}(x^2+y^2-tz^2,\R(t))/G\longrightarrow\small\left(\bigoplus_{\HH}\Z/2\Z\right)/G.\]
Let us explore the action of $G$ on $\bigoplus_{\HH}\Z/2\Z$. 
The action of $G$ on $\mathcal{J}_{\circ}$ induces an action on $\PP^1$, the set of conics. For $f\in\mathcal{J}_{\circ}$ the element $\bar{\theta}_\R(f)\in\bigoplus_{\HH}\Z/2\Z$ 
corresponds to the non-real conics on which $f$ has an odd number of base-points. Then $G$ acts on $\bigoplus_{\HH}\Z/2\Z$ by acting on $\HH$, i.e. $\left(\bigoplus_{\HH}\Z/2\Z\right)/G=\bigoplus_{\HH/G}\Z/2\Z$.
By Lemma~\ref{lem scale on P1}, we have $G\simeq\R_{>0}\rtimes\Z/2\Z$ and it acts on $\PP^1$ is by real positive scaling and taking inverse. 
The upper half plane $\HH$ is the set of non-real conics up to taking inverse, and $G$ acts on it by positive real scaling. The orbit space $\HH/G$ is the upper half circle. We obtain the surjective morphism
\begin{align*}\mathcal{J}_{\circ}\longrightarrow(\bigoplus_{\HH}\Z/2\Z)/G,\ f\mapsto\begin{cases} \bar{\theta}_\R(f),&f\in\mathrm{SO}(x^2+y^2-tz^2,\R(t))\\ 0,& f\in\Aut_\R(\PP^1,[0,\infty])\end{cases}\end{align*}

We want to lift the quotient $\J_{\circ}\rightarrow \bigoplus_{\text{upper half circle}}\Z/2\Z$ onto $\Bir_\R(\PP^2)$, so recall the re-assignment in Lemma~\ref{lem linear} for the base-points of standard quintic transformations in $\J_{\circ}$. It induces a scaling by negative real numbers on the conics, and on $\HH/G$ it identifies conics having the same imaginary part. The orbit space is a quarter circle which we identify with the interval $(0,1]$, and obtain the surjective morphism
\[\varphi_{\circ}\colon\mathcal{J}_{\circ}\longrightarrow((\bigoplus_{\HH}\Z/2\Z)/G)\rightarrow\bigoplus_{(0,1]}\Z/2\Z.\]
\FloatBarrier

%%%%%%%%%%%%%%%%%%%%%%%%%
%%%%%%%%%%%%%%%%%%%%%%%%%
\subsubsection{Geometric construction}\label{geometric construction}
%%%%%%%%%%%%%%%%%%%%%%%%%
%%%%%%%%%%%%%%%%%%%%%%%%%
What follows is the geometric construction of $\varphi_{\circ}\colon\mathcal{J}_{\circ}\rightarrow\bigoplus_{(0,1]}\Z/2\Z$. In the rest of the paper, we will use this description of the quotient and in particular Remark~\ref{rmk map}, as the working tools are geometric ones. 

\begin{Def}\label{def map Jcirc}
Let $f\in\mathcal{J}_{\circ}$. For any {\em non-real} base-point $q$ of $f$ different from $p_1,\bar{p}_1,p_2,\bar{p}_2$, we have
$\pi_{\circ}(C_q)=[a+ib:1]$ and $\pi_{\circ}(C_{\bar{q}})=[a-ib:1]$ for some $a,b\in\R$, $b\neq0$ (see Definition~\ref{def basic} for the definition of $C_q$). We define 
\[\nu(C_q):=1-\frac{\mid a\mid}{a^2+b^2}\ \in(0,1].\]
Note that $\nu(C_{q'})=\nu(C_q)$ if and only if $\pi_{\circ}(C_q)=\lambda\pi_{\circ}(C_{q'})$ or $\pi_{\circ}(C_q)=\lambda\pi_{\circ}(C_{\bar{q}'})$ for some $\lambda\in\R$. In fact, seeing $\PP^1$ as space of conics, $\nu\colon\PP^1\setminus\PP^1(\R)\rightarrow(0,1]\hat{=}(\HH/G)/\R_{<0}$ is the quotient from Subsection~\ref{spinor norm}. 

We define $e_{\delta}\in\oplus_{(0,1]}\Z/2\Z$ to be the "standard vector" given by
\[(e_{\delta})_{\varepsilon}=\begin{cases}1,&\delta=\varepsilon\\ 0,&\text{else}\end{cases}\]
\end{Def}

%%%%%%%%%%%%%%%%%%%%%
\begin{figure}\label{figure quotient}
\def\svgwidth{0.6\textwidth}
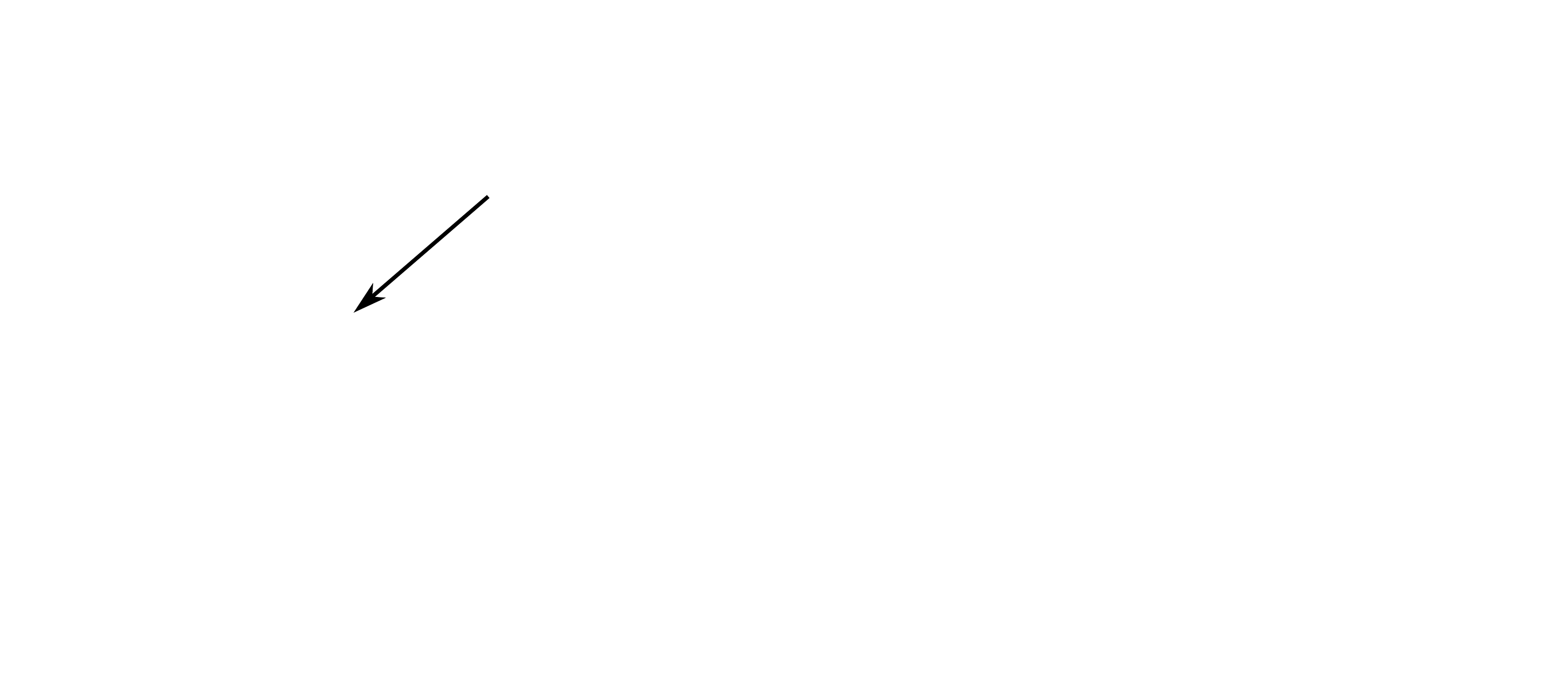
\caption{The map $\nu$ in Definition~\ref{def map Jcirc}}\label{figure 1}
\end{figure}
%%%%%%%%%%%%%%%%%%%%%

\begin{Def}\label{def varphicirc}
Let $f\in\mathcal{J}_{\circ}$ and $S(f)$ be the set of non-real conjugate {\em pairs} of base-points of $f$ different from $p_1,\dots,\bar{p}_2$. We define
\[\varphi_{\circ}\colon\mathcal{J}_{\circ}\longrightarrow\bigoplus_{(0,1]}\Z/2\Z,\qquad f\longmapsto\sum\limits_{(q,\bar{q})\in S(f)}e_{\nu(C_q)}\]
which is a well defined map according to Definition~\ref{def map Jcirc}.
\end{Def}

The construction of $\varphi_{\circ}$ and Remark~\ref{rmk map}~(\ref{rmk map 1}) yield the following lemma.
\begin{Lem}\label{lem hom}
The map $\varphi_{\circ}\colon\mathcal{J}_{\circ}\longrightarrow\bigoplus_{(0,1]}\Z/2\Z$ in Definition~$\ref{def varphicirc}$ coincides with the surjective homomorphism constructed in Section~$\ref{spinor norm}$. Its kernel contains all elements of degree $\leq4$.
\end{Lem}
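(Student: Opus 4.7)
The proof splits into two parts: identifying the two constructions of $\varphi_\circ$, and checking directly that elements of $\mathcal{J}_\circ$ of degree at most~$4$ are sent to $0$.

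\textbf{Part 1: the two constructions agree.} For $f\in\mathcal{J}_\circ$ I would use the splitting in Lemma~\ref{lem scale on P1} to write $f=g\cdot s$ with $g\in\mathrm{SO}(x^2+y^2-tz^2,\R(t))$ and $s$ in the chosen section of $G\simeq\R_{>0}\rtimes\Z/2\Z$. The section of $G$ is sent to $0$ by the spinor-norm construction, so only $g$ contributes. The geometric interpretation of $\bar\theta_\R$ supplied in Section~\ref{spinor norm} states that the $h$-coordinate of $\bar\theta_\R(g)\in\bigoplus_{\HH}\Z/2\Z$ equals the parity of the number of base-points of $g$ on the non-real fibre $\pi_\circ^{-1}(h)$, counted without multiplicity. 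Because $p_1,\bar p_1,p_2,\bar p_2$ lie on every fibre of $\pi_\circ$, they do not affect any parity; the remaining base-points on non-real fibres are precisely the non-real conjugate pairs $(q,\bar q)\in S(g)$, each pair contributing to the single index $h=\pi_\circ(C_q)\in\HH$.

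Next I would pass to the two quotients described in Section~\ref{spinor norm}. The action of $G$ on $\PP^1$ is given by positive scaling and $[u:v]\mapsto[v:u]$ (Lemma~\ref{lem scale on P1}(2)); combined with the re-assignment by $\R_{<0}$ supplied by Lemma~\ref{lem linear}, the orbit space of non-real conics is identified with $(0,1]$, and the quotient map is precisely $\nu$ of Definition~\ref{def map Jcirc}, as recorded in the comment following that definition. Therefore the class of $\pi_\circ(C_q)$ in $\bigoplus_{(0,1]}\Z/2\Z$ is $e_{\nu(C_q)}$, and summing over pairs in $S(f)$ reproduces Definition~\ref{def varphicirc}. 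This shows $\varphi_\circ$ coincides with the spinor-norm homomorphism; surjectivity and the group-homomorphism property are inherited from Section~\ref{spinor norm}.

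\textbf{Part 2: elements of degree $\le 4$ are killed.} For $d=1$ the map $f$ has no base-points. For $d\ge2$, apply Lemma~\ref{lem char}: the key observation is that any non-real base-point of $f$ comes paired with its complex conjugate, so if the characteristic prescribes an \emph{odd} number of extra base-points of some multiplicity (outside $p_1,\bar p_1,p_2,\bar p_2$), each such base-point has to be real. The characteristics $(2;1^2,1)$, $(3;1^4,2)$ and $(4;2^2,1^2,2,1)$ each feature exactly one extra base-point of each multiplicity, so every extra base-point is real. Hence $S(f)=\emptyset$ and $\varphi_\circ(f)=0$.

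\textbf{Main obstacle.} The delicate point is Part~1: translating the algebraic datum in $\R(t)^*/(\R(t)^*)^2$ furnished by the spinor norm into the purely geometric count of non-real conjugate pairs of base-points, and verifying that the two successive quotients (by $G$ and by $\R_{<0}$) exactly match the orbit structure captured by $\nu$. The explicit interpretation of $\bar\theta_\R$ given in Section~\ref{spinor norm} does most of this work, but care is required to handle the special role of the four reference points $p_1,\bar p_1,p_2,\bar p_2$, which lie on every non-real fibre and therefore must be excluded from $S(f)$ without disturbing parity counts.
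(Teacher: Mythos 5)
Your proposal is correct and follows essentially the same route as the paper, which disposes of the lemma in one line by appealing to the construction of $\varphi_{\circ}$ in Section~\ref{spinor norm} (where $\bar{\theta}_\R$ is already interpreted as the parity of base-points per non-real fibre, and $\nu$ is identified with the quotient $(\HH/G)/\R_{<0}\to(0,1]$ in Definition~\ref{def map Jcirc}) together with Remark~\ref{rmk map}~(\ref{rmk map 1}), whose characteristic argument for degree $\le 4$ is exactly your Part~2. You merely spell out the details the paper leaves implicit.
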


\noindent By looking at the linear systems of transformations, one can also prove directly that the map $\varphi_{\circ}$ in Definition~\ref{def varphicirc} is a homomorphism by using Lemma~\ref{lem Jcirc gen} and Remark~\ref{rmk map}.

\begin{Rmk}\label{rmk map}The following remarks directly follow from the definition of $\varphi_{\circ}$.
\begin{enumerate}
\item\label{rmk map 0} If $S(f)=\emptyset$, then $\varphi_0(f)=0$. 
\item\label{rmk map 1} Let $f\in\mathcal{J}_{\circ}$ of degree $\leq4$. Its characteristic (see Lemma~\ref{lem char}) implies that the non-real base-points of $f$ are among $p_1,\bar{p}_1,p_2,\bar{p}_2$. Thus, the set $S(f)$ is empty and $\varphi_0(f)=0$.
\item Let $\theta\in\mathcal{J}_{\circ}$ be a standard quintic transformation. Then $|S(f)|=1$ and therefore $\varphi_{\circ}(\theta)$ is a "standard vector" by Definition~\ref{def varphicirc}.
\item\label{rmk map 3} It follows from the definition of standard quintic transformations (Definition~\ref{def 5.1}) that for every $\delta\in(0,1]$ there exists a standard quintic transformation $\theta\in\mathcal{J}_{\circ}$ such that $\varphi_{\circ}(\theta)=e_{\delta}$.
\item Let $\theta_1,\theta_2\in\mathcal{J}_{\circ}$ be standard quintic transformations and $S(\theta_i)=\{(q_i,\bar{q}_i)\}$, $i=1,2$. If $C_{q_1}=C_{q_2}$ (or $C_{q_1}=C_{\bar{q}_2}$), then $\varphi_{\circ}(\theta_1)=\varphi_{\circ}(\theta_2)$. 
\item\label{rmk map 2} Let $\theta\in\mathcal{J}_{\circ}$ be a standard quintic transformation. Let $S(\theta)=\{(q_1,\bar{q}_1)\}$ and $S(\theta^{-1})=\{(q_2,\bar{q}_2)\}$. Since $\theta$ induces $\Id$ or $[x:y]\mapsto[y:x]$ on $\PP^1$ (Lemma~\ref{lem scale on P1}), it follows that $\nu(C_{q_1})=\nu(C_{q_2})$ and in particular $\varphi_{\circ}(\theta)=\varphi_{\circ}(\theta^{-1})$. 
\item\label{rmk map 5} Let $f\in\mathcal{J}_{\circ}$ and $C$ be any non-real conic passing through $p_1,\dots,\bar{p}_2$. The automorphism $\hat{f}$ on $\PP^1$ induced by $f$ is a scaling by a positive real number (Lemma~\ref{lem scale on P1}), thus $\nu\circ\hat{f}=\nu$. In particular, $e_{\nu(f(C))}=e_{\nu(\hat{f}(C))}=e_{\nu(C)}$.
\end{enumerate}
\end{Rmk}

%%%%%%%%%%%%%%%%%%%%%%%%%
%%%%%%%%%%%%%%%%%%%%%%%%%
\section{Presentation of $\Bir_\R(\PP^2)$ by generating sets and relations}\label{section technical thm}
%%%%%%%%%%%%%%%%%%%%%%%%%
%%%%%%%%%%%%%%%%%%%%%%%%%

This section is devoted to the rather technical proof of Theorem~\ref{prop technical thm}. We remind of the notation $p_1:=[1:i:0],p_2:=[0:1:i]$. 

%%%%%%%%%%%%%%%%%%%%%%%%%
%%%%%%%%%%%%%%%%%%%%%%%%%
\subsection{The presentation}
%%%%%%%%%%%%%%%%%%%%%%%%%
%%%%%%%%%%%%%%%%%%%%%%%%%

The family of standard quintic transformations plays an important role in $\Bir_\R(\PP^2)$. Recall the two quadratic transformations 
\[\sigma_0\colon [x:y:z]\dasharrow [yz:xz:xy],\qquad\sigma_1\colon [x:y:z]\dasharrow [y^2+z^2:xy:xz].\]

\begin{Thm}[\cite{RV05},\cite{BM12}]\label{thm BM}
The group $\Bir_\R(\PP^2)$ is generated by $\sigma_0,\sigma_1$, $\Aut_\R(\PP^2)$ and all standard quintic transformations. 
\end{Thm}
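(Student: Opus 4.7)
The plan is to proceed by induction on the degree $d=\deg(f)$ of an element $f\in\Bir_\R(\PP^2)$. The base case $d=1$ is immediate, since then $f\in\Aut_\R(\PP^2)$. For $d\ge 2$ I would aim to exhibit, depending on the configuration of the base-points of $f$, one of the three types of generating maps $\sigma_0$, $\sigma_1$, or a standard quintic (up to pre- and post-composition with elements of $\Aut_\R(\PP^2)$) such that composing with it strictly decreases the degree, at which point the induction hypothesis finishes the argument.

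The starting point would be the classical Noether inequality, which says that the three largest base-point multiplicities $m_1\ge m_2\ge m_3$ of $f$ satisfy $m_1+m_2+m_3>d$ together with the genus and B\'ezout identities already used in the proof of Lemma~\ref{lem char}. The new feature compared to the complex Noether--Castelnuovo theorem is that in the real setting the base-points of $f$ fall into three kinds: real proper (or infinitely near a real) points, pairs of complex conjugate proper points, and pairs of conjugate infinitely near points. I would then split into cases. \emph{Case 1}: if three real non-collinear base-points of highest multiplicities can be selected (possibly after resolving infinitely near ones), there exists $\alpha\in\Aut_\R(\PP^2)$ sending them to $[1{:}0{:}0],[0{:}1{:}0],[0{:}0{:}1]$, and $\sigma_0\alpha f$ has smaller degree. \emph{Case 2}: if among the base-points of highest multiplicity there is one real point together with a pair of complex conjugate points, in non-degenerate position, then an element of $\Aut_\R(\PP^2)$ conjugates them to the base-points of $\sigma_1$, and composition with $\sigma_1$ reduces the degree.

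\emph{Case 3}, which I expect to be the main obstacle, is the case where the high-multiplicity base-points all come in pairs of complex conjugates. This is where $\sigma_0$ and $\sigma_1$ no longer suffice, and the quintic maps enter. The key point is to show that one can always find three pairs of conjugate base-points whose total multiplicity makes the composition with a suitable degree-$5$ map of lower degree; this is a real analogue of the Noether inequality for conjugate triples. Given three conjugate pairs of highest multiplicity that do not lie on a common conic, Definition~\ref{def 5.1} furnishes a standard quintic $\theta$ with precisely these base-points, and $\theta f$ then has smaller degree. The degenerate sub-case where the six base-points lie on a conic, or where one pair is infinitely near another, is handled by the special quintics of Definition~\ref{def 5.2}; by \cite[Lemma~3.7]{BM12}, the latter decompose as products of standard quintics and linear maps, so they need not appear in the generating set.

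The main obstacle is therefore verifying the real Noether-type statement in Case~3: that the configuration of base-points of $f$, when all highest-multiplicity points are non-real, always contains three conjugate pairs in sufficiently general position, and that the non-degeneracy conditions (non-collinearity, absence of a common conic) needed to apply Lemma~\ref{lem sigma}, Definition~\ref{def 5.1}, or to reduce to Cases~1--2 after one quintic step, actually hold. This combinatorial analysis of real homaloidal nets is exactly the content of \cite{RV05} and \cite{BM12}, which is why the theorem is quoted here rather than re-proved.
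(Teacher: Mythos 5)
The paper offers no proof of Theorem~\ref{thm BM} to compare yours against: the statement is imported wholesale from \cite{RV05} and \cite{BM12} (the bracketed citations in the theorem header are the attribution), and nothing in Section~\ref{section technical thm} or elsewhere re-derives it. Your outline is a sensible reconstruction of a Noether--Castelnuovo-style induction, close in spirit to the way Ronga--Vust handle the subgroup $\Aut(\PP^2(\R))$; but as a proof it stops exactly where the difficulty begins, and you say so yourself.

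The concrete gap is your Case~3. The classical Noether inequality produces three base-points with $m_1+m_2+m_3>d$, counted as individual points. When the points of highest multiplicity are non-real they come in conjugate pairs of equal multiplicity, so those three points are typically $q_1,\bar q_1,q_2$, and the resulting inequality $2m(q_1)+m(q_2)>d$ does \emph{not} formally imply what you need, namely $m(q_1)+m(q_2)+m(q_3)>d$ for three \emph{distinct} conjugate pairs --- which is the condition making $\deg(\theta f)=5d-4\bigl(m(q_1)+m(q_2)+m(q_3)\bigr)<d$ for a standard quintic $\theta$ based at those pairs. Nor is a real base-point guaranteed, so neither $\sigma_0$ nor $\sigma_1$ need be available; and the non-degeneracy conditions (no common conic, control of infinitely near pairs) must also be checked. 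Ruling out all such bad homaloidal configurations is the actual content of the theorem, and the cited proofs do not dispatch it by a direct strengthening of Noether's inequality: \cite{BM12} argue via Sarkisov links between real minimal rational surfaces ($\PP^2$, the quadric $\mathcal{Q}_{3,1}$, and real conic bundles), classifying the links and expressing each in terms of the listed generators. So your proposal correctly identifies the architecture and the obstacle, but the central step is asserted rather than proved, which is precisely why the paper quotes the result instead of establishing it.
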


From $\sigma_0\in\mathcal{J}_*$, $\sigma_1\in\mathcal{J}_{\circ}$ and Lemma~\ref{lem quintlin}, we obtain the following corollary:

\begin{Cor}\label{cor BM}
The group $\Bir_\R(\PP^2)$ is generated by $\Aut_\R(\PP^2)$, $\mathcal{J}_*$, $\mathcal{J}_{\circ}$. 
\end{Cor}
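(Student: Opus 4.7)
The plan is to deduce this directly from Theorem~\ref{thm BM} by verifying that each of its generators already lies in the subgroup $\langle \Aut_\R(\PP^2), \mathcal{J}_*, \mathcal{J}_\circ\rangle$. Theorem~\ref{thm BM} tells us that $\Bir_\R(\PP^2)$ is generated by $\Aut_\R(\PP^2)$, $\sigma_0$, $\sigma_1$, and the family of standard quintic transformations, so it suffices to place each of these generators in one of the three subgroups (up to left/right multiplication by elements of $\Aut_\R(\PP^2)$).

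First I would observe that $\sigma_0 \in \mathcal{J}_*$: its base-points are $[1{:}0{:}0],[0{:}1{:}0],[0{:}0{:}1]$, and a direct computation (or simply the usual fact that $\sigma_0$ is a de~Jonqui\`eres map with respect to the pencil of lines through $[1{:}0{:}0]$) shows that it preserves the fibration $\pi_*\colon [x{:}y{:}z]\mapsto[y{:}z]$. Next, $\sigma_1 \in \mathcal{J}_\circ$, which is precisely the content of the first sentence of Remark~\ref{rmk Jcirc elt}.

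For the standard quintic transformations, I invoke Lemma~\ref{lem quintlin}: given any standard quintic $\theta$, there exist $\alpha,\beta \in \Aut_\R(\PP^2)$ with $\beta\theta\alpha \in \mathcal{J}_\circ$. Writing $\theta = \beta^{-1}(\beta\theta\alpha)\alpha^{-1}$ exhibits $\theta$ as an element of $\langle \Aut_\R(\PP^2),\mathcal{J}_\circ\rangle$. Combining these three observations, every generator appearing in Theorem~\ref{thm BM} lies in $\langle \Aut_\R(\PP^2),\mathcal{J}_*,\mathcal{J}_\circ\rangle$, hence this subgroup equals all of $\Bir_\R(\PP^2)$.

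There is no real obstacle here; the corollary is essentially a bookkeeping consequence of the three ingredients (Theorem~\ref{thm BM}, Remark~\ref{rmk Jcirc elt}, and Lemma~\ref{lem quintlin}) already established in the text. The only point one must be slightly careful about is not to forget that Theorem~\ref{thm BM} does not explicitly mention special quintic transformations, so we do not need to absorb them separately — they are already handled, either because they are compositions of standard quintics and linear maps (as noted in the proof of Lemma~\ref{lem Jcirc gen}) or simply because Theorem~\ref{thm BM} is stated only with standard quintics.
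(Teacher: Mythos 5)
Your proposal is correct and matches the paper's own (very short) argument: the paper likewise notes $\sigma_0\in\mathcal{J}_*$, $\sigma_1\in\mathcal{J}_{\circ}$, and invokes Lemma~\ref{lem quintlin} to absorb the standard quintic transformations, so the corollary follows immediately from Theorem~\ref{thm BM}. No issues.
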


Using these generating groups, we can give a representation of $\Bir_\R(\PP^2)$ in terms of generating sets and relations:

Define $S:=\Aut_\R(\PP^2)\cup\mathcal{J}_*\cup\mathcal{J}_{\circ}$ and let $F_S$ be the free group generated by $S$. Let $w\colon S\rightarrow F_S$ be the canonical word map.

\begin{Def}\label{def G}
We denote by $\mathcal{G}$ be the following group:
\[F_{S} / \scalebox{1.4}{\Bigg\langle} \begin{array}{ll}w(f)w(g)w(h),& f,g,h\in\Aut_{\R}(\PP^2), \ fgh=1\ \text{in}\ \Aut_{\R}(\PP^2)\\
 w(f)w(g)w(h), & f,g,h\in\mathcal{J}_*,\ fgh=1\ \text{in}\ \mathcal{J}_*\\ 
 w(f)w(g)w(h),&f,g,h\in\mathcal{J}_{\circ},\ fgh=1\ \text{in}\ \mathcal{J}_{\circ}\\ 
\text{the relations in the list below}\end{array}\scalebox{1.4}{\Bigg\rangle}\]

\begin{erel}
\item\label{relrel1} Let $\theta_1,\theta_2\in\mathcal{J}_{\circ}$ be standard quintic transformations and $\alpha_1,\alpha_2\in\Aut_{\R}(\PP^2)$. 
\[w(\alpha_2)w(\theta_1)w(\alpha_1)=w(\theta_2)\ \text{in}\ \mathcal{G}\quad \text{if}\quad \alpha_2\theta_1\alpha_1=\theta_2.\]

\item\label{relrel2} Let $\tau_1,\tau_2\in\mathcal{J}_*\cup\mathcal{J}_{\circ}$ be both of degree $2$ or of degree $3$ and $\alpha_1,\alpha_2\in\Aut_{\R}(\PP^2)$.  
\[w(\tau_1)w(\alpha_1)=w(\alpha_2)w(\tau_2)\ \text{in}\ \mathcal{G}\quad \text{if}\quad \tau_1\alpha_1=\alpha_2\tau_2.\]

\item\label{relrel3} Let $\tau_1,\tau_2,\tau_3\in\mathcal{J}_*$ all be of degree 2, or $\tau_1,\tau_2$ of degree 2 and $\tau_3$ of degree 3, and $\alpha_1,\alpha_2,\alpha_3\in\Aut_{\R}(\PP^2)$.
\[w(\tau_2)w(\alpha_1)w(\tau_1)=w(\alpha_3)w(\tau_3)w(\alpha_2)\quad \text{in}\quad \mathcal{G}\ \text{if}\ \tau_2\alpha_1\tau_1=\alpha_3\tau_3\alpha_2.\]

\end{erel}
\end{Def} 

\begin{Thm}[Structure theorem]\label{prop technical thm}
The natural surjective group homomorphism $\mathcal{G}\rightarrow\Bir_\R(\PP^2)$ is an isomorphism.
\end{Thm}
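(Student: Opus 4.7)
The plan is to prove injectivity of the natural homomorphism $\mathcal{G}\to\Bir_\R(\PP^2)$, since surjectivity is immediate from Corollary~\ref{cor BM}. The argument should show that any word $W=w(g_1)\cdots w(g_n)$ in $F_S$ whose image is trivial in $\Bir_\R(\PP^2)$ is already trivial in $\mathcal{G}$.

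First I would normalise such a word using the three families of ``triangle'' relations inside $\Aut_\R(\PP^2)$, $\mathcal{J}_*$, and $\mathcal{J}_{\circ}$: after fusing consecutive factors lying in the same generating group, the word becomes an alternating product of non-linear elements coming from $\mathcal{J}_*$ and $\mathcal{J}_{\circ}$ separated by linear factors. Using Lemma~\ref{lem Jcirc gen} and its analogue for $\mathcal{J}_*$, each non-linear factor can be further broken into a product of quadratic and cubic transformations and, in the case of $\mathcal{J}_{\circ}$, standard quintic transformations. The proof then proceeds by induction on a complexity measure such as the pair (number of quintic atoms, total degree of the quadratic and cubic atoms), ordered lexicographically.

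The roles of the additional relations can be read as follows. Relation~(rel.~1) identifies in $\mathcal{G}$ any two standard quintic transformations that agree in $\Bir_\R(\PP^2)$ up to pre- and post-composition with automorphisms; this is what controls the quintic part of a word. Relation~(rel.~2) is a commutation relation allowing an automorphism to be pushed across a quadratic or cubic atom, possibly re-labelling which of $\mathcal{J}_*,\mathcal{J}_{\circ}$ the atom is taken to belong to when this ambiguity arises. Finally, relation~(rel.~3) serves as the key reduction step: whenever a composition $\tau_2\alpha_1\tau_1$ of two quadratic transformations in $\mathcal{J}_*$ has total image of degree at most $3$, one rewrites it as a single quadratic or cubic atom, strictly decreasing complexity. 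The auxiliary quadratic and cubic atoms required to apply these relations in a given configuration will be supplied by Lemmata~\ref{lem sigma} and~\ref{lem sigma deg 3}.

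The main obstacle will be the geometric case analysis underlying the reduction. One must verify that \emph{every} relation holding in $\Bir_\R(\PP^2)$ between low-degree generators can be achieved by iterating (rel.~1)--(rel.~3) together with the relations inside the generating groups. This is where the careful bookkeeping of base-points and characteristics from Subsection~\ref{subsection Jcirc} becomes essential: for each admissible configuration of base-points of the intermediate atoms one must exhibit an explicit factorisation matching one of the forms on either side of (rel.~2) or (rel.~3), and the special positions (base-points on the real lines $L_{p_i,\bar p_i}$ or on one of the reducible conics $C_1,C_2,C_3$) have to be handled separately. I expect the most delicate step to be showing that standard quintic atoms cannot be hidden inside relations among lower-degree generators beyond what (rel.~1) already prescribes, i.e.\ that after all quadratic and cubic simplifications have been carried out, the remaining quintic part of the word is rigid enough to be trivialised using only (rel.~1) and the $\mathcal{J}_{\circ}$-triangle relations.
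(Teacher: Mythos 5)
There is a genuine gap: your induction has no workable termination measure and no mechanism for locating where a reduction can be performed. The paper's induction is not on data intrinsic to the word (number of quintic atoms, total degree of the atoms) but on the lexicographic pair $(D,k)$ where $D=\max_i\deg(f_i\cdots f_1)$ is the maximal degree of the \emph{intermediate linear systems} $\Lambda_i=(f_i\cdots f_1)(\Lambda_0)$, $n$ is the last index where this maximum is attained, and $k=\sum_{i=1}^{n}(\deg(f_i)-1)$. This is the crucial point your proposal misses. With your measure, a long word of quadratic atoms composing to the identity gives no adjacent triple $\tau_2\alpha_1\tau_1$ of degree $\leq 3$ in general (a product of two quadratic maps generically has degree $4$), so (rel.~3) simply never fires and your complexity never decreases; you acknowledge the issue (``whenever \dots has total image of degree at most $3$'') but give no reason such a configuration must exist. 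The paper's answer is Lemma~\ref{lem deg mult ineq}: at the peak $n$ of the degree sequence, the Noether-type inequalities force the linear system $\Lambda_{n-1}$ to have base-points of large multiplicity located among the base-points of $f_{n-1}$ and $f_{n+1}$, and it is exactly this that allows one to construct the auxiliary quadratic, cubic and quintic maps (Lemmas~\ref{lem sigma}, \ref{lem sigma deg 3}, \ref{lem exist}) and to rewrite $w(f_{n+1})w(f_n)w(f_{n-1})$ via (rel.~1)--(rel.~3) so that all new intermediate systems have degree $<D$, or $D$ stays fixed while $k$ strictly drops (Lemmas~\ref{lem rel 5,5}, \ref{lem rel dJ 2}, \ref{lem rel dJ}, \ref{lem rel mixed}).

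A secondary deviation: you propose to first shatter each $\mathcal{J}_*$-factor into quadratic atoms. The paper instead keeps $\mathcal{J}_*$-elements whole and peels off one quadratic map at a time (in Lemma~\ref{lem rel dJ} the first new factor satisfies $\deg(\theta_1)=\deg(h)-1$), which is precisely what makes the secondary parameter $k$ decrease; pre-decomposing would destroy this bookkeeping. Finally, your closing paragraph (``one must verify that every relation holding in $\Bir_\R(\PP^2)$ between low-degree generators can be achieved by iterating (rel.~1)--(rel.~3)'') essentially restates the theorem for short words rather than proving it; the content of the proof is the explicit case analysis on common base-points carried out in the four rewriting lemmas, driven by the inequalities of Lemma~\ref{lem deg mult ineq}. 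Without the degree-of-partial-products parameter and those inequalities, the argument does not close.
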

Its proof is situated at the very end of Section~\ref{section technical thm}. The method to prove it is to study linear systems of birational transformations of $\PP^2$ and their base-points, and has been described in \cite{Bla12}, \cite{I85} and \cite{Z15}.
 
\begin{Rmk}\label{rmk general amalgam} 
The generalised amalgamated product of $\Aut_\R(\PP^2)$, $\mathcal{J}_*$, $\mathcal{J}_{\circ}$ along all pairwise intersections is the quotient of the free product of the three groups modulo the relations given by the pairwise intersections.\par
Note that the group $\mathcal{G}$ is isomorphic to the quotient of the generalised amalgamated product of $\Aut_\R(\PP^2)$, $\mathcal{J}_*$, $\mathcal{J}_{\circ}$ along all pairwise intersections by relations \ref{relrel1}, \ref{relrel2} and \ref{relrel3}. \par
Since $\Bir_\R(\PP^2)$ is generated by $\Aut_\R(\PP^2),\mathcal{J}_*,\mathcal{J}_{\circ}$ (Corollary~\ref{cor BM}), there exists a natural surjective group homomorphism $F_S\rightarrow\Bir_\R(\PP^2)$ which factors through a group homomorphism $\mathcal{G}\rightarrow\Bir_\R(\PP^2)$ because all relations above hold in $\Bir_\R(\PP^2)$.
\end{Rmk}

By abuse of notation, we also denote by
\[w\colon\Aut_{\R}\cup\mathcal{J}_*\cup\mathcal{J}_{\circ}\rightarrow\mathcal{G}\]
the composition of $S\rightarrow F_S$ with the canonical projection $F_S\rightarrow\mathcal{G}$.

\begin{Rmk}\label{rmk rel sp}
Suppose $\theta_1,\theta_2\in\mathcal{J}_{\circ}$ are special quintic transformations (see Definition~\ref{def 5.2}). If there exist $\alpha_1,\alpha_2\in\Aut_\R(\PP^2)$ such that
$\theta_2=\alpha_2\theta_1\alpha_1$ then $\alpha_1,\alpha_2$ permute $p_1,\bar{p}_1,p_2,\bar{p}_2$ and are thus contained in $\mathcal{J}_{\circ}$. So, the relation
\begin{equation}w(\theta_2)=w(\alpha_2)w(\theta_1)w(\alpha_2)\quad \text{if}\quad\theta_2=\alpha_2\theta_1\alpha_1\ \text{in}\ \Bir_\R(\PP^2)\tag{\bf rel. 4}\label{relrel4}\end{equation}
is true in $\mathcal{G}$ and even in the generalised amalgamated product of $\Aut_\R(\PP^2)\,\mathcal{J}_*,\mathcal{J}_{\circ}$ along all the pairwise intersections. Therefore, we need not list this relation in Definition~\ref{def G}.
\end{Rmk}

\begin{Rmk}
In the definition of standard (resp. special) quintic transformations, we choose an identification of $Y$ (reps. $Y_0$) with $\PP^2$. 
Changing this choice means composing from the left with an automorphism of $\PP^2$, and relations \ref{relrel1} and (\ref{relrel4}) are not affected by this: 
Let $\theta_1,\theta_2\in\Bir_\R(\p^2)$ standard (resp. special) quintic transformations such that $\theta_2=\alpha\theta_1$ for some $\alpha\in\Aut_\R(\PP^2)$. By Lemma~\ref{lem quintlin} there exist $\beta_1,\beta_2,\gamma_1,\gamma_2\in\Aut_\R(\PP^2)$ such that 
\[\theta_1':=\beta_2\theta_1\beta_1\in\J_{\circ},\quad\theta_2':=\gamma_2\theta_2\gamma_1\in\J_{\circ}.\]
Then $\theta_2'=(\gamma_2\alpha_2\beta_2^{-1})\theta_1'(\beta_1^{-1}\gamma_1)$,
and relation~\ref{relrel1} implies that
\[w(\theta_2')=w(\gamma_2\alpha_2\beta_1^{-1})w(\theta_1)w(\beta_2^{-1}\gamma_1).\]
It means that the relations $\theta_2=\alpha\theta_1$ holds in $\mathcal{G}$ as well. 
\end{Rmk}

\begin{Rmk}\label{rmk examples}
In the proof of Theorem~\ref{prop technical thm}, relations \ref{relrel1}, \ref{relrel2} and \ref{relrel3} mostly turn up in the form of the following examples:\par 
{\bf(1)} Example of \ref{relrel1} - changing pencil: Let $\theta\in\mathcal{J}_{\circ}$ be a standard quintic transformation (see Definition~\ref{def 5.1}). Call its base-points $p_1,\bar{p}_1,p_2,\bar{p}_2,p_3,\bar{p}_3$, and the base-points of its inverse $p_1,\bar{p}_1,p_2,\bar{p}_2,p_4,\bar{p}_4$  where $p_3,p_4$ are non-real proper points of $\PP^2$. By Lemma~\ref{lem 5.1} and Remark~\ref{rmk quintlin 2} it sends the pencil of conics through $p_1,\bar{p}_1,p_3,\bar{p}_3$ onto the one through $p_1,\bar{p}_1,p_4,\bar{p}_4$ (or $p_2,\bar{p}_2,p_4,\bar{p}_4$, in which case we proceed analogously).
For $i=3,4$, the four points $p_1,\bar{p}_1,p_i,\bar{p}_i$ are not collinear, so there exist $\alpha_{1},\alpha_{2}\in\Aut_{\R}(\PP^2)$ such that $\alpha_1(\{p_1,p_3\})=\{p_1,p_2\}$ and $\alpha_2(\{p_1,p_4\})=\{p_1,p_2\}$.  Then $\alpha_2\theta\alpha_1^{-1}\in\mathcal{J}_{\circ}$ is a standard quintic transformation and the relation  $w(\alpha_2)w(\theta)w(\alpha_1)=w(\alpha_2\theta\alpha_1)$ is an example of \ref{relrel1}.

{\bf(2)} Example of \ref{relrel2} - changing pencil: Let $\tau\in\mathcal{J}_{\circ}$ be of degree 2 or $3$. By Lemma~\ref{lem char}, $\tau$ has exactly one real base-point. Let $r$ be the real base-point of $\tau$ and $s$ the real base-point of $\tau^{-1}$. Observe that $\tau$ sends the pencil of lines through $r$ onto the pencil of lines through $s$. There exist $\alpha_1,\alpha_2\in\Aut_{\R}(\PP^2)$ such that $(\alpha_1)^{-1}(r)=[1:0:0]=\alpha_2(s)$. Then $\alpha_2\tau\alpha_1$ is an element of $\mathcal{J}_*$ and the relation $w(\alpha_2)w(\tau)w(\alpha_1)=w(\alpha_2\tau\alpha_1)$ is an example of \ref{relrel2}.

{\bf(3)} Example of \ref{relrel2}: Let $\tau_1,\tau_2\in\mathcal{J}_{\circ}$ of degree 2 or 3 and suppose there exists $\alpha\in\Aut_\R(\PP^2)$ that sends the base-points of $(\tau_1)^{-1}$ onto the base-points of $\tau_2$.  
Then $\tau_2\alpha(\tau_1)^{-1}$ is linear and the relation $w(\tau_2)w(\alpha)w((\tau_1)^{-1})=w(\tau_2\alpha(\tau_1)^{-1})$ is an other example of \ref{relrel2}.

{\bf(4)} Example of \ref{relrel3}: Let $\tau_1,\tau_2\in\mathcal{J}_*$ be of degree 2 with base-points $p:=[1:0:0],r_1,r_2$ and $p,s_1,s_2$ respectively, and $\alpha\in\Aut_{\R}(\PP^2)$ with $\alpha(r_i)=s_i$ but $\alpha(p)\neq p$ (i.e. $\alpha\notin\mathcal{J}_*$). 
Then $\tau_3:=\tau_2\alpha(\tau_1)^{-1}$ is of degree $2$ 
and there exist $\beta_1,\beta_2\in\Aut_\R(\PP^2)$ such that $\beta_2\tau_3\beta_1\in\mathcal{J}_*$. The relation $w(\beta_2^{-1})w(\beta_2\tau_3\beta_1)w(\beta_1^{-1})=w(\tau_2)w(\alpha)w(\tau_1)$ is an example of \ref{relrel3}.
\end{Rmk}

%%%%%%%%%%%%%%%%%%%%%%%%%
%%%%%%%%%%%%%%%%%%%%%%%%%
\subsection{Proof of the structure theorem}
%%%%%%%%%%%%%%%%%%%%%%%%%
%%%%%%%%%%%%%%%%%%%%%%%%%
For a linear system $\Lambda$ in $\PP^2$, we write $m_{\Lambda}(q)$ for the multiplicity of $\Lambda$ in a point $q$. Similarly, for $f\in\Bir_\R(\PP^2)$, we write $m_f(q)$ for the multiplicity of $f$ in the point $q$.

\begin{Lem}\label{lem deg mult ineq}
Let $f\in\mathcal{J}_*\cup\mathcal{J}_{\circ}$ be non-linear and $\Lambda$ be a real linear system of degree $\deg(\Lambda)=D$. Suppose that 
\[\deg(f(\Lambda))\leq D\quad (\text{resp.}\ <D).\]
\begin{enumerate}
\item  If $f\in\mathcal{J}_*$, there exist two real or a pair of non-real conjugate base-points  $q_1,q_2$ of $f$ such that
\[m_{\Lambda}([1:0:0])+m_{\Lambda}(q_1)+m_{\Lambda}(q_2)\geq D\quad(\text{resp.}\ >D)\]

\item Suppose that $f\in\mathcal{J}_{\circ}$. Then there exists a base-point $q\notin\{p_1,\bar{p}_1,p_2,\bar{p}_2\}$ of $f$ of multiplicity $2$ such that
\[(2.1)\quad m_{\Lambda}(p_1)+m_{\Lambda}(p_2)+m_{\Lambda}(q)\geq D\quad(\text{resp.}\ >D)\]
or $f$ has a (single) simple base-point $r$ and
\[(2.2)\qquad 2m_{\Lambda}(p_i)+m_{\Lambda}(r)\geq D\quad(\text{resp.}\ >),\quad\text{where}\ m_f(p_i)=\nicefrac{\deg(f)}{2}.\]
\end{enumerate}
\end{Lem}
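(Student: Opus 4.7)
The plan is to reduce everything to the standard intersection-theoretic identity
\[
\deg(f(\Lambda)) \;=\; d\,D \;-\; \sum_{q\in\Bp(f)} m_f(q)\,m_\Lambda(q),
\]
obtained by intersecting the strict transform of $\Lambda$ with the pull-back of a general line by $f$ on a minimal common resolution of $f$ and $\Lambda$. Here $d=\deg(f)$ and the sum runs over the base-points of $f$ (proper or infinitely near), where $m_\Lambda(q)$ and $m_f(q)$ are the multiplicities at $q$ on the corresponding blow-up. Rewriting $\deg(f(\Lambda))\le D$ (resp. $<D$) yields
\[
(d-1)\,D \;\le\; \sum_{q\in\Bp(f)} m_f(q)\, m_\Lambda(q) \quad (\text{resp.\ } <).
\]

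\textbf{Case $f\in\mathcal{J}_*$.} A de Jonqui\`eres transformation of degree $d$ with center $[1{:}0{:}0]$ has characteristic $(d;\,d{-}1,\,1^{2d-2})$, so the inequality becomes
\[
(d-1)\bigl(D-m_\Lambda([1{:}0{:}0])\bigr) \;\le\; \sum_{j=1}^{2d-2} m_\Lambda(q_j).
\]
Since both $f$ and $\Lambda$ are real, the real simple base-points of $f$ occur in even number, so the $2d-2$ simple base-points partition into exactly $d-1$ pairs, each consisting of two real points or of a conjugate pair. The pigeonhole principle applied to the $d-1$ pair-sums gives a pair $(q_1,q_2)$ with $m_\Lambda(q_1)+m_\Lambda(q_2)\ge D-m_\Lambda([1{:}0{:}0])$, hence the claim (strict if $\deg(f(\Lambda))<D$).

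\textbf{Case $f\in\mathcal{J}_\circ$.} Plug in the characteristics from Lemma~\ref{lem char}, setting $\mu_j:=m_\Lambda(p_j)=m_\Lambda(\bar p_j)$ (by reality). If $d$ is odd, the only non-$\{p_1,\bar p_1,p_2,\bar p_2\}$ base-points are the $(d-1)/2$ double points $q_k$, and the inequality simplifies to
\[
(d-1)\bigl(D-\mu_1-\mu_2\bigr) \;\le\; 2\sum_{k} m_\Lambda(q_k),
\]
so pigeonhole on the $(d-1)/2$ summands yields a double point $q$ with $m_\Lambda(q)\ge D-\mu_1-\mu_2$, i.e.\ (2.1). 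If $d$ is even, let $p_i$ be the point of multiplicity $d/2$, $r$ the simple base-point, and $M=\sum_k m_\Lambda(q_k)$ over the $(d-2)/2$ double points. The inequality becomes
\[
(d-1)D \;\le\; 2\mu_i + R + (d-2)(\mu_1+\mu_2) + 2M,
\]
where $R=m_\Lambda(r)$. Arguing by contradiction, assume both (2.1) and (2.2) fail, so $m_\Lambda(q_k)\le D-\mu_1-\mu_2-1$ for each $k$ and $2\mu_i+R\le D-1$; summing the bounds gives $2M\le(d-2)(D-\mu_1-\mu_2-1)$ and substituting back yields $(d-1)D\le(d-1)D-(d-1)$, a contradiction. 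The strict version goes through verbatim with $-1$'s replaced by $0$'s.

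\textbf{Expected obstacle.} The only nontrivial point is the bookkeeping in the even-degree sub-case of $\mathcal{J}_\circ$: one must simultaneously handle the asymmetry between $p_i$ and $p_{3-i}$ and the lone simple base-point $r$, and verify that the integer gap between the strict and non-strict statements matches up exactly so that both versions of the conclusion follow from the same contradiction argument.
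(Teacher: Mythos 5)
Your proposal is correct and follows essentially the same route as the paper: the degree--multiplicity identity combined with the characteristics from Lemma~\ref{lem char} and a pigeonhole/averaging argument on the resulting sum. The only cosmetic difference is that in the even-degree $\mathcal{J}_{\circ}$ case the paper rewrites the sum as $D$ plus a collection of terms one of which must be $\leq 0$, whereas you run the equivalent contradiction with explicit integer bounds; both are the same computation.
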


\begin{proof}
Define $d:=\deg(f)$ to be the degree of $f$. The claim for ``$<$" follows by putting a strict inequality everywhere below. 

(1) Suppose that $f\in\mathcal{J}_{*}$. Its characteristic is $(d;d-1,1^{2d-2})$ because it preserves the pencil of lines through $[1:0:0]$. Let $r_1,\dots,r_{2d-2}$ be its simple base-points. Since non-real base-points come in pairs, $f$ has an even number $N$ of real base-points. 
We order the base-points such that either $r_{2i-1},r_{2i}$ are real or $r_{2i}=\bar{r}_{2i-1}$ for $i=1,\dots,d-1$. Then
{\small
\begin{align*}\small D\geq\deg(f(\Lambda))=&dD-(d-1)m_{\Lambda}([1:0:0])-\sum_{i=1}^{d-1}(m_{\Lambda}(r_{2i-1})+m_{\Lambda}(r_{2i}))\\
=&D+\sum_{i=1}^{d-1}\left(D-m_{\Lambda}([1:0:0])-m_{\Lambda}(r_{2i-1})-m_{\Lambda}(r_{2i})\right)
\end{align*}
}
Hence there exists $i_0$ such that $D\leq m_0-m_{\Lambda}(r_{2i_0-1})-m_{\Lambda}(r_{2i_0})$.

(2) Suppose that $f\in\mathcal{J}_{\circ}$. By Lemma~\ref{lem char}, its characteristic is $(d;(\frac{d-1}{2})^4,2^{\frac{d-1}{2}})$ or $(d;(\nicefrac{d}{2})^2,(\frac{d-2}{2})^2,2^{\frac{d-2}{2}},1)$.

Assume that $f$ has no simple base-point, i.e. no base-point of multiplicity $1$. Call $r_1,\dots,r_{(d-1)/2}$ its base-points of multiplicity $2$ distinct from $p_1,\bar{p}_1,p_2,\bar{p}_2$. Then
 {\small
\begin{align*}D\geq \deg(f(\Lambda))&=dD-2m_{\Lambda}(p_1)\cdot\frac{d-1}{2}-2m_{\Lambda}(p_1)\cdot\frac{d-1}{2}-2\sum_{i=1}^{(d-1)/2}m_{\Lambda}(r_i)\\
&=D+2\sum_{i=1}^{(d-1)/2}\left(D-m_{\Lambda}(p_1)-m_{\Lambda}(p_2)-m_{\Lambda}(r_i)\right)
\end{align*}
}
which implies that there exists $i_0$ such that $0\geq D-m_{\Lambda}(p_1)-m_{\Lambda}(p_1)-m_{\Lambda}(r_{i_0})$. The claim for "$>$" follows analogously.

Assume that $f$ has a simple base-point $s$. Let $r_1,\dots,r_{(d-2)/2}$ be its base-points of multiplicity $2$ distinct from $p_1,\bar{p}_1,p_2,\bar{p}_2$. Then
{\small 
\begin{align*} D\geq\deg(f(\Lambda))&=dD-2m_{\Lambda}(p_j)\cdot\frac{d}{2}-2m_{\Lambda}(p_k)\cdot\frac{d-2}{2}-(2\sum_{i=1}^{(d-2)/2}m_{\Lambda}(r_i))-m_{\Lambda}(s)\\
&=D+(D-2m_{\Lambda}(p_j)-m_{\Lambda}(s))+2\sum_{i=1}^{(d-2)/2}(D-m_{\Lambda}(p_j)-m_{\Lambda}(p_k)-m_{\Lambda}(r_i))
\end{align*}
}
where $\{j,k\}=\{1,2\}$. The inequality implies there exist $i_0$ such that $0\geq D-m_{\Lambda}(p_j)-m_{\Lambda}(p_k)-m_{\Lambda}(r_{i_0})$ or that $0\geq D-2m_{\Lambda}(p_j)-m_{\Lambda}(s)$. 
\end{proof}

\begin{Not}
In the following diagrams, the points in the brackets are the base-points of the corresponding birational map (arrow). A dashed arrow indicates a birational map, and a drawn out arrow a linear tranformation.\par
Let $f_1,\dots,f_n\in\Aut_\R(\PP^2)\cup\mathcal{J}_*\cup\mathcal{J}_{\circ}$ such that $f_n\cdots f_1=\Id$. If $w(f_n)\cdots w(f_1)=1$ in the group $\mathcal{G}$, we say that the diagram
\[\xymatrix{\PP^2\ar@{-->}[r]^{f_1} &\PP^2\ar@{..>}[r]&\PP^2\ar@{-->}[r]^{f_{n-1}}&\PP^2\ar@/^1pc/@{-->}[lll]^{f_n}}\]
{\em corresponds to a relation in $\mathcal{G}$} or is {\em generated by relations} in $\mathcal{G}$. In the sequel, we replace $\PP^2$ by a linear system $\Lambda$ of curves in $\PP^2$ and its images by $f_1,\dots,f_{n-1}$.
\end{Not}

\begin{Lem}\label{lem rel 5,5}
Let $f,h\in\mathcal{J}_{\circ}$ be standard or special quintic transformations, $g\in\Aut_{\R}(\PP^2)$ and $\Lambda$ be a real linear system of degree $D$. Suppose that
\[\deg(h^{-1}(\Lambda))\leq D\quad \text{and}\quad \deg(fg(\Lambda))<D.\]
Then there exists $\theta_1\in\Aut_{\R}(\PP^2)$, $\theta_2,\dots,\theta_n\in\Aut_{\R}(\PP^2)\cup\mathcal{J}_{\circ}$ such that
\begin{enumerate}
\item $w(f)w(g)w(h)=w(\theta_n)\cdots w(\theta_1)$ holds in $\mathcal{G}$, i.e. the following diagram corresponds to a relation in $\mathcal{G}$:
\[\small\xymatrix{&\Lambda\ar[r]^g &g(\Lambda)\ar@{-->}[dr]^f&\\ 
h^{-1}(\Lambda)\ar@{-->}[ru]^h\ar@{-->}[r]^{\theta_1}&\ar@{..>}[r]&\ar@{-->}[r]^{\theta_n}&fg(\Lambda) }\]
\item $\deg(\theta_i\cdots\theta_1h^{-1}(\Lambda))<D$ for $i=2,\dots,n$.
\end{enumerate}
\end{Lem}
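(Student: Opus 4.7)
The plan is to apply Lemma~\ref{lem deg mult ineq} to $f$ acting on $g(\Lambda)$ in order to locate a double base-point of $f$ whose multiplicity in $g(\Lambda)$ forces the degree drop, and then to rewrite the triple product $fgh$ through a chain of transformations in $\Aut_\R(\PP^2)\cup\mathcal{J}_\circ$ using the relations \ref{relrel1}--\ref{relrel3} of Definition~\ref{def G} together with the instances described in Remark~\ref{rmk examples}.

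First I would apply Lemma~\ref{lem deg mult ineq}~(2.1) to $f\in\mathcal{J}_\circ$: since $f$ is a quintic and hence of odd degree, case (2.2) cannot occur, so the hypothesis $\deg(fg(\Lambda))<D$ produces a base-point $q$ of $f$ of multiplicity $2$, not among $\{p_1,\bar p_1,p_2,\bar p_2\}$, satisfying
\[m_{g(\Lambda)}(p_1)+m_{g(\Lambda)}(p_2)+m_{g(\Lambda)}(q)>D.\]
By Lemma~\ref{lem 5.1} and Lemma~\ref{lem char}, the base-points of $f$ are then exactly $p_1,\bar p_1,p_2,\bar p_2,q,\bar q$, all of multiplicity $2$. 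The same reasoning applied to $h^{-1}$ only yields a non-strict inequality, so the first strict degree drop has to be engineered at step $\theta_2$.

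Next I would choose a linear $\theta_1\in\Aut_\R(\PP^2)$ that normalises $h^{-1}$: since there is freedom to permute $\{p_1,\bar p_1,p_2,\bar p_2\}$ and to change the identification of the target of a standard/special quintic (as per the remarks following Definition~\ref{def 5.1} and Definition~\ref{def 5.2}), one can arrange that the base-points of $\theta_1 h^{-1}$ contain a triple on which a quadratic $\tau\in\mathcal{J}_\circ$ (produced by Lemma~\ref{lem sigma}), or a cubic (produced by Lemma~\ref{lem sigma deg 3}), matches the configuration of high-multiplicity points identified in the previous paragraph. Setting $\theta_2:=\tau$, the intersection formula (used as in the proof of Lemma~\ref{lem deg mult ineq}) forces $\deg(\theta_2\theta_1 h^{-1}(\Lambda))<D$. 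The identity $w(f)w(g)w(h)=w(\theta_n)\cdots w(\theta_1)$ is then built one symbol at a time: each time a quadratic or cubic $\theta_i$ is inserted, it is absorbed into the neighbouring quintic or linear factor using an instance of relation~\ref{relrel1}, \ref{relrel2}, or \ref{relrel3} (and Remark~\ref{rmk rel sp} in the special quintic case). Once we are strictly below degree $D$, the remaining factors $\theta_3,\ldots,\theta_n$ are chosen inductively so as to recompose $fgh$ while every intermediate linear system stays of degree $<D$.

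The main obstacle will be the case analysis for the geometric position of the pair $q,\bar q$: whether $q$ is a proper point of $\PP^2$ (standard quintic case) or infinitely near one of $p_1,\bar p_1,p_2,\bar p_2$ (special quintic case), and symmetrically for $h$, plus the sub-case where a base-point of the quadratic $\theta_2$ must be real and non-collinear with forbidden pairs --- which is precisely why Lemma~\ref{lem sigma} is stated in two parts. The technical heart of the argument is the verification that every rewriting step genuinely falls into one of the three forms of Definition~\ref{def G} and requires no new relation, a check that rests on the fact that each newly introduced factor lies in $\mathcal{J}_\circ$ and interacts with its neighbours through a composition of degree $\leq 3$ or through a re-choice of identification of the target of a quintic.
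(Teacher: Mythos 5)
Your opening step is correct: applying Lemma~\ref{lem deg mult ineq}~(2) to the odd-degree map $f$ gives a double point $q\notin\{p_1,\bar p_1,p_2,\bar p_2\}$ with $m_{g(\Lambda)}(p_1)+m_{g(\Lambda)}(p_2)+m_{g(\Lambda)}(q)>D$, and similarly a non-strict inequality for $h^{-1}$. But the next step --- engineering the degree drop by inserting a quadratic from Lemma~\ref{lem sigma} or a cubic from Lemma~\ref{lem sigma deg 3} --- cannot work. For a standard or special quintic in $\mathcal{J}_{\circ}$ the extra double points $q,\bar q$ (resp.\ $p_3,\bar p_3$ for $h^{-1}$) are \emph{non-real}, whereas every quadratic in $\mathcal{J}_{\circ}$ produced by Lemma~\ref{lem sigma} has base-points $p_i,\bar p_i,r$ with $r$ real, and every cubic from Lemma~\ref{lem sigma deg 3} has base-points $r,p_1,\bar p_1,p_2,\bar p_2$ with $r$ real. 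Such a map $\tau$ satisfies $\deg(\tau(\Lambda))=2D-2m_\Lambda(p_i)-m_\Lambda(r)$ or $3D-2m_\Lambda(r)-2m_\Lambda(p_1)-2m_\Lambda(p_2)$, and neither is forced below $D$ by an inequality whose third term sits at the non-real point $q$: if $\Lambda$ has multiplicity $D/3$ at each of the six non-real base-points and $0$ at every real point, no element of $\mathcal{J}_{\circ}$ of degree $\leq 3$ decreases the degree at all. The only generators that can absorb the multiplicity concentrated at a non-real pair outside $\{p_1,\dots,\bar p_2\}$ are again standard or special quintics, which is why the paper's proof inserts \emph{intermediate quintic} transformations whose six base-points are assembled from the high-multiplicity pairs of $h^{-1}$ and of $fg$ (the pairs $\mathfrak r_1,\mathfrak r_2$ and $\mathfrak r_4,\mathfrak r_5$ in its notation), checking via the two inequalities that these six points do not lie on a conic.

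The second gap is that you never confront the linear map $g$, which in general does not lie in $\mathcal{J}_{\circ}$, so the product $fgh$ cannot be simplified using only the internal relations of $\mathcal{J}_{\circ}$; and relations \ref{relrel1}--\ref{relrel3} do not let you ``absorb'' a quadratic or cubic into a neighbouring quintic across $g$ (relation~\ref{relrel1} only identifies two quintics that differ by linear maps, and \ref{relrel2}, \ref{relrel3} only involve maps of degree $2$ and $3$). The paper's mechanism is a case analysis on the number of common base-points of $h^{-1}$ and $fg$: when they share four base-points, suitable $\alpha_i\in\Aut_\R(\PP^2)$ conjugate both $h$ and $fg$ into $\mathcal{J}_{\circ}$ via relation~\ref{relrel1} and Remark~\ref{rmk rel sp}, after which the composition is a single element of $\mathcal{J}_{\circ}$; when they share fewer, one or two intermediate quintics are interposed to reduce to that case. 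Without this device your induction ``recompose $fgh$ while every intermediate system stays of degree $<D$'' has no engine driving it.
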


\begin{proof}
The maps $h^{-1}$ and $f$ have base-points $p_1,\bar{p}_1,p_2,\bar{p}_2,p_3,\bar{p}_3$ and $p_1,\bar{p}_1,p_2,\bar{p}_2,p_4,\bar{p}_4$ respectively, for some non-real points $p_3,p_4$ that are in $\PP^2$ or infinitely near one of $p_1,\dots,\bar{p}_2$. Denote by $m(q):=m_{\Lambda}(q)$ the multiplicity of $\Lambda$ at $q$. According to Lemma~\ref{lem deg mult ineq} we have
\begin{equation}\tag{$\text{Ineq}^0$}\label{eqn0}m(p_1)+m(p_2)+m(p_3)\geq D,\quad m_{g(\Lambda)}(p_1)+m_{g(\Lambda)}(p_2)+m_{g(\Lambda)}(p_4)>D\end{equation}
For a pair of non-real points $q,\bar{q}\in\PP^2$ or infinitely near, we denote by $\mathfrak{q}$ the set $\{q,\bar{q}\}$. We choose $\mathfrak{r}_1,\mathfrak{r}_2,\mathfrak{r}_3$ with $\{\mathfrak{r}_1,\mathfrak{r}_2,\mathfrak{r}_3\}=\{\mathfrak{p}_1,\mathfrak{p}_2,\mathfrak{p}_3\}$ such that $m_{\Lambda}(r_1)\geq m_{\Lambda}(r_2)\geq m_{\Lambda}(r_3)$ and such that if $r_i$ is infinitely near $r_j$, then $j<i$. In a similar way, we choose $\mathfrak{r}_4,\mathfrak{r}_5,\mathfrak{r}_6$ with $\{\mathfrak{r}_4,\mathfrak{r}_5,\mathfrak{r}_6\}=g^{-1}(\{\mathfrak{p}_1,\mathfrak{p}_2,\mathfrak{p}_4\})$. In particular, $r_1,r_4$ are proper points of $\PP^2$.

The two inequalities (\ref{eqn0}) translate to
\begin{equation}\tag{$\text{Ineq}^1$}\label{eqn1}m_{\Lambda}(r_1)+m_{\Lambda}(r_2)+m_{\Lambda}(r_3)\geq D,\quad m_{\Lambda}(r_4)+m_{\Lambda}(r_5)+m_{\Lambda}(r_6)>D\end{equation}

We now look at four cases, depending of the number of common base-points of $fg$ and $h^{-1}$. 
\vskip\baselineskip

\underline{Case 0:} If $h^{-1}$ and $fg$ have six common base-points, then $\alpha:=fgh$ is linear and $w(g)w(h)w(\alpha^{-1})=w(f^{-1})$ by Definition~\ref{def G}~\ref{relrel1} and Remark~\ref{rmk rel sp}~(\ref{relrel4}). The claim follows with $\theta_1=\theta_n=fgh$. 
\smallskip

\underline{Case 1}: {\em Suppose that $h^{-1}$ and $fg$ have exactly four common base-points.}
There exists $\alpha_1\in\Aut_{\R}(\PP^2)$ sending the common base-points onto $p_1,\dots,\bar{p}_2$ if all the common points are proper points of $\PP^2$, and onto $p_i,\bar{p}_i,p_3,\bar{p}_3$ if $p_3,\bar{p}_3$ are infinitely near $p_i,\bar{p}_i$. There exist $\alpha_2,\alpha_3\in\Aut_\R(\PP^2)$ such that $\alpha_1h\alpha_2\in\mathcal{J}_{\circ}$ and $\alpha_3fg(\alpha_1)^{-1}\in\mathcal{J}_{\circ}$ (see Lemma~\ref{lem quintlin}). Definition~\ref{def G}~\ref{relrel1} and Remark~\ref{rmk rel sp}~(\ref{relrel4}) imply that
\[w(\alpha_1)w(h)w(\alpha_2)=w(\alpha_1h\alpha_2),\quad w(\alpha_3)w(f)w((\alpha_1)^{-1})=w(\alpha_3fg(\alpha_1)^{-1}).\]
Since $\alpha_1h\alpha_2\in\mathcal{J}_{\circ}$ and $\alpha_3fg(\alpha_1)^{-1}\in\mathcal{J}_{\circ}$, we get
\begin{align*}
w(f)w(g)w(h)&=w((\alpha_3)^{-1})\ w(\alpha_3f(\alpha_1)^{-1})\ w(\alpha_1h\alpha_2)\ w((\alpha_2)^{-1})\\
&=w((\alpha_3)^{-1})\ w(\alpha_3fh\alpha_2)\ w((\alpha_2)^{-1})
\end{align*}
The claim follows with $\theta_1:=(\alpha_2)^{-1}$, $\theta_2:=\alpha_3fh\alpha_2\in\J_{\circ}$ and $\theta_3:=(\alpha_3)^{-1}$. 

\smallskip

\underline{Case 2:} {\em Suppose that the set $\mathfrak{r}_1\cup\mathfrak{r}_2\cup\mathfrak{r}_4\cup\mathfrak{r}_5$  consists of $6$ points $r_{i_1},\bar{r}_{i_1},\dots,r_{i_3},\bar{r}_{i_3}$.} 
If at least four of them are proper points of $\PP^2$, inequality~(\ref{eqn1}) yields 
\[2m_{\Lambda}(r_{i_1})+2m_{\Lambda}(r_{i_2})+2m_{\Lambda}(r_{i_3})>D,\]
which implies that the six points $r_{i_1},\bar{r}_{i_1},\dots,r_{i_3},\bar{r}_{i_3}$ are not contained in one conic. 
By this and by the chosen ordering of the points, there exists  a standard or special quintic transformation $\theta\in\mathcal{J}_{\circ}$ and $\alpha\in\Aut_{\R}(\PP^2)$ such that those six points are the base-points of $\theta\alpha$. By construction, we have 
\[\deg(\theta\alpha(\Lambda))=5D-4m_{\Lambda}(r_{i_1})-4m_{\Lambda}(r_{i_2})-4m_{\Lambda}(r_{i_3})<D,\] 
and $h^{-1},\theta\alpha$ and $\theta\alpha,fg$ each have four common base-points. We apply Case 1 to $h,\alpha,\theta$ and to $\theta^{-1} ,g\alpha^{-1},f$. 

If only two of the six points are proper points of $\PP^2$, then the chosen ordering yields $\mathfrak{q}=\mathfrak{r}_1=\mathfrak{r}_4$ and the points in $\mathfrak{r}_2\cup\mathfrak{r}_5$ are infinitely near points. Since $h,f$ are standard or special quintic transformations (so have at most two infinitely near base-points), it follows that $r_3,r_6$ are both proper points of $\PP^2$. We choose $i\in\{3,6\}$, $j\in\{2,5\}$ with $m_{\Lambda}(r_i)=\max\{m_{\Lambda}(r_3),m_{\Lambda}(r_6)\}$ and $m_{\Lambda}(r_j)=\max\{m_{\Lambda}(r_2),m_{\Lambda}(r_5)\}$.
We have 
\[2m_{\Lambda}(r_1)+2m_{\Lambda}(r_j)+2m_{\Lambda}(r_i)\geq 2m_{\Lambda}(r_4)+2m_{\Lambda}(r_5)+2m_{\Lambda}(r_6)>D.\] 
Thus the six points in $\mathfrak{r}_1\cup\mathfrak{r}_i\cup\mathfrak{r}_j$ are not contained in one conic and there exists a standard or special quintic transformation $\theta\in\mathcal{J}_{\circ}$ and $\alpha\in\Aut_{\R}(\PP^2)$ such those points are the base-points of $\theta\alpha$. Again, the maps $h^{-1},\theta\alpha$ and $\theta\alpha,fg$ have four common base-points. The above inequality implies $\deg(\theta\alpha(\Lambda))<D$. The maps $h,\alpha,\theta$ and the maps $\theta^{-1},g\alpha^{-1},f$ satisfy the assumptions of Case 1, the latter with ``$<$". We proceed as in Case 1 to get $\theta_1,\dots,\theta_n$. 

\smallskip

\underline{Case 3:} {\em Suppose that $\mathfrak{r}_1\cup\mathfrak{r}_2\cup\mathfrak{r}_4\cup\mathfrak{r}_5$ consists of eight points.} Then $\mathfrak{r}_1\cup\mathfrak{r}_2\cup\mathfrak{r}_4$ and $\mathfrak{r}_1\cup\mathfrak{r}_4\cup\mathfrak{r}_5$ each consist of six points. We have by inequality~(\ref{eqn1}) and by the chosen ordering that
\[2m_{\Lambda}(r_1)+2m_{\Lambda}(r_2)+2m_{\Lambda}(r_4)>2D,\qquad 2m_{\Lambda}(r_1)+2m_{\Lambda}(r_4)+2m_{\Lambda}(r_5)>2D,\]
so the points in each set $\mathfrak{r}_1\cup \mathfrak{r}_2\cup\mathfrak{r}_4$ and $\mathfrak{r}_1\cup\mathfrak{r}_4\cup\mathfrak{r}_5$ are not on one conic. Moreover, at least four points in each set are proper points of $\PP^2$ (because $r_1,r_4\in\PP^2$). Therefore, there exist standard or special quintic transformations $\theta_1,\theta_2\in\mathcal{J}_{\circ}$ and $\alpha_1,\alpha_2\in\Aut_{\R}(\PP^2)$ such that $\theta_1\alpha_1$ (resp. $\theta_2\alpha_2$) has base-points $\mathfrak{r}_1\cup\mathfrak{r}_2\cup\mathfrak{r}_4$ (resp. $\mathfrak{r}_1\cup\mathfrak{r}_4\cup\mathfrak{r}_5$). The above inequalities imply $\deg(\theta_i\alpha_i(\Lambda))<D$. The maps $h,\alpha_1^{-1},\theta_1$,  the maps $(\theta_1)^{-1},\alpha_2(\alpha_1)^{-1},\theta_2$ and the maps $(\theta_2)^{-1},g(\alpha_2)^{-1},f$ satisfy the assumptions of Case 1, the latter two with ``$<$". Proceeding analogously yields $\theta_1,\dots,\theta_n$.
\end{proof}

\begin{Rmk}\label{lem notcoll}
Let $f\in\mathcal{J}_*$, and $q_1,q_2$ two simple base-points of $f$. Then the points $[1:0:0],q_1,q_2$ are not collinear. (This means that they do not belong, as proper points of $\PP^2$ or infinitely near points, to the same line.)
\end{Rmk}

\begin{Lem}\label{lem rel dJ 2}
Let $f,h\in\mathcal{J}_*$ be of degree $2$, $g\in\Aut_{\R}(\PP^2)$ and $\Lambda$ be a real linear system of degree $D$. Suppose that
\[\deg(h^{-1}(\Lambda))\leq D \ (\text{resp.}<D),\qquad\deg(fg(\Lambda))<D\]
Then there exist $\theta_1\in\Aut_\R(\PP^2)$ and $\theta_2,\dots,\theta_n\in\Aut_{\R}(\PP^2)\cup\mathcal{J}_*\cup\mathcal{J}_{\circ}$ such that
\begin{enumerate}
\item $w(f)w(g)w(h)=w(\theta_n)\cdots w(\theta_1)$ holds in $\mathcal{G}$, i.e. the following commutative diagram corresponds to a relation in $\mathcal{G}$:
\[\small\xymatrix{&\Lambda\ar[r]^g &g(\Lambda)\ar@{-->}[dr]^f&\\ 
h^{-1}(\Lambda)\ar@{-->}[ru]^h\ar@{-->}[r]^{\theta_1}&\ar@{..>}[r]&\ar@{-->}[r]^{\theta_n}&fg(\Lambda) }\]
\item 
%$\deg(\theta_1(\Lambda))=\deg(\theta_2\theta_1(\Lambda))\leq D\ (\text{resp.} <D)$\\
$\deg(\theta_i\cdots\theta_2\theta_1h^{-1}(\Lambda))<D,\quad i=2,\dots,n.$
\end{enumerate}
\end{Lem}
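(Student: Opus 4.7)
We follow the blueprint of the proof of Lemma~\ref{lem rel 5,5}, adapted to degree-two generators of $\mathcal{J}_*$. Let $p := [1:0:0]$ and write $\text{Bp}(h^{-1}) = \{p, q_1, q_2\}$, $\text{Bp}(f) = \{p, s_1, s_2\}$, so that $\text{Bp}(fg) = \{g^{-1}(p), r_1, r_2\}$ with $r_i = g^{-1}(s_i)$. Applying Lemma~\ref{lem deg mult ineq}(1) to $h^{-1}$ acting on $\Lambda$ and to $f$ acting on $g(\Lambda)$ yields the two multiplicity inequalities
\[m_\Lambda(p) + m_\Lambda(q_1) + m_\Lambda(q_2) \geq D \;(\text{resp.}\ >D), \qquad m_\Lambda(g^{-1}(p)) + m_\Lambda(r_1) + m_\Lambda(r_2) > D.\]
We then split into cases according to the number $k$ of common base-points of $h^{-1}$ and $fg$.

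When $k = 3$, the map $fgh$ is linear, say equal to $\alpha$. If $g \in \mathcal{J}_*$ then $fg$ and $h^{-1}$ both lie in $\mathcal{J}_*$ and satisfy $fg = \alpha h^{-1}$; relation~\ref{relrel2} yields $w(f)w(g)w(h) = w(\alpha)$ and we take $\theta_1 := \alpha$. If $g \notin \mathcal{J}_*$, we introduce an auxiliary standard quadratic $\theta \in \mathcal{J}_*$ with base-points $\{p, q_1, q_2\}$: since $p \in \text{Bp}(fg)$ (as one of $r_i$ must equal $p$ in this configuration) one checks that $fg \in \mathcal{J}_*$ as well, so both $fg = \gamma^{-1}\theta$ and $h^{-1} = \beta\theta$ are instances of~\ref{relrel2}; combining them collapses the word to a single linear element and again gives $\theta_1 = \alpha$.

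When $k \in \{1, 2\}$, the composition $fgh$ has degree at most $3$. Choose $\alpha_2, \alpha_3 \in \Aut_\R(\PP^2)$ sending an appropriate base-point of $fgh$ (resp. of its inverse) to $p$, so that $\tau_3 := \alpha_3^{-1} fgh\, \alpha_2^{-1}$ lies in $\mathcal{J}_*$ and has degree $2$ or $3$. Relation~\ref{relrel3} then yields $w(f)w(g)w(h) = w(\alpha_3)w(\tau_3)w(\alpha_2)$, and we set $\theta_1 := \alpha_2$, $\theta_2 := \tau_3$, $\theta_3 := \alpha_3$. The intermediate-degree inequality $\deg(\tau_3\alpha_2 h^{-1}(\Lambda)) < D$ holds automatically since $\tau_3\alpha_2 h^{-1}(\Lambda) = \alpha_3^{-1} fg(\Lambda)$ has degree $<D$ by hypothesis.

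The main obstacle is $k = 0$, where $fgh$ is generically of degree $4$ and no single relation suffices. The plan is to insert an intermediate quadratic $\theta \in \mathcal{J}_* \cup \mathcal{J}_\circ$ that shares base-points with $fg$ (or $h^{-1}$) and satisfies $\deg(\theta(\Lambda)) < D$, then invoke the previous cases on the two shorter compositions $(h, \alpha, \theta^{-1})$ and $(\theta, \alpha', f)$ for appropriate linear $\alpha, \alpha'$. The existence of a suitable $\theta$ rests on inequality~(II): the three points $g^{-1}(p), r_1, r_2$ already carry multiplicity sum exceeding $D$, and after a preliminary linear adjustment $\theta_1 \in \Aut_\R(\PP^2)$ sending $g^{-1}(p)$ to $p$ (or $\{p_1,\bar{p}_1\}$ to the conjugate pair among the $r_i$, when available), we obtain a quadratic in $\mathcal{J}_*$ (resp. by Lemma~\ref{lem sigma}, in $\mathcal{J}_\circ$) with the prescribed base-points that strictly drops the degree. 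The delicate point is ensuring admissibility of the chosen triple (respecting complex conjugation and non-collinearity with the center $p$ or with $p_1,\bar{p}_1,p_2,\bar{p}_2$); in the worst configurations a chain of two intermediates may be needed, just as Case 3 of Lemma~\ref{lem rel 5,5} uses two quintics, but termination is guaranteed because each insertion strictly increases the number of common base-points with either $h^{-1}$ or $fg$.
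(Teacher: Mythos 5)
Your case skeleton (splitting on the number of common base-points of $h^{-1}$ and $fg$, collapsing via \ref{relrel2} when there are three, via \ref{relrel3} when there are one or two, and inserting intermediate quadratics when there are none) is exactly the paper's strategy, and your treatment of the cases $k\in\{1,2,3\}$ is essentially correct. One small flaw there: in the case $k=3$ with $g\notin\mathcal{J}_*$ you argue that $fg\in\mathcal{J}_*$ because "one of the $r_i$ must equal $p$"; that does not follow (three common base-points only gives $\{p,q_1,q_2\}=\{g^{-1}(p),g^{-1}(s_1),g^{-1}(s_2)\}$ as sets, and $fg\in\mathcal{J}_*$ would require $g^{-1}(p)=p$). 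The detour is also unnecessary: $fgh=\alpha$ linear is literally $fg=\alpha h^{-1}$ with $f,h^{-1}\in\mathcal{J}_*$ of degree $2$ and $g,\alpha$ linear, which is an instance of \ref{relrel2} as written.

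The genuine gap is the case $k=0$, which is where all the content of this lemma lives and which you only sketch. "Insert an intermediate quadratic sharing base-points with $fg$ or $h^{-1}$ that drops the degree" is the right idea, but whether such a quadratic \emph{exists} is exactly the delicate point, and it cannot be waved away: the candidate triple of base-points must be stable under complex conjugation (so a non-real point forces its conjugate into the triple), must respect the proximity order of infinitely near points, must be non-collinear, and must have multiplicity sum exceeding $D$. The paper resolves this by a further case distinction on whether $\{r_1,r_2\}$ and $\{s_1,s_2\}$ are real or non-real conjugate pairs. In the mixed and non-real cases the choice of intermediate hinges on comparing $m_{\Lambda}(p)$ with $m_{\Lambda}(g^{-1}(p))$ (resp. $m_{g(\Lambda)}(p)$ with $m_{g(\Lambda)}(g(p))$), which decides whether one takes base-points $p,g(r_1),g(\bar r_1)$ or $p,g^{-1}(s_1),g^{-1}(\bar s_1)$, and B\'ezout-type inequalities are needed to rule out collinearity; in the all-real case one needs \emph{two} intermediates with base-point triples $a_1,a_2,b_1$ and $a_1,b_1,b_2$ mixing points from both sides, whose admissibility follows from the ordering $m_{\Lambda}(a_1)\ge m_{\Lambda}(a_2)\ge m_{\Lambda}(a_3)$ together with the two multiplicity inequalities. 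Your termination argument ("each insertion strictly increases the number of common base-points") is not what carries the proof either: the construction directly produces intermediates with exactly two common base-points with each neighbour and reduces to the case $k=2$ in one or two steps; nothing is iterated. As written, the proposal asserts the conclusion of the hard case rather than proving it.
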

\begin{proof}
If $g\in\mathcal{J}_*$ then $w(f)w(g)w(h)=w(fgh)$ in $\mathcal{J}_*$. So, lets assume that $g\notin\mathcal{J}_*$. 
Let $p:=[1:0:0]$, and let $p,r_1,r_2$ be the base-points of $h^{-1}$, and $p,s_1,s_2$ the ones of $f$. The assumptions $\deg(h^{-1}(\Lambda))\leq D$ and $\deg(fg(\Lambda))<D$ imply 
\begin{equation}\tag{$\bigstar$}\label{eqnzero}m_{\Lambda}(p)+m_{\Lambda}(r_1)+m_{\Lambda}(r_2)\geq D,\quad m_{g(\Lambda)}(p)+m_{g(\Lambda)}(s_1)+m_{g(\Lambda)}(s_2)>D\end{equation}
Note that $r_1,r_2$ (resp. $s_1,s_2$) are both real or a pair of non-real conjugate points. We can assume that $m_{\Lambda}(r_1)\geq m_{\Lambda}(r_2)$, $m_{g(\Lambda)}(s_1)\geq m_{g(\Lambda)}(s_2)$ and that $r_1$ (resp. $s_1$) is a proper point of $\PP^2$ or in the first neighbourhood of $p$ (resp. $p$) and that $r_2$ (resp. $s_2$) is a proper point of $\PP^2$ or in the first neighbourhood of $p$ (resp. $p$) or $r_1$ (resp. $s_1$).

Note that if $\deg(h^{-1}(\Lambda))<D$, then by Lemma~\ref{lem deg mult ineq} "$>$" holds in all inequalities. 

- {\bf(1)} - If $h^{-1}$ and $fg$ have three common base-points, the map $fgh$ is linear and $w(f)w(g)w(h)=w(fgh)$ by Definition~\ref{def G}~\ref{relrel2}. The claim follows with $\theta_1=fgh$.
\smallskip

- {\bf(2)} - If $h^{-1}$ and $fg$ have exactly two (resp. one) common base-points, the map $fgh$ is of degree 2 (resp. 3) and there exists $\alpha_1,\alpha_2\in\Aut_\R(\PP^2)$, $\tau\in\mathcal{J}_*$ of degree $2$ (resp. $3$), such that $fgh=\alpha_2\tau\alpha_1$. Then by Definition~\ref{def G}~\ref{relrel3}
\[w(f)w(g)w(h)=w(\alpha_2)w(\tau)w(\alpha_1).\]
The claim follows with $\theta_1:=\alpha_1$, $\theta_2:=\tau$, $\theta_3=\alpha_2$.
\smallskip

- {\bf(3)} - Suppose that $h^{-1}$ and $fg$ have no common base-points. To construct $\theta_1,\dots,\theta_n$, we need to look at three cases depending on the $r_i$'s and $s_i$'s being real or non-real points. 
\smallskip

- {\bf(3.1)} -
Suppose that $r_2=\bar{r}_1$ and $s_2=\bar{s}_1$. Since they are base-points of $h^{-1},f$, they are all proper points of $\PP^2$. The following two constructions are summarised in the diagrams below.

$\bullet$ If $m_{\Lambda}(p)\geq m_{\Lambda}(g^{-1}(p))$, then Inequalities~(\ref{eqnzero}) imply
\[m_{\Lambda}(p)+2m_{\Lambda}(g^{-1}(s_1))\geq m_{\Lambda}(g^{-1}(p))+2m_{\Lambda}(g^{-1}(s_1)) >D.\]
In particular, the points $p,g^{-1}(s_1),g^{-1}(\bar{s}_1)$ are not collinear an there exists $\tau\in\J_*$ of degree $2$ with these points its base-points. Then
\[\deg(\tau(\Lambda))=2D-m_{\Lambda}(p)-2m_{\Lambda}(g^{-1}(s_1))<D.\]
We put $\theta_1:=\mathrm{Id}$ and $\theta_2:=\tau h\in\J_*$. The maps $\tau,fg$ have two common base-points, and proceed with as in {\bf(2)} with ``$<$" to get $\theta_3,\dots,\theta_n$. 

$\bullet$ If $m_{\Lambda}(p)<m_{\Lambda}(g^{-1}(p))$, then Inequalities~(\ref{eqnzero}) imply
\[m_{\Lambda}(g^{-1}(p))+2m_{\Lambda}(r_1)>m_{\Lambda}(p)+2m_{\Lambda}(r_1)\geq D.\]
In particular, the points $p,g(r_1),g(\bar{r}_1)$ are not collinear, and there exists $\tau'\in\J_*$ of degree $2$ with them as base-points. As above, we have $\deg(\tau'g(\Lambda))<D$.
We put $\theta_n:=f(\tau')^{-1}\in\J_*$. The maps $h^{-1},\tau'g$ have two common base-points and we proceed as in {\bf(2)} to get $\theta_1,\dots,\theta_{n-1}$.
{\footnotesize
\[\xymatrix{ \Lambda\ar@{-->}[d]^{h^{-1}}_(.3){[p,r_1,r_2]} \ar[rr]^g&&g(\Lambda)\ar@/^.5pc/@{-->}[dr]^{f}\ar@{-->}[d]_(.55){\tau'}_(.35){[p,g(r_1),g(r_2)]}\\
h^{-1}(\Lambda)&&\tau'g(\Lambda)\ar@{-->}[r]^(.4){\theta_n}&fg(\Lambda),} 
\ 
\xymatrix{ &\Lambda\ar@{-->}[dl]_{h^{-1}} \ar@{-->}[d]^(.6){\tau}^(.3){[p,g^{-1}(s_1),g^{-1}(s_2)]} \ar[rr]^g  &&g(\Lambda)\ar@{-->}[d]_{f}^(.3){[p,s_1,s_2]}\\
h^{-1}(\Lambda)\ar@{-->}[r]^{\theta_1}&\tau(\Lambda)&&fg(\Lambda)}  
 \]
 }
\smallskip

- {\bf(3.2)} - Assume that $r_2=\bar{r}_1$ and $s_1,s_2$ are real points. (If $r_1,r_2$ are real points and $s_2=\bar{s}_1$, we proceed analogously.) As $p,r_1,\bar{r}_2$ are the base-points of $h^{-1}$, the points $r_1,\bar{r}_2$ are proper points of $\PP^2$. The following constructions are summarised in the diagram below if they are not already pictured in the ones above.

$\bullet$ If $m_{g(\Lambda)}(p)> m_{g(\Lambda)}(g(p))$, then 
\[m_{g(\Lambda)}(p)+2m_{g(\Lambda)}(g(r_1))>m_{g(\Lambda)}(g(p))+2m_{g(\Lambda)}(g(r_1)) \stackrel{(\bigstar)}\geq D\]
hence $p,g(r_1),g(\bar{r}_1)$ are not collinear. We proceed as in {\bf(3.1)}.

$\bullet$ Suppose that $m_{g(\Lambda)}(g(p))\geq m_{g(\Lambda)}(p)$. 

If $s_1$ or $s_2$ are infinitely near to $p$, then $m_{g(\Lambda)}(s_1)\geq m_{g(\Lambda)}(s_2)$ implies $m_{g(\Lambda)}(p)\geq m(s_2)$, and
\[m_{g(\Lambda)}(g(p))+m_{g(\Lambda)}(p)+m_{g(\Lambda)}(s_1)\geq m_{g(\Lambda)}(p)+m_{g(\Lambda)}(s_2)+m_{g(\Lambda)}(s_1)\stackrel{(\bigstar)}>D.\]
In particular, $p,g(p),s_1$ are not collinear. Since $s_1$ is in the first neighbourhood of $p$ or a proper point of $\PP^2$ by ordering, there exists $\tau\in\mathcal{J}_*$ of degree 2 with base-points $p,g(p),s_1$. The above inequality implies $\deg(\tau g(\Lambda))<D$. We put $\theta_n:=f\tau^{-1}\in\mathcal{J}_*$ and apply {\bf(2)} to $h^{-1},g,\tau$ to construct $\theta_1,\dots,\theta_{n-1}$. 

If $s_1,s_2$ are proper points of $\PP^2$, then
\[m_{g(\Lambda)}(g(p))+m_{g(\Lambda)}(s_2)+m_{g(\Lambda)}(s_1)\geq m_{g(\Lambda)}(p)+m_{g(\Lambda)}(s_2)+m_{g(\Lambda)}(s_1)\stackrel{(\bigstar)}>D\]
In particular, $g(p),s_1,s_2$ are not collinear and there exists $\tau'\in\mathcal{J}_*$ with base-points $p,g^{-1}(s_1),g^{-1}(s_2)$. The above inequality implies $\deg(\tau'(\Lambda))<D$. We put $\theta_1:=\Id$, $\theta_2:=\tau'h$ and apply {\bf(2)} to $\tau',g,f$ to construct $\theta_3,\dots,\theta_n$.
{\footnotesize
\[\xymatrix{ \Lambda\ar@{-->}[d]^{h^{-1}}_(.3){[p,r_1,r_2]} \ar[rr]^g&&g(\Lambda)\ar@/^.5pc/@{-->}[dr]^{f}\ar@{-->}[d]^{\tau}_(.35){[p,g(p),s_1]}\\
h^{-1}(\Lambda)&&\tau g(\Lambda)\ar@{-->}[r]^(.4){\theta_n}&fg(\Lambda),} 
\ 
\xymatrix{ &\Lambda\ar@{-->}[dl]_{h^{-1}} \ar@{-->}[d]_{\tau'}^(.3){[p,g^{-1}(s_1),g^{-1}(s_2)]} \ar[rr]^g  &&g(\Lambda)\ar@{-->}[d]_{f}^(.3){[p,s_1,s_2]}\\
h^{-1}(\Lambda)\ar@{-->}[r]^{\theta_2\theta_1}&\tau'(\Lambda)&&fg(\Lambda)}  
 \]
 }
\smallskip

- {\bf(3.3)} - If $r_1,r_2,s_1,s_2$ are real points, let $\{a_1,a_2,a_3\}=\{p,r_1,r_2\}$ and $\{b_1,b_2,b_3\}=\{g^{-1}(p),g^{-1}(s_1),g^{-1}(s_2)\}$ such that $m_{\Lambda}(a_i)\leq m_{\Lambda}(a_{i+1})$ and $m_{\Lambda}(b_i)\leq m_{\Lambda}(b_{i+1})$, $i=1,2$, and if $a_i$ (resp. $b_i$) is infinitely near $a_j$ (resp. $b_j$) then $j>i$. In particular, $a_1,b_1$ are proper points of $\PP^2$. From inequalities~(\ref{eqnzero}), we obtain
\[m_{\Lambda}(a_1)+m_{\Lambda}(a_2)+m_{\Lambda}(b_1)>D,\quad m_{\Lambda}(a_1)+m_{\Lambda}(b_1)+m_{\Lambda}(b_2)>D.\] 
By them and the chosen ordering, there exists $\tau_1,\tau_2\in\mathcal{J}_*$ of degree 2, $\alpha_1,\alpha_2\in\Aut_{\R}(\PP^2)$ such that $\tau_1\alpha_1$, $\tau_2\alpha_2$ have base-points $a_1,a_2,b_1$ and $a_1,b_1,b_2$ respectively. 
The situation is summarised in the following diagram.
{\footnotesize
\[\xymatrix{ &\Lambda\ar@/^1pc/[rr]^g\ar@{-->}[d]_{\tau_1\alpha_1}^(.5){[a_1,a_2,b_1]}\ar@{-->}[dl]_{\tau_1}_(.25){[a_1,a_2,a_3]}\ar@{-->}[rrd]^{\tau_2\alpha_2}^(.3){[a_1,b_1,b_2]}&\qquad&g(\Lambda)\ar@{-->}[rd]^{\tau_2}^(.3){[b_1,b_2,b_3]}&\\
\tau_1(\Lambda)&\tau_1\alpha_1(\Lambda)&&\tau_2\alpha_2g(\Lambda)&\tau_2g(\Lambda)}\] 
}
By construction of $\tau_1,\tau_2$, we have
\begin{align*}&\deg(\tau_1\alpha_1(\Lambda))=2D-m_{\Lambda}(a_1)-m_{\Lambda}(a_2)-m_{\Lambda}(b_1)<D,\\
&\deg(\tau_2\alpha_2(\Lambda))=2D-m_{\Lambda}(a_1)-m_{\Lambda}(b_1)-m_{\Lambda}(b_2)<D.
\end{align*}
The maps $h^{-1},\tau_1\alpha_1$, the maps $\tau_1\alpha_1,\tau_2\alpha_2$ and the maps $\tau_2\alpha_2,f$ have two common base-points, respectively. We proceed with each pair as in case {\bf(2)}, with ``$<$" in the latter two cases, to construct $\theta_1,\dots,\theta_n$. 
\end{proof}

\begin{Lem}\label{lem rel dJ}
Let $f,h\in\mathcal{J}_*$, $g\in\Aut_{\R}(\PP^2)$ and $\Lambda$ be a real linear system of degree $D$. Suppose that
\[\deg(h^{-1}(\Lambda))\leq D \ (\text{resp.}<D),\qquad\deg(fg(\Lambda))<D\]
Then there exist $\theta_1\in\mathcal{J}_*$, $\theta_2\in\Aut_\R(\PP^2)$ and $\theta_3,\dots,\theta_n\in\Aut_{\R}(\PP^2)\cup\mathcal{J}_*\cup\mathcal{J}_{\circ}$ such that
\begin{enumerate}
\item $w(f)w(g)w(h)=w(\theta_n)\cdots w(\theta_1)$ holds in $\mathcal{G}$, i.e. the following commutative diagram corresponds to a relation in $\mathcal{G}$:
\[\small\xymatrix{&\Lambda\ar[r]^g &g(\Lambda)\ar@{-->}[dr]^f&\\ 
h^{-1}(\Lambda)\ar@{-->}[ru]^h\ar@{-->}[r]^{\theta_1}&\ar@{..>}[r]&\ar@{-->}[r]^{\theta_n}&fg(\Lambda) }\]
\item $\deg(\theta_1)=\deg(h)-1,\quad \deg(\theta_1(\Lambda))=\deg(\theta_2\theta_1(\Lambda))\leq D\ (\text{resp.} <D)$ \ and
\begin{align*}
&deg(\theta_i\cdots\theta_1h^{-1}(\Lambda))<D,\quad i=3,\dots,n.
\end{align*}
\end{enumerate}
\end{Lem}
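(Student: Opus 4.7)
We induct on $d := \deg(h)$, reducing to Lemma~\ref{lem rel dJ 2} at each step. For the \emph{base case} $d = 2$, apply Lemma~\ref{lem rel dJ 2} to $(f,g,h)$ to obtain $\theta'_1 \in \Aut_\R(\PP^2)$ and $\theta'_2, \ldots, \theta'_k$ satisfying its conclusion. Insert $\theta_1 := \mathrm{Id} \in \mathcal{J}_*$ (which has degree $1 = d - 1$) and shift, setting $\theta_{j+1} := \theta'_j$. All degree and word conditions carry over: $\theta_1 = \mathrm{Id}$ leaves $h^{-1}(\Lambda)$ unchanged, and $\theta_2 = \theta'_1$ is linear so it preserves degree; the relations in $\mathcal{G}$ match since $w(\mathrm{Id})$ is trivial.

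For the \emph{inductive step} $d \ge 3$, let $p := [1{:}0{:}0]$. Lemma~\ref{lem deg mult ineq} applied to $h^{-1}$ and $\Lambda$ produces two simple base-points $r_1, r_2$ of $h^{-1}$ (both real or a non-real conjugate pair) with
\[
m_\Lambda(p) + m_\Lambda(r_1) + m_\Lambda(r_2) \ge D \quad (\text{resp.}\ > D).
\]
Since $r_1, r_2$ are simple base-points of a Jonqui\`eres map, Remark~\ref{lem notcoll} guarantees that $p, r_1, r_2$ are not collinear (as proper or infinitely near points), so there exists a quadratic $\tau \in \mathcal{J}_*$ whose inverse has base-points $p, r_1, r_2$. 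Set $h_1 := \tau^{-1} h \in \mathcal{J}_*$. A B\'ezout-style degree count shows $\deg(h_1) = d - 1$: pulling back a general line under $\tau^{-1}$ yields a conic through $p, r_1, r_2$ with multiplicity $1$ at each, and its further pullback under $h$ has degree $2d - ((d-1) + 1 + 1) = d - 1$, using that the base-points of $h^{-1}$ at $p, r_1, r_2$ have multiplicities $d - 1, 1, 1$. Thus $h = \tau h_1$ in $\mathcal{J}_*$, which gives $w(h) = w(\tau) w(h_1)$ in $\mathcal{G}$.

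Set $\theta_1 := h_1 \in \mathcal{J}_*$, so $\deg(\theta_1) = d - 1$ and
\[
\theta_1(h^{-1}(\Lambda)) = h_1\bigl(h_1^{-1}\tau^{-1}(\Lambda)\bigr) = \tau^{-1}(\Lambda),
\]
of degree $2D - (m_\Lambda(p) + m_\Lambda(r_1) + m_\Lambda(r_2)) \le D$ (resp.\ $< D$). The triple $(f, g, \tau)$ now satisfies the hypotheses of Lemma~\ref{lem rel dJ 2} (with $\tau \in \mathcal{J}_*$ of degree $2$): $\deg(\tau^{-1}(\Lambda)) \le D$ (resp.\ $< D$) and $\deg(fg(\Lambda)) < D$. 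Applying Lemma~\ref{lem rel dJ 2} yields $\theta'_1 \in \Aut_\R(\PP^2)$ and $\theta'_2, \ldots, \theta'_{n-1} \in \Aut_\R(\PP^2) \cup \mathcal{J}_* \cup \mathcal{J}_\circ$ with $w(f)w(g)w(\tau) = w(\theta'_{n-1}) \cdots w(\theta'_1)$ and $\deg(\theta'_i \cdots \theta'_1 \tau^{-1}(\Lambda)) < D$ for $i \ge 2$. Setting $\theta_2 := \theta'_1$ and $\theta_{j+1} := \theta'_j$ for $2 \le j \le n-1$, one then verifies all the required conditions: the word relation reads $w(f)w(g)w(h) = w(f)w(g)w(\tau)w(h_1) = w(\theta_n) \cdots w(\theta_1)$; $\theta_2 = \theta'_1$ is linear, so $\deg(\theta_2\theta_1 h^{-1}(\Lambda)) = \deg(\tau^{-1}(\Lambda)) \le D$ (resp.\ $< D$); and for $i \ge 3$, $\deg(\theta_i \cdots \theta_1 h^{-1}(\Lambda)) = \deg(\theta'_{i-1} \cdots \theta'_1 \tau^{-1}(\Lambda)) < D$.

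The main obstacle is the factorisation $h = \tau h_1$ with $\deg(h_1) = d - 1$: one must carefully match the three base-points of $\tau^{-1}$ with base-points of $h^{-1}$ at the correct multiplicities, and handle the case where $r_1$ or $r_2$ is infinitely near another base-point. Both issues are resolved by the non-collinearity condition of Remark~\ref{lem notcoll}, which ensures the existence of the desired quadratic transformation in $\mathcal{J}_*$ even in the infinitely near configurations.
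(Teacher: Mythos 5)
Your overall strategy --- peel a quadratic map $\tau$ off $h$ using base-points of $h^{-1}$ supplied by Lemma~\ref{lem deg mult ineq}, set $\theta_1:=\tau^{-1}h$ of degree $\deg(h)-1$, and reduce to the quadratic case --- is the same as the paper's, but there are two genuine gaps. The first, and the one the paper spends most of its proof on, is the existence of $\tau$. Non-collinearity of $p,r_1,r_2$ (Remark~\ref{lem notcoll}) is necessary but not sufficient for a quadratic map with these base-points to exist: if $r_1$ and $r_2$ are both proximate to $p$ (for instance both in the first neighbourhood of $p$ in distinct directions, which can happen for a de Jonqui\`eres map of degree $\geq 3$ and is perfectly compatible with non-collinearity), then the proximity inequality for a homaloidal net of degree $2$ would force $1=m(p)\geq m(r_1)+m(r_2)=2$, so no quadratic transformation has $p,r_1,r_2$ as its base-points. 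Your closing claim that non-collinearity ``ensures the existence of the desired quadratic transformation \dots even in the infinitely near configurations'' is precisely the false step. The paper's ``Situation 2'' is devoted to this case: it shows by a B\'ezout argument that at least one of the two desired quadratic maps (the one attached to $h$ or the one attached to $f$) must exist, derives $2m_{\Lambda}(p)\geq D$ (resp.\ $2m_{g(\Lambda)}(p)>D$) in the degenerate configuration, and uses these inequalities to construct explicit substitutes for the missing quadratic map.

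The second gap is the reduction itself: Lemma~\ref{lem rel dJ 2} is stated only for $f,h\in\mathcal{J}_*$ \emph{both of degree $2$}, whereas you apply it to the triple $(f,g,\tau)$ (and, in your base case, to $(f,g,h)$) with $f\in\mathcal{J}_*$ of arbitrary degree; the induction on $\deg(h)$ alone does not repair this. The paper's argument is symmetric: it also extracts a quadratic $\tau_2$ from $f$ (with base-points $p,s_1,s_2$ coming from Lemma~\ref{lem deg mult ineq} applied to $f$ and $g(\Lambda)$), sets $\theta_n:=f\tau_2^{-1}\in\mathcal{J}_*$, and only then invokes Lemma~\ref{lem rel dJ 2} on the triple of quadratic data $\tau_1^{-1},g,\tau_2$. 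The existence of $\tau_2$ runs into the same proximity obstruction, which is why the two reductions must be handled together in Situation 2.
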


\begin{proof}
If $g\in\mathcal{J}_*$ then $w(f)w(g)w(h)=w(fgh)$ in $\mathcal{J}_*$. So, lets assume that $g\notin\mathcal{J}_*$. 
Let $p:=[1:0:0]$. 
By Lemma~\ref{lem deg mult ineq} there exists $r_1,r_2$ base-points of $h^{-1}$ and $s_1,s_2$ base-points of $f$ such that 
\begin{equation}\tag{$\bigstar$}\label{eqnzero}m_{\Lambda}(p)+m_{\Lambda}(r_1)+m_{\Lambda}(r_2)\geq D,\quad m_{g(\Lambda)}(p)+m_{g(\Lambda)}(s_1)+m_{g(\Lambda)}(s_2)>D\end{equation}
and either $r_1,r_2$ (resp. $s_1,s_2$) are both real or a pair of non-real conjugate points. We can assume that $m_{\Lambda}(r_1)\geq m_{\Lambda}(r_2)$, $m_{g(\Lambda)}(s_1)\geq m_{g(\Lambda)}(s_2)$ and that $r_1$ (resp. $s_1$) is a proper point of $\PP^2$ or in the first neighbourhood of $p$ (resp. $p$) and that $r_2$ (resp. $s_2$) is a proper point of $\PP^2$ or in the first neighbourhood of $p$ (resp. $p$) or $r_1$ (resp. $s_1$).

Note that if $\deg(h^{-1}(\Lambda))<D$, then by Lemma~\ref{lem deg mult ineq} "$>$" holds in all inequalities. 
\smallskip

{\bf - Situation 1 -} {\it Assume that there exist $\tau_1,\tau_2\in\mathcal{J}_*$ of degree 2 with base-points $p,r_1,r_2$ and $p,s_1,s_2$ respectively.}
 
 Observe that $\tau_1h\in\mathcal{J}_*$ and $f(\tau_2)^{-1}\in\mathcal{J}_*$, and $\deg(\tau_1h)=\deg(h)-1$. 
From Inequality~(\ref{eqnzero}) we get
\begin{align*}&\deg(\tau_1(\Lambda))=2D-m_{\Lambda}(p)-m_{\Lambda}(r_1)-m_{\Lambda}(r_2)\leq D,\\
&deg(\tau_2g(\Lambda))=2D-m_{g(\Lambda)}(p)-m_{g(\Lambda)}(s_1)-m_{g(\Lambda)}(s_2)<D.
\end{align*}
We put $\theta_1:=\tau_1h$ and $\theta_n:=f(\tau_2)^{-1}$. The claim now follows from Lemma~\ref{lem rel dJ 2} for $\tau_1^{-1}$, $g$, $\tau_2$.
\smallskip

{\bf - Situation 2 -} {\it Assume that there exists no $\tau_1\in\mathcal{J}_*$ or no $\tau_2\in\mathcal{J}_*$ of degree 2 with base-points $p,r_1,r_2$ and $p,s_1,s_2$, respectively.} 

We claim that one of the two exists. Assume that neither $\tau_1$ nor $\tau_2$ exist. Since $p,r_1,r_2$ are not collinear by Lemma~\ref{lem notcoll}, it follows that $r_1,r_2$ are both in the first neighourhood $p$ \cite[\S2]{AC02}. Then $m_{\Lambda}(p)\geq m_{\Lambda}(r_1)+m_{\Lambda}(r_2)$. Inequality~(\ref{eqnzero}) implies 
\[2m_{\Lambda}(p)\geq m_{\Lambda}(p)+m_{\Lambda}(r_1)+m_{\Lambda}(r_2)\geq D.\] 
Similarly, we get $2m_{g(\Lambda)}(p)>D$. Then $m_{\Lambda}(p)+m_{\Lambda}(g^{-1}(p))>D$, which contradicts B\'ezout theorem. So, $\tau_1$ exists or $\tau_2$ exists.

- {\bf(1)} - Assume that $\tau_1$ exists and $\tau_2$ does not. We will construct an alternative for $\tau_2$ and then proceed as in Situation 2. The constructions are represented in the commutative diagram below. 
By the same argument in the as previous paragraph we get that $s_1,s_2$ are both proximate to $p$ and 
\begin{equation}\label{2star}\tag{$\bigstar\bigstar$}m_{g(\Lambda)}(p)>\nicefrac{D}{2}.\end{equation}

$\bullet$ If $r_1,r_2$ are real points, let $\{t_1,t_2,t_3\}=\{p,r_1,r_2\}$ such that $m_{\Lambda}(t_i)\geq m_{\Lambda}(t_{i+1})$ and such that if $t_i$ is infinitely near $t_j$ then $i>j$. In particular, $t_1$ is a proper point of $\PP^2$. By the chosen ordering, (\ref{eqnzero}) and (\ref{2star}) we have
\[m_{g(\Lambda)}(g(t_1))+m_{g(\Lambda)}(g(t_2))+m_{g(\Lambda)}(p)> \nicefrac{2D}{3}+\nicefrac{D}{2}=D.\]
So, the the points $p,g(t_1),g(t_2)$ are not collinear. Moreover, $t_1\in\PP^2$ and $t_2$ is either a proper point of $\PP^2$ as well or is infinitely near $t_1$. So,  
there exist $\tau_3\in\mathcal{J}_*$ of degree $2$ with base-points $p,g(t_1),g(t_2)$. The above inequality implies $\deg(\tau_3g(\Lambda))<D$. We proceed as in Situation $1$ with $\tau_1,g,\tau_3$.

$\bullet$ If $r_2=\bar{r}_1$, then $r_1,\bar{r}_2$ are proper points of $\PP^2$ because they are base-points of $\tau_1$. B\'ezout theorem and (\ref{2star}) imply 
\[D\geq m_{g(\Lambda)}(p)+m_{g(\Lambda)}(g(p))>\nicefrac{D}{2}+m_{g(\Lambda)}(g(p)).\]
So $m_{g(\Lambda)}(p)>\frac{D}{2}> m_{g(\Lambda)}(g(p))$. 
This and Inequalities~(\ref{eqnzero}) imply 
\[m_{g(\Lambda)}(p)+2m_{g(\Lambda)}(g(r_1))> m_{g(\Lambda)}(g(p))+2m_{g(\Lambda)}(g(r_1))\stackrel{(\bigstar)}\geq D.\]
Thus there exists $\tau_4\in\mathcal{J}_*$ of degree $2$ with base-points $p,g(r_1),g(\bar{r}_2)$. The above inequality implies that $\deg(\tau_4g(\Lambda))<D$. We define $\theta_n:=f\tau_4\in\mathcal{J}_*$ and proceed as in Situation 2 with $\tau_1,g,\tau_4$. 
\[\xymatrix{ &\Lambda\ar[rr]^g\ar@{-->}[d]^{\tau_1}^(.3){[p,r_1,r_2]}&\quad&g(\Lambda)\ar@/^.5pc/@{-->}[rd]^f\ar@{-->}[d]^(.4){\tau_3/}^(.6){\tau_4}_(.7){[p,g(r_1),g(\bar{r}_2)]}_(.4){[p,g(t_1),g(t_2)]/}&\\
h^{-1}(\Lambda)\ar@{-->}[ru]^h&\tau_1(\Lambda)&&\tau_ig(\Lambda)&fg(\Lambda)}\]

- {\bf(2)} - Suppose that $\tau_1$ does not exist and $\tau_2$ does. We proceed analogously as above; we similarly obtain key inequalities $m_{\Lambda}(p)\geq\frac{D}{2}$ and $m_{\Lambda}(p)\geq m_{g(\Lambda)}(g^{-1}(p))$ and the above strict inequalitis with $r_i$ replaced by $s_i$. This enables us to construct an alternative for $\tau_1$ just as above.
\end{proof}

\begin{Lem}\label{lem rel mixed}
Let $g\in\Aut_{\R}(\PP^2)$ and either $f\in\mathcal{J}_{\circ}$ be a standard or special quintic transformation and $h\in\mathcal{J}_*$. 
Let $\Lambda$ be a real linear system of degree $D$. Suppose that
\[\deg(h^{-1}(\Lambda))\leq D\ (\text{resp.}<D),\quad\deg(fg(\Lambda))<D\]
Then there exist $\theta_1\in\mathcal{J}_*$, $\theta_2\in\Aut_\R(\PP^2)$, $\theta_3,\dots,\theta_n\in\Aut_{\R}(\PP^2)\cup\mathcal{J}_*\cup\mathcal{J}_{\circ}$ such that
\begin{enumerate}
\item $w(f)w(g)w(h)=w(\theta_n)\cdots w(\theta_1)$ holds in $\mathcal{G}$, i.e. the following commutative diagram corresponds to a relation in $\mathcal{G}$:
\[\small\xymatrix{&\Lambda\ar[r]^g &g(\Lambda)\ar@{-->}[dr]^f&\\ 
h^{-1}(\Lambda)\ar@{-->}[ru]^h\ar@{-->}[r]^{\theta_1}&\ar@{..>}[r]&\ar@{-->}[r]^{\theta_n}&fg(\Lambda) }\]
\item 
$\deg(\theta_1)=\deg(h)-1,\quad \deg(\theta_1(\Lambda))=\deg(\theta_2\theta_1(\Lambda))\leq D\ (\text{resp.} <D)$ \ and
\begin{align*}
&deg(\theta_i\cdots\theta_1h^{-1}(\Lambda))<D,\quad i=3,\dots,n.
\end{align*}
\end{enumerate}
If  $h\in\mathcal{J}_{\circ}$ is a standard or special quintic transformation and $f\in\mathcal{J}_*$, we find $\theta_1\in\Aut_\R(\PP^2)$, $\theta_2,\dots,\theta_n\in\Aut_\R(\PP^2)\cup\mathcal{J}_*\cup\mathcal{J}_{\circ}$ satisfying (1) and  
\[\deg(\theta_i\cdots\theta_2\theta_1h^{-1}(\Lambda))<D,\quad i=2,\dots,n.\]
\end{Lem}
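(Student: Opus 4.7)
My plan is to mirror the strategy of Lemma~\ref{lem rel dJ} on the $\mathcal{J}_*$-side while applying the $\mathcal{J}_{\circ}$-reduction from Lemma~\ref{lem rel 5,5} on the quintic side, bridging the two via an induction on $\deg(h)$ for Case A (respectively on $\deg(f)$ for Case B).

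For Case A ($h \in \mathcal{J}_*$, $f \in \mathcal{J}_{\circ}$ a standard or special quintic), I would first apply Lemma~\ref{lem deg mult ineq}(1) to $h^{-1}$ acting on $\Lambda$ to extract a pair of base-points $r_1, r_2$ of $h^{-1}$ satisfying $m_\Lambda([1:0:0]) + m_\Lambda(r_1) + m_\Lambda(r_2) \geq D$, then produce a quadratic $\tau_1 \in \mathcal{J}_*$ with base-points $[1:0:0], r_1, r_2$; the degenerate case (where no such $\tau_1$ exists because the three points collapse into an infinitely-near configuration) is handled exactly as in Situation 2 of Lemma~\ref{lem rel dJ}, using the forced inequality $2 m_\Lambda([1:0:0]) \geq D$ together with B\'ezout. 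Setting $\theta_1 := \tau_1 h \in \mathcal{J}_*$ gives a map of degree $\deg(h) - 1$ with $\deg(\theta_1(h^{-1}(\Lambda))) = \deg \tau_1(\Lambda) \leq D$.

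In parallel, I would apply Lemma~\ref{lem deg mult ineq}(2) with strict inequality to $f$ acting on $g(\Lambda)$. Since $f$ is a standard or special quintic with characteristic $(5; 2^4, 2^2)$ (Remark~\ref{rmk quintlin 2}), it has no simple base-point, so only case $(2.1)$ of that lemma arises and produces a pair of non-real double base-points $q, \bar q$ of $f$ outside $\{p_1, \bar p_1, p_2, \bar p_2\}$ with $m_{g(\Lambda)}(p_1) + m_{g(\Lambda)}(p_2) + m_{g(\Lambda)}(q) > D$. The six points $p_1, \bar p_1, p_2, \bar p_2, q, \bar q$ then cannot lie on a common conic (otherwise intersection-theoretic bounds force equality), so Definitions~\ref{def 5.1}/\ref{def 5.2} together with Lemma~\ref{lem quintlin} furnish a standard or special quintic $\theta \in \mathcal{J}_{\circ}$ with precisely these base-points, and the multiplicity count gives $\deg(\theta g(\Lambda)) < D$. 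Take $\theta_n := f \theta^{-1} \in \mathcal{J}_{\circ}$. The interior of the sequence must then decompose the bridge $\theta g \tau_1^{-1} \colon \tau_1(\Lambda) \to \theta g(\Lambda)$; but this is a new instance of the present lemma with $\tau_1^{-1} \in \mathcal{J}_*$ of degree $2$ playing the role of $h$, so induction on $\deg(h)$ closes the argument when $\deg(h) \geq 3$. The base case $\deg(h) = 2$ is resolved directly: then $\theta_1 = \tau_1 h$ is linear, and a combined invocation of Lemmas~\ref{lem rel 5,5}, \ref{lem rel dJ 2} (certified by relations \ref{relrel1}, \ref{relrel2}, \ref{relrel3} and Remark~\ref{rmk rel sp}) provides the required decomposition. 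Case B is treated symmetrically: because the quintic $h$ already has four of its six base-points at $\{p_1, \bar p_1, p_2, \bar p_2\}$, only a linear repositioning $\theta_1 \in \Aut_\R(\PP^2)$ is needed on the left to align the remaining conjugate pair of base-points of $h^{-1}$ with an auxiliary quintic drawn from the $f$-side, after which the reduction of $f$ via quadratics in $\mathcal{J}_*$ proceeds as in Lemma~\ref{lem rel dJ}.

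The main obstacle is the coordination of the two reductions across the inductive bridge: one must verify existence of both $\tau_1$ and $\theta$ in every non-generic configuration, and ensure that no intermediate linear system in the bridge has degree exceeding $D$ so that each elementary step strictly decreases the degree. I expect the infinitely-near and collinearity-degenerate cases, to be managed through B\'ezout-type inequalities and the workaround schemes of Situation 2 of Lemma~\ref{lem rel dJ}, to require the most careful analysis; in particular the base case of the induction will likely require several auxiliary linear repositionings before Lemma~\ref{lem rel 5,5} becomes applicable.
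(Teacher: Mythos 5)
Your overall architecture has a genuine gap at exactly the point where the lemma is hard. Two observations. First, the auxiliary quintic $\theta$ you build from Lemma~\ref{lem deg mult ineq}~(2.1) has base-points $p_1,\bar p_1,p_2,\bar p_2,q,\bar q$ where $q,\bar q$ is the remaining double-point pair of $f$ itself; since $f$ is already a standard or special quintic, $\theta$ has the \emph{same} base-points as $f$, so $\theta_n=f\theta^{-1}$ is linear and no reduction of the $\mathcal{J}_{\circ}$-side has taken place. Second, and more seriously, your induction on $\deg(h)$ bottoms out at $\deg(h)=2$, which is still the fully mixed configuration: a quadratic element of $\mathcal{J}_*$ on one side of $g$ and a quintic element of $\mathcal{J}_{\circ}$ on the other. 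Neither Lemma~\ref{lem rel 5,5} (which needs both outer maps to be quintics in $\mathcal{J}_{\circ}$) nor Lemma~\ref{lem rel dJ 2} (which needs both to be quadratics in $\mathcal{J}_*$) applies there, so invoking them "in combination" is circular --- the base case of your induction \emph{is} the statement being proved.

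The missing idea is the hybrid quadratic map that straddles the two de Jonqui\`eres groups. The paper's proof extracts a high-multiplicity real point $t$ from the base-points of $h^{-1}$ (using $m_{\Lambda}(t)>D/3$) and a high-multiplicity non-real conjugate pair $s_1,\bar s_1$ from the base-points of $f$ (using $m_{g(\Lambda)}(s_1)>D/3$), and builds a quadratic $\tau\alpha$ with base-points $g(t),s_1,\bar s_1$; the sum of the two bounds gives $m_{g(\Lambda)}(g(t))+2m_{g(\Lambda)}(s_1)>D$, hence non-collinearity and $\deg(\tau\alpha g(\Lambda))<D$. Because this quadratic shares a real base-point with the $\mathcal{J}_*$-side and a non-real pair with $f$, the composition $f(\tau\alpha)^{-1}$ can be conjugated into a single element of $\mathcal{J}_{\circ}$ via \ref{relrel1}, \ref{relrel2} and Remark~\ref{rmk rel sp}, while $h,g\alpha^{-1},\tau$ all live in the $\mathcal{J}_*$-world and are handled by Lemma~\ref{lem rel dJ} (which is also where $\theta_1\in\mathcal{J}_*$ with $\deg(\theta_1)=\deg(h)-1$ actually comes from). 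Without this construction your two one-sided reductions never meet. Two smaller problems: the degenerate (infinitely-near) cases cannot be dispatched "exactly as in Situation 2 of Lemma~\ref{lem rel dJ}", since the B\'ezout contradiction there uses the existence of a second quadratic in $\mathcal{J}_*$ centred at $[1:0:0]$ on the $f$-side, which does not exist when $f$ is a quintic; and your inductive splicing can leave an intermediate system of degree exactly $D$ in position~$3$, violating conclusion~(2) unless you merge the adjacent linear maps.
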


\begin{proof}
We only look at the situation, where $f\in\mathcal{J}_{\circ}$, $h\in\mathcal{J}_*$, because if $f\in\mathcal{J}_*$, $h\in\mathcal{J}_{\circ}$ the constructions are done analogously.

We need to play with the multiplicities of the six base-points of $f$, so we call them $s_1,\bar{s}_1,s_2,\bar{s}_2,s_3,\bar{s}_3$ and order them such that $m_{g(\Lambda)}(s_1)\geq m_{g(\Lambda)}(s_2)\geq m_{g(\Lambda)}(s_3)$ and if $s_i$ is infinitely near $s_j$, then $i>j$. In particular, $s_1$ is a proper point of $\PP^2$. By Lemma~\ref{lem deg mult ineq} we have 
\begin{equation}\tag{$\text{Ineq}^3$}\label{eqn6} \quad m_{g(\Lambda)}(s_1)+m_{g(\Lambda)}(s_2)+m_{g(\Lambda)}(s_3)>D.\end{equation}
Let $p:=[1:0:0]$. By Lemma~\ref{lem deg mult ineq} there exist two real or two non-real conjugate base-points $r_1,r_2$ of $h$, such that 
\begin{equation}\tag{$\text{Ineq}^4$}\label{eqn5}m_{\Lambda}(p)+m_{\Lambda}(r_1)+m_{\Lambda}(r_2)\geq D.\end{equation}
We can assume that $m_{\Lambda}(r_1)\geq m_{\Lambda}(r_2)$ and that $r_1$ is a proper point of $\PP^2$ or in the first neighbourhood of $p$ and $r_2$ is a proper base-point of $\PP^2$ or in the first neighbourhood of $p$ or $r_1$. 
We now look at two cases, depending on whether $r_1,r_2$ are real points or not. If $\deg(h^{-1}(\Lambda))<D$, then "$>$" holds in the above inequalities (Lemma~\ref{lem deg mult ineq}) and we will have "$<$" everywhere.
\smallskip

\underline{Case 1:} Suppose that $r_1,r_2$ are real points. We can pick $t\in\{p,r_1,r_2\}$ a proper point of $\PP^2$ such that $m_{\Lambda}(t)=\max\{m_{\Lambda}(p),m_{\Lambda}(r_1),m_{\Lambda}(r_2)\}$. Inequalities~(\ref{eqn6}) and~(\ref{eqn5}) impy $m_{g(\Lambda)}(s_1),m_{\Lambda}(t)>\frac{D}{3}$, so
\[m_{g(\Lambda)}(g(t))+2m_{g(\Lambda)}(s_1)>D.\]
In particular, $g(t),s_1,\bar{s}_1$ are not collinear, and since $t$ and $s_1$ are proper points of $\PP^2$, there exists $\tau\in\mathcal{J}_*$ of degree $2$, $\alpha\in\Aut_{\R}(\PP^2)$ such that $\tau\alpha$ has base-points $g(t),s_1,\bar{s}_1$. At least one of the points $s_2,s_3$ is not on a line contracted by $\tau\alpha$, say $s_i$, because $s_1,\bar{s}_1,s_2,\bar{s}_2,s_3,\bar{s}_3$ are not all on one conic. There exists $\beta_1\in\Aut_\R(\PP^2)$ that sends $s_1,s_i$ onto $p_1,p_2$. There exists $\beta_2\in\Aut_\R(\PP^2)$ that sends $(\tau\alpha)(s_1),(\tau\alpha)(s_i)$ onto $p_1,p_2$. Then $\beta_2\tau\alpha\beta_1^{-1}\in\J_{\circ}$. There also exists $\beta_3\in\Aut_\R(\PP^2)$ such that $\beta_3f\beta_1^{-1}\in\J_{\circ}$. 
Definition~\ref{def G}~\ref{relrel1}, \ref{relrel2} and Remark~\ref{rmk rel sp}~(\ref{relrel4}) imply
\[w(\beta_2)w(\tau)w(\alpha\beta_1^{-1})=w(\beta_2\tau\alpha\beta_1^{-1}),\quad w(\beta_3)w(f)w(\beta_1^{-1})=w(\beta_3f\beta_1^{-1}),\]
and thus
\begin{align*}w(f)w(\alpha^{-1})w(\tau^{-1})&=w(\beta_3^{-1})w(\beta_3f\beta_1^{-1})w(\beta_1)w(\alpha^{-1})w(\alpha\beta_1^{-1})w(\beta_1\alpha^{-1}\tau^{-1}\beta_2^{-1})w(\beta_2)\\
&\underset{\beta_3f\beta_1^{-1}\in\mathcal{J}_{\circ}}{\stackrel{\beta_2\tau\alpha\beta_1^{-1}\in\mathcal{J}_{\circ}}=}w(\beta_3^{-1})w(\beta_3f\alpha^{-1}\tau^{-1}\beta_2^{-1})w(\beta_2).
\end{align*}
According to the above inequality, we have
\[\deg(\tau\alpha g(\Lambda))=2D-m_{g(\Lambda)}(g(t))-2m_{g(\Lambda)}(s_1)<D.\]
We define $\theta_{n-2}:=\beta_2^{-1}$, $\theta_{n-1}:=\beta_3f\alpha^{-1}\tau^{-1}\beta_2\in\mathcal{J}_{\circ}$ and 
$\theta_n:=\beta_3^{-1}$. The maps $h,g\alpha^{-1},\tau$ satisfy the assumptions of Lemma~\ref{lem rel dJ} and find $\theta_1,\dots,\theta_{n-3}$ accordingly. 
The construction is visualised in the following commutative diagram.
{\small
\[\xymatrix{\Lambda\ar[rrrr]^g&&&& g(\Lambda)\ar@{-->}[dlll]_(.6){\tau\alpha}_(.45){[g(t),s_1,\bar{s}_1]} \ar@{-->}[dr]^f \ar[dl]_{\beta_1}\\
h^{-1}(\Lambda)\ar@{-->}[u]^h&\tau\alpha g(\Lambda)\ar[r]_{\beta_2=\theta_{n-2}}&\beta_2^{-1}\tau\alpha g(\Lambda)\ar@{-->}[r]^(.55){\beta_1(\tau\alpha)^{-1}\beta_2^{-1}}\ar@/_1pc/@{-->}[rr]_{\theta_{n-1}}&\beta_1g(\Lambda)\ar@{-->}[r]^{\beta_3f\beta_1^{-1}}&\beta_3fg(\Lambda)&fg(\Lambda)\ar[l]^{\beta_3=\theta_n^{-1}}}\]
}

\underline{Case 2:} Suppose that $r_2=\bar{r}_1$. 

{\bf(1)} If $g(r_1),g(\bar{r}_1)\in\{s_1,\bar{s}_1\dots,s_3,\bar{s}_3\}$, then $r_1,\bar{r}_1$ cannot be infinitely near $p$, so they are proper points of $\PP^2$. Furthermore, $p,r_1,\bar{r}_1$ are not collinear (Remark~\ref{lem notcoll}), so there exists $\tau\in\mathcal{J}_*$ of degree 2 with base-points $p,r_1,\bar{r}_1$. At least one point in $\{s_1,s_2,s_3\}\setminus\{g(r_1),g(\bar{r}_1)\}$ is not on a line contracted by $\tau g^{-1}$, say $s_1$, because $s_1,\dots,\bar{s}_3$ are not all on one conic. 
By the same argument as in Case 1, there exist $\beta_1,\beta_2,\beta_3\in\mathcal{J}_{\circ}$ such that $\beta_2\tau g^{-1}\beta_1^{-1}\in\mathcal{J}_{\circ}$ and $\beta_3f\beta_1^{-1}\in\mathcal{J}_{\circ}$. 
Just like in the Case 1, we obtain with \ref{relrel1}, \ref{relrel2} and Remark~\ref{rmk rel sp}~(\ref{relrel4})
\[w(f)w(g)w(\tau)=w(\beta_3^{-1})w(\beta_3fg\tau^{-1}\beta_2^{-1})w(\beta_2).\]
Note that $\tau h\in\mathcal{J}_*$ and $\deg(\tau h)=\deg(h)-1$. Furthermore, the above inequality implies that 
\[\deg(\tau(\Lambda))=2D-m_{\Lambda}(p)-2m_{\Lambda}(r_1)<D.\]
We define $\theta_1:=\tau h$, $\theta_2:=\beta_2$, $\theta_3:=\beta_3fg\tau^{-1}(\beta_2)^{-1}\in\mathcal{J}_{\circ}$ and $\theta_4=(\beta_3)^{-1}$. 
The construction is summarised in the following commutative diagram:
{\small
\[\xymatrix{&\Lambda\ar[rrrr]^g \ar@{-->}[dr]^(.6){\tau}^(.45){[p,r_1,\bar{r}_1]}&&&& g(\Lambda) \ar@{-->}[dr]^f \ar[dl]_{\beta_1}\\
h^{-1}(\Lambda)\ar@{-->}[ur]^h\ar@{-->}[rr]^{\theta_1}_{\deg(\theta_1)=\deg(h)-1}&&\tau(\Lambda)\ar[r]_{\beta_2=\theta_2}&\beta_2^{-1}\tau g(\Lambda)\ar@{-->}[r]^(.55){\beta_1g(\beta_2\tau)^{-1}}\ar@/_1pc/@{-->}[rr]_{\theta_3}&\beta_1g(\Lambda)\ar@{-->}[r]^{\beta_3f\beta_1^{-1}}&\beta_3fg(\Lambda)&fg(\Lambda)\ar[l]^{\beta_3=(\theta_4)^{-1}}}\]
}
\indent{\bf(2)} Assume that $g(r_1),g(\bar{r}_1)\notin\{s_1,\bar{s}_1\dots,s_3,\bar{s}_3\}$. 
\smallskip

$\bullet$ If $m_{\Lambda}(p)\geq m_{\Lambda}(r_1)$, then $m_{\Lambda}(p)=\max\{m_{\Lambda}(p),m_{\Lambda}(r_1),m_{\Lambda}(\bar{r}_2)\}$ and we proceed as in Case 1 with $p$ instead of $t$.
\smallskip

$\bullet$ If $m_{\Lambda}(p)<m_{\Lambda}(r_1)$, then $r_1,\bar{r}_1$ are proper points of $\PP^2$ and there exists $\tau\in\mathcal{J}_*$ of degree $2$ with base-points $p,r_1,\bar{r}_1$. As in {\bf(1)}, we have
\[\deg(\tau(\Lambda))\leq D,\quad\deg(\tau h)=\deg(h)-1,\]
and we put $\theta_1:=\tau h\in\mathcal{J}_*$. Inequality~(\ref{eqn6}) and $m_{\Lambda}(p)<m_{\Lambda}(r_1)$ imply $m_{g(\Lambda)}(g(r_1))\geq\frac{D}{3}$. Inequality~(\ref{eqn5}) and the order of the $s_i$ imply
\[m_{g(\Lambda)}(g(r_1))+m_{g(\Lambda)}(s_1)+m_{g(\Lambda)}(s_2)>D.\]
Since moreover $r_1,s_1\in\PP^2$ and $s_2$ is a proper point of $\PP^2$ or in the first neighbourhood of $s_1$, there exists a standard or special quintic transformation $\theta\in\mathcal{J}_{\circ}$, $\alpha\in\Aut_{\R}(\PP^2)$ such that $\theta\alpha$ has base-points $g(r_1),g(\bar{r}_1),s_1,\bar{s}_1,s_2,\bar{s}_2$. Then 
\[\deg(\theta\alpha g(\Lambda))=5D-4m_{g(\Lambda)}(g(r_1))-4m_{g(\Lambda)}(s_1)-4m_{g(\Lambda)}(s_2)<D.\] 
The construction is visualised in the following diagram.
\[\small\xymatrix{ &\Lambda\ar[rr]^g\ar@{-->}[d]_{\tau}^(.4){[p,r_1,\bar{r}_1]}&&g(\Lambda)\ar@{-->}[d]^{\theta\alpha}_(.4){[r_1,s_1,s_2]}\ar@{-->}[rd]^f^(.3){[s_1,s_2,s_3]}&\\
h^{-1}(\Lambda)\ar@{-->}[r]_{\deg(h)-1}^{\theta_1}\ar@{-->}[ru]^h&\tau(\Lambda)&&\theta\alpha g(\Lambda)&fg(\Lambda)}\]
The maps $\tau,\alpha g,\theta$ are satisfy the conditions of {\bf(1)}, and $\theta,\alpha,f$ satisfy the assumptions of Lemma~\ref{lem rel 5,5} with ``$<$". We get $\theta_2,\dots,\theta_n$ from them.
\end{proof}

\begin{Lem}\label{lem exist}
Let $f\in\J_{\circ}$, $\Lambda$ and $q$ as in Lemma~\ref{lem deg mult ineq}~(2)~(2.1) with $q\in\PP^2$.
Then one of the following holds: 
\begin{enumerate}
\item there exists $\theta\in\J_{\circ}$ of degree $3$ with base-points $p_1,\bar{p}_1,p_2,\bar{p}_2,q$, 
\item $\deg(f)$ is even and there exists $\theta\in\J_{\circ}$ of degree $2$ with base-points $p_i,\bar{p}_i,q$ and \mbox{$2m(p_i)+m(q)\geq D$}, where $m_f(p_i)=\frac{\deg(f)}{2}$.
\end{enumerate}
\end{Lem}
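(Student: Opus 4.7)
The plan is a case analysis on whether $q$ is collinear with two points of $\{p_1,\bar p_1,p_2,\bar p_2\}$. If it is not, Lemma~\ref{lem sigma deg 3} directly produces the cubic $\theta\in\J_\circ$ with double point $q$ and base-points $p_1,\bar p_1,p_2,\bar p_2,q$, yielding option~(1).

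Otherwise $q$ lies on some line $L_{a,b}$ with $a,b\in\{p_1,\bar p_1,p_2,\bar p_2\}$. Since $q\in\PP^2$ is a proper base-point with $m_f(q)=2$, intersecting $L_{a,b}$ with a general member of the linear system of $f$ yields $\deg(f)\geq m_f(a)+m_f(b)+2$. Comparing this to the characteristic in Lemma~\ref{lem char}, every configuration is ruled out with the sole exception of $L_{a,b}=L_{p_j,\bar p_j}$ with $\deg(f)=d$ even and $m_f(p_j)=(d-2)/2$: for odd $d$ the real line $L_{p_j,\bar p_j}$ gives $d\geq d+1$; for even $d$ with $m_f(p_j)=d/2$ it gives $d\geq d+2$; and each of the four non-real lines $L_{p_s,\bar p_t}$ ($s\neq t$, or through $p_1,p_2$) forces $d\geq d+1$ in both parities. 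Setting $i:=3-j$ we obtain $m_f(p_i)=d/2$ as required by~(2). The same calculations show that $q$ is collinear with no pair from $\{p_1,\bar p_1,p_2,\bar p_2\}$ other than $(p_j,\bar p_j)=(p_{3-i},\bar p_{3-i})$, so Lemma~\ref{lem sigma} supplies the desired quadratic $\tau\in\J_\circ$ with base-points $p_i,\bar p_i,q$.

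It remains to verify the inequality $2m_\Lambda(p_i)+m_\Lambda(q)\geq D$. Intersecting $\Lambda$ with $L_{p_j,\bar p_j}$ gives $D\geq 2m_\Lambda(p_j)+m_\Lambda(q)$; combining this with the hypothesis $m_\Lambda(p_1)+m_\Lambda(p_2)+m_\Lambda(q)\geq D$ yields $m_\Lambda(p_i)\geq m_\Lambda(p_j)$, whence
\[2m_\Lambda(p_i)+m_\Lambda(q)\geq m_\Lambda(p_i)+m_\Lambda(p_j)+m_\Lambda(q)\geq D.\]
The main obstacle is the bookkeeping in the collinear case: one has to check every pair $(a,b)$ against both parities of $d$ and both possible multiplicities at $p_j$, and confirm that this simultaneously isolates the unique surviving configuration and delivers the non-collinearity conditions needed to invoke Lemma~\ref{lem sigma}.
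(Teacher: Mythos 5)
Your proof is correct and follows essentially the same route as the paper: the characteristic from Lemma~\ref{lem char} together with B\'ezout rules out every collinearity configuration except $q\in L_{p_j,\bar p_j}$ with $d$ even and $m_f(p_j)=\frac{d-2}{2}$, Lemmas~\ref{lem sigma deg 3} and~\ref{lem sigma} then supply $\theta$, and the inequality in (2) follows from the hypothesis combined with B\'ezout applied to $\Lambda$ along $L_{p_j,\bar p_j}$. The paper merely organises the dichotomy in the contrapositive direction (if the inequality in (2) fails, it deduces non-collinearity of $q,p_j,\bar p_j$ and falls back to (1)), which is the same argument.
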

\begin{proof}   
If $f$ has odd degree, its characteristic (Lemma~\ref{lem char}) implies that $q$ is not collinear with any two of $p_1,\dots,\bar{p}_2$, and (1) follows from Lemma~\ref{lem sigma deg 3}. If $f$ has even degree, let $p_i,\bar{p}_i$ its base-points of multiplicity $\frac{\deg(f)}{2}$. Then $q,p_i,\bar{p}_i$ are not collinear by Lemma~\ref{lem char}. By Lemma~\ref{lem sigma} there exists $\theta\in\J_{\circ}$ of degree $2$ with base-points $q,p_i,\bar{p}_i$. Suppose that (2) does not hold, i.e. \mbox{$2m_{\Lambda}(p_i)+m_{\Lambda}(q)<D$}. By assumption, we have $2(m_{\Lambda}(p_1)+m_{\Lambda}(p_2)+m_{\Lambda}(q))\geq 2D$, and it implies $2m_{\Lambda}(p_j)+m_{\Lambda}(q)>D$ for $j\neq i$. Thus the points $p_j,\bar{p}_j,q$ are not collinear as well. The characteristic of $f$ implies that $q$ is not collinear with $p_1,p_2$ and $p_1,\bar{p}_2$. Summarised, $q$ is not collinear with any two of $p_1,\dots,\bar{p}_2$, and now Lemma~\ref{lem sigma deg 3} implies (1).
\end{proof}

\begin{proof}[Proof of Theorem~\ref{prop technical thm}]
We prove the following: If $f_1,\dots,f_m\in\Aut_{\R}(\PP^2)\cup\mathcal{J}_*\cup\mathcal{J}_{\circ}$ such that 
\[f_m\cdots f_1=\Id \quad\text{in}\ \Bir_{\R}(\PP^2),\]
then 
\[w(f_m)\cdots w(f_1)=1\quad\text{in}\ \mathcal{G}.\]
It then follows that the natural surjective homomorphism $\mathcal{G}\rightarrow\Bir_{\R}(\PP^2)$ is an isomorphism.\par
Let $\Lambda_0$ be the linear system of lines in $\PP^2$, and define
\[\Lambda_i:=(f_i\cdots f_1)(\Lambda_0)\]
It is the linear system of the map $(f_i\cdots f_1)^{-1}$ and of degree $d_i:=\deg(f_i\cdots f_1)$. Define
\[ D:=\max\{d_i\mid i=1,\dots,m\},\ n:=\max\{i\mid d_i=D\},\ k:=\sum_{i=1}^n(\deg(f_i)-1)\]
We use induction on the lexicographically ordered pair $(D,k)$.

If $D=1$, then $f_1,\dots,f_m$ are linear maps, and thus $w(f_m)\cdots w(f_1)=1$ holds in $\Aut_{\R}(\PP^2)$ and hence in $\mathcal{G}$. 

So, lets assume that $D>1$. By definition of $n$, we have $\deg(f_{n+1})\geq 2$. We may assume that $f_n$ is a linear map - else we can insert $\Id$ after $f_n$, because $w(f_m)\cdots w(f_1)=w(f_m)\cdots w(f_{n+1})w(\Id )w(f_n)\cdots w(f_1)$ and this does not change the pair $(D,k)$. 

If $f_{n-1},f_n,f_{n+1}$ are all contained in $\J_{\circ}$ or in $\J_*$, we have $w(f_{n+1})w(f_n)w(f_{n+1})=w(f_{n+1}f_nf_{n-1})$ and this decreases the pair $(D,k)$. 

So, lets assume that not all three are contained in the same group. We now use Lemma~\ref{lem rel 5,5}, \ref{lem rel dJ}, \ref{lem rel mixed}, \ref{lem exist} to find $\theta_1,\dots,\theta_N\in\Aut_{\R}(\PP^2)\cup\mathcal{J}_*\cup\mathcal{J}_{\circ}$ such that 
\begin{equation}\tag{$\ast$}\label{1stern}w(f_{n+1})w(f_n)w(f_{n-1})=w(\theta_N)\cdots w(\theta_1)\end{equation}
and such that the pair $(D',k')$ associated to $f_m\cdots f_{n+2}\theta_N\cdots \theta_1f_{n-2}\cdots f_1$ is strictly smaller than $(D,k)$. 
\smallskip

{\bf(1)} If $f_{n-1},f_{n+1}\in\mathcal{J}_*$, we apply Lemma~\ref{lem rel dJ} to $f_{n-1},f_n,f_{n+1}$; there exist $\theta_1\in\J_*$, $\theta_2\in\Aut_\R(\PP^2)$, $\theta_3,\dots,\theta_N\in\Aut_\R(\PP^2)\cup\J_*\cup\J_{\circ}$ satisfying (\ref{1stern}) such that
\begin{align*}&\deg(\theta_1)=\deg(f_{n-1})-1,\quad\deg(\theta_2\theta_1(\Lambda_{n-2}))=\deg(\theta_1(\Lambda_{n-2}))\leq D,\\
&\deg(\theta_i\cdots\theta_3\theta_2\theta_1(\Lambda_{n-2}))<D,\quad i=3,\dots,N.
\end{align*}
We obtain a new pair $(D',k')$ where $D'<D$ or $D'=D$, $n'\leq n$ and 
\[k'\leq\sum_{i=1}^{n-2}(\deg(f_i)-1)+(\deg(\theta_1)-1)<\sum_{i=1}^{n-2}(\deg(f_i)-1)+(\deg(f_{n-1})-1)=k.\]
\smallskip

{\bf(2)} 
Suppose that $f_{n-1}\in\mathcal{J}_{\circ}$. Denote by $m_i(t)$ the multiplicity of $\Lambda_i$ in $t$. 
By Lemma~\ref{lem deg mult ineq}, there exists a base-point $q$ of $(f_{n-1})^{-1}$ of multiplicity $2$ such that 
\[m_{n-1}(p_1)+m_{n-1}(p_2)+m_{n-1}(q)\geq D,\] 
or $(f_{n-1})^{-1}$ has a simple base-point $r$ and 
\begin{equation}\tag{sbp}\label{eq:2.2} 2m_{n-1}(p_i)+m_{n-1}(r)\geq D,\qquad\text{where}\quad m_f(p_i)=\nicefrac{\deg(f_{n-1})}{2}.\end{equation}
We can assume that $q$ is either a proper point of $\PP^2$ or in the first neighbourhood of one of $p_1,\bar{p}_1,p_2,\bar{p}_2$. 

$\bullet$ First, assume that we are in the case where $q$ exists. 

If $q$ is a non-real point, then the above inequality implies that $p_1,\bar{p}_1,p_2,\bar{p}_2,q,\bar{q}$ are not all on contained in one conic, so there exists a standard or special quintic transformation $g_{n-1}\in\J_{\circ}$ with base-points $p_1,\dots,\bar{p}_2,q,\bar{q}$. 

If $q$ is a real point, then $q\in\PP^2$ because a real point cannot be infinitely near one of $p_1,\dots,\bar{p}_2$. By Lemma~\ref{lem exist} there exists $g_{n-1}\in\mathcal{J}_{\circ}$ of degree $3$ with base-points $p_1,\dots,\bar{p}_2,q$, or the degree of $f_{n-1}$ is even and there exists $g_{n-1}\in\J_{\circ}$ of degree $2$ with base-points $p_i,\bar{p}_i,q$ and $m_{n-1}(q)+2m_{n-1}(p_i)\geq D$ where $m_{(f_{n-1})^{-1}}(p_i)=\frac{\deg(f_{n-1})}{2}$. 

$\bullet$ If $q$ does not exist, $(f_{n-1})^{-1}$ has a simple base-point $r$ that satisfies conditions~(\ref{eq:2.2}).
If $r$ is not a proper point of $\PP^2$, we replace it by the (real) proper point of $\PP^2$ to which $r$ is infinitely near. This does not change the above inequality. In either case, Lemma~\ref{lem sigma} ensures the existence of $g_{n-1}\in\J_{\circ}$ of degree $2$ with base-points $p_i,\bar{p}_i,r$ for $m_{(f_{n-1})^{-1}}(p_i)=\frac{\deg(f_{n-1})}{2}$.

In each of the above cases, the above inequalities imply
\[\deg(g_{n-1}(\Lambda_{n-1}))\leq D.\]
We define $\theta_1:=g_{n-1}f_{n-1}\in\J_{\circ}$. By construction of $g_{n-1}$ we have
\begin{equation}\tag{$\ast\ast$}\label{eq*}
\deg(\theta_1)<\deg(f_{n-1})
\end{equation}

If $f_{n+1}\in\J_{\circ}$, we similarly find the above conditions with ``$>$" and $g_{n+1}\in\J_{\circ}$ standard or special quintic transformation or of degree $2$ or $3$ such that 
\[\deg(g_{n+1}(\Lambda_n))<D,\]
and define $\theta_N:=f_{n+1}(g_{n+1})^{-1}\in\J_{\circ}$ (we do not care about its degree). 

If $f_{n-1},f_{n+1}\in\J_{\circ}$, we apply Lemma~\ref{lem rel 5,5} to $g_{n-1},f_n,g_{n+1}$ to find $\theta_2\in\Aut_\R(\PP^2)$, $\theta_3,\dots,\theta_{N-1}\in\Aut_\R(\PP^2)\cup\J_*\cup\J_{\circ}$ satisfying (\ref{1stern}) and
\[\deg(\theta_i\cdots\theta_3\theta_2 g_{n-1}(\Lambda_{n-1}))<D,\quad i=3,\dots,N-1.\]

If $f_{n-1}\in\J_{\circ}$ and $f_{n+1}\in\J_*$, we apply Lemma~\ref{lem rel mixed} to $g_{n-1},f_n,f_{n+1}$ to find $\theta_2\in\Aut_\R(\PP^2)$, $\theta_3,\dots,\theta_N\in\Aut_\R(\PP^2)\cup\J_*\cup\J_{\circ}$ satisfying (\ref{1stern}) and
\[\deg(\theta_i\cdots\theta_3\theta_2g_{n-1}(\Lambda_{n-1}))<D,\quad i=3,\dots,N.\]

If $f_{n-1}\in\J_*$ and $f_{n+1}\in\J_{\circ}$, we apply Lemma~\ref{lem rel mixed} to $f_{n-1},f_n,g_{n+1}$ to find $\theta_1\in\J_*$, $\theta_2\in\Aut_\R(\PP^2)$ $\theta_3,\dots,\theta_{N-1}\in\Aut_\R(\PP^2)\cup\J_*\cup\J_{\circ}$ satisfying (\ref{1stern}) and
\begin{align*}\tag{$\ast\ast\ast$}&\deg(\theta_1)=\deg(f_{n-1})-1,\quad \deg(\theta_2\theta_1(\Lambda_{n-2}))=\deg(\theta_1(\Lambda_{n-2}))\leq D,\\
& \deg(\theta_i\cdots\theta_2\theta_1(\Lambda_{n-2}))<D,\quad i=3,\dots,N-1.\end{align*}
The constructions are visualised in the following commuting diagrams corresponding to relations in $\mathcal{G}$.
{\tiny
\[\xymatrix{&\Lambda_{n-1}\ar[r]^{f_n}\ar@{-->}[d]^{g_{n-1}}&\Lambda_n\ar@{-->}[rd]^{f_{n+1}\in\J_{\circ}}\ar@{-->}[d]_{g_{n+1}}&\\
\Lambda_{n-2}\ar@{-->}[ur]^{\J_{\circ}\ni f_{n-1}}\ar[r]^{\theta_1}_{\deg(\theta_1)<\deg(f_{n-1})}&\ar@{..>}[r]^{\theta_{N-1}\cdots\theta_2}_{\text{Lem.}~\ref{lem rel 5,5}}&\ar@{-->}[r]^{\theta_N}&\Lambda_{n+1}
}
\xymatrix{&\Lambda_{n-1}\ar[r]^{f_n}_{\text{Lem.}~\ref{lem rel mixed}}\ar@{-->}[d]^{g_{n-1}}&\Lambda_n\ar@{-->}[d]^{f_{n+1}\in\J_*}\\
\Lambda_{n-2}\ar@{-->}[ur]^{\J_{\circ}\ni f_{n-1}}\ar[r]^{\theta_1}_{\deg(\theta_1)<\deg(f_{n-1})}&\ar@{..>}[r]^{\theta_N\cdots\theta_2}_{\text{Lem.}~\ref{lem rel mixed}}&\Lambda_{n+1}
}
\xymatrix{\Lambda_{n-1}\ar[r]^{f_n}&\Lambda_n\ar@{-->}[rd]^{f_{n+1}\in\J_{\circ}}\ar@{-->}[d]_{g_{n+1}}&\\
\Lambda_{n-2}\ar@{-->}[u]^{\J_*\ni f_{n-1}}\ar@{..>}[r]^{\theta_{N-1}\cdots\theta_1}_{\text{Lem.}~\ref{lem rel mixed}}&\ar@{-->}[r]^{\theta_N}&\Lambda_{n+1}
}
\]
}
We claim that in each case $(D',k')<(D,k)$. The properties listed above imply that $D'\leq D$, and if $D=D'$ then $n'\leq n$ and, because $\theta_2\in\Aut_\R(\PP^2)$,
\begin{align*}k'\leq\sum_{i=1}^{n-2}(\deg(f_i)-1)+(\deg(\theta_1)-1)\underset{(\ast\ast\ast)}{\stackrel{(\ast\ast)}<}\sum_{i=1}^{n-2}(\deg(f_i)-1)+(\deg(f_{n-1})-1)=k. 
\end{align*}
\end{proof}

%%%%%%%%%%%%%%%%%%%%%%%%%
%%%%%%%%%%%%%%%%%%%%%%%%%
\section{A quotient of $\Bir_{\R}(\PP^2)$}\label{section quotient}
%%%%%%%%%%%%%%%%%%%%%%%%%
%%%%%%%%%%%%%%%%%%%%%%%%%

Let $\varphi_0\colon\mathcal{J}_{\circ}\rightarrow\bigoplus_{(0,1]}\Z/2\Z$ be the map given in Definition~\ref{def varphicirc}. By Theorem~\ref{prop technical thm} and Remark~\ref{rmk general amalgam}, the group $\Bir_{\R}(\PP^2)$ is isomorphic to the free product \mbox{$\Aut_{\R}(\PP^2)\ast\mathcal{J}_*\ast\mathcal{J}_{\circ}$} modulo all the pairwise intersections of $\Aut_\R(\PP^2),\mathcal{J}_*,\mathcal{J}_{\circ}$ and the relations \ref{relrel1}, \ref{relrel2}, \ref{relrel3} (see Definition~\ref{def G}). Define the map
\[\Phi\colon\Aut_{\R}(\PP^2)\ast\mathcal{J}_*\ast\mathcal{J}_{\circ}\longrightarrow \bigoplus_{(0,1]} \Z/2\Z,\quad f\mapsto\begin{cases} \varphi_{\circ}(f),&f\in\mathcal{J}_{\circ}\\ 0,&f\in\Aut_{\R}(\PP^2)\cup\mathcal{J}_*\end{cases}\]
It is a surjective homomorphism of groups because $\varphi_{\circ}$ is a surjective homomorphism of groups (Lemma~\ref{lem hom}).
We shall now show that there exists a homomorphism $\varphi$ such that the diagram
\begin{equation}\label{phi}\tag{Diag. $\varphi$}\small\xymatrix{ \Aut_{\R}(\PP^2)\ast\mathcal{J}_*\ast\mathcal{J}_{\circ}\ar[r]^{\pi}\ar[d]^{\Phi} & \mathcal{G}\simeq\Bir_{\R}(\PP^2)\ar@{..>}[dl]^{\quad\exists\ \varphi\ (\text{Prop.}~\ref{thm quotient})} \\
\bigoplus_{(0,1]}\Z/2\Z} \end{equation}
is commutative, where $\pi$ is the quotient map. For this, it suffices to show that $\ker(\pi)\subset\ker(\Phi)$. We will first show that the relations given by the pairwise intersections of $\Aut_\R(\PP^2),\mathcal{J}_*,\mathcal{J}_{\circ}$ are contained in $\ker(\Phi)$ and then it is left to prove that relations \ref{relrel1}, \ref{relrel2}, \ref{relrel3} are contained in it.

\begin{Lem}\label{lem inters}\item
$(1)$ Let $f_1\in\Aut_\R(\PP^2)$, $f_2\in\mathcal{J}_{\circ}$ such that $\pi(f_1)=\pi(f_2)$. Then $\Phi(f_1)=\Phi(f_2)=0$.
$(2)$ Let $f_1\in\mathcal{J}_*$, $f_2\in\mathcal{J}_{\circ}$ such that $\pi(f_1)=\pi(f_2)$. Then $\Phi(f_1)=\Phi(f_2)=0$. 

In particular, $\Phi$ induces a homomorphism from the generalised amalgamated product of $\Aut_\R(\PP^2),\mathcal{J}_*,\mathcal{J}_{\circ}$ along all pairwise intersections onto $\bigoplus_{(0,1]}\Z/2\Z$.
\end{Lem}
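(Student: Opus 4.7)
The plan is to show in both (1) and (2) that $f_2$ has degree at most $3$, so that $\varphi_{\circ}(f_2)=0$ by Lemma~\ref{lem hom}, while $\Phi(f_1)=0$ directly from the definition of $\Phi$.

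For (1), the hypothesis $\pi(f_1)=\pi(f_2)$, combined with the isomorphism $\mathcal{G}\simeq\Bir_\R(\PP^2)$ of Theorem~\ref{prop technical thm}, forces $f_1=f_2$ as elements of $\Bir_\R(\PP^2)$. Hence $f_2\in\Aut_\R(\PP^2)\cap\mathcal{J}_{\circ}$ is linear, in particular $\deg(f_2)=1$, and Remark~\ref{rmk map}~(\ref{rmk map 1}) (equivalently Lemma~\ref{lem hom}) gives $\varphi_{\circ}(f_2)=0$.

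For (2), the same reasoning yields $f_2\in\mathcal{J}_*\cap\mathcal{J}_{\circ}$. Set $d:=\deg(f_2)$. Viewed as an element of $\mathcal{J}_*$, the characteristic of $f_2$ is $(d;\,d-1,\,1^{2d-2})$: the point $[1:0:0]$ has multiplicity $d-1$ and every other base-point is simple. Viewed as an element of $\mathcal{J}_{\circ}$, Lemma~\ref{lem char} asserts that the four non-real points $p_1,\bar{p}_1,p_2,\bar{p}_2$ are base-points of multiplicity $\frac{d-1}{2}$ (if $d$ is odd) or of multiplicities $\frac{d}{2},\frac{d}{2},\frac{d-2}{2},\frac{d-2}{2}$ (if $d$ is even). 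Since none of $p_1,\bar{p}_1,p_2,\bar{p}_2$ equals $[1:0:0]$, the two descriptions of the base-point multiplicities are compatible only when $d\leq 3$: for odd $d\geq 5$ each of $p_1,\bar{p}_1,p_2,\bar{p}_2$ would have multiplicity $\geq 2$, and for even $d\geq 4$ at least two of them would have multiplicity $d/2\geq 2$, both contradicting that $[1:0:0]$ is the unique base-point of multiplicity $>1$ in the $\mathcal{J}_*$-characteristic. Hence $d\leq 3$, so Lemma~\ref{lem hom} again yields $\varphi_{\circ}(f_2)=0$.

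For the \emph{in particular} statement, the remaining pairwise intersection $\Aut_\R(\PP^2)\cap\mathcal{J}_*$ is handled automatically, since $\Phi$ vanishes identically on both $\Aut_\R(\PP^2)$ and $\mathcal{J}_*$. Combined with (1) and (2), this shows that $\Phi$ is compatible with every relation defining the generalised amalgamated product of $\Aut_\R(\PP^2),\mathcal{J}_*,\mathcal{J}_{\circ}$ along their pairwise intersections, and therefore descends to a surjective homomorphism from this amalgamated product onto $\bigoplus_{(0,1]}\Z/2\Z$. The main (and only non-routine) obstacle is the characteristic comparison in (2); once the degree bound $d\leq 3$ is established, everything reduces to Lemma~\ref{lem hom}.
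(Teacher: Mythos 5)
The proposal is correct and takes essentially the same route as the paper: in both cases one reduces to the observation that the common element lies in the relevant intersection, and in (2) one compares the $\mathcal{J}_*$-characteristic $(d;d-1,1^{2d-2})$ with the $\mathcal{J}_{\circ}$-characteristic from Lemma~\ref{lem char} to force $d\le 3$, after which $\varphi_{\circ}$ vanishes by Lemma~\ref{lem hom} (the paper cites Remark~\ref{rmk map} and Remark~\ref{rmk deg 3} for the same fact). Your write-up merely spells out the multiplicity comparison that the paper leaves implicit, which is a welcome addition but not a different argument.
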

\begin{proof}
(1) We have $\pi(f_1)=\pi(f_2)\in\Aut_\R(\PP^2)\cap\mathcal{J}_{\circ}\subset\mathcal{J}_{\circ}$. In particular, $\varphi_{\circ}(\pi(f_i))=0$, $i=1,2$ (Remark~\ref{rmk map}, (\ref{rmk map 1})), and so $\Phi(f_1)=\Phi(f_2)=0$ by definition of $\Phi$.

(2) Lets first figure out what exactly $\mathcal{J}_*\cap\mathcal{J}_{\circ}$ consists of. First of all, it is not empty because the quadratic involution
\[[x:y:z]\dashrightarrow[y^2+z^2:xy:xz]\]
is contained in it. Let $f\in\mathcal{J}_*\cap\mathcal{J}_{\circ}$ be of degree $d$. By Lemma~\ref{lem char}, its characteristic is $(d; (\frac{d-1}{2})^4,2^{\frac{d-1}{2}})$ or $(d;(\frac{d}{2})^2,(\frac{d-2}{2})^2,2^{\frac{d-2}{2}},1)$. Since $f\in\mathcal{J}_*$, it has characteristic $(d;d-1,1^{2d-2})$. If follows that $d\in\{1,2,3\}$. 

Linear and quadratic elements of $\mathcal{J}_{\circ}$ are sent by $\varphi_{\circ}$ onto $0$ (Remark~\ref{rmk map}~(\ref{rmk map 1})). Elements of $\mathcal{J}_{\circ}$ of degree 3 decompose into quadratic elements of $\mathcal{J}_{\circ}$ (Remark~\ref{rmk deg 3}) and are hence sent onto zero by $\varphi_{\circ}$ as well. In particular, $\Phi(f_1)=\Phi(f_2)=0$. 

Since $\Aut_\R(\PP^2),\J_*\subset\ker(\Phi)$, (1) and (2) imply that $\Phi$ induces a homomorphism from the generalised amalgamated product of $\Aut_\R(\PP^2),\mathcal{J}_*,\mathcal{J}_{\circ}$ along all pairwise intersections onto $\bigoplus_{(0,1]}\Z/2\Z$.
\end{proof}

\begin{Rmk}\label{lem linear 2}
Let $\theta\in\mathcal{J}_{\circ}$ be a standard quintic transformation with $S(\theta)=\{(q,\bar{q})\}$, $S(\theta^{-1})=\{(q',\bar{q}')\}$. 
Let $\alpha_q,\alpha_{q'}\in\Aut_{\R}(\PP^2)$ that fix $p_1$ and send $q$ (resp. $q'$) onto $p_2$. Then $\alpha_{q'}\theta(\alpha_q)^{-1}\in\mathcal{J}_{\circ}$ is a standard quintic transformation with base-points $p_1,\dots,\bar{p}_2$, $\alpha_q(p_2),\ \alpha_q(\bar{p}_2)$. Lemma~\ref{lem linear} and the definition of $\varphi_{\circ}$ (Definition~\ref{def map Jcirc} and Definition~\ref{def varphicirc})
imply $\varphi_{\circ}(\theta)=\varphi_{\circ}\left(\alpha_{q'}\theta(\alpha_q)^{-1}\right)$.
\end{Rmk}

\begin{Prop}\label{thm quotient}
The homomorphism $\Phi$ induces a surjective homomorphism of groups
\[\varphi\colon\Bir_{\R}(\PP^2)\longrightarrow\bigoplus_{(0,1]}\Z/2\Z\]
which is given as follows: 
Let $f\in\Bir_{\R}(\PP^2)$ and write $f=f_n\cdots f_1$, where $f_1,\dots,f_n\in\Aut_{\R}(\PP^2)\cup\mathcal{J}_*\cup\mathcal{J}_{\circ}$. Then $\varphi(\Aut_\R(\PP^2)\cup\mathcal{J}_*)=0$ and
\[\varphi(f)=\sum_{i=1}^n\Phi(f_i)=\sum_{f_i\in\mathcal{J}_{\circ}}\varphi_{\circ}(f_i)\]
The kernel $\ker(\varphi)$ contains all elements of degree $\leq 4$. 
\end{Prop}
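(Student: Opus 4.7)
The plan is to reduce the question to verifying that $\Phi$ respects the relations in the presentation of $\Bir_\R(\PP^2)\simeq \mathcal{G}$ coming from Theorem~\ref{prop technical thm}. By Remark~\ref{rmk general amalgam}, $\mathcal{G}$ is the generalised amalgamated product of $\Aut_\R(\PP^2)$, $\mathcal{J}_*$, $\mathcal{J}_\circ$ along their pairwise intersections, modulo the three families \ref{relrel1}, \ref{relrel2}, \ref{relrel3}. Lemma~\ref{lem inters} already tells us that $\Phi$ descends through the pairwise intersections, so it remains to check that $\Phi$ annihilates \ref{relrel1}, \ref{relrel2}, \ref{relrel3}. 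Once this is done, the existence of the quotient $\varphi$ and its explicit formula $\varphi(f)=\sum\Phi(f_i)$ follow from the universal property of the quotient together with the additivity of $\Phi$ on words in the generating subgroups.

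Relations \ref{relrel2} and \ref{relrel3} will be dispatched by degree considerations. In \ref{relrel3} every factor lies in $\mathcal{J}_*$ (or $\Aut_\R(\PP^2)$), on which $\Phi$ vanishes by definition. In \ref{relrel2} each $\tau_i$ belongs to $\mathcal{J}_*\cup\mathcal{J}_\circ$ with $\deg\tau_i\in\{2,3\}$; the $\mathcal{J}_*$-case is again trivial, the $\mathcal{J}_\circ$-case of degree $2$ is covered by Remark~\ref{rmk map}~(\ref{rmk map 1}), and the $\mathcal{J}_\circ$-case of degree $3$ is reduced to degree $2$ by Remark~\ref{rmk deg 3}. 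Hence both sides of \ref{relrel2} and \ref{relrel3} evaluate to $0$ under $\Phi$.

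Relation \ref{relrel1} is the main obstacle: I must show $\varphi_\circ(\theta_1)=\varphi_\circ(\theta_2)$ whenever $\theta_2=\alpha_2\theta_1\alpha_1$ with $\theta_i$ standard quintic in $\mathcal{J}_\circ$ and $\alpha_i\in\Aut_\R(\PP^2)$. Writing $S(\theta_i)=\{(q_i,\bar q_i)\}$, the linear $\alpha_1^{-1}$ bijects the six base-points of $\theta_2$ onto those of $\theta_1$, and by design the entire quotient chain $\PP^1\setminus\PP^1(\R) \to (0,1]$ constructed in Section~\ref{subsection quotient Jcirc} is engineered so that this invariance is enforced. The analysis splits into two cases: if $\alpha_1$ preserves $\{p_1,\bar p_1,p_2,\bar p_2\}$ setwise, then $\alpha_1$ acts on the pencil of conics through the group $G=\Aut_\R(\PP^1,[0,\infty])$ of Lemma~\ref{lem scale on P1}, and the quotient by $G$ in the construction of $\varphi_\circ$ (equivalently the identification of rays in $\HH$) yields $\nu(C_{q_2})=\nu(C_{q_1})$. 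Otherwise $\alpha_1$ trades some $p_j$ with $q_2$ (and simultaneously $q_1$ with some $p_k$), and, after conjugating by suitable transpositions in $\Aut_\R(\PP^2)\cap\mathcal{J}_\circ$ coming from Remark~\ref{rmk Jcirc elt} to put oneself in the normalised situation fixing $p_1$, the $\R_{<0}$-re-assignment of Lemma~\ref{lem linear}, packaged as Remark~\ref{lem linear 2}, gives the same conclusion. In both cases $\varphi_\circ(\theta_1)=\varphi_\circ(\theta_2)$, so \ref{relrel1} is respected.

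Surjectivity is then immediate from Remark~\ref{rmk map}~(\ref{rmk map 3}), which realises every basis vector $e_\delta$ as $\varphi_\circ(\theta)$ for some standard quintic $\theta\in\mathcal{J}_\circ$. For the kernel claim, let $f\in\Bir_\R(\PP^2)$ with $\deg f\le 4$; I invoke the inductive degree-reduction underlying the proof of Theorem~\ref{prop technical thm} (via Lemmas~\ref{lem rel 5,5}, \ref{lem rel dJ 2}, \ref{lem rel dJ}, \ref{lem rel mixed}) to produce a decomposition $f=f_n\cdots f_1$ with each $f_i\in\Aut_\R(\PP^2)\cup\mathcal{J}_*\cup\mathcal{J}_\circ$ of degree at most $\deg f\le 4$. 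The linear and $\mathcal{J}_*$-factors contribute $0$ by definition, and each $\mathcal{J}_\circ$-factor of degree $\le 4$ contributes $0$ by Remark~\ref{rmk map}~(\ref{rmk map 1}); summing over $i$ gives $\varphi(f)=0$.
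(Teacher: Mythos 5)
Your construction of $\varphi$ follows the paper's own proof essentially step for step: reduce via Remark~\ref{rmk general amalgam} and Lemma~\ref{lem inters} to checking that $\Phi$ annihilates relations \ref{relrel1}, \ref{relrel2}, \ref{relrel3}; kill \ref{relrel2} and \ref{relrel3} by the degree bound of Remark~\ref{rmk map}~(\ref{rmk map 1}) (the detour through Remark~\ref{rmk deg 3} for cubics is harmless but unnecessary, since that remark already covers degree $\leq4$); and treat \ref{relrel1} by splitting according to whether $\alpha_1$ permutes $p_1,\bar{p}_1,p_2,\bar{p}_2$, invoking Remark~\ref{rmk Jcirc elt} and the re-assignment of Lemma~\ref{lem linear} packaged as Remark~\ref{lem linear 2} in the non-trivial case. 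Surjectivity via Remark~\ref{rmk map}~(\ref{rmk map 3}) is likewise the paper's argument, phrased there as surjectivity of $\varphi_{\circ}$. All of this is correct.

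The one step that does not hold up as written is the kernel claim. You propose to decompose an arbitrary $f$ with $\deg f\leq4$ into generators of degree $\leq4$ by ``invoking the inductive degree-reduction underlying the proof of Theorem~\ref{prop technical thm}''. But Lemmas~\ref{lem rel 5,5}, \ref{lem rel dJ 2}, \ref{lem rel dJ} and \ref{lem rel mixed} are rewriting lemmas for words $w(f)w(g)w(h)$ whose letters are already generators and which satisfy degree hypotheses on an auxiliary linear system; the induction in Theorem~\ref{prop technical thm} reduces \emph{relations}, not maps, and none of this machinery produces a decomposition of a given birational map with controlled degrees. What you actually need is only \emph{one} decomposition of $f$ whose $\J_{\circ}$-factors lie in $\ker\varphi_{\circ}$, and this is available directly: for $\deg f\in\{2,3\}$ the base-point of highest multiplicity is real and $f$ sends the pencil of lines through it to the pencil through the corresponding point of $f^{-1}$, so there are $\alpha,\beta\in\Aut_\R(\PP^2)$ with $\beta f\alpha\in\mathcal{J}_*$, whence $\varphi(f)=\varphi(\beta^{-1})+\varphi(\beta f\alpha)+\varphi(\alpha^{-1})=0$; for $\deg f=4$, $f$ is a composition of quadratic maps, each contributing $0$. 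This is exactly how the paper closes the argument, and substituting it repairs your proof.
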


\begin{proof}
Let $\pi\colon\Aut_{\R}(\PP^2)\ast\mathcal{J}_*\ast\mathcal{J}_{\circ}\rightarrow\mathcal{G}\simeq\Bir_{\R}(\PP^2)$ be the quotient map (Remark~\ref{rmk general amalgam}). We want to show that there exists a homomorphism $\varphi\colon\Bir_\R(\PP^2)\rightarrow\bigoplus_{(0,1]}\Z/2\Z$ such that the diagram (\ref{phi}) 
is commutative. It suffices to show that $\ker(\pi)\subset\ker(\Phi)$. By Lemma~\ref{lem inters}, $\Phi$ induces a homomorphism from the generalised amalgamated product of $\Aut_\R(\PP^2),\mathcal{J}_*,\mathcal{J}_{\circ}$ along all intersections onto $\bigoplus_{(0,1]}\Z/2\Z$. So, by Remark~\ref{rmk general amalgam}, it suffices to show that $\Phi$ sends the relations \ref{relrel1}, \ref{relrel2}, \ref{relrel3} from Definition~\ref{def G} onto zero. 

By definition of $\Phi$ and Remark~\ref{rmk map}~(\ref{rmk map 1}), linear, quadratic and cubic transformations in $\mathcal{J}_{\circ}$ and the group $\mathcal{J}_*$ are sent onto zero by $\Phi$, hence relations~\ref{relrel2} and \ref{relrel3} are contained in $\ker(\Phi)$. 
So, we just have to bother with relation~\ref{relrel1}:

Lets $\theta_1,\theta_2\in\mathcal{J}_{\circ}$ be standard quintic transformations, $\alpha_1,\alpha_2\in\Aut_\R(\PP^2)$ such that
\[\theta_2=\alpha_2\theta_1\alpha_1.\]
We want to show that $\Phi(\theta_1)=\Phi(\theta_2)$. By definition of $\Phi$, we have $\Phi(\theta_i)=\varphi_{\circ}(\theta_i)$, $i=1,2$, so we need to show that $\varphi_{\circ}(\theta_1)=\varphi_{\circ}(\theta_2)$.

If $\alpha_1\in\mathcal{J}_{\circ}$, then $\alpha_2=\theta_2\alpha_1^{-1}\theta_1^{-1}\in\mathcal{J}_{\circ}$. Similarly, if $\alpha_2\in\mathcal{J}_{\circ}$, then also $\alpha_1\in\mathcal{J}_{\circ}$. 
In either case, we have $\varphi_{\circ}(\alpha_2\theta_1\alpha_2)=\varphi_{\circ}(\alpha_2)\varphi_{\circ}(\theta_1)\varphi_{\circ}(\alpha_2)=\varphi_{\circ}(\theta_1)$. 

Suppose that $\alpha_1,\alpha_2\notin\mathcal{J}_{\circ}$. Let $S(\theta_1)=\{(p_3,\bar{p}_3)\}$ and $S((\theta_1)^{-1})=\{(p_4,\bar{p}_4)\}$. The map $\alpha_1^{-1}$ sends the base-points of $\theta_1$ onto the base-points of $\theta_2$, and since $\alpha_1^{-1}\notin\J_{\circ}$, it sends $p_3$ onto one of $p_1,\bar{p}_1,p_2,\bar{p}_2$. Similarly, $\alpha_2$ sends $p_4$ onto one of $p_1,\dots,\bar{p}_2$.
By Remark~\ref{rmk Jcirc elt} there exist $\beta_1,\beta_2,\gamma_1,\gamma_2\in\J_{\circ}\cap\Aut_\R(\PP^2)$ permutations of $p_1,\dots,\bar{p}_2$ such that 
\begin{align*} &(\beta_1\alpha_1^{-1}\gamma_1)(p_1)=p_1,& (\beta_1\alpha_1^{-1}\gamma_1)(\gamma_1^{-1}(p_3))=p_2,\\ 
&(\beta_2\alpha_2\gamma_2)(p_1)=p_1,& (\beta_2\alpha_2\gamma_2)(\gamma_2^{-1}(p_4))=p_2.
\end{align*} 
Remark~\ref{lem linear 2} with $\alpha_{p_3}:=\beta_1\alpha_1^{-1}\gamma_1$ and $\alpha_{p_4}:=\beta_2\alpha_2\gamma_2$ implies that
\[\varphi(\theta_2)\underset{\text{linear}}{\stackrel{\beta_1,\beta_2\in\J_{\circ}}=}\varphi(\beta_2\theta_2\beta_1^{-1})\stackrel{\theta_2=\alpha_2\theta_1\alpha_1}=\varphi_{\circ}(\alpha_{p_4}(\gamma_2^{-1}\theta_1\gamma_1^{-1})(\alpha_{p_3})^{-1})\stackrel{\text{Rmk.}\ref{lem linear 2}}=\varphi_{\circ}(\theta_1).\]

So, $\ker(\pi)\subset\ker(\Phi)$, and $\Phi$ induces a morphism $\varphi\colon\Bir_\R(\PP^2)\rightarrow\bigoplus_{(0,1]}\Z/2\Z$. It is surjective because $\varphi_{\circ}$ is surjective (Lemma~\ref{lem hom}).

If $f\in\Bir_{\R}(\PP^2)$ is of degree 2 or 3 there exists $\alpha,\beta\in\Aut_{\R}(\PP^2)$ such that $\beta f\alpha\in\mathcal{J}_*$. Hence $\varphi(f)=0$. If $\deg(f)=4$, $f$ is a composition of quadratic maps, hence $\varphi(f)=0$.
\end{proof}

Let $X$ be a real rational variety. Recall that we denote by $X(\R)$ its set of real points, and by $\Aut(X(\R))\subset\Bir(X)$ the subgroup of transformations defined at each point of $X(\R)$. We now look at restrictions of $\varphi\colon\Bir_\R(\PP^2)\rightarrow\bigoplus_{(0,1]}\Z/2\Z$ to $\Aut(X(\R))$. 

\begin{Cor}\label{cor:aut1}
There exist surjective group homomorphisms 
\[\Aut(\PP^2(\R))\rightarrow\bigoplus_{(0,1]}\Z/2\Z,\qquad\Aut(\A^2(\R))\rightarrow\bigoplus_{(0,1]}\Z/2\Z.\] 
\end{Cor}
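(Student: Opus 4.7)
The plan is to use the surjective homomorphism $\varphi\colon\Bir_\R(\PP^2)\to\bigoplus_{(0,1]}\Z/2\Z$ of Proposition~\ref{thm quotient}, and to show that its restriction to each of the subgroups $\Aut(\PP^2(\R))$ and $\Aut(\A^2(\R))$ is still surjective. In both cases the task reduces to exhibiting, for every $\delta\in(0,1]$, an element of the subgroup whose image under $\varphi$ is the standard vector $e_\delta\in\bigoplus_{(0,1]}\Z/2\Z$.

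For $\Aut(\PP^2(\R))$ the statement is immediate. By Remark~\ref{rmk map}~(\ref{rmk map 3}), for each $\delta\in(0,1]$ there exists a standard quintic transformation $\theta\in\mathcal{J}_{\circ}$ with $\varphi(\theta)=\varphi_{\circ}(\theta)=e_\delta$, and by Lemma~\ref{lem 5.1}~(3) every standard quintic transformation lies in $\Aut(\PP^2(\R))$. Hence $\varphi|_{\Aut(\PP^2(\R))}$ is surjective.

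The case of $\Aut(\A^2(\R))$ is the main step. A standard quintic $\theta\in\mathcal{J}_{\circ}$ need not send the line at infinity $L_\infty=\{z=0\}$ to itself, so it does not automatically restrict to a birational diffeomorphism of $\A^2(\R)=\PP^2(\R)\setminus L_\infty(\R)$. The key observation, which I would carry out as a preliminary computation, is that $\theta(L_\infty)$ is nonetheless always a line. Indeed, by Lemma~\ref{lem 5.1}~(1) all six base points of $\theta$ have multiplicity $2$, and $L_\infty$ contains exactly two of them (namely $p_1$ and $\bar p_1$), each simply; the usual intersection formula then gives
\[\deg\theta(L_\infty)\;=\;5\cdot 1-2\cdot 1-2\cdot 1\;=\;1.\]
Since $\theta$ is defined over $\R$ and $L_\infty$ is real, $L':=\theta(L_\infty)$ is a real line of $\PP^2$.

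The end of the argument is then routine. Given $\delta\in(0,1]$, I pick $\theta\in\mathcal{J}_{\circ}$ with $\varphi(\theta)=e_\delta$ as above, and choose $\alpha\in\Aut_\R(\PP^2)=\PGL_3(\R)$ with $\alpha(L')=L_\infty$; such $\alpha$ exists because $\PGL_3(\R)$ acts transitively on the set of real lines of $\PP^2$. Then $\alpha\theta\in\Aut(\PP^2(\R))$ satisfies $\alpha\theta(L_\infty)=L_\infty$ as subschemes, so it maps $L_\infty(\R)$ bijectively onto itself and therefore restricts to an element of $\Aut(\A^2(\R))$. Since $\Aut_\R(\PP^2)\subset\ker\varphi$ by Proposition~\ref{thm quotient}, one has $\varphi(\alpha\theta)=\varphi(\theta)=e_\delta$, giving the desired surjectivity. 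The main obstacle is precisely the mismatch between $\theta(L_\infty)$ and $L_\infty$; the degree-one computation above reduces it to a single linear correction.
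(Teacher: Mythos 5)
Your proof is correct and follows essentially the same route as the paper: restrict $\varphi$ to the two subgroups and hit every standard vector $e_\delta$ with a standard quintic transformation, corrected by a linear map in the affine case. The only (harmless) difference is how the linear correction is produced: the paper uses that standard quintics in $\mathcal{J}_{\circ}$ preserve $C_3=L_{p_1,\bar p_1}\cup L_{p_2,\bar p_2}$ and composes with a permutation of $p_1,\dots,\bar p_2$, whereas you compute directly that $\theta(L_\infty)$ has degree one and invoke transitivity of $\PGL_3(\R)$ on real lines --- both land in the same place, since $L_\infty=L_{p_1,\bar p_1}$ and $\Aut_\R(\PP^2)\subset\ker\varphi$.
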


\begin{proof}
We identify $\A^2(\R)$ with $\PP^2(\R)\setminus L_{p_1,\bar{p}_1}$. All quintic transformations are contained in $\Aut(\PP^2(\R))$ (Lemma~\ref{lem 5.1}) and preserve $C_3:=L_{p_1,\bar{p}_1}\cup L_{p_2,\bar{p}_2}$. For any standard quintic transformation $\theta$ there exists a permutation $\alpha\in\J_{\circ}\cap\Aut_\R(\PP^2)$ of $p_1,\dots,\bar{p}_2$ such that $\alpha\theta$ preserves $L_{p_i,\bar{p}_i}$, $i=1,2$, i.e. is contained in $\Aut(\A^2(\R))$. Therefore, the restriction of $\varphi$ to $\Aut(\PP^2(\R))$ and $\Aut(\A^2(\R))$ is surjective.
\end{proof}

We now look at two real rational quadric surfaces, namely $\mathcal{Q}_{3,1}$ and $\FF_0\simeq\PP^1\times\PP^1$.
Let $\mathcal{Q}_{3,1}\subset\PP^3$ be the variety given by the equation $w^2=x^2+y^2+z^2$. Its real part $\mathcal{Q}_{3,1}(\R)$ is the $2$-sphere $\mathbb{S}^2$. Consider the stereographic projection
\[\p\colon\mathcal{Q}_{3,1}\dashrightarrow\PP^2,\quad [w:x:y:z]\dashmapsto[w-z:x:y]\]
It is a real birational transformation obtained by first blowing-up the point $[1:0:0:1]$ and then blowing down the singular hyperplane section $w=z$ onto the points $p_2,\bar{p}_2$. It sends the exceptional divisor of $[1:0:0:1]$ onto the line $L_{p_2,\bar{p}_2}$ passing through $p_2,\bar{p}_2$. The inverse $\p^{-1}$ is an isomorphism around $p_1,\bar{p}_1$ and $\p$ sends a general hyperplane section onto a general conic passing through $p_2,\bar{p}_2$.\par

\begin{Cor}\label{cor:aut2}
The following homomorphism is surjective.
\[\Aut(\mathcal{Q}_{3,1}(\R))\rightarrow\bigoplus_{(0,1]}\Z/2\Z,\quad f\mapsto \varphi(\p^{-1}f\p)\] 
\end{Cor}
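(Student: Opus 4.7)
The plan is to exhibit, for each $\delta\in(0,1]$, an element $f\in\Aut(\mathcal{Q}_{3,1}(\R))$ with $\varphi(\p^{-1}f\p)=e_\delta$. Concretely, I will produce $\theta\in\mathcal{J}_\circ\cap\Aut(\PP^2(\R))$ with $\varphi(\theta)=e_\delta$ such that the conjugate $f\in\Bir_\R(\mathcal{Q}_{3,1})$ of $\theta$ by $\p$ lies in $\Aut(\mathcal{Q}_{3,1}(\R))$.

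First, by Remark~\ref{rmk map}~(\ref{rmk map 3}) I choose a standard quintic $\theta_0\in\mathcal{J}_\circ$ with $\varphi_\circ(\theta_0)=e_\delta$; by Lemma~\ref{lem 5.1}~(3) we have $\theta_0\in\Aut(\PP^2(\R))$. By Lemma~\ref{lem scale on P1}~(4), the image of $\theta_0$ in $\Aut_\R(\PP^1,[0,\infty])$ is $(1,0)$ or $(1,1)$, both of which fix $[1:1]=\pi_\circ(C_3)$; hence $\theta_0(C_3)=C_3$. The line $L_{p_2,\bar p_2}$ contains exactly the two base-points $p_2,\bar p_2$ of $\theta_0$ (the remaining pair $q,\bar q$ cannot lie on $L_{p_2,\bar p_2}$, since otherwise $C_q=C_3$ would be real, contradicting $\nu(C_q)=\delta>0$), so $\theta_0(L_{p_2,\bar p_2})$ has degree $5-2-2=1$ and is a component of $\theta_0(C_3)=C_3$, i.e. either $L_{p_1,\bar p_1}$ or $L_{p_2,\bar p_2}$. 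Let $\alpha:=([x:y:z]\mapsto[z:-y:x])\in\Aut_\R(\PP^2)\cap\mathcal{J}_\circ$ (Remark~\ref{rmk Jcirc elt}), which swaps the pairs $\{p_1,\bar p_1\}$ and $\{p_2,\bar p_2\}$ and hence exchanges $L_{p_1,\bar p_1}$ and $L_{p_2,\bar p_2}$. Set $\theta:=\theta_0$ if $\theta_0$ preserves $L_{p_2,\bar p_2}$, and $\theta:=\theta_0\alpha$ otherwise. In both cases $\theta\in\mathcal{J}_\circ\cap\Aut(\PP^2(\R))$ satisfies $\theta(L_{p_2,\bar p_2})=L_{p_2,\bar p_2}$ and $\varphi(\theta)=\varphi_\circ(\theta_0)+\varphi_\circ(\alpha)=e_\delta$ by Remark~\ref{rmk map}~(\ref{rmk map 1}).

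The main step — and, I expect, the principal obstacle — is to verify that the conjugate $f$ of $\theta$ by $\p$ actually lies in $\Aut(\mathcal{Q}_{3,1}(\R))$. Since $\theta\in\Aut(\PP^2(\R))$ has no real base-points, and the base-points $p_2,\bar p_2$ of $\p^{-1}$ are non-real, the only real point of $\mathcal{Q}_{3,1}$ at which $f$ could be undefined is the base-point $[1:0:0:1]$ of $\p$. I would resolve this by blowing up $[1:0:0:1]$ on $\mathcal{Q}_{3,1}$: the resulting morphism to $\PP^2$ sends the exceptional divisor onto $L_{p_2,\bar p_2}$, and because $\theta$ maps $L_{p_2,\bar p_2}$ to itself and $\p$ contracts $L_{p_2,\bar p_2}$ back onto $[1:0:0:1]$, the composition descends to a morphism in a neighbourhood of $[1:0:0:1]$; thus $f$ is regular at $[1:0:0:1]$. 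The same argument with $\theta^{-1}$ in place of $\theta$ shows that $f^{-1}$ is regular there as well, so $f\in\Aut(\mathcal{Q}_{3,1}(\R))$ and $\varphi(\p^{-1}f\p)=\varphi(\theta)=e_\delta$, which proves surjectivity.
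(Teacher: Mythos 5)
Your argument is correct, and its geometric core is the same as the paper's: for each $\delta\in(0,1]$ one takes a standard quintic transformation $\theta\in\mathcal{J}_{\circ}$ with $\varphi_{\circ}(\theta)=e_{\delta}$ that preserves the line $L_{p_2,\bar{p}_2}$, and conjugates it by the stereographic projection $\p$; the decisive observation in both proofs is that such a $\theta$ has only non-real base-points and fixes the line contracted by $\p^{-1}$, so that the conjugate and its inverse are regular on all of $\mathcal{Q}_{3,1}(\R)$. Where you differ is in the packaging. The paper invokes the Koll\'ar--Mangolte generation theorem ($\Aut(\mathcal{Q}_{3,1}(\R))$ is generated by $\PO(3,1)$ and the standard cubic transformations) and then spends most of the proof verifying, via a degree-and-multiplicity computation and \cite[Lemma 5.4(3)]{BM12}, that the conjugate of $\theta$ is literally a standard cubic transformation of $\mathcal{Q}_{3,1}$; neither the generation theorem nor this identification is needed for surjectivity, and you omit both, replacing them with a direct resolution argument at the single real base-point $[1:0:0:1]$ of $\p$. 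You also make explicit a step the paper leaves implicit, namely that for every $\delta$ the quintic can be normalised --- using $\theta_0(C_3)=C_3$ from Lemma~\ref{lem scale on P1} and composition with the swap $\alpha$ of Remark~\ref{rmk Jcirc elt} --- so that $L_{p_2,\bar{p}_2}$, rather than $L_{p_1,\bar{p}_1}$, is the preserved component. The trade-off is that your version is more self-contained and elementary, while the paper's yields the extra (here unused) information that the elements realising the generators $e_{\delta}$ can be taken among the standard cubic transformations of the sphere.
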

\begin{proof}
By \cite[Theorem 1]{KM09} (see also \cite[Theorem 1.4]{BM12}), the group $\Aut(\mathcal{Q}_{3,1}(\R))$ is generated by $\Aut_{\R}(\mathcal{Q}_{3,1})=\PO(3,1)$ and the family of standard cubic transformations $($see \cite[Example 5.1]{BM12} for definition$)$.
To prove the Lemma, it suffices to show that every standard quintic transformation $\theta\in\mathcal{J}_{\circ}$ that contracts onto $p_2$ the conic passing through all its base-points except $p_2$ is conjugate via $\p$ to a standard cubic transformation in $\Aut(\mathcal{Q}_{3,1}(\R))$. The standard quintic transformation $\theta$ preserves the line $L_{p_2,\bar{p}_2}$, hence $\p\theta\p^{-1}$ has only non-real base-points, i.e. $\p\theta\p^{-1}\in\Aut(\mathcal{Q}_{3,1}(\R))$. So, by \cite[Lemma 5.4 (3)]{BM12}, we have to show that $(\p\theta\p^{-1})^{-1}$ sends a general hyperplane section onto a cubic section passing through $\p^{-1}(p_1),\p^{-1}(\bar{p}_1)$, $\p^{-1}(p_3),\p^{-1}(\bar{p}_3)$ with multiplicity~$2$.\par
Let $p_1,\bar{p}_1$, $p_2,\bar{p}_2$, $p_3,\bar{p}_3$ be the base-points of $\theta$. The map $\p$ sends a general hyperplane section onto a general conic $C$ passing through $p_2,\bar{p}_2$. The curve $\theta^{-1}(C)$ is a curve of degree $6$ with multiplicity $3$ in $p_2,\bar{p}_2$ and multiplicity $2$ in $p_1,\bar{p}_1,p_3,\bar{p}_3$. Therefore, $(\p^{-1}\theta^{-1})(C)\subset\mathcal{Q}_{3,1}$ is a curve of self-intersection 18 passing through $\p^{-1}(p_1),\p^{-1}(\bar{p}_1)$, $\p^{-1}(p_3),\p^{-1}(\bar{p}_3)$ with multiplicity $2$. It follows that $(\p^{-1}\theta\p)^{-1}$ sends a general hyperplane section onto a cubic section having multiplicity~$2$ at these four points. 
\end{proof}

\begin{Cor}\label{cor:F0}
For any real birational map $\psi\colon\FF_0\dashrightarrow\PP^2$, the group $\psi\Aut(\FF_0(\R))\psi^{-1}$ is a subgroup of $\ker(\varphi)$. 
\end{Cor}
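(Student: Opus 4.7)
My plan is to reduce the statement to a single convenient choice of $\psi$, and then to verify the vanishing of $\varphi$ on the conjugates of a known generating set for $\Aut(\FF_0(\R))$.

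First I observe that the statement is independent of the choice of $\psi$. Indeed, if $\psi_1,\psi_2\colon\FF_0\dashrightarrow\PP^2$ are any two real birational maps, then $h:=\psi_2\psi_1^{-1}\in\Bir_\R(\PP^2)$, and for every $g\in\Aut(\FF_0(\R))$,
\[\psi_2\,g\,\psi_2^{-1}=h\,(\psi_1\,g\,\psi_1^{-1})\,h^{-1}.\]
Since the target $\bigoplus_{(0,1]}\Z/2\Z$ is abelian and $\varphi(h)+\varphi(h^{-1})=0$,
\[\varphi(\psi_2\,g\,\psi_2^{-1})=\varphi(h)+\varphi(\psi_1\,g\,\psi_1^{-1})+\varphi(h^{-1})=\varphi(\psi_1\,g\,\psi_1^{-1}).\]
Thus it suffices to prove the claim for one specific~$\psi$.

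Next I fix such a $\psi$ of small degree, for example the standard birational map $\psi\colon\FF_0\dashrightarrow\PP^2$ of degree~$2$ obtained by blowing up a real point $q\in\FF_0$ and contracting the strict transforms of the two rulings of $\FF_0$ through $q$. By \cite{BM12}, the group $\Aut(\FF_0(\R))$ is generated by $\Aut_\R(\FF_0)\simeq(\PGL_2(\R)\times\PGL_2(\R))\rtimes\Z/2\Z$ together with an explicit family of low-bidegree birational involutions of $\FF_0$. For every generator $g$ of $\Aut(\FF_0(\R))$ I then verify that the conjugate $\psi\,g\,\psi^{-1}\in\Bir_\R(\PP^2)$ has degree at most~$4$; since by Proposition~\ref{thm quotient} every such element lies in $\ker(\varphi)$, this finishes the proof. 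The automorphism part of the generating set is handled uniformly, as conjugating a linear automorphism of $\FF_0$ by a degree-$2$ birational map produces an element of $\Bir_\R(\PP^2)$ of degree at most $4$; for the explicit birational involutions one argues on a case-by-case basis, which is the content of the next step.

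The main obstacle is this degree bound on $\psi\,g\,\psi^{-1}$. The naive estimate $\deg(\psi\,g\,\psi^{-1})\le\deg(\psi)^2\deg(g)$ is too weak to conclude directly, so one has to track carefully how the base-points of $g$ on $\FF_0$ interact with those of $\psi$ and $\psi^{-1}$: cancellations occur whenever $g$ fixes or permutes a base-point of $\psi$, and these cancellations bring the degree of the conjugate back down. Using the explicit description of the generating involutions of $\Aut(\FF_0(\R))$ from \cite{BM12} and the fact that the indeterminacy points of our chosen $\psi$ are real, I carry out this short case analysis in a blow-up picture of $\FF_0$ and $\PP^2$ and obtain $\deg(\psi\,g\,\psi^{-1})\le 4$ in every case, which by Proposition~\ref{thm quotient} places each generator of $\psi\Aut(\FF_0(\R))\psi^{-1}$ inside $\ker(\varphi)$ and concludes the proof.
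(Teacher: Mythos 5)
Your proposal is correct and follows essentially the same route as the paper: reduce to one specific birational map $\PP^2\dashrightarrow\FF_0$ (the paper uses $[x:y:z]\dashmapsto([x:z],[y:z])$, which is exactly your blow-up-one-real-point-and-contract-the-two-rulings map), conjugate the generators of $\Aut(\FF_0(\R))$ from \cite{BM12} (namely $\Aut_\R(\FF_0)$ and the single involution $\tau$), check that the conjugates have small degree (the paper finds $\leq 3$, you claim $\leq 4$; either suffices), and invoke Proposition~\ref{thm quotient} together with normality of $\ker(\varphi)$. The only cosmetic difference is that you justify the independence of the choice of $\psi$ by an explicit computation in the abelian target rather than by citing normality of the kernel, which is the same argument.
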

\begin{proof}
By \cite[Theorem 1.4]{BM12}, the group $\Aut(\FF_0(\R))$ is generated by $\Aut_\R(\FF_0)\simeq\mathrm{PGL}(2,\R)^2\rtimes\Z/2\Z$ and the involution
\[\tau\colon([u_0:u_1],[v_0:v_1])\dashmapsto([u_0,u_1],[u_0v_0+u_1v_1:u_1v_0-u_0v_1]).\]
Consider the real birational map 
\[\psi\colon\PP^2\dashrightarrow\FF_0,\quad[x:y:z]\dashmapsto([x:z],[y:z]),\] 
with inverse $\psi^{-1}\colon([u_0:u_1],[v_0,v_1])\dashmapsto[u_0v_1:u_1v_0:u_1v_1]$. 
A quick calculation shows that the conjugate by $\psi$ of these generators of $\Aut(\FF_0(\R))$ are of degree at most $3$. Proposition~\ref{thm quotient} implies that they are contained in $\ker(\varphi)$. In particular, $\psi^{-1}\Aut(\FF_0(\R))\psi\subset\ker(\varphi)$. Since $\ker(\varphi)$ is a normal subgroup of $\Bir_\R(\PP^2)$, the same statement holds for any other real birational map $\PP^2\dashrightarrow\FF_0$.
\end{proof}

\begin{Cor}[Corollary~\ref{cor 1}]\label{cor quotient}
For any $n\in\N$ there is a normal subgroup of $\Bir_{\R}(\PP^2)$ of index $2^n$ containing all elements of degree $\leq 4$.
The same statement holds for $\Aut(\PP^2(\R))$, $\Aut(\A^2(\R))$ and $\Aut(\mathcal{Q}_{3,1}(\R))$. 
\end{Cor}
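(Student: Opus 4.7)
The plan is to reduce the corollary directly to the surjective homomorphisms already constructed in Proposition~\ref{thm quotient} and Corollaries~\ref{cor:aut1} and \ref{cor:aut2}. For each of the four groups $G\in\{\Bir_\R(\PP^2),\Aut(\PP^2(\R)),\Aut(\A^2(\R)),\Aut(\mathcal{Q}_{3,1}(\R))\}$, we have a surjective group homomorphism
\[\varphi_G\colon G\longrightarrow\bigoplus_{(0,1]}\Z/2\Z\]
(namely $\varphi$, its restriction to $\Aut(\PP^2(\R))$ and $\Aut(\A^2(\R))$, and $f\mapsto\varphi(\p^{-1}f\p)$ respectively), whose kernel contains every element of degree $\leq 4$ of the ambient group $\Bir_\R(\PP^2)$; in the case of $\Aut(\mathcal{Q}_{3,1}(\R))$ this last property must be interpreted after conjugation by $\p$, which preserves degree bounds after composition with suitable linear maps, so it suffices that the kernel contains all elements of $\Aut(\mathcal{Q}_{3,1}(\R))$ of low enough degree as a subset of $\Bir_\R(\mathcal{Q}_{3,1})$.

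Given any $n\in\N$, I fix any $n$ distinct elements $\delta_1,\dots,\delta_n\in(0,1]$ and consider the projection
\[\pi_n\colon\bigoplus_{(0,1]}\Z/2\Z\longrightarrow\bigoplus_{i=1}^n\Z/2\Z\simeq(\Z/2\Z)^n,\qquad (a_\delta)_{\delta\in(0,1]}\longmapsto(a_{\delta_1},\dots,a_{\delta_n}),\]
which is a surjective group homomorphism. By Remark~\ref{rmk map}~(\ref{rmk map 3}), for each $\delta_i$ there exists a standard quintic transformation $\theta_i\in\mathcal{J}_{\circ}$ with $\varphi_{\circ}(\theta_i)=e_{\delta_i}$, and these lie in each of the four groups $G$ above (by Lemma~\ref{lem 5.1}~(3) and, for $\Aut(\mathcal{Q}_{3,1}(\R))$, via the construction in the proof of Corollary~\ref{cor:aut2}). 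Hence the composition $\pi_n\circ\varphi_G\colon G\to(\Z/2\Z)^n$ is surjective.

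Let $N_G:=\ker(\pi_n\circ\varphi_G)$. This is a normal subgroup of $G$, and by surjectivity together with the first isomorphism theorem we have $[G:N_G]=2^n$. Moreover $\ker(\varphi_G)\subseteq N_G$, so $N_G$ contains all elements of $G$ of degree $\leq 4$ by Proposition~\ref{thm quotient} (and, for $\Aut(\mathcal{Q}_{3,1}(\R))$, by the fact that standard cubic transformations of $\mathcal{Q}_{3,1}$ correspond via $\p$ to transformations in $\ker(\varphi)$, as already used in the proof of Corollary~\ref{cor:aut2}). This proves the corollary. There is no genuine obstacle here, since all the technical content is carried by the existence and the degree bound of the kernel of $\varphi_G$; the only point requiring a line of justification is the surjectivity of $\pi_n\circ\varphi_G$, for which the explicit quintic transformations $\theta_1,\dots,\theta_n$ suffice.
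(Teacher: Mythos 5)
Your proof is correct and is essentially the paper's own argument: compose $\varphi$ (resp.\ its restrictions/conjugates from Corollaries~\ref{cor:aut1} and~\ref{cor:aut2}) with the projection onto $n$ chosen coordinates of $\bigoplus_{(0,1]}\Z/2\Z$ and take the kernel, which has index $2^n$ and contains $\ker(\varphi)$, hence all elements of degree $\leq 4$. The extra justification you give for surjectivity of the composite via explicit standard quintic transformations is already subsumed in the surjectivity statements of Proposition~\ref{thm quotient} and Corollaries~\ref{cor:aut1}--\ref{cor:aut2}.
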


\begin{proof}
Let $pr_{\delta_1,\dots,\delta_n}\colon\bigoplus_{(0,1]}\Z/2\Z\rightarrow(\Z/2\Z)^n$ be the projection onto the $\delta_1,\dots,\delta_n$-th factors. Then $pr_{\delta_1,\dots,\delta_n}\circ\varphi$ has kernel of index $2^n$ containing $\ker(\varphi)$ and thus all elements of degree $\leq4$. By Corollary~\ref{cor:aut1} and Corollary~\ref{cor:aut2}, the same argument works for $\Aut(\PP^2(\R))$, $\Aut(\A^2(\R))$ and $\Aut(\mathcal{Q}_{3,1}(\R))$. 
\end{proof}

Lemma~\ref{lem quintlin} and Theorem~\ref{thm BM} imply that $\Bir_\R(\PP^2)$ is generated by $\Aut_\R(\PP^2),\sigma_1,\sigma_0$ and all standard quintic transformations in $\mathcal{J}_{\circ}$. This generating set is not far from being minimal:

\begin{Cor}\label{cor generating set}
The group $\Bir_\R(\PP^2)$ is not generated by $\Aut_\R(\PP^2)$ and a countable family of elements. The same statement holds for $\Aut(\PP^2(\R))$, $\Aut(\A^2(\R))$ and $\Aut(\mathcal{Q}_{3,1}(\R))$, replacing $\Aut_\R(\PP^2)$ for the latter two by the affine automorphism group of $\A^2$ and by $\Aut_\R(\mathcal{Q}_{3,1})$ respectively.
\end{Cor}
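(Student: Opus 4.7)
The plan is to deduce this corollary directly from the existence of the surjective homomorphism $\varphi\colon\Bir_\R(\PP^2)\to\bigoplus_{(0,1]}\Z/2\Z$ (Proposition~\ref{thm quotient}) by a cardinality argument. The target group $\bigoplus_{(0,1]}\Z/2\Z$ has cardinality $2^{\aleph_0}$, while in any abelian group of exponent $2$ a countable set $T$ generates only a countable subgroup (every element of $\langle T\rangle$ is a finite sum of elements of $T$, since $-t=t$). So I first note that $\Aut_\R(\PP^2)\subset\ker(\varphi)$: this is part of Proposition~\ref{thm quotient}, or equivalently follows from the definition of $\varphi$ via $\Phi$ together with the fact that elements of degree~$\leq 4$ are in $\ker(\varphi)$. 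Suppose for contradiction that $\Bir_\R(\PP^2)=\langle\Aut_\R(\PP^2),S\rangle$ for some countable set $S$. Then
\[
\varphi\bigl(\Bir_\R(\PP^2)\bigr)=\bigl\langle\varphi(\Aut_\R(\PP^2))\cup\varphi(S)\bigr\rangle=\langle\varphi(S)\rangle,
\]
which is countable, contradicting surjectivity of $\varphi$ onto an uncountable group.

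For the three other groups, the same argument applies, once we supply the appropriate surjective homomorphism and check that the prescribed ``linear'' subgroup lies in its kernel. For $\Aut(\PP^2(\R))$ and $\Aut(\A^2(\R))$ we use the restrictions of $\varphi$, which are surjective by Corollary~\ref{cor:aut1}; the inclusion $\Aut_\R(\PP^2)\subset\ker(\varphi)$ is again immediate, and the affine automorphism group of $\A^2$ is contained in $\Aut_\R(\PP^2)$ (affine maps extend linearly to $\PP^2$), hence lies in the kernel. For $\Aut(\mathcal{Q}_{3,1}(\R))$ we use the surjective homomorphism $f\mapsto\varphi(\p^{-1}f\p)$ from Corollary~\ref{cor:aut2}; for any $f\in\Aut_\R(\mathcal{Q}_{3,1})=\mathrm{PO}(3,1)$ the conjugate $\p^{-1}f\p$ has degree at most $\deg(\p^{-1})\deg(f)\deg(\p)=2\cdot 1\cdot 2=4$, hence belongs to $\ker(\varphi)$ by the last clause of Proposition~\ref{thm quotient}. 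In each case the cardinality argument above yields the conclusion.

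There is no real obstacle here: the entire difficulty of the statement has been absorbed into Proposition~\ref{thm quotient} and Corollaries~\ref{cor:aut1},~\ref{cor:aut2}. The only point that deserves a line of explanation is the elementary lemma that a countable subset of a $2$-torsion abelian group generates a countable subgroup, which together with the uncountability of $(0,1]$ gives the contradiction.
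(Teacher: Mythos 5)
Your proof is correct and follows essentially the same route as the paper: push a hypothetical countable generating set through the surjective homomorphism $\varphi$ (resp.\ its restrictions and the conjugate homomorphism of Corollary~\ref{cor:aut2}), note the linear subgroup dies in the kernel, and contradict the uncountability of $\bigoplus_{(0,1]}\Z/2\Z$. The paper states this in two sentences; you have merely made explicit the supporting details (the $2$-torsion cardinality lemma and the degree-$\leq 4$ check for $\mathrm{PO}(3,1)$), which are all sound.
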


\begin{proof}
If $\Bir_\R(\PP^2)$ was generated by $\Aut_\R(\PP^2)$ and a countable family $\{f_n\}_{n\in\N}$ of elements of $\Bir_\R(\PP^2)$ then by Proposition~\ref{thm quotient}, the countable family would yield a countable generating set of $\oplus_{(0,1]}\Z/2\Z$, which is impossible. 
The same argument works for $\Aut(\PP^2(\R))$, $\Aut(\mathcal{Q}_{3,1}(\R))$ and $\Aut(\A^2(\R))$ - for the latter two we replace $\Aut_\R(\PP^2)$ respectively by $\Aut_\R(\mathcal{Q}_{3,1})$ and by the subgroup of affine automorphisms of $\A^2$, which corresponds to $\Aut_\R(\PP^2)\cap\Aut(\A^2(\R))$. 
\end{proof}

Corollary~\ref{cor:aut1}, Corollary~\ref{cor:aut2}, Corollary~\ref{cor:F0} and Corollary~\ref{cor generating set} imply Corollary~\ref{cor 2}.

\begin{Cor}[Corollary~\ref{cor 4}] 
The normal subgroup of $\Bir_\R(\PP^2)$ generated by any countable set of elements of $\Bir_\R(\PP^2)$ is a proper subgroup of $\Bir_\R(\PP^2)$. The same statement holds for $\Aut(\PP^2(\R))$, $\Aut(\A^2(\R))$ and $\Aut(\mathcal{Q}_{3,1}(\R))$. 
\end{Cor}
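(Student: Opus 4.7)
The plan is to reduce the statement to a cardinality property of the abelian quotient $\bigoplus_{(0,1]}\Z/2\Z$, which is already supplied by Proposition~\ref{thm quotient} together with Corollary~\ref{cor:aut1} and Corollary~\ref{cor:aut2}. The key point is that the target is not just uncountable but \emph{abelian of exponent~$2$}, so the normal closure of a subset of $\Bir_\R(\PP^2)$ projects to the ordinary subgroup generated by its image, making the argument a one-line cardinality count.

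Let $S\subset\Bir_\R(\PP^2)$ be a countable subset and let $N$ be the normal subgroup it generates. First, I would consider $\varphi(N)\subset\bigoplus_{(0,1]}\Z/2\Z$; since the target is abelian, $\varphi(N)$ equals the subgroup generated (without taking normal closure) by the countable set $\varphi(S)$. Next, every element of that subgroup is a finite $\Z/2\Z$-linear combination of elements of $\varphi(S)$, and the set of finite subsets of a countable set is countable; hence $\varphi(N)$ is countable. As $\bigoplus_{(0,1]}\Z/2\Z$ is uncountable, $\varphi(N)$ is a proper subgroup, which forces $N\neq\Bir_\R(\PP^2)$.

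For $\Aut(\PP^2(\R))$, $\Aut(\A^2(\R))$ and $\Aut(\mathcal{Q}_{3,1}(\R))$, the identical argument applies verbatim, using the surjective homomorphisms onto $\bigoplus_{(0,1]}\Z/2\Z$ provided by Corollary~\ref{cor:aut1} and Corollary~\ref{cor:aut2}. There is no genuine obstacle: once one has in hand an explicit surjection onto an uncountable abelian group of exponent~$2$, the corollary is immediate. The only subtlety worth flagging is the use of abelianness of the target to pass from the normal closure $N$ of $S$ to the ordinary subgroup $\langle\varphi(S)\rangle$; without abelianness the image $\varphi(N)$ might a priori be larger than the subgroup generated by $\varphi(S)$, and the cardinality step would break down.
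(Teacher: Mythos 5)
Your proof is correct and follows essentially the same route as the paper: push the countable set through the surjection $\varphi$ onto the uncountable abelian group $\bigoplus_{(0,1]}\Z/2\Z$, use abelianness to see that the normal closure lands in the (countable) subgroup generated by the image, and conclude by a cardinality count. If anything, your explicit remark that $\varphi(N)=\langle\varphi(S)\rangle$ is slightly more careful than the paper's phrasing, which works with the preimage of the set $\varphi(S)$ itself.
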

\begin{proof}
Let $S\subset\Bir_\R(\PP^2)$ be a countable set of elements. Its image $\varphi(S)\subset\bigoplus_{(0,1]}\Z/2\Z$ is a countable set and hence a proper subset of $\bigoplus_{(0,1]}\Z/2\Z$. Since $\varphi$ is surjective (Proposition~\ref{thm quotient}), the preimage $\varphi^{-1}(\varphi(S))\subsetneq\Bir_\R(\PP^2)$ is a proper subset. The group $\oplus_{(0,1]}\Z/2\Z$ is abelian, so the set $\varphi^{-1}(\varphi(S))$ contains the normal subgroup of $\Bir_\R(\PP^2)$ generated by $S$, which in particular is a proper subgroup of $\Bir_\R(\PP^2)$.
\end{proof}

\begin{Rmk}\label{rmk conjugate}\item
(1) The morphism $\varphi\colon \Bir_\R(\PP^2)\rightarrow\bigoplus_{(0,1]}\Z/2\Z$ does not have any sections: If it had a section, then for any $k\in\N$ the group $(\Z/2\Z)^k$ would embed into $\Bir_\R(\PP^2)$, which is not possible by \cite{B07}.

\noindent(2) Over $\C$, the analogues of $\mathcal{J}_{\circ}$ and $\mathcal{J}_*$ are conjugate because the pencil of conics through four points can be send by an element of $\Bir_\C(\PP^2)$ onto the pencil of lines through one point.  
This is not true over $\R$: by Proposition~\ref{thm quotient}, one is contained in $\ker(\varphi)$ and the other is not. 

\noindent(3) No proper normal subgroup of $\Bir_{\R}(\PP^2)$ of finite index is closed with respect to the Zariski or Euclidean topology because $\Bir_\R(\PP^2)$ is topologically simple \cite{BZ15}.

\noindent(4) The group $\Bir_\C(\PP^2)$ does not contain any proper normal subgroups of countable index: Assume that $\{\Id\}\neq N$ is a normal subgroup of countable index. The image of $\mathrm{PGL}_3(\C)$ in the quotient is countable, hence $\mathrm{PGL}_3(\C)\cap N$ is non-trivial. Since $\mathrm{PGL}_3(\C)$ is a simple group, we have $\mathrm{PGL}_3(\C)\subset N$. Since the normal subgroup generated by $\mathrm{PGL}_3(\C)$ is $\Bir_\C(\PP^2)$ \cite[Lemma 2]{Giz}, we get that $N=\Bir_\C(\PP^2)$.
\end{Rmk}

%%%%%%%%%%%%%%%%%%%%%%%%%
%%%%%%%%%%%%%%%%%%%%%%%%%
\section{The kernel of the quotient}\label{section kernel}
%%%%%%%%%%%%%%%%%%%%%%%%%
%%%%%%%%%%%%%%%%%%%%%%%%%

In this section, we prove that the kernel of $\varphi\colon\Bir_\R(\PP^2)\rightarrow\bigoplus_{(0,1]}\Z/2\Z$ is the smallest normal subgroup containing $\Aut_\R(\PP^2)$, which will turn out to be the commutator subgroup of $\Bir_\R(\PP^2)$. It implies that $\varphi$ is in fact the abelianisation of $\Bir_\R(\PP^2)$.

The key idea is to show that $\J_*$ is contained in the normal subgroup generated by $\Aut_\R(\PP^2)$ (Lemma~\ref{lem quadr conj}) and that in the decomposition of an element of $\J_{\circ}$, we can group the standard or special quintic transformations having the same image by $\varphi_{\circ}$.
Then we apply the following: if two standard or special quintic transformations $\theta_1,\theta_2$ are sent by $\varphi_{\circ}$ onto the same image, then $\theta_2$ can be obtained by composing $\theta_1$ with a suitable amount of linear and quadratic elements (Lemma~\ref{rmk tilde 3.2}, \ref{lem tilde 3}, \ref{lem tilde 3.3}), which will imply that $\theta_1(\theta_2)^{-1}$ is contained the normal subgroup of $\Bir_\R(\PP^2)$ generated by $\Aut_\R(\PP^2)$ (Lemma~\ref{lem claim 3}, \ref{lem quad in normal}).

\begin{Def}
We denote by $\langle\langle\Aut_{\R}(\PP^2)\rangle\rangle$ the smallest normal subgroup of $\Bir_{\R}(\PP^2)$ containing $\Aut_{\R}(\PP^2)$.
\end{Def}

%%%%%%%%%%%%%%%%%%%%%%%%%
\subsection{Geometry between cubic and quintic transformations}
%%%%%%%%%%%%%%%%%%%%%%%%%
One part of the proof that $\ker(\varphi)=\langle\langle\Aut_\R(\PP^2)\rangle\rangle$ is to see that if two standard quintic transformations are sent onto the same standard vector in $\bigoplus_{(0,1]}\Z/2\Z$, then one is obtained from the other by composing from the right and the left with suitable cubic maps, which in turn can be written as composition of quadratic maps. For this, we first have to dig into the geometry of cubic maps.

\begin{Lem}\label{lem deg 3}
Let $q\in\PP^2(\C)\setminus\{p_1,\bar{p}_1,p_2,\bar{p}_2\}$ be a non-real point such that $C_q$ is irreducible. 
Then there exists a real point $r\in L_{q,\bar{p}_2}$ and $f\in\mathcal{J}_{\circ}$ of degree $2$ or $3$ with $r$ among its base-points such that 
\begin{enumerate}
\item $f(C_{q})=C_q$,
\item the image of $q$ by $f$ 
is infinitely near $p_1$ and corresponds to the tangent direction of $f(C_{q})$ in $p_1$,
\item either $\deg(f)=3$ and $C_r$ is smooth or $\deg(f)=2$ and $C_r$ is singular,
\end{enumerate}
\end{Lem}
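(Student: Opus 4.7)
The plan is to take $r := L_{q,\bar{p}_2}\cap L_{\bar{q},p_2}$, the unique real point on $L_{q,\bar{p}_2}$, obtained as the intersection with its complex conjugate line. First I would verify that $r$ is distinct from $p_1,\bar{p}_1,p_2,\bar{p}_2$ and lies on none of the five lines $L_{p_1,\bar{p}_2}, L_{\bar{p}_1,p_2}, L_{p_2,\bar{p}_2}, L_{p_1,p_2}, L_{\bar{p}_1,\bar{p}_2}$: each such exclusion follows from irreducibility of $C_q$, since otherwise $L_{q,\bar{p}_2}$ would share two points with one of these lines and coincide with it, placing $q$ on a component of $C_1$, $C_2$ or $C_3$. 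The only collinearity left to check is therefore whether $r\in L_{p_1,\bar{p}_1}$.

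The construction of $f$ splits accordingly. If $r\notin L_{p_1,\bar{p}_1}$, then $r$ is not collinear with any pair from $\{p_1,\bar{p}_1,p_2,\bar{p}_2\}$ and lies on none of $C_1,C_2,C_3$, so $C_r$ is smooth and Lemma~\ref{lem sigma deg 3} produces a cubic $f\in\mathcal{J}_{\circ}$ with double point $r$ and simple base-points $p_1,\bar{p}_1,p_2,\bar{p}_2$. If $r\in L_{p_1,\bar{p}_1}$, then $C_r=C_3$ is singular and $r$ is collinear only with the exceptional pair $(p_1,\bar{p}_1)$, so conjugating Lemma~\ref{lem sigma} by the linear involution of Remark~\ref{rmk Jcirc elt} exchanging $p_1\leftrightarrow p_2$ yields a quadratic $f\in\mathcal{J}_{\circ}$ with base-points $r,p_2,\bar{p}_2$. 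In either case we retain the freedom to post-compose $f$ with an element of $\mathcal{J}_{\circ}\cap\Aut_{\R}(\PP^2)$ which permutes $\{p_1,\bar{p}_1,p_2,\bar{p}_2\}$.

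For (1), by Lemma~\ref{lem scale on P1} every element of $\mathcal{J}_{\circ}$ induces an automorphism of $\PP^1$ lying in the subgroup $\R_{>0}\rtimes\Z/2\Z$; since $\pi_{\circ}(C_q)$ is non-real, the only such automorphism fixing it is the identity. In the quadratic case, a direct calculation using the scaling formula of Lemma~\ref{lem scale on P1}(2), together with the fact that the image of $r$ under the exchange $p_1\leftrightarrow p_2$ lies on $L_{p_2,\bar{p}_2}$ (and hence has vanishing first affine coordinate), gives $\lambda=\frac{b^2+1}{b^2+1}=1$ automatically. In the cubic case, the combinatorial criterion of Lemma~\ref{lem scale on P1}(3) can be arranged by the available freedom in Lemma~\ref{lem sigma deg 3}. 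Hence $f(C_q)=C_q$.

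For (2), the crucial geometric observation is that $r\in L_{q,\bar{p}_2}$ means $L_{r,\bar{p}_2}=L_{q,\bar{p}_2}$ contains $q$ and joins two base-points of $f$, so $f$ contracts this line to a single base-point $P$ of $f^{-1}$. Using the residual freedom in the construction we can arrange $P=p_1$; then on the minimal resolution of $f$, the strict transform of $L_{r,\bar{p}_2}$ is identified birationally with the exceptional divisor $E_{p_1}$ over $p_1$, so $q$ determines a specific infinitely near point of $p_1$. That this point corresponds to the tangent direction of $C_q=f(C_q)$ at $p_1$ follows from the incidence $q\in C_q\cap L_{r,\bar{p}_2}$ together with $f(C_q)=C_q$: the strict transform of $C_q$ on the resolution meets $E_{p_1}$ exactly in the point encoding its tangent direction at $p_1$, and this must coincide with the image of $q$ under $L_{r,\bar{p}_2}\simeq E_{p_1}$. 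The main obstacle will be making this last tangent-direction identification rigorous through careful blow-up bookkeeping, which I would approach via the del Pezzo surface of Remark~\ref{rmk:del Pezzo}, where the conic bundle structure makes the identification more transparent.
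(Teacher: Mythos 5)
Your proposal follows essentially the same route as the paper's proof: the same real point $r=L_{q,\bar{p}_2}\cap L_{\bar{q},p_2}$, the same dichotomy (a cubic from Lemma~\ref{lem sigma deg 3} when $r\notin L_{p_1,\bar{p}_1}$, a quadratic from Lemma~\ref{lem sigma} when $r\in L_{p_1,\bar{p}_1}$), the same normalisation by elements of $\Aut_\R(\PP^2)\cap\J_{\circ}$, and the same identification of the contracted line $L_{q,\bar{p}_2}$ with the exceptional divisor over $p_1$ to read off the infinitely near point in part (2). One caution for the cubic case: the literal criterion of Lemma~\ref{lem scale on P1}(3) (that $L_{r,p_i}$ is contracted onto $p_i$ or $\bar{p}_i$) is incompatible with simultaneously contracting $L_{r,\bar{p}_2}$ onto $p_1$, so what you should arrange instead — as the paper does — is that $f$ preserves $C_1$, which already forces $\hat{f}=\Id$ and is compatible with $f(L_{r,\bar{p}_2})=p_1$ (the pattern $L_{r,p_1}\mapsto\bar{p}_2$, $L_{r,\bar{p}_1}\mapsto p_2$, $L_{r,p_2}\mapsto\bar{p}_1$, $L_{r,\bar{p}_2}\mapsto p_1$ sends $L_{p_1,p_2}$ to $L_{p_1,p_2}$).
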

%%%%%%%%%%%%%%%%%%%%%
\begin{figure}
\def\svgwidth{0.8\textwidth}
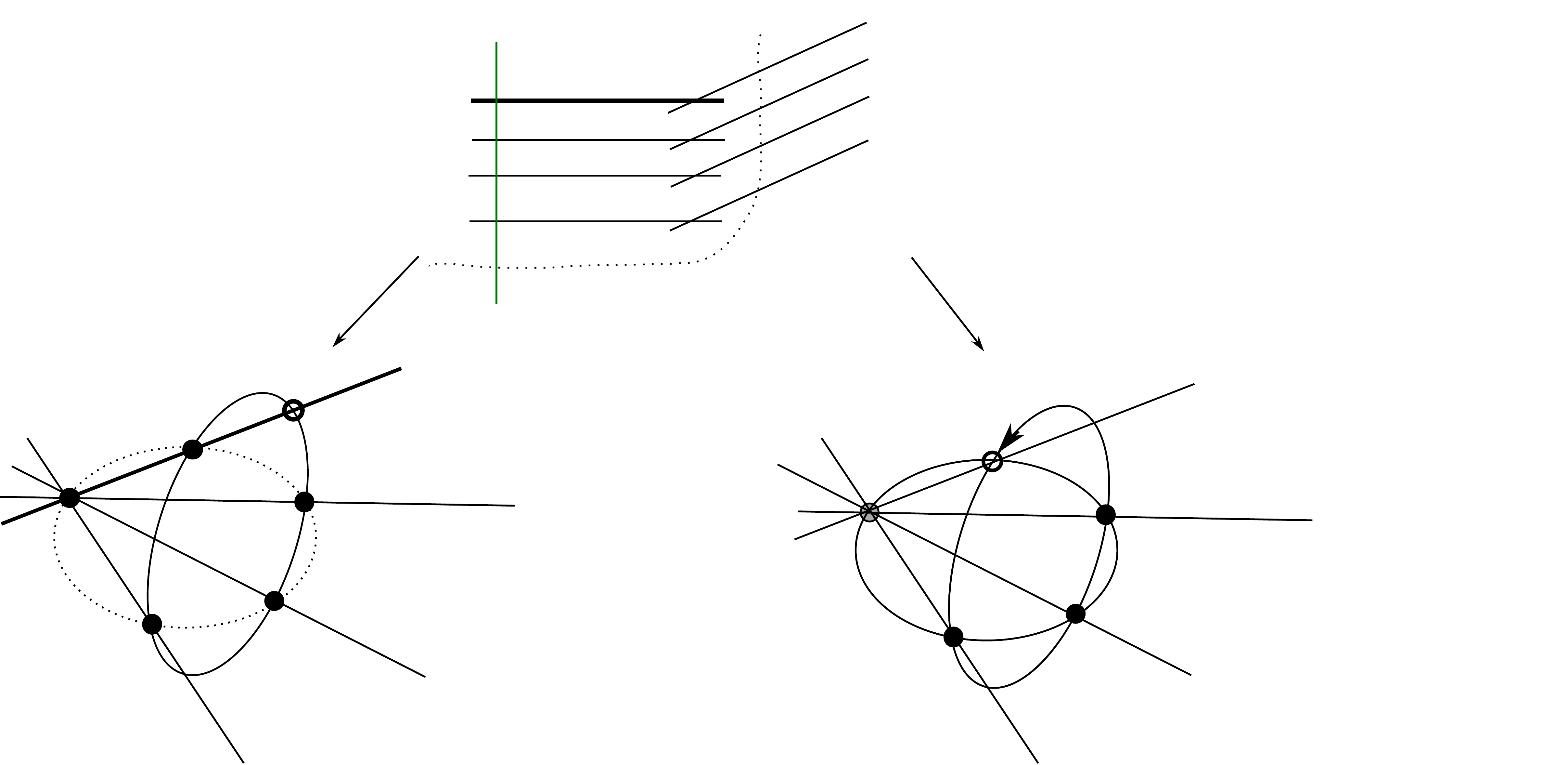
\caption{The cubic transformation of Lemma~\ref{lem deg 3}}
\end{figure}
%%%%%%%%%%%%%%%%%%%%%

\begin{proof}
Since $C_q$ is irreducible, $q$ is not collinear with any of $p_1,\bar{p}_1,p_2,\bar{p}_2$. It follows that $L_{q,\bar{p}_2}\neq L_{q,p_2}$, and so $L_{q,\bar{p}_2}$ and $\bar{L}_{q,\bar{p}_2}$ intersect in exactly one point $r$, which is a real point. 
 
If $r$ is not collinear with any two of $p_1,\bar{p}_1,p_2,\bar{p}_2$, then Lemma~\ref{lem sigma deg 3} ensures the existence of $f\in\mathcal{J}_{\circ}$ of degree $3$ with singular point $r$. The line $L_{q,\bar{p}_2}$ is contracted onto $p_i$ or $\bar{p}_i$, $i\in\{1,2\}$. By composing with elements of $\Aut_\R(\PP^2)\cap\mathcal{J}_{\circ}$, we can assume that $L_{q,\bar{p}_2}$ is contracted onto $p_1$ and that $f$ preserves the conic $L_{p_1,p_2}\cup L_{\bar{p}_1,\bar{p}_2}$, and thus induces the identity map on $\PP^1$ (Lemma~\ref{lem scale on P1}), and therefore preserves $C_{q}$. It follows that the image of $q$ by $f$ is infinitely near $p_1$ and corresponds to the tangent direction of $f(C_{q})=C_{q}$.

If $r$ is collinear with two of $p_1,\bar{p}_1,p_2,\bar{p}_2$, it is collinear with $p_1,\bar{p}_1$ and not collinear with any other two of the four points. Lemma~\ref{lem sigma} implies that there exists $f\in\mathcal{J}_{\circ}$ of degree $2$ with base-points $r,p_2,\bar{p}_2$. After composing with a linear map in $\J_{\circ}$, we may assume that $f$ contracts $L_{r,\bar{p}_2}=L_{q,\bar{p}_2}$ onto $p_1$ -- then $f(\{p_1,\bar{p}_1\})=\{p_2,\bar{p}_2\}$ -- and such that $f(p_1)=p_2$. Then the image of $q$ by $f$ is infinitely near $p_1$. We claim that $f(C_q)=C_q$. Call $\hat{f}$ the automorphism of $\PP^1$ induced by $f$. We calculate $\hat{f}^{-1}$ (cf. proof of Lemma~\ref{lem scale on P1}). The conditions on $f$ imply $f(L_{p_1,p_2})=L_{p_1,p_2}$. Then $\hat{f}^{-1}\colon [u:v]\mapsto[(r_1^2+(r_0+r_2)^2)u:(r_1^2+(r_0-r_2)^2)v]$, where $r=[r_0:r_1:r_2]$. Since $r\in L_{p_1,\bar{p}_1}$, we have $r_2=0$ and so $\hat{f}^{-1}=\Id$. In particular, $f(C_q)=C_q$.
\end{proof}

\begin{Lem}\label{rmk tilde 3.2}
Let $\theta_1,\theta_2\in\mathcal{J}_{\circ}$ be special quintic transformations with $S(\theta_i)=\{(q_i,\bar{q}_i)\}$. If $C_{q_1}=C_{q_2}$ or $C_{q_1}=C_{\bar{q}_2}$, then there exist $\alpha_1,\alpha_2\in\mathcal{J}_{\circ}\cap\Aut_\R(\PP^2)$ such that $\theta_2=\alpha_2\theta_1\alpha_1$.  
\end{Lem}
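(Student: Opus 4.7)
The plan is to reduce the claim to arranging that $\theta_1 \alpha_1$ and $\theta_2$ share the same six base-points. Once this is done, the construction of a special quintic transformation recalled in Definition~\ref{def 5.2} shows that $\theta_1 \alpha_1$ and $\theta_2$ both arise from the same birational morphism $X_2 \to Y_0 \simeq \PP^2$ obtained by blowing up the six common base-points of $\PP^2$ and then contracting the specified collection of exceptional curves. Hence they can differ only by the identification of $Y_0$ with $\PP^2$, i.e.\ by an element $\alpha_2 \in \Aut_\R(\PP^2)$ acting on the left, giving $\theta_2 = \alpha_2 \theta_1 \alpha_1$. Since both $\theta_1 \alpha_1$ and $\theta_2$ lie in $\mathcal{J}_{\circ}$, one automatically has $\alpha_2 \in \mathcal{J}_{\circ} \cap \Aut_\R(\PP^2)$.

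Write $q_i$ as infinitely near $p_{j_i} \in \{p_1, \bar p_1, p_2, \bar p_2\}$. Any $\alpha_1 \in \mathcal{J}_{\circ} \cap \Aut_\R(\PP^2)$ permutes $\{p_1, \bar p_1, p_2, \bar p_2\}$ (the base-locus of $\pi_{\circ}$), so the base-points of $\theta_1 \alpha_1$ are the same four points together with $\alpha_1^{-1}(q_1)$ and $\alpha_1^{-1}(\bar q_1)$. It suffices, then, to find $\alpha_1$ with $\alpha_1^{-1}(q_1) \in \{q_2, \bar q_2\}$, which amounts to $\alpha_1$ sending $p_{j_2}$ (respectively $\bar p_{j_2}$, when $C_{q_1} = C_{\bar q_2}$) to $p_{j_1}$ \emph{and} fixing the conic $C_{q_1}$: the differential of such an $\alpha_1$ then sends the tangent direction $q_2$ of $C_{q_2} = C_{q_1}$ at $p_{j_2}$ to the tangent direction $q_1$ of $C_{q_1}$ at $p_{j_1}$.

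The crucial point is the existence of $\alpha_1$. A direct computation with the formula for $\pi_{\circ}$ in Definition~\ref{def basic map} shows that the three commuting real linear involutions
\[
[-x:y:-z],\qquad [z:-y:x],\qquad [z:y:x]
\]
lie in $\mathcal{J}_{\circ} \cap \Aut_\R(\PP^2)$ (two are already recalled in Remark~\ref{rmk Jcirc elt}) and each induces the trivial action on the pencil $\pi_{\circ}$; together with the identity they form a Klein four-subgroup $K$ which fixes every conic of the pencil and, as one sees by tracking the images of $p_1$, acts freely and transitively on $\{p_1, \bar p_1, p_2, \bar p_2\}$. By Lemma~\ref{lem scale on P1}, every remaining element of $\mathcal{J}_{\circ} \cap \Aut_\R(\PP^2)$ induces the swap $[u:v] \mapsto [v:u]$ on $\PP^1$, which has no non-real fixed point; hence $K$ is precisely the stabiliser of the non-real conic $C_{q_1}$ in $\mathcal{J}_{\circ} \cap \Aut_\R(\PP^2)$.

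Finally, take $\alpha_1$ to be the unique element of $K$ sending $p_{j_2}$ (or $\bar p_{j_2}$) to $p_{j_1}$, according to whether $C_{q_1} = C_{q_2}$ or $C_{q_1} = C_{\bar q_2}$. Then $\alpha_1$ fixes $C_{q_1}$ and has the required effect on the sixth base-point, so $\theta_1 \alpha_1$ and $\theta_2$ have the same six base-points and the first paragraph concludes the proof. The only non-routine input is the identification of $K$ with the full stabiliser of $C_{q_1}$ together with the transitivity of its action on the four base-points of $\pi_{\circ}$; this is what makes the linear subgroup of $\mathcal{J}_{\circ}$ fixing $C_{q_1}$ large enough to realise any needed matching of the infinitely near base-points.
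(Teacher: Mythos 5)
Your proof is correct and follows essentially the same strategy as the paper: pre-compose $\theta_1$ with a linear element of $\mathcal{J}_{\circ}$ that permutes $\{p_1,\bar{p}_1,p_2,\bar{p}_2\}$ while acting trivially on the pencil, so that the six base-points of $\theta_1\alpha_1$ coincide with those of $\theta_2$, and absorb the remaining discrepancy into a left factor $\alpha_2$. The paper only invokes the single involution $[x:y:z]\mapsto[z:-y:x]$ and disposes of the cases rather tersely; your explicit Klein four-group $K$, acting simply transitively on the four base-points and fixing every fibre of $\pi_{\circ}$, makes the matching uniform and in particular covers the configuration where $\bar{q}_2$ (rather than $q_2$) lies on the exceptional divisor over $p_1$ but the hypothesis is $C_{q_1}=C_{q_2}$, a case in which the paper's first alternative needs the extra involution $[-x:y:-z]$ that only your argument supplies explicitly.
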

\begin{proof}
We can assume that $q_1$ is infinitely near $p_1$ as the proof is the same if it is infinitely near $\bar{p}_1,p_2$ or $\bar{p}_2$. If $q_2$ or $\bar{q}_2$ are infinitely near $p_1$ as well, then $\{q_1,\bar{q}_1\}=\{q_2,\bar{q}_2\}$. Therefore $\theta_2\theta_1^{-1}\in\mathcal{J}_{\circ}\cap\Aut_\R(\PP^2)$. \par
Suppose that $q_2$ or $\bar{q}_2$ is infinitely near $p_2$. The automorphism $\alpha\colon[x:y:z]\mapsto[z:-y:x]$ is contained in $\mathcal{J}_{\circ}$ and exchanges $p_1$ and $p_2$, while inducing the identity map on $\PP^1$. Then $\theta_2\alpha$ and $\theta_1$ are in the case above. 
\end{proof}

\begin{Lem}\label{lem tilde 3}
Let $\theta_1,\theta_2\in\mathcal{J}_{\circ}$ be standard quintic transformations with $S(\theta_i)=\{(q_i,\bar{q}_i)\}$, $i=1,2$. Assume that $C_{q_1}=C_{q_2}$ or $C_{q_1}=C_{\bar{q}_2}$. 

Then there exist $\tau_1,\dots,\tau_8\in\mathcal{J}_{\circ}$ of degree $\leq 2$ such that $\theta_2=\tau_8\cdots\tau_5\theta_1\tau_4\cdots\tau_1$.
\end{Lem}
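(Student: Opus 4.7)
The plan is to reduce the claim to Lemma~\ref{rmk tilde 3.2}, the analogous statement for special quintic transformations, using Lemma~\ref{lem deg 3} as a bridge. We may assume $C_{q_1}=C_{q_2}$, the case $C_{q_1}=C_{\bar q_2}$ being analogous after replacing $\theta_2$ by $\theta_2\alpha$ for a suitable permutation $\alpha\in\Aut_\R(\PP^2)\cap\J_\circ$ (Remark~\ref{rmk Jcirc elt}). Applying Lemma~\ref{lem deg 3} to each $q_i$ yields a map $f_i\in\J_\circ$ of degree $2$ or $3$ which preserves the common conic $C_{q_1}=C_{q_2}$ and sends $q_i$ to the unique point $\tilde q$ infinitely near $p_1$ corresponding to the tangent direction of this conic at $p_1$; hence $f_1(q_1)=f_2(q_2)=\tilde q$ and, by conjugation, $f_1(\bar q_1)=f_2(\bar q_2)=\bar{\tilde q}$. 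By Remark~\ref{rmk deg 3}, each $f_i$ decomposes as a product of at most two quadratic elements of $\J_\circ$.

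The key technical step is to verify that $\tilde\theta_i:=\theta_i f_i^{-1}$ is a special quintic transformation in $\J_\circ$ whose base-points are precisely $p_1,\bar p_1,p_2,\bar p_2,\tilde q,\bar{\tilde q}$ in the sense of Definition~\ref{def 5.2}. Intuitively, $f_i^{-1}$ replaces the sixth base-point pair $(q_i,\bar q_i)$ of $\theta_i$ by $(\tilde q,\bar{\tilde q})$ while leaving the other four intact. To make this rigorous one computes the characteristic of $\tilde\theta_i$ by tracking, via Lemma~\ref{lem char} and the action on the pencil of conics (Lemma~\ref{lem scale on P1}), how the multiplicities of the base-points of $\theta_i$ and $f_i$ combine; this is the main obstacle, and the analysis naturally splits according to whether $\deg(f_i)=2$ or $3$ (equivalently, whether $C_r$ is singular or smooth in the notation of Lemma~\ref{lem deg 3}).

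Once this is established, Lemma~\ref{rmk tilde 3.2} applied to $\tilde\theta_1$ and $\tilde\theta_2$ provides linear maps $\alpha_1,\alpha_2\in\J_\circ\cap\Aut_\R(\PP^2)$ such that $\tilde\theta_2=\alpha_2\tilde\theta_1\alpha_1$. Substituting $\tilde\theta_i=\theta_i f_i^{-1}$ and rearranging gives
\[
\theta_2 \;=\; \alpha_2\,\theta_1\,f_1^{-1}\,\alpha_1\,f_2.
\]
The left factor $\alpha_2$ is already of degree $\leq 2$ in $\J_\circ$, and the right-hand product $f_1^{-1}\alpha_1 f_2$ is made up of at most $2+1+2=5$ factors of $\J_\circ$-elements of degree $\leq 2$. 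Absorbing the linear map $\alpha_1$ into an adjacent quadratic factor of $f_1^{-1}$ or $f_2$ (using that $\J_\circ$ is a group, so the product of a linear and a quadratic element of $\J_\circ$ is again a quadratic element of $\J_\circ$) reduces this to at most four factors. Padding with identities if necessary produces the required $\tau_1,\dots,\tau_8\in\J_\circ$ of degree $\leq 2$ with $\theta_2=\tau_8\cdots\tau_5\,\theta_1\,\tau_4\cdots\tau_1$.
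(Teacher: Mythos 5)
There is a genuine gap at the step you yourself flag as the key one: the map $\tilde\theta_i:=\theta_i f_i^{-1}$ is \emph{not} a special quintic transformation, and no amount of multiplicity bookkeeping will make it one. Compute its degree directly: if $\deg(f_i)=3$, then $f_i$ has characteristic $(3;1^4,2)$ with simple base-points $p_1,\bar p_1,p_2,\bar p_2$ and a real double point $r_i$ which is \emph{not} a base-point of $\theta_i$ (it is real, while all base-points of $\theta_i$ are non-real). Hence
\[
\deg(\theta_i f_i^{-1})=3\cdot 5-2\cdot m_{\theta_i}(r_i)-\sum_{j}m_{\theta_i}(p_j)\cdot 1=15-0-8=7,
\]
and similarly $\deg(\theta_i f_i^{-1})=6$ when $\deg(f_i)=2$. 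So Lemma~\ref{rmk tilde 3.2} cannot be applied to $\theta_1f_1^{-1}$ and $\theta_2f_2^{-1}$, and your factorisation $\theta_2=\alpha_2\theta_1f_1^{-1}\alpha_1f_2$ collapses.

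The missing idea is that one must also compose on the \emph{left} with a second correction map. The point $\theta_i(r_i)$ is well defined ($\theta_i$ is an isomorphism near the real point $r_i$ by Lemma~\ref{lem 5.1}), and the conic $C_{\theta_i(r_i)}$ is irreducible or singular exactly when $C_{r_i}$ is; accordingly Lemma~\ref{lem sigma deg 3} or Lemma~\ref{lem sigma} provides $h_i\in\J_{\circ}$ of degree $3$ or $2$ with real base-point $\theta_i(r_i)$, normalised (after composing with elements of $\J_{\circ}\cap\Aut_\R(\PP^2)$) so that $h_i$ contracts the line $\theta_i(L_{q_i,\bar p_2})$ onto $p_1$. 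It is the full sandwich $h_i\theta_i f_i^{-1}$ that is a special quintic transformation with base-points $p_1,\dots,\bar p_2,t_i,\bar t_i$; only then does Lemma~\ref{rmk tilde 3.2} give $h_1\theta_1f_1^{-1}=\beta\,h_2\theta_2f_2^{-1}\alpha$ and hence $\theta_2=h_2^{-1}\beta^{-1}h_1\,\theta_1\,f_1^{-1}\alpha^{-1}f_2$. This also explains the count of eight factors of degree $\le 2$: two degree-$\le 3$ maps on each side, each splitting into two quadratics by Remark~\ref{rmk deg 3}, with the linear maps absorbed. Everything else in your outline (reduction to $C_{q_1}=C_{q_2}$, use of Lemma~\ref{lem deg 3}, the observation $t_1=t_2$) matches the paper's argument.
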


\begin{proof}
By exchanging the names of $q_2,\bar{q}_2$, we can assume that $C_{q_1}=C_{q_2}$. It suffices to show that there exist $g_1,\dots,g_4\in\mathcal{J}_{\circ}$ of degree $\leq 3$ such that $\theta_2=g_4g_3\theta_1g_2g_2$, since every element of $\mathcal{J}_{\circ}$ of degree $3$ can be written as composition of two qudratic elements of $\mathcal{J}_{\circ}$ (Remark~\ref{rmk deg 3}). We give an explicit construction of the $g_i$'s.

According to Lemma~\ref{lem deg 3} there exist for $i=1,2$ a real point $r_i\in L_{q_i,\bar{p}_2}$ and $f_i\in\mathcal{J}_{\circ}$ of degree $2$ or $3$ and $r_i$ among its base-points such that $f_i$ preserves $C_{q_i}$ and the image $t_i$ of $q_i$ by $f_i$ 
is infinitely near $p_1$ and corresponds to the tangent direction of $C_{q_i}$ in $p_1$. 
The real conic $C_{r_i}$ is not contracted by $\theta_i$, and 
$\theta_i$ is an isomorphism around $r_i$ (Lemma~\ref{lem 5.1}).
Recall that $\theta_i$ preserves the set $\{C_1,C_2\}$ and the conic $C_3$. 

If $C_{r_i}$ is irreducible (and hence $\deg(f_i)=3$), then $\theta_i(C_{r_i})=C_{\theta_i(r_i)}$ is irreducible as well. 
Therefore, there exists $h_i\in\mathcal{J}_{\circ}$ of degree $3$ with base-point $\theta_i(r_i)$ (Lemma~\ref{lem sigma deg 3}). 

If $C_{r_i}$ is singular (and hence $\deg(f_i)=2$), then $C_{\theta_i(r_i)}$ is singular as well.  
Therefore, there exists $h_i\in\mathcal{J}_{\circ}$ of degree $2$ with base-points $\theta_i(r_i)$ among its base-points (Lemma~\ref{lem sigma}).

By composing $h_i$ with elements in $\mathcal{J}_{\circ}\cap\Aut_\R(\PP^2)$, we can assume that $h_i$ sends the line $\theta_i(L_{q_i,\bar{p}_2})$ onto $p_1$ (Remark~\ref{rmk deg 3}). 
Then $h_i\theta_i(f_i)^{-1}\in\mathcal{J}_{\circ}$ is a special quintic transformation with base-points $p_1,\bar{p}_1,p_2,\bar{p}_2,t_i,\bar{t}_i$, where $t_i$ is infinitely near $p_1$ and  corresponds to the tangent direction of $C_{q_i}$ on $p_1$. As $C_{q_1}=C_{q_2}$ by assumption, the maps $h_1\theta_1(f_1)^{-1}$ and $h_2\theta_2(f_2)^{-1}$ have exactly the same base-points. By Lemma~\ref{rmk tilde 3.2} we have $h_1\theta_1(f_1)^{-1}= \beta h_2\theta_2(f_2)^{-1}\alpha$ for some $\alpha,\beta\in\Aut_\R(\PP^2)\cap\mathcal{J}_{\circ}$. In particular, 
\[\theta_2=(h_2)^{-1}\beta^{-1}h_1\theta_1(f_1)^{-1}\alpha^{-1}f_2.\] 
\end{proof}

\begin{Lem}\label{lem tilde 3.3}
Let $\theta_1,\theta_2\in\mathcal{J}_{\circ}$ be a standard and a special quintic transformation respecitvely with $S(\theta_i)=\{(q_i,\bar{q}_i)\}$. Assume that $C_{q_1}=C_{q_2}$ or $C_{q_1}=C_{\bar{q}_2}$. 

Then there exists $\tau_1,\dots,\tau_{4}\in\mathcal{J}_{\circ}$ of degree $\leq2$ such that $\theta_2=\tau_{4}\tau_{3}\theta_1\tau_2\tau_1$. 
\end{Lem}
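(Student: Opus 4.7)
My approach is to mirror the proof of Lemma~\ref{lem tilde 3}, exploiting the fact that $\theta_2$ is already a special quintic transformation to halve the construction: instead of modifying both $\theta_1$ and $\theta_2$ by a pair of degree $\leq3$ maps on each side, I only modify $\theta_1$ so as to produce a special quintic transformation having exactly the same base-points as $\theta_2$, and then invoke Lemma~\ref{rmk tilde 3.2}.

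First, by swapping $q_2$ and $\bar{q}_2$ if necessary, I may assume $C_{q_1}=C_{q_2}$; as in the proof of Lemma~\ref{rmk tilde 3.2}, by composing $\theta_2$ with a suitable permutation of $\{p_1,\bar p_1,p_2,\bar p_2\}$ lying in $\mathcal{J}_{\circ}\cap\Aut_\R(\PP^2)$ (see Remark~\ref{rmk Jcirc elt}), I may further assume that $q_2$ is infinitely near $p_1$. This linear adjustment will eventually be absorbed into one of the $\tau_i$. Next, I apply Lemma~\ref{lem deg 3} to $q_1$ to obtain a real point $r_1\in L_{q_1,\bar p_2}$ and an element $f_1\in\mathcal{J}_{\circ}$ of degree $2$ or $3$ with $r_1$ as a base-point, preserving $C_{q_1}$ and sending $q_1$ to a point $t_1$ infinitely near $p_1$ whose tangent direction at $p_1$ is that of $C_{q_1}$. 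Using Lemma~\ref{lem sigma} or~\ref{lem sigma deg 3}, I find $h_1\in\mathcal{J}_{\circ}$ of degree $2$ or $3$ with $\theta_1(r_1)$ as a base-point, adjusted (by composition with a linear element of $\mathcal{J}_{\circ}\cap\Aut_\R(\PP^2)$) so that $h_1$ contracts $\theta_1(L_{q_1,\bar p_2})$ onto $p_1$. Exactly as in the proof of Lemma~\ref{lem tilde 3}, $h_1\theta_1 f_1^{-1}$ is then a special quintic transformation with base-points $p_1,\bar p_1,p_2,\bar p_2,t_1,\bar t_1$.

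The key identification is that $q_2=t_1$: since $C_{q_2}=C_{q_1}$, the tangent direction of $C_{q_2}$ at $p_1$ coincides with that of $C_{q_1}$, and a tangent direction at $p_1$ determines a unique point infinitely near $p_1$. Hence $h_1\theta_1 f_1^{-1}$ and $\theta_2$ share the same six base-points, and Lemma~\ref{rmk tilde 3.2} provides $\alpha,\beta\in\mathcal{J}_{\circ}\cap\Aut_\R(\PP^2)$ such that $h_1\theta_1 f_1^{-1}=\beta\theta_2\alpha$, whence
\[\theta_2=\beta^{-1}h_1\,\theta_1\, f_1^{-1}\alpha^{-1}.\]
By Remark~\ref{rmk deg 3}, each of $h_1$ and $f_1^{-1}$ is quadratic or a product of two quadratic elements of $\mathcal{J}_{\circ}$; absorbing $\beta^{-1}$ and $\alpha^{-1}$ into the respective outer factors yields the required decomposition $\theta_2=\tau_4\tau_3\theta_1\tau_2\tau_1$ with $\tau_i\in\mathcal{J}_{\circ}$ of degree $\leq 2$.

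The main subtle point is the initial reduction ensuring $q_2$ is infinitely near $p_1$ specifically (rather than $\bar p_1$, $p_2$, or $\bar p_2$), which must be achieved using permutations actually lying in $\mathcal{J}_{\circ}\cap\Aut_\R(\PP^2)$ — exactly what Remark~\ref{rmk Jcirc elt} provides. The remainder is bookkeeping: tracking the degrees of the auxiliary maps so that the final decomposition has only four factors of degree at most $2$, in contrast to the eight factors needed in Lemma~\ref{lem tilde 3}.
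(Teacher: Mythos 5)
Your proposal is correct and follows essentially the same route as the paper: apply Lemma~\ref{lem deg 3} to $q_1$ to get $f_1$ of degree $2$ or $3$ preserving $C_{q_1}$, pick a matching $h_1$ of the same degree with base-point $\theta_1(r_1)$ (via Lemma~\ref{lem sigma} or~\ref{lem sigma deg 3} according to whether $C_{r_1}$ is singular or irreducible), observe that $h_1\theta_1f_1^{-1}$ is a special quintic transformation with the same associated conic as $\theta_2$, conclude with Lemma~\ref{rmk tilde 3.2}, and split $f_1,h_1$ into quadratics by Remark~\ref{rmk deg 3}. The only cosmetic difference is that you normalise $q_2$ to be infinitely near $p_1$ at the outset, whereas the paper works with $p_j$ for $j\in\{1,2\}$ and lets Lemma~\ref{rmk tilde 3.2} absorb the remaining permutation; both are fine.
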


\begin{proof}
By exchanging the names of $q_1,\bar{q}_1,q_2,\bar{q}_2$, we can assume that $C_{q_1}=C_{q_2}$ and that $q_2$ is infinitely near $p_j$, $j\in\{1,2\}$. By Lemma~\ref{lem deg 3} there exists $f\in\mathcal{J}_{\circ}$ of degree $2$ or $3$ such that $f(C_{q_1})=C_{q_1}=C_{q_2}$ and the image $t$ of $q_1$ by $f$ is infinitely near $p_j$. Let $r$ be the real base-point of $f$. It is not on a conic contracted by $\theta_1$,
and $\theta_1$ is an isomorphism around $r$ (Lemma~\ref{lem 5.1}). 

If $C_r$ is irreducible (i.e. $\deg(f)=3$), the conic $\theta_1(C_r)=C_{\theta_1(r)}$ is irreducible as well. By Lemma~\ref{lem sigma deg 3} there exists $g\in\mathcal{J}_{\circ}$ of degree $3$ with double point $\theta_1(r)$. 

If $C_r$ is singular (i.e. $\deg(f)=2$), the conic $C_{\theta_1(r)}$ is singular as well. By Lemma~\ref{lem sigma} there exists $g\in\mathcal{J}_{\circ}$ of degree $2$ with $\theta_1(r)$ among its base-points.

The map $g\theta_1f^{-1}$ is a special quintic transformation with base-points $p_1,\bar{p}_1,p_2,\bar{p}_2,t,\bar{t}$, where $t$ is infinitely near $p_j$ and corresponds to the tangent directions $C_{q_1}$ in $p_j$. By assumption, we have $C_{q_1}=C_{q_2}$, so by Lemma~\ref{rmk tilde 3.2} there exists $\alpha,\beta\in\Aut_\R(\PP^2)\cap\mathcal{J}_{\circ}$ such that $\beta g\theta_1f^{-1}\alpha=\theta_2$. 
We can write $f$ and $g$ as composition of at most two quadratic transformations in $\J_{\circ}$ by Remark~\ref{rmk deg 3} and thus obtain $\tau_1,\dots,\tau_4$.
\end{proof}

%%%%%%%%%%%%%%%%%%%%
\subsection{The smallest normal subgroup containing $\Aut_\R(\PP^2)$}
%%%%%%%%%%%%%%%%%%%%

\begin{Lem}\label{lem quad in normal}
Any quadratic map in $\Bir_\R(\PP^2)$ is contained in $\langle\langle\Aut_\R(\PP^2)\rangle\rangle$.
\end{Lem}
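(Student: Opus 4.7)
The plan is to prove that every quadratic $\tau\in\Bir_\R(\PP^2)$ lies in $N:=\langle\langle\Aut_\R(\PP^2)\rangle\rangle$ by reducing to a small set of standard representatives and then exhibiting each as an element of $N$ via a conjugation/commutator construction.

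First I would classify the quadratic maps by their base-point configurations. Since $\tau$ has exactly three base-points and non-real base-points come in complex-conjugate pairs, $\tau$ falls into one of two types: (i) three real base-points (possibly with infinitely-near configurations), or (ii) one real base-point together with a pair of non-real conjugates (again possibly with infinitely-near configurations). The property $\tau\in N$ is invariant under left- and right-multiplication by $\Aut_\R(\PP^2)$, so it depends only on the $\Aut_\R(\PP^2)\times\Aut_\R(\PP^2)$-double coset of $\tau$. In case (ii), $\PGL_3(\R)$ acts transitively on ordered pairs of conjugate non-real points, so I can arrange the conjugate pair to be $\{p_1,\bar p_1\}$ and, via Lemma~\ref{lem sigma}, place $\tau$ inside $\mathcal{J}_\circ$ as a quadratic with base-points $p_1,\bar p_1,q$ for some admissible real $q$. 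In case (i) I can similarly place $\tau$ inside $\mathcal{J}_*$, the generic representative being $\sigma_0$.

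The central step is then to prove $N$-membership for a representative of each orbit. Here I would exploit that $N$ is normal: for any quadratic $\tau_0$ and any $\alpha\in\Aut_\R(\PP^2)$, the conjugate $\tau_0\alpha\tau_0^{-1}$ lies in $N$, and therefore the commutator $[\tau_0,\alpha]=\tau_0\alpha\tau_0^{-1}\alpha^{-1}$ also lies in $N$. For suitable choices of $\alpha$ (transvections fixing appropriate subsets of the base-point configuration), this commutator is itself a quadratic map with explicitly computable base-points, producing a family of quadratics in $N$. To propagate $N$-membership to arbitrary quadratics in $\mathcal{J}_\circ$, I would combine this with the cubic factorisation from Lemma~\ref{lem sigma deg 3} and Remark~\ref{rmk deg 3}: any cubic in $\mathcal{J}_\circ$ is a product of two quadratics in $\mathcal{J}_\circ$, so by first placing a well-chosen cubic (or its pair of quadratic factors) in $N$ via the commutator construction and then varying one factor freely, I conclude that both factors lie in $N$. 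The same strategy, with $\sigma_0$ and permutations of $\{e_0,e_1,e_2\}$ replacing the $\mathcal{J}_\circ$-setup, handles case (i).

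The main obstacle will be making the commutator/factorisation argument quantitative enough to cover every prescribed base-point configuration. In particular, in case (ii) Lemma~\ref{lem sigma} produces a one-parameter family of quadratics $\tau_q$, and one must show that \emph{all} values of $q$ give elements of $N$; my plan is to interpolate between different choices of $q$ using the cubic transformations constructed in Lemma~\ref{lem sigma deg 3}, which connect different quadratic factors through a common cubic. The degenerate cases with infinitely-near base-points will require extra bookkeeping, but being isolated configurations they should be treatable either by a limit argument from the generic family or, more directly, by applying Lemma~\ref{lem deg 3} to produce an explicit chain of quadratics and cubics in $\mathcal{J}_\circ$ linking the degenerate case to a generic one already known to lie in $N$.
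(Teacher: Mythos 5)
Your general strategy --- exploiting that membership in $N:=\langle\langle\Aut_\R(\PP^2)\rangle\rangle$ is invariant under left/right composition with $\Aut_\R(\PP^2)$, and seeding $N$ with explicit quadratics of the form $[\tau_0,\alpha]=\tau_0\alpha\tau_0^{-1}\alpha^{-1}$ --- uses sound ingredients, but the propagation step that is supposed to reach \emph{every} quadratic has a genuine logical gap. Writing a cubic $g\in\mathcal{J}_{\circ}$ as $g=\tau_2\tau_1$ with $g\in N$ only yields the \emph{equivalence} $\tau_1\in N\Leftrightarrow\tau_2\in N$; ``varying one factor freely'' never breaks this symmetry, since replacing $\tau_1$ forces the corresponding replacement $\tau_2=g\tau_1^{-1}$, and the allowed variations of $\tau_1$ (same base-points, hence differing by a linear map) stay in the same class modulo $N$. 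The same circularity affects the proposed ``interpolation'' between the quadratics of Lemma~\ref{lem sigma} through a common cubic. What would actually close your argument --- and what the proposal never states --- is that two quadratics with the same base-points differ by left composition with an element of $\Aut_\R(\PP^2)$, so that $N$-membership of a quadratic depends only on the $\PGL_3(\R)$-orbit of its base-point configuration; as there are very few such orbits for proper base-points (and the infinitely-near configurations can be split off by composing with a generic quadratic), one commutator seed per orbit would then suffice. As written, you leave exactly this point open and flag it yourself as ``the main obstacle'', so the plan does not yet constitute a proof.

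For comparison, the paper's proof needs none of the case analysis, seeding or propagation: given any quadratic $\tau$ with base-points $q_1,q_2,q_3$ and inverse base-points $t_1,t_2,t_3$, it picks a general point $r$ with $s=\tau(r)$ and an $\alpha\in\Aut_\R(\PP^2)$ matching $(q_1,q_2,r)$ with $(t_1,t_2,s)$; the map $\tilde\tau=\tau\alpha$ then fixes $s$ and has $t_1,t_2$ among the base-points of both $\tilde\tau$ and $\tilde\tau^{-1}$, so conjugating by a quadratic $\theta$ with base-points $t_1,t_2,s$ produces a \emph{linear} map $\theta\tilde\tau\theta^{-1}$. Hence $\tau=\theta^{-1}\bigl(\theta\tilde\tau\theta^{-1}\bigr)\theta\,\alpha^{-1}\in N$ in one stroke, uniformly over all base-point configurations.
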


\begin{proof}
Let $\tau\in\Bir_\R(\PP^2)$ be of degree 2. Pick two base-points $q_1,q_2$ of $\tau$ that are either a pair of non-real conjugate points or two real base-points, such that either both are proper points of $\PP^2$ or $q_1$ is a proper point of $\PP^2$ and $q_2$ is in the first neighbourhood of $q_1$. Let $t_1,t_2$ be base-points of $\tau^{-1}$ such that for $i=1,2$, $\tau$ sends the pencil of lines through $q_i$ onto the pencil of lines through $t_i$. Pick a general point $r\in\PP^2$ and let $s:=\tau(r)$. 
There exists $\alpha\in\Aut_\R(\PP^2)$ that sends $q_1,q_2,r$ onto $t_1,t_2,s$. The map $\tilde{\tau}:=\tau\alpha$ is of degree $2$, fixes $s$, and $t_1,t_2$ are base-points of $\tilde{\tau}$ and $\tilde{\tau}^{-1}$. 

Since $r$ is general, also $s$ is general, and there exists $\theta\in\Bir_\R(\PP^2)$ of degree 2 with base-points $t_1,t_2,s$. Observe that the map $\theta\tilde{\tau}\theta^{-1}$ is linear. In particular, $\tau$ is contained in $\langle\langle\Aut_\R(\PP^2)\rangle\rangle$.
\end{proof}

The following lemma is classical. 

\begin{Lem}\label{lem quadr conj}
The group $\mathcal{J}_*$ is generated by its quadratic and linear elements. In particular, $\mathcal{J}_*\subset\langle\langle\Aut_\R(\PP^2)\rangle\rangle$.
\end{Lem}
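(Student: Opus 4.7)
The plan is to induct on $d:=\deg(f)$ for $f\in\mathcal{J}_*$, the case $d=1$ being immediate. For $d\geq 2$, any $f\in\mathcal{J}_*$ has characteristic $(d;d-1,1^{2d-2})$: a point of multiplicity $d-1$ at $p:=[1:0:0]$ together with $2d-2$ simple base-points, which split into Galois-invariant pairs because $f$ is defined over $\R$. Pick one such pair $r_1,r_2$ (two real points or a non-real conjugate pair).

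By Remark~\ref{lem notcoll}, the points $p,r_1,r_2$ are not collinear, even allowing infinitely near points. An argument analogous to Lemma~\ref{lem sigma} (bringing $p,r_1,r_2$ onto $[1:0:0],[0:1:0],[0:0:1]$ by a real automorphism fixing $p$ and then composing with $\sigma_0$) then produces a real quadratic transformation $\tau\in\mathcal{J}_*$ with base-points exactly $p,r_1,r_2$. Standard degree bookkeeping on the common base-points yields
\[\deg(f\tau^{-1})\leq 2d-m_f(p)m_\tau(p)-m_f(r_1)m_\tau(r_1)-m_f(r_2)m_\tau(r_2)=2d-(d-1)-1-1=d-1,\]
so $f\tau^{-1}\in\mathcal{J}_*$ has degree strictly less than $d$. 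The inductive hypothesis writes $f\tau^{-1}$ as a product of linear and quadratic elements of $\mathcal{J}_*$, and right-multiplying by $\tau$ gives the desired expression for $f$.

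For the final clause, $\Aut_\R(\PP^2)\subset\langle\langle\Aut_\R(\PP^2)\rangle\rangle$ is tautological, while Lemma~\ref{lem quad in normal} places every quadratic element of $\Bir_\R(\PP^2)$ in the same normal subgroup; hence all of $\mathcal{J}_*$ lies in $\langle\langle\Aut_\R(\PP^2)\rangle\rangle$.

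The delicate point I expect is verifying the degree bound in configurations where $r_2$ is infinitely near $r_1$ (or where one of the $r_i$'s is infinitely near $p$), since the multiplicity bookkeeping then has to be carried out on a common resolution of $f$ and $\tau^{-1}$ rather than naively on $\PP^2$. Remark~\ref{lem notcoll} is precisely what guarantees both the existence of $\tau$ and the validity of the inequality in that infinitely near case; the structural induction is unchanged.
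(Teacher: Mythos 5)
Your overall strategy (descent on the degree by composing with quadratic de Jonqui\`eres maps) is genuinely different from the paper's proof, which is purely algebraic: it uses the isomorphism $\mathcal{J}_*\simeq\mathrm{PGL}_2(\R(x))\rtimes\mathrm{PGL}_2(\R)$, generation by elementary matrices, factorisation of real polynomials into factors of degree $\leq 2$ to reduce to a generating set of elements of degree $\leq 3$, and finally splits the cubic generators into quadratic ones. The geometric descent you propose is the classical route, but as written it has a gap at the key step.

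The gap is the existence of a quadratic $\tau\in\mathcal{J}_*$ with base-points exactly $p,r_1,r_2$ for an \emph{arbitrarily chosen} Galois-stable pair $\{r_1,r_2\}$ of simple base-points of $f$. Non-collinearity of $p,r_1,r_2$ (Remark~\ref{lem notcoll}) is necessary but not sufficient: a triple of (possibly infinitely near) points is the base locus of a quadratic transformation only if, in addition, each point is a proper point of $\PP^2$ or lies in the first neighbourhood of another point of the triple, and the proximity inequality at $p$ is respected. Both conditions can fail for a pair of simple base-points of an element of $\mathcal{J}_*$. For instance, if $r_1$ and $r_2=\bar{r}_1$ both lie in the first neighbourhood of $p$, then a conic with multiplicity exactly $1$ at $p$ has a single tangent direction there and cannot pass through both, so the net of conics through $p,r_1,r_2$ is empty and no such $\tau$ exists; this is precisely the obstruction the paper has to work around in Situation~2 of the proof of Lemma~\ref{lem rel dJ}. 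Likewise, if $r_1$ is infinitely near a third simple base-point $s\notin\{p,r_2\}$ (which does occur, e.g.\ for $f=\tau_2\tau_1$ with $\tau_1,\tau_2$ quadratic maps whose non-real base-point pairs are in suitable position), the triple $\{p,r_1,r_2\}$ is not the base locus of any quadratic map. So ``pick one such pair'' is not enough: you must prove that a pair for which $\tau$ exists can always be found, using the proximity inequality $\sum_{q\,\text{prox.}\,p}m(q)\leq d-1$ to control how many simple base-points can be proximate to $p$ and dealing with towers of infinitely near points; that is exactly the part of the argument that is missing. The final step, placing $\mathcal{J}_*$ inside $\langle\langle\Aut_\R(\PP^2)\rangle\rangle$ via Lemma~\ref{lem quad in normal}, is fine once generation by linear and quadratic elements is established.
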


\begin{proof}
The group $\mathcal{J}_*$ is isomorphic to $\mathrm{PGL}_2(\R(x))\rtimes\mathrm{PGL}_2(\R)$ and is therefore generated by the elementary matrices in each factor. In other words, $\mathcal{J}_*$ is generated by its elements of the form
\[(x,y)\dashmapsto(ax,y),\quad (x,y)\dashmapsto(x+b,y),\quad(x,y)\dashmapsto(\nicefrac{1}{x},y),\quad a,b\in\R^*,\]
\[(x,y)\dashmapsto(x,\alpha(x)y),\quad (x,y)\dashmapsto(x,y+\beta(x)),\quad(x,y)\dashmapsto(x,\nicefrac{1}{y}),\quad \alpha,\beta\in\R(x).\]
The map $(x,y)\dashmapsto(x+b,y)$ can be obtained by conjugating $(x,y)\dashmapsto(x+1,y)$ with $(x,y)\dashmapsto(bx,y)$, so we only need $b=1$ in the first line. Similarly, we only need $\beta=1$ in the second line. Any element of $\R[x]$ can be factored into polynomials of degree $\leq2$, hence it suffices to take $\alpha\in\R[x]$ of degree $\leq2$ and $\frac{1}{\alpha}\in\R[x]$ of degree $\leq2$. This yields a generating set of elements of degree $\leq3$. Any of the above maps of degree $3$ have two non-real simple base-points in $\PP^2$ and thus, analogously to Remark~\ref{rmk deg 3}, we can decompose them into quadratic maps in $\J_*$. 
\end{proof}

%%%%%%%
\subsection{The kernel is equal to $\langle\langle\Aut_\R(\PP^2)\rangle\rangle$}
%%%%%%%
Take an element of $\ker(\varphi_{\circ})$. It is the composition of linear, quadratic and standard and special quintic elements (Lemma~\ref{lem Jcirc gen}). The next three lemmata show that we can choose the order of the linear, quadratic and standard and special quintic elements so that the ones belonging to the same coset are just one after another. These lemmata will be the remaining ingredients to prove that $\ker(\varphi)=\langle\langle\Aut_\R(\PP^2)\rangle\rangle$.

\begin{Lem}\label{lem claim 1}
Let $\tau,\theta\in\mathcal{J}_{\circ}$ be a quadratic and a standard (or special) quintic transformation respectively. Then there exist $\tilde{\tau}_1,\tilde{\tau}_2\in\mathcal{J}_{\circ}$ of degree $2$ and $\tilde{\theta}_1,\tilde{\theta}_2\in\mathcal{J}_{\circ}$ standard or special quintic transformations such that $\tau\theta=\tilde{\theta}_1\tilde{\tau}_1$ and $\theta\tau=\tilde{\tau}_2\tilde{\theta}_2$, i.e. we can "permute" $\tau,\theta$. 
\end{Lem}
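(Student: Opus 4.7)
The plan is to analyse the degrees of the two compositions $\tau\theta$ and $\theta\tau$ (both will turn out to equal~$6$), then peel off a quadratic factor on the appropriate side via the second statement of Lemma~\ref{lem sigma}, leaving a degree-$5$ element of $\J_\circ$ which, by Lemma~\ref{lem char}, can only be a standard or special quintic transformation.

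First I would compute $\deg(\tau\theta)=\deg(\theta\tau)=6$. By Lemma~\ref{lem char}, the quadratic $\tau\in\J_\circ$ has characteristic $(2;\,1^2,1)$, so its base points consist of a pair $p_i,\bar p_i$ together with one real simple base point, whereas $\theta^{-1}$ has the six non-real base points $p_1,\bar p_1,p_2,\bar p_2,q',\bar q'$. Hence $\tau$ and $\theta^{-1}$ share exactly the proper points $p_i,\bar p_i$, with multiplicities $1$ in $\tau$ and $2$ in $\theta^{-1}$, and the composition-degree formula
\[
\deg(g\circ f)=\deg(g)\deg(f)-\sum_{p} m_g(p)\,m_{f^{-1}}(p),
\]
summed over the common base points of $g$ and $f^{-1}$, yields $\deg(\tau\theta)=10-(2+2)=6$; the mirror computation gives $\deg(\theta\tau)=6$. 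Lemma~\ref{lem char} then forces both compositions to have characteristic $(6;\,3^2,2^2,2^2,1)$, so each has a distinguished multiplicity-$3$ pair of base points $p_k,\bar p_k\subset\{p_1,\bar p_1,p_2,\bar p_2\}$ together with a unique simple, hence real, base point.

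Next I would invoke the second statement of Lemma~\ref{lem sigma} with $f=\tau\theta$: it produces $\tilde\tau_1\in\J_\circ$ of degree~$2$ whose base points are exactly this multiplicity-$3$ pair $p_k,\bar p_k$ together with the real point attached (in the sense of Lemma~\ref{lem sigma}) to the simple base point of $\tau\theta$. The same composition-degree formula then gives
\[
\deg\bigl(\tau\theta\,\tilde\tau_1^{-1}\bigr)=6\cdot 2-(3+3+1)=5,
\]
so $\tilde\theta_1:=\tau\theta\,\tilde\tau_1^{-1}\in\J_\circ$ has degree $5$ and, by Lemma~\ref{lem char}, characteristic $(5;\,2^4,2^2)$. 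Its six base points are therefore all non-real double points: the four points $p_1,\bar p_1,p_2,\bar p_2$ and one further non-real conjugate pair, which is either a pair of proper points of $\PP^2$ (making $\tilde\theta_1$ a standard quintic in the sense of Definition~\ref{def 5.1}) or infinitely near one of the $p_j$ (making $\tilde\theta_1$ special in the sense of Definition~\ref{def 5.2}); the ``no conic through the six points'' genericity condition in those definitions is forced by the identity $d^2-1=\sum_p m(p)^2$. The factorization $\theta\tau=\tilde\tau_2\tilde\theta_2$ is then obtained by the mirror construction, applying Lemma~\ref{lem sigma} to $(\theta\tau)^{-1}$ to produce $\tilde\tau_2^{-1}\in\J_\circ$ of degree $2$ on the left, and checking identically that $\tilde\theta_2:=\tilde\tau_2^{-1}\theta\tau$ has degree $5$.

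The only mildly delicate point will be that the simple base point of $\tau\theta$ (respectively of $(\theta\tau)^{-1}$) need not be a proper point of $\PP^2$, but could be only infinitely near one of $p_k,\bar p_k$. This is however handled with no extra work, since Lemma~\ref{lem sigma} is formulated precisely to cover both cases by replacing the infinitely-near simple base point by the proper point above it, and the multiplicity count in the composition-degree formula is unaffected.
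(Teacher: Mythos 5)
Your overall strategy is the same as the paper's (compute that both compositions have degree $6$, peel off a quadratic of $\J_{\circ}$ absorbing the multiplicity-$3$ pair and the simple base-point, and identify the degree-$5$ remainder), but the two places you yourself flag as delicate are exactly where your argument breaks down. First, your claim that an infinitely near simple base-point leaves ``the multiplicity count unaffected'' is false. The simple base-point $s$ of $\tau\theta$ is real (it is the unique simple base-point of a real map), so it cannot be infinitely near $p_k$ or $\bar p_k$; if it is infinitely near at all, it is infinitely near a \emph{real double point} $q$ of $\tau\theta$ (proximity forces $m(q)\geq m(s)=1$, hence $m(q)=2$). Lemma~\ref{lem sigma} then hands you a quadratic with base-points $p_k,\bar p_k,q$, and the composition formula gives $\deg(\tau\theta\,\tilde\tau_1^{-1})=12-3-3-2=4$, not $5$. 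The correct resolution is the paper's: the simple base-point of $\tau\theta$ is $\theta^{-1}(r)$, where $r$ is the real base-point of $\tau$, and since $\theta\in\Aut(\PP^2(\R))$ (Lemma~\ref{lem 5.1}) is an isomorphism in a neighbourhood of every real point, this is an honest proper point of $\PP^2$. You need to say this; it is not covered by the blanket remark you make.

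Second, the implication ``degree $5$ in $\J_{\circ}$, hence characteristic $(5;2^4,2^2)$, hence all six base-points are non-real double points, hence standard or special quintic'' is not valid. The characteristic forces six double points, four of which are $p_1,\bar p_1,p_2,\bar p_2$, but it does \emph{not} force the remaining pair to be non-real: the composition of two general cubic elements of $\J_{\circ}$ (each with a real double point, as in Lemma~\ref{lem sigma deg 3}) is a degree-$5$ element of $\J_{\circ}$ with two \emph{real} double points, and such a map is neither a standard nor a special quintic in the sense of Definitions~\ref{def 5.1} and~\ref{def 5.2}. To close this you must identify the two extra double points of $\tilde\theta_1=\tau\theta\,\tilde\tau_1^{-1}$ concretely and check they form a non-real conjugate pair; this is what the paper does by tracing them back to base-points of $\theta$ and $\tau$ (the extra double points of $\tau\theta$ are the non-real pair $q,\bar q$ of $\theta$, since the only new base-point created by precomposing $\theta$ with $\tau$ is the real simple point $\theta^{-1}(r)$). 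Your parenthetical appeal to $d^2-1=\sum m(p)^2$ for the ``no conic through the six points'' condition is also not quite the right reason --- the relevant argument is that a conic through six double points would meet the quintic system negatively and hence be a fixed component --- but that is a minor point compared with the two gaps above.
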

\begin{proof}
The map $\tau^{-1}$ has base-points $p_i,\bar{p}_i,r$, for some $r\in\PP^2(\R)$, $i\in\{1,2\}$, and $\theta$ is an isomorphism around $r$ (Lemma~\ref{lem 5.1}).
Let $p_{j_i}\in\{p_1,p_2\}$ be the image by $\theta$ of the contracted conic $C_i$ not passing through $p_i$. 
The map $\theta\tau$ is of degree $6$, and $p_{j_i},\bar{p}_{j_i}$ are base-points of $(\theta\tau)^{-1}$. 
They are of multiplicity $3$ because $p_{j_i}$ is the image by $\theta\tau$ of the curve $(\tau^{-1})(C_i)$ of degree $3$. 
By Lemma~\ref{lem sigma} there exists $\tilde{\tau}\in\mathcal{J}_{\circ}$ of degree $2$ with base-points $\theta(r),p_{j_i},\bar{p}_{j_i}$. 
Then the map $\tilde{\theta}:=\tilde{\tau}\theta\tau\in\mathcal{J}_{\circ}$ is of degree $5$. We claim that it is a standard or special quintic transformations.
Since $\tau^{-1}\in\J_{\circ}$, the point $q\in\{p_1,p_2\}$ different from $p_i$ is not on a line contracted by $\tau^{-1}$. 
Then $\tau^{-1}(q),\tau^{-1}(\bar{q})$ and the two non-real base-points of $\tau$ are base-points of $\tilde{\theta}$ that are proper points in $\PP^2$; the map $\tilde{\theta}$ has two more base-points, which could be proper points of $\PP^2$ or not.
Thus $\tilde{\theta}$ is a standard or special quintic transformation. 
We put $\tilde{\tau}_2:=\tilde{\tau}^{-1}$, $\tilde{\theta}_2:=\tilde{\theta}$. 
A similar construction yields $\tilde{\theta}_1,\tilde{\tau}_1$.
\end{proof}

\begin{Lem}\label{lem claim 2}
Let $\theta_1,\theta_2\in\mathcal{J}_{\circ}$ be standard or special quintic transformations (both can be either) such that $\varphi_0(\theta_1)\neq\varphi_0(\theta_2)$. Then there exist $\theta_3,\theta_4\in\mathcal{J}_{\circ}$ standard or special quintic transformations, such that 
\[\theta_2\theta_1=\theta_4\theta_3,\quad \varphi_0(\theta_1)=\varphi_0(\theta_4),\quad \varphi_0(\theta_2)=\varphi_0(\theta_3)\] 
i.e. we can "permute" $\theta_1,\theta_2$.
\end{Lem}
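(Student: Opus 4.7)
The plan is to construct $\theta_3 \in \J_\circ$ with source base-points $\mathcal{P}, q_3, \bar{q}_3$, where $\mathcal{P}:=\{p_1, \bar{p}_1, p_2, \bar{p}_2\}$ and $q_3 := \theta_1^{-1}(q_2)$, and then show that $\theta_4 := \theta_2 \theta_1 \theta_3^{-1}$ is automatically a standard or special quintic in $\J_\circ$ with $\varphi_\circ(\theta_4) = \varphi_\circ(\theta_1)$. Write $S(\theta_i) = \{(q_i, \bar{q}_i)\}$ for $i=1,2$ and $S(\theta_1^{-1}) = \{(r_1, \bar{r}_1)\}$.

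First I would observe that $\{q_2, \bar{q}_2\} \cap \{r_1, \bar{r}_1\} = \emptyset$: by Remark~\ref{rmk map}~(\ref{rmk map 2}), $\varphi_\circ(\theta_1) = e_{\nu(C_{r_1})}$, so the hypothesis $\varphi_\circ(\theta_1) \neq \varphi_\circ(\theta_2) = e_{\nu(C_{q_2})}$ forces $\nu(C_{q_2}) \neq \nu(C_{r_1})$, and hence $C_{q_2}$, $C_{r_1}$, $C_{\bar{r}_1}$ are pairwise distinct and $q_2 \neq r_1, \bar{r}_1$. Consequently $\theta_1^{-1}$ is defined at $q_2$, and I set $q_3 := \theta_1^{-1}(q_2)$, a non-real point in the source of $\theta_1$ (proper or infinitely near $\mathcal{P}$ depending on whether $\theta_2$ is standard or special). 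Since the induced automorphism $\hat{\theta}_1$ of $\PP^1$ is a positive real scaling (Lemma~\ref{lem scale on P1}), Remark~\ref{rmk map}~(\ref{rmk map 5}) gives $\nu(C_{q_3}) = \nu(C_{q_2})$. I then construct $\theta_3 \in \J_\circ$ a standard (or special, in the infinitely near case) quintic with source base-points $\mathcal{P}, q_3, \bar{q}_3$: existence follows because $\pi_\circ(q_3)$ is non-real, hence $C_{q_3}$ is a non-real conic and $\bar{q}_3 \notin C_{q_3}$, so the six points are not on a single conic; Lemma~\ref{lem quintlin} then arranges $\theta_3 \in \J_\circ$. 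By construction, $\varphi_\circ(\theta_3) = e_{\nu(C_{q_3})} = \varphi_\circ(\theta_2)$.

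It remains to check that $\theta_4 := \theta_2\theta_1\theta_3^{-1} \in \J_\circ$ is a quintic. Under the disjointness above, $\theta_2\theta_1$ has degree $5 \cdot 5 - 2 \cdot 2 \cdot 4 = 9$, and Lemma~\ref{lem char} gives characteristic $(9; 4^4, 2^4)$ whose multiplicity-$4$ points are $\mathcal{P}$ and whose multiplicity-$2$ points are precisely $q_1, \bar{q}_1, q_3, \bar{q}_3$ (the former as source base-points of $\theta_1$, the latter as pull-backs under $\theta_1$ of the source base-points of $\theta_2$). Pushing forward this linear system by $\theta_3$ yields degree $5 \cdot 9 - 2(4+4+4+4+2+2) = 5$. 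A Picard-group computation on a common resolution of $\theta_3$ (using that the exceptional divisors over $\mathcal{P}, q_3, \bar{q}_3$ map, under the iso of blowups, to the six ``conic'' $(-1)$-curves contracting to $\mathcal{P}, s_3, \bar{s}_3$, while the proper points $q_1, \bar{q}_1$ go to proper points $t_4 := \theta_3(q_1)$ and $\bar{t}_4 := \theta_3(\bar{q}_1)$) shows that the multiplicities of the push-forward at $s_3, \bar{s}_3$ vanish and the push-forward class is $5H - 2 \sum_{\mathcal{P}} E - 2(E_{t_4} + E_{\bar{t}_4})$. Hence $\theta_4$ has characteristic $(5; 2^6)$ and is a standard (or special) quintic, and since $\theta_3 \in \J_\circ$ preserves $\nu$-classes, $\varphi_\circ(\theta_4) = e_{\nu(C_{t_4})} = e_{\nu(C_{q_1})} = \varphi_\circ(\theta_1)$. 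The main obstacle is the Picard-group push-forward showing the cancellation at $s_3, \bar{s}_3$: this is the numerical coincidence that explains why $q_3 := \theta_1^{-1}(q_2)$ is the right choice, namely so that the ``new'' target base-points of $\theta_3$ get absorbed by the blowups of $q_2, \bar{q}_2$ performed by $\theta_2$.
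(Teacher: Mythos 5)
Your construction is exactly the paper's: take the extra base-point of $\theta_2$, pull it back through $\theta_1^{-1}$ (using $\varphi_\circ(\theta_1)\neq\varphi_\circ(\theta_2)$ to guarantee it avoids the contracted conics $C_{r_1},C_{\bar r_1}$), build $\theta_3$ on $\mathcal{P}$ and that pulled-back pair, and set $\theta_4:=\theta_2\theta_1\theta_3^{-1}$, with the $\varphi_\circ$-identities following from Remark~\ref{rmk map}~(\ref{rmk map 5}). The only difference is that you verify $\theta_4$ is a quintic by an explicit degree/multiplicity computation where the paper simply asserts it, so this is the same proof with some extra (correct) detail.
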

\begin{proof}
Let $p_1,\bar{p}_1,p_2,\bar{p}_2,p_3,\bar{p}_3$ be the base-points of $\theta_1$ and $p_1,\bar{p}_1,p_2,\bar{p}_2,p_4,\bar{p}_4$ the ones of $\theta_2$. The assumption $\varphi_0(\theta_1)\neq\varphi_0(\theta_2)$ implies that $p_4\notin C_{p_3}\cup C_{\bar{p}_3}$. 

Let $p_5$ be the image of $p_4$ by $(\theta_1)^{-1}$, which is either a proper point of $\PP^2$ or in the first neighbourhood of one of $p_1,\bar{p}_1,p_2,\bar{p}_2$.
Because $p_4,\bar{p}_4,p_1,\bar{p}_1,p_2,\bar{p}_2$ are not on one conic, the points $p_5,\bar{p}_5,p_1,\dots,\bar{p}_2$ are not on one conic. So, by Lemma~\ref{lem quintlin} there exists a standard or special quintic transformation $\theta_3\in\mathcal{J}_{\circ}$ with base-points $p_1,\dots,\bar{p}_2,p_5,\bar{p}_5$. The map $\theta_4:=\theta_2\theta_1(\theta_3)^{-1}\in\mathcal{J}_{\circ}$ is a standard or special quintic transformation. In fact, its inverse has base-points 
$p_1,\dots,\bar{p}_2,q_3,\bar{q}_3$ where $q_3$ is the image of $p_3$ by $\theta_2$ and is a proper point of $\PP^2$ or infinitely near one of $p_1,\bar{p}_1,p_2,\bar{p}_2$.
We have by construction $\theta_2\theta_1=\theta_4\theta_3$. The equalities $\varphi_{\circ}(\theta_1)=\varphi_{\circ}( \theta_4)$ and $\varphi_{\circ}(\theta_2)=\varphi_{\circ}(\theta_3)$ follow from the construction and Remark~\ref{rmk map}~(\ref{rmk map 5}).
\end{proof}

\begin{Lem}\label{lem claim 3}
Let $\theta_1,\theta_2\in\mathcal{J}_{\circ}$ be standard or special quintic transformations $($both can be either$)$ such that $\varphi_0(\theta_1)=\varphi_0(\theta_2)$. Then $\theta_1(\theta_2)^{-1}\in\langle\langle\Aut_{\R}(\PP^2)\rangle\rangle$.
\end{Lem}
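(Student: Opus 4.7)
The plan is to reduce the hypothesis $\varphi_{\circ}(\theta_1)=\varphi_{\circ}(\theta_2)$, equivalently $\nu(C_{q_1})=\nu(C_{q_2})$, to the stronger geometric condition $C_{q_1}\in\{C_{q_2},C_{\bar q_2}\}$, after which one of Lemma~\ref{rmk tilde 3.2}, Lemma~\ref{lem tilde 3}, or Lemma~\ref{lem tilde 3.3} applies, depending on whether each of $\theta_1,\theta_2$ is standard or special. In that reduced setting, the cited lemmas yield a decomposition $\theta_1=\tau_N\cdots\tau_1\,\theta_2\,\tau_0\cdots\tau_{-M}$ in which every $\tau_i$ is either a linear element of $\mathcal{J}_{\circ}\cap\Aut_\R(\PP^2)$ or a quadratic element of $\mathcal{J}_{\circ}$. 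By Lemma~\ref{lem quad in normal} each such $\tau_i$ lies in $\langle\langle\Aut_\R(\PP^2)\rangle\rangle$, and normality of that subgroup then gives $\theta_1\theta_2^{-1}\in\langle\langle\Aut_\R(\PP^2)\rangle\rangle$, as wanted.

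For the reduction itself, Definition~\ref{def map Jcirc} translates $\nu(C_{q_1})=\nu(C_{q_2})$ into the condition that $\pi_{\circ}(C_{q_1})$ belongs to $\R^*\cdot\pi_{\circ}(C_{q_2})\cup\R^*\cdot\pi_{\circ}(C_{\bar q_2})$. I propose to construct an auxiliary standard or special quintic $\theta_2'\in\mathcal{J}_{\circ}$ with $S(\theta_2')=\{(q_2',\bar q_2')\}$ satisfying $C_{q_2'}\in\{C_{q_1},C_{\bar q_1}\}$ together with $\theta_2'\theta_2^{-1}\in\langle\langle\Aut_\R(\PP^2)\rangle\rangle$. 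Three tools are available to move the $\pi_{\circ}$-image of the special pair: first, Lemma~\ref{lem scale on P1}~(2) supplies, for every $\lambda>0$, a quadratic element of $\mathcal{J}_{\circ}$ inducing scaling by $\lambda$ on $\PP^1$; second, the linear involution $[x:y:z]\mapsto[-x:y:z]\in\mathcal{J}_{\circ}\cap\Aut_\R(\PP^2)$ induces the swap $[u:v]\mapsto[v:u]$ on $\PP^1$; third, Remark~\ref{lem linear 2} together with Lemma~\ref{lem linear} provides a linear reassignment that sends the special non-real pair of a quintic to one whose $\pi_{\circ}$-image lies in $\R_{<0}\cdot\pi_{\circ}(C_q)$, accounting precisely for the sign ambiguity in the identification $(0,1]\simeq(\HH/G)/\R_{<0}$. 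Composing $\theta_2$ on the left and right with suitable combinations from these three classes and repeatedly applying Lemma~\ref{lem claim 1} to reabsorb the composition back into the product of a quintic and a single quadratic of $\mathcal{J}_{\circ}$, I obtain $\theta_2'$. Since each step multiplies by or conjugates by a linear or quadratic element of $\mathcal{J}_{\circ}$, by Lemma~\ref{lem quad in normal} and normality the cumulative error $\theta_2'\theta_2^{-1}$ stays in $\langle\langle\Aut_\R(\PP^2)\rangle\rangle$.

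Applying the first-paragraph argument to the pair $\theta_1,\theta_2'$ then yields $\theta_1(\theta_2')^{-1}\in\langle\langle\Aut_\R(\PP^2)\rangle\rangle$, and the conclusion follows from the factorisation $\theta_1\theta_2^{-1}=\theta_1(\theta_2')^{-1}\cdot\theta_2'\theta_2^{-1}$. The hardest part will be the verification in the second paragraph that, after the successive rewritings by Lemma~\ref{lem claim 1} (whose proof depends on a choice of auxiliary real base-points $\theta(r)$ and on decomposing the composition back into a quintic plus quadratic), the special non-real pair of $\theta_2'$ really ends up on the target conic $C_{q_1}$ or $C_{\bar q_1}$. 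This boils down to tracking how the non-real pair is transported by the scaling/swap/sign operations and matching it against the explicit coordinate computation of Lemma~\ref{lem linear}; the case analysis splits according to the sign of $\mu$ and to whether we are aiming at $\pi_{\circ}(C_{q_2})$ or $\pi_{\circ}(C_{\bar q_2})$.
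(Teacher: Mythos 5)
Your proposal is correct and follows essentially the same route as the paper: reduce $\varphi_{\circ}(\theta_1)=\varphi_{\circ}(\theta_2)$ to the geometric condition $C_{q_1}\in\{C_{q_2},C_{\bar q_2}\}$ by composing with a quadratic of $\mathcal{J}_{\circ}$ realising the required positive scaling on $\PP^1$ (Lemma~\ref{lem scale on P1}) and, when the scalar is negative, with the linear reassignment of Lemma~\ref{lem linear}/Remark~\ref{lem linear 2}, then invoke Lemmas~\ref{rmk tilde 3.2}, \ref{lem tilde 3}, \ref{lem tilde 3.3} and conclude by Lemma~\ref{lem quad in normal} and normality. The bookkeeping you flag as the hardest part is exactly the content of the paper's two explicit cases ($\lambda>0$ handled by an instance of the Lemma~\ref{lem claim 1} construction producing a quintic that contracts $C_{p_4},C_{\bar p_4}$, and $\lambda<0$ handled by $\alpha_{p_4}$), so your plan for that verification is the right one.
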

\begin{proof}
Let $S(\theta_1)=\{(p_3,\bar{p}_3)\}$ and $S(\theta_2)=\{(p_4,\bar{p}_4)\}$. The assumption $\varphi_0(\theta_1)=\varphi_0(\theta_2)$ implies that there exists some $\lambda\in\R$ such that $\pi_{\circ}(C_{p_3})=\lambda\pi_{\circ}(C_{p_4})$ or $\pi_{\circ}(C_{p_3})=\lambda\pi_{\circ}(C_{\bar{p}_4})$ in $\PP^1$. We can ssume that $\pi_{\circ}(C_{p_3})=\lambda\pi_{\circ}(C_{p_4})$, after exchanging the names of $p_4,\bar{p}_4$ if necessary.

Suppose that $\lambda>0$. By Lemma~\ref{lem scale on P1} there exist $\tau_1\in\mathcal{J}_{\circ}$ of degree 2 such that $\pi_{\circ}(\tau_1(C_{p_3}))=\pi_{\circ}(C_{p_4})$, i.e. $\tau_1(C_{p_3})=C_{p_4}$. Let $r$ be the real base-point of $\tau$. The map $\theta_1$ is an isomorphism around $r$ (Lemma~\ref{lem 5.1}), and $\theta_1(r)$ is a base-point of $(\theta_1\tau_1)^{-1}$.
Let $p_{j_i}$ be the image by $\theta_1$ of the contracted conic not passing through $p_i$. The map $\theta_1\tau_1$ is of degree $6$ and $p_{j_i},\bar{p}_{j_i}$ are base-points of $(\theta_1\tau_1)^{-1}$ of multiplicity $3$. By Lemma~\ref{lem sigma} there exists $\tau_2\in\mathcal{J}_{\circ}$ of degree $2$ with base-points $\theta(r),p_{j_i},\bar{p}_{j_i}$. The map $\tau_2\theta_1\tau_1\in\mathcal{J}_{\circ}$ is a standard or special quintic transformation contracting the conics $C_{p_4},C_{\bar{p}_4}$. 
Hence, by Lemma~\ref{lem tilde 3},~\ref{rmk tilde 3.2}, and~\ref{lem tilde 3.3}, there exist $\nu_1,\dots,\nu_{2m}\in\mathcal{J}_{\circ}$ of degree $\leq2$ such that $\theta_2=\nu_{2m}\cdots\nu_{m+1}(\tau_2\theta_1\tau_n^{-1})\nu_m\cdots\nu_1$. Then
\[\theta_1(\theta_2)^{-1}=\left(\theta_1(\nu_m\cdots\nu_1)^{-1}(\tau_1)\theta_1^{-1})\right (\tau_2)^{-1}(\nu_{2m}\cdots\nu_{m+1})^{-1}.\]
By Lemma~\ref{lem quad in normal}, all quadratic elements of $\mathcal{J}_{\circ}$ belong to $\langle\langle\Aut_\R(\PP^2)\rangle\rangle$, so $\theta_1(\theta_2)^{-1}$ is contained in $\langle\langle\Aut_{\R}(\PP^2)\rangle\rangle$.

If $\lambda<0$, take $\alpha_{p_4}\in\Aut_\R(\PP^2)$ fixing $p_1$ and sending $p_4$ onto $p_2$. If $S((\theta_2)^{-1})=\{(p_4',\bar{p}_4')\}$, we analogously pick $\alpha_{p_4'}\in\Aut_\R(\PP^2)$. Then $\alpha_{p_4'}\theta_2(\alpha_{p_4})^{-1}\in\J_{\circ}$ is a standard or special quintic transformation with base-points $p_1,\dots,\bar{p}_2,\alpha_{p_4}(p_2),\alpha_{p_4}(\bar{p}_2)$. Lemma~\ref{lem linear} implies that $\pi_{\circ}(C_{\alpha_{p_4}(p_2)})=\mu\pi_{\circ}(C_{p_4})$ for some $\mu\in\R_{<0}$. In particular, $\pi_{\circ}(C_{p_3})=\lambda\pi_{\circ}(C_{p_4})=\lambda\mu^{-1}\pi_{\circ}(C_{\alpha_{p_4}(p_2)})$. We proceed as above with $\theta_1$ and $\theta_2':=\alpha_{p_4'}\theta_2(\alpha_{p_4})^{-1}$, $\lambda':=\lambda\mu^{-1}>0$ and obtain that
\[\theta_1\alpha_{p_4}(\theta_2)^{-1}(\alpha_{p_4'})^{-1}=\theta_1(\alpha_{p_4'}\theta_2(\alpha_{p_4})^{-1})^{-1}\in\langle\langle\Aut_\R(\PP^2)\rangle\rangle.\]
The claim follows after conjugating $\theta_1\alpha_{p_4}(\theta_2)^{-1}$ with $\theta_2$.
\end{proof}

\begin{Prop}\label{thm kernel}
Let $\varphi\colon\Bir_{\R}(\PP^2)\rightarrow\bigoplus_{(0,1]}\Z/2\Z$ be the surjective group homomorphism defined in Theorem $\ref{thm quotient}$. Then
\[\ker(\varphi)=\langle\langle\Aut_{\R}(\PP^2)\rangle\rangle\]
\end{Prop}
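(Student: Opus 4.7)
The plan is to prove both inclusions. The easy inclusion $\langle\langle\Aut_\R(\PP^2)\rangle\rangle\subseteq\ker(\varphi)$ is immediate: by construction of $\varphi$ in Proposition~\ref{thm quotient}, $\varphi$ vanishes on $\Aut_\R(\PP^2)$, and since $\ker(\varphi)$ is a normal subgroup, it must contain the normal closure $N:=\langle\langle\Aut_\R(\PP^2)\rangle\rangle$.

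For the reverse inclusion, I would take $f\in\ker(\varphi)$ and first reduce it modulo $N$ to a product of standard quintic transformations in $\mathcal{J}_\circ$. By Corollary~\ref{cor BM}, we can write $f=g_m\cdots g_1$ with each $g_i\in\Aut_\R(\PP^2)\cup\mathcal{J}_*\cup\mathcal{J}_\circ$. The $\Aut_\R(\PP^2)$-factors lie in $N$ by definition, and the $\mathcal{J}_*$-factors lie in $N$ by Lemma~\ref{lem quadr conj}. Each $g_i\in\mathcal{J}_\circ$ decomposes via Lemma~\ref{lem Jcirc gen} into linear, quadratic, and standard quintic elements of $\mathcal{J}_\circ$; the linear elements lie in $\Aut_\R(\PP^2)\subseteq N$ and the quadratic ones in $N$ by Lemma~\ref{lem quad in normal}. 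Consequently $f\equiv\theta_n\cdots\theta_1\pmod{N}$, where each $\theta_i$ is a standard quintic transformation in $\mathcal{J}_\circ$.

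The heart of the argument is to use the kernel condition to pair off the quintics. By Proposition~\ref{thm quotient} and Remark~\ref{rmk map}~(3), each $\varphi_\circ(\theta_i)=e_{\delta_i}$ is a standard vector, so the identity $\varphi(f)=\sum_{i=1}^n e_{\delta_i}=0$ in $\bigoplus_{(0,1]}\Z/2\Z$ forces every $\delta\in(0,1]$ to occur an even number of times among $\delta_1,\dots,\delta_n$. Repeated applications of Lemma~\ref{lem claim 2} would let me swap two consecutive quintics whose $\varphi_\circ$-images differ (replacing them by new standard or special quintics with the same pair of $\varphi_\circ$-images), so I can sort the factorisation to have quintics with equal $\varphi_\circ$-image adjacent. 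For each resulting consecutive pair $\theta_a\theta_b$ with $\varphi_\circ(\theta_a)=\varphi_\circ(\theta_b)$, Remark~\ref{rmk map}~(\ref{rmk map 2}) gives $\varphi_\circ(\theta_b^{-1})=\varphi_\circ(\theta_b)=\varphi_\circ(\theta_a)$, so Lemma~\ref{lem claim 3} applied to $\theta_a$ and $\theta_b^{-1}$ yields $\theta_a\theta_b=\theta_a(\theta_b^{-1})^{-1}\in N$. Collapsing the pairs one at a time gives $f\in N$, completing the proof.

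The hard part is the bookkeeping in the sorting step: successive applications of Lemma~\ref{lem claim 2} replace standard quintics by new standard or special quintics rather than merely permuting the original factors, so I must ensure that Lemma~\ref{lem claim 3} continues to apply at each stage. Fortunately both lemmata are stated for standard or special quintic transformations, and the inverse of a standard (resp.\ special) quintic is again of the same type by the symmetry of Definitions~\ref{def 5.1} and~\ref{def 5.2}, so the induction goes through without additional complication.
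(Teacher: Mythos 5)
Your proposal is correct and follows essentially the same route as the paper: reduce to a product of (standard or special) quintic transformations modulo $\langle\langle\Aut_\R(\PP^2)\rangle\rangle$ using Lemmas~\ref{lem quadr conj}, \ref{lem Jcirc gen} and \ref{lem quad in normal}, sort them by their $\varphi_\circ$-images via Lemma~\ref{lem claim 2}, and kill adjacent pairs with equal image via Lemma~\ref{lem claim 3}. The only difference is cosmetic: the paper pushes the quadratic factors to the right explicitly with Lemma~\ref{lem claim 1}, whereas you absorb them into the normal closure from the outset by working in the quotient, which is equally valid.
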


\begin{proof}
By definition of $\varphi$ (see Proposition~\ref{thm quotient}), $\Aut_{\R}(\PP^2)$ is contained in $\ker(\varphi)$, hence $\langle\langle\Aut_{\R}(\PP^2)\rangle\rangle\subset\ker(\varphi)$. Lets prove the other inclusion. Consider the commutative diagram (\ref{phi}):
\[\small\xymatrix{ \Aut_{\R}(\PP^2)\ast\mathcal{J}_*\ast\mathcal{J}_{\circ}\ar[r]^{\pi}\ar@/_1pc/[rr]_{\Phi} & \mathcal{G}\simeq\Bir_{\R}(\PP^2)\ar[r]^{\varphi} &\bigoplus\limits_{(0,1]}\Z/2\Z} \]
where $\varphi\pi=\varphi_{\circ}$. The groups $\Aut_\R(\PP^2)$ and $\J_*$ are sent onto zero by $\Phi$, so $\ker(\Phi)$ is the normal subgroup generated by $\Aut_\R(\PP^2),\J_*$ and $\ker(\varphi_{\circ})$. Then $\ker(\varphi)=\pi(\ker(\Phi))$ implies that $\ker(\varphi)$ is the normal subgroup generated by $\Aut_\R(\PP^2),\mathcal{J}_*$ and $\ker(\varphi_{\circ})$. Moreover, $\Aut_{\R}(\PP^2)$ and $\mathcal{J}_*$ are contained in $\langle\langle\Aut_{\R}(\PP^2)\rangle\rangle$ (Lemma~\ref{lem quadr conj}), thus it suffices to prove that $\ker(\varphi_0)$ is contained in $\langle\langle\Aut_{\R}(\PP^2)\rangle\rangle$.

By Lemma~\ref{lem Jcirc gen}, every $f\in\ker(\varphi_{\circ})$ is the composition of linear, quadratic and standard quintic elements of $\mathcal{J}_{\circ}$. 
Note that a quadratic or quintic element composed with a linear element is still a quadratic or standard quintic element respectively, so we can assume that $f$ decomposes into quadratic and standard quintic elements. 

Let $\tau,\theta\in\J_{\circ}$ be a quadratic and a standard or special quintic transformation. 
By Lemma~\ref{lem claim 1} we can replace compositions $\tau\theta$ by $\theta'\tau'$ where $\tau',\theta'\in\J_{\circ}$ are a quadratic and a standard or special quintic transformation, and $\varphi_{\circ}(\tau\theta)=\varphi_{\circ}(\theta)=\varphi_{\circ}(\theta')=\varphi_{\circ}(\theta'\tau')$.
Let $\theta_1,\theta_2\in\J_{\circ}$ be standard quintic or special quintic transformations such that $\varphi_{\circ}(\theta_1)\neq\varphi_{\circ}(\theta_2)$. By Lemma~\ref{lem claim 2} we can write $\theta_2\theta_1=\theta_1'\theta_2'$ where $\theta_1',\theta_2'\in\J_{\circ}$ are standard or special quintic transformations and $\varphi_{\circ}(\theta_1)=\varphi_{\circ}(\theta_1')$ and $\varphi_{\circ}(\theta_2)=\varphi_{\circ}(\theta_2')$. 
So, using these two lemmas, we can create a new decomposition 
\[f=\theta_{n_k}\cdots\theta_{n_2+1}\theta_{n_2}\cdots\theta_{n_1+1}\theta_{n_1}\cdots\theta_1\tau_l\cdots\tau_1\] 
where $\tau_1,\dots,\tau_l\in\J_{\circ}$ are of degree $2$ and $\theta_1,\dots,\theta_{n_k}\in\J_{\circ}$ are standard and special quintic transformations, and the elements of each sequence $\theta_{n_i+1},\theta_{n_i+2},\dots,\theta_{n_{i+1}}$ have the same image by $\varphi_{\circ}$.
Since $f\in\ker(\varphi_{\circ})$, each sequence $\theta_{n_i+1},\theta_{n_i+2},\dots,\theta_{n_{i+1}}$ has an even number of elements, i.e. all $n_i$ are even.
We have $\varphi_{\circ}(\theta_i)=\varphi_{\circ}(\theta_i^{-1})$, hence
Lemma~\ref{lem claim 3} implies that $(\theta_{n_i+2(j+1)} \theta_{n_i+2j+1})\in\langle\langle\Aut_{\R}(\PP^2)\rangle\rangle$ for all $j\geq0$. 
Lemma~\ref{lem quad in normal} implies that all $\tau_1,\dots,\tau_l$ are contained in $\langle\langle\Aut_\R(\PP^2)\rangle\rangle$. The claim follows. 
\end{proof}

\begin{Cor}\label{cor thm kernel} \item 
$(1)$ We have $\langle\langle\Aut_\R(\PP^2)\rangle\rangle=\ker(\varphi)=\left[\Bir_\R(\PP^2),\Bir_\R(\PP^2)\right]$.

\noindent $(2)$ The sequence of iterated commutated subgroups of $\Bir_\R(\PP^2)$ is stationary. More specifically: Let $H:=[\Bir_\R(\PP^2),\Bir_\R(\PP^2)]$. Then $[H,H]=H$.
\end{Cor}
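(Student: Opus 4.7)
The strategy is to leverage Proposition~\ref{thm kernel}, which already identifies $\ker(\varphi)$ with $\langle\langle\Aut_\R(\PP^2)\rangle\rangle$, and to combine it with the classical fact that $\mathrm{PGL}_3(\R) = \mathrm{PSL}_3(\R)$ is simple as an abstract group (hence perfect). This will immediately give (1), and (2) will then follow from a short argument about characteristic subgroups, exploiting the fact that $H$ is normal in $\Bir_\R(\PP^2)$.

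For (1), I would first note that the codomain $\bigoplus_{(0,1]}\Z/2\Z$ of $\varphi$ is abelian, which forces
\[ [\Bir_\R(\PP^2),\Bir_\R(\PP^2)] \subseteq \ker(\varphi). \]
The reverse inclusion is the only thing to establish. Since $\Aut_\R(\PP^2) \simeq \mathrm{PGL}_3(\R)$ is perfect, one has $\Aut_\R(\PP^2) = [\Aut_\R(\PP^2),\Aut_\R(\PP^2)] \subseteq [\Bir_\R(\PP^2),\Bir_\R(\PP^2)]$. As the commutator subgroup is normal in $\Bir_\R(\PP^2)$, it contains the smallest normal subgroup generated by $\Aut_\R(\PP^2)$, namely $\langle\langle\Aut_\R(\PP^2)\rangle\rangle$. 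Combined with Proposition~\ref{thm kernel}, this yields the chain of equalities claimed in (1).

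For (2), set $H := [\Bir_\R(\PP^2),\Bir_\R(\PP^2)]$, which by (1) equals $\langle\langle\Aut_\R(\PP^2)\rangle\rangle$ and is therefore normal in $\Bir_\R(\PP^2)$. The derived subgroup $[H,H]$ is characteristic in $H$, so it is itself normal in $\Bir_\R(\PP^2)$. Perfectness of $\Aut_\R(\PP^2)$ then gives
\[ \Aut_\R(\PP^2) = [\Aut_\R(\PP^2),\Aut_\R(\PP^2)] \subseteq [H,H], \]
since $\Aut_\R(\PP^2) \subseteq H$. By the defining minimality of $\langle\langle\Aut_\R(\PP^2)\rangle\rangle$ among normal subgroups of $\Bir_\R(\PP^2)$ containing $\Aut_\R(\PP^2)$, we obtain $H = \langle\langle\Aut_\R(\PP^2)\rangle\rangle \subseteq [H,H]$; the opposite inclusion is trivial.

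I do not anticipate any substantive obstacle here: beyond Proposition~\ref{thm kernel}, the only ingredient used is the classical perfectness of $\mathrm{PGL}_3(\R)$, and the rest is formal group theory. The mildly non-obvious step is recognising that while $[H,H]$ is a priori only normal in $H$, it is in fact normal in $\Bir_\R(\PP^2)$ because it is characteristic in the normal subgroup $H$; this is what allows the minimality of $\langle\langle\Aut_\R(\PP^2)\rangle\rangle$ to be applied.
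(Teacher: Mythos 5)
Your proposal is correct and follows essentially the same route as the paper: the paper's proof is the one-line observation that perfectness of $\mathrm{PGL}_3(\R)$ places $\langle\langle\Aut_\R(\PP^2)\rangle\rangle$ inside the derived subgroup, which together with Proposition~\ref{thm kernel} and the abelianness of $\bigoplus_{(0,1]}\Z/2\Z$ gives both claims. You have merely spelled out the details (including the characteristic-subgroup step for (2)) that the paper leaves implicit.
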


\begin{proof}
Since $\mathrm{PGL}_3(\R)$ is perfect, the group $\langle\langle\Aut_\R(\PP^2)\rangle\rangle$ is contained in the derived subgroup. This and Proposition~\ref{thm kernel} imply (1) and (2). 
\end{proof}

Theorem~\ref{thm 2} is the summary of Proposition~\ref{thm quotient}, Corollary~\ref{cor generating set}, Proposition~\ref{thm kernel}, Corollary~\ref{cor thm kernel}.

\begin{Rmk}\label{rmk kernel}
(1) The kernel of $\varphi$ is the normal subgroup $N$ generated by all squares in $\Bir_\R(\PP^2)$: On one hand, for any group $G$, its commutator subgroup $[G,G]$ is contained in the normal subgroup of $G$ generated by all squares. On the other hand, since $\bigoplus_{(0,1]}\Z/2\Z$ is Abelian and all its elements are of order $2$, the normal subgroup of $\Bir_\R(\PP^2)$ generated by the squares is contained in $\ker(\varphi)$. The claim now follows from $\ker(\varphi)=[\Bir_\R(\PP^2),\Bir_\R(\PP^2)]$ (Corollary~\ref{cor thm kernel}). 

(2) Endowed with the Zariski topology or the Euclidean topology (see \cite{BF13}), the group $\Bir_\R(\PP^2)$ does not contain any non-trivial proper closed normal subgroups and $\langle\langle\Aut_\R(\PP^2)\rangle\rangle$ is dense in $\Bir_\R(\PP^2)$ \cite{BZ15}. In particular, the quotient topology on $\bigoplus_{(0,1]}\Z/2\Z$ is the trivial topology.
\end{Rmk}

%\vskip\baselineskip

\end{document}